\documentclass[a4paper,11pt]{amsart}

%USEPACKAGE

\usepackage{amsmath}
\usepackage{amsthm}
\usepackage{amssymb}
\usepackage{amsbsy}
\usepackage{amsfonts}
\usepackage{amstext}
\usepackage{amscd}
\usepackage{tikz}
\usepackage{graphicx}
\usepackage{verbatim}
\usepackage{bm}
\usepackage{commath,mathtools,setspace}
\usepackage{paralist}
\usepackage{extarrows}
\usepackage{color}
\usepackage{stmaryrd}

%\selectlanguage{spanish}
\usepackage[T1]{fontenc}
\usepackage{graphicx}
\usepackage{blindtext} % Package to generate dummy text throughout this template 
\usepackage{geometry}
\geometry{
	%total={210mm,297mm},
	left=25mm,
	right=25mm,
	top=25mm,
	bottom=25mm,
	footskip=8mm
}

\usepackage[colorlinks=true,linkcolor=blue,citecolor=blue]{hyperref}
\usepackage[colorinlistoftodos,prependcaption,textsize=small]{todonotes}

\definecolor{red}{RGB}{255,0,0}
\definecolor{green}{RGB}{0,100,0}
\definecolor{blue}{RGB}{0,0,255}

%EQUATION NUMBER
\numberwithin{equation}{section}

%%%%%%%%%%%%%%%%%%%%%%
\newtheorem{theorem}{Theorem}[section]  %Yuki: Since there was a tex error and the first page looked strange, I commented out it for now. 
\newtheorem{lemma}[theorem]{Lemma}

\newtheorem{corollary}[theorem]{Corollary}
\newtheorem{proposition}[theorem]{Proposition}
\newtheorem{notation}[theorem]{Notation}

\newtheorem{remark}[theorem]{Remark}
\newtheorem{definition}[theorem]{Definition}
\newtheorem{example}[theorem]{Example}
%\newtheorem{observation}[theorem]{Observation}

%%%%%%%%%%%%%%%%%%%%%

%USEFUL NOTATIONS
 %Real part
 %Imaginary part

\newcommand{\weak}{\xrightarrow{w}}

\newcommand{\C}{\mathbb{C}}

\newcommand{\N}{\mathbb{N}}
\newcommand{\R}{\mathbb{R}}

%ORIGINAL OPERATIONS
\newcommand{\dx}{{\rm d} x}
\newcommand{\dt}{{\rm d} t}

\newcommand{\polplus}{\mathcal P_d(\mathbb R_{\geq 0})}
\newcommand{\pols}{\mathcal P}
\DeclareMathOperator{\Var}{Var}

%MATHBB

\newcommand{\cc}{\mathbb{C}}

\newcommand{\nn}{\mathbb{N}}

\newcommand{\pp}{\mathbb{P}}

\newcommand{\rr}{\mathbb{R}}

\renewcommand{\tt}{\mathbb{T}}

%MATHCAL

\newcommand{\MM}{\mathcal{M}}

\newcommand{\PP}{\mathcal{P}}

\newcommand{\falling}[2]{\left(#1\right)_{#2}}
\newmuskip\pFqmuskip

\newcommand*\pFqN[6][8]{%
  \begingroup % only local assignments
  \pFqmuskip=#1mu\relax
  % make the comma math active
  \mathcode`\,=\string"8000
  % and define it to be \pFqcomma
  \begingroup\lccode`\~=`\,
  \lowercase{\endgroup\let~}\pFqcomma
  % typeset the formula
  {}_{#2}F_{#3}{\left(\genfrac..{0pt}{}{#4}{#5};#6\right)}%
  \endgroup
}
\newcommand{\pFqcomma}{\mskip\pFqmuskip}

\newcommand*\HGF[5]{%
   % typeset the formula
 {\ }_{#1} F_{#2}{\left(\genfrac..{0pt}{}{#3}{#4};#5\right)}%
}

\newcommand*\HGP[3]{%
   % typeset the formula
 \mathcal{H}_{#1}{\left[\genfrac..{0pt}{1}{#2}{#3}\right]}%
}

%%%%%  New commands for the paper %%%%%%%%%%%%%

\newcommand{\dil}[1]{{\rm Dil}_{#1}}  
\newcommand{\meas}[1]{\mu \left\llbracket #1 \right\rrbracket}

\newcommand{\coef}[2]{\widetilde{\mathsf{e}}_{#1}^{(#2)}} %Coefficient of polynomials

\newcommand{\ffc}[2]{\kappa_{#1}^{(#2)}} %Finite free cumulants

\newcommand{\freec}[2]{{\bm r}_{#1}\left(#2\right)} %Finite free cumulants

\newcommand{\diff}[2]{\partial_{#1|#2}\, }  %Derivative of polynomials

\newcommand{\strans}[2]{S_{#1}^{(#2)}}   %$S$-transform of polynomials

\newcommand{\stranstilde}[2]{\widetilde{S}_{#1}^{(#2)}}   %$S$-transform with tilde for symmetric case

\newcommand{\ttrans}[2]{T_{#1}^{(#2)}}   %T-transform of polynomials

\newcommand{\cdf}[1]{F_{#1}}   %cumulant distribution function of polynomials

\newcommand{\blocks}[1]{|#1|}

\newcommand{\partlat}{P}

\newcommand{\shift}[1]{{\rm Shi}_{#1}}

\newcommand{\reversed}{{\langle -1\rangle}}

\newcommand{\mfp}{\mathfrak{p}}

\usepackage{stackengine,scalerel}
\newcommand{\boxlor}{\mathop{\ThisStyle{%
  \ensurestackMath{\stackinset{c}{-.05\LMpt}{c}{-.2\LMpt}{\SavedStyle\lor}{\SavedStyle\square}}}}}

\usepackage[backend=biber,style=alphabetic]{biblatex} % biblatex
\addbibresource{Ref.bib} % biblatex 
\ExecuteBibliographyOptions{date=year,sorting=nyt,sortcites=true,isbn=false,url=false,doi=false,eprint=true,giveninits=true}% bibliography 

%%%%%%% TITLE %%%%%%%%%%%5

\title{$S$-transform in finite free probability}
\author{Octavio Arizmendi, Katsunori Fujie, Daniel Perales and Yuki Ueda }

\address{
Octavio Arizmendi:
Centro de Investigaci\'{o}n en Matem\'{a}ticas, Guanajuato, Mexico}
\email{octavius@cimat.mx}

\address{
Katsunori Fujie:
Department of Mathematics, Kyoto University, Kyoto, Japan}
\email{fujie.katsunori.42m@st.kyoto-u.ac.jp}

\address{
Daniel Perales:
Department of Mathematics, Texas A\&M University, TX, USA}
\email{daniel.perales@tamu.edu}

\address{
Yuki Ueda:
Department of Mathematics, Hokkaido University of Education, Hokkaido, Japan}
\email{ueda.yuki@a.hokkyodai.ac.jp}

\date{\today}

\setcounter{tocdepth}{1}
\setlength {\marginparwidth }{2cm} 

\begin{document}

\begin{abstract}
We characterize the limiting root distribution $\mu$ of a sequence of polynomials $\{p_d\}_{d=1}^\infty$ with nonnegative roots and degree $d$, in terms of their coefficients. Specifically, we relate the asymptotic behaviour of the ratio of consecutive coefficients of $p_d$ to Voiculescu's $S$-transform $S_\mu$ of $\mu$.

In the framework of finite free probability, we interpret these ratios of coefficients as a new notion of finite $S$-transform, which converges to $S_\mu$ in the large $d$ limit. It also satisfies several analogous properties to those of the $S$-transform in free probability, including multiplicativity and monotonicity.

The proof of the main theorem is based on various ideas and new results relating finite free probability and free probability. In particular, we provide a simplified explanation of why free fractional convolution corresponds to the differentiation of polynomials, by finding how the finite free cumulants of a polynomial behave under differentiation.

This new insight has several applications that strengthen the connection between free and finite free probability. Most notably, we generalize the approximation of $\boxtimes_d$ to $\boxtimes$ and prove a finite approximation of the Tucci--Haagerup--Möller limit theorem in free probability, conjectured by two of the authors. We also provide finite analogues of the free multiplicative Poisson law, the free max-convolution powers and some free stable laws.
\end{abstract}

\maketitle

\tableofcontents

%SECTION_INTRODUCTION
\section{Introduction}
\label{sec:intro}

The main object of interest in this paper is the finite free multiplicative convolution of polynomials. This is a binary operation on the set $\pols_d$ of polynomials of degree $d$ which can be defined explicitly in terms of the coefficients of the polynomials involved. For polynomials 
\begin{equation} 
\label{eq:polynomial.coefficients}
p(x)= \sum_{k=0}^d  (-1)^k \binom{d}{k} \coef{k}{d}(p) x^{d-k} \qquad \text{and} \qquad q(x)=\sum_{k=0}^d  (-1)^k \binom{d}{k} \coef{k}{d}(q) x^{d-k},
\end{equation}
 the finite free multiplicative convolution of $p$ and $q$ is defined as
\begin{equation} 
\label{eq:finitefreemult1} [p\boxtimes_d q](x)= \sum_{k=0}^d  (-1)^k \binom{d}{k} \coef{k}{d}(p)\coef{k}{d}(q) x^{d-k}.\end{equation}

This operation is bilinear, commutative, and associative and more importantly it is closed in the set $\polplus$ of polynomials with all non-negative real roots.
Moreover, it is known that the maximum root of $p\boxtimes_d q$ is bounded above by the product of the maximum roots of $p$ and $q$.
All these basic properties have been well understood since more than a century ago, see for instance \cite{walsh1922location}. 

Recently, this convolution was rediscovered in the study of characteristic polynomials of certain random matrices \cite{MSS}.
To elaborate, for $d$-dimensional positive semidefinite matrices, $A$ and $B$, with characteristic polynomials $p$ and $q$, respectively, their finite free multiplicative convolution, $p\boxtimes_d q$, is the expected characteristic polynomial of $A U B U^{*}$, where $U$ is a random unitary matrix sampled according to the Haar measure on the unitary group of degree $d$.
One motivation behind this new interpretation in terms of random matrices was to derive new polynomial inequalities for the roots of the convolution. Marcus, Spielman, and Srivastava achieved a stronger bound on the maximum root using tools from free probability, in particular from Voiculescu's $S$-transform.
In the light of this, Marcus pursued this connection further  \cite{Mar21} by suggesting and providing evidence that, as the degree $d$ tends to infinity, the finite free convolution of polynomials, $\boxtimes_d$, tends to free multiplicative convolution of measures, $\boxtimes$, thus initiating the theory of finite free probability.
This was rigorously proved in \cite{AGP}. 

The study of polynomial convolutions from the perspective of finite free probability has strengthened the connections between geometry of polynomials, random matrices, random polynomials and free probability theory, and has given new insights into these topics.
In particular, after the original paper of Marcus, Spielman and Srivastava \cite{MSS} and the work of Marcus \cite{Mar21}, various papers have investigated limit theorems for important sequences of polynomials or their finite free convolutions, see \cite{fujie2023law, AP, AGP,mfmp2024hypergeometric,kabluchko2022lee,hoskins2020dynamics, kabluchko2021repeated, arizmendifujieueda}.  

In a certain sense, the present paper continues this line of research, understanding the relation of finite free probability with free probability in the large $d$ limit.

One of the motivations of this work is to give a concrete understanding of how the limiting behaviour of the coefficients of a polynomial is connected with convergence in distribution to a probability measure. More specifically, 
consider a sequence of polynomials $p_d$ of degree $d$
such that the empirical root distribution of the polynomials, denoted by $\meas{p_d}$, converges weakly to a measure $\mu$ as the degree $d$ tends to infinity and assume that $p_d$ has coefficients given as in \eqref{eq:polynomial.coefficients}.
Then a natural question is what happens with the coefficients $\widetilde{\mathsf{e}}_k(p_d)$ in the limit, or vice versa, to give conditions on the behaviour of the coefficients $\widetilde{\mathsf{e}}_{k}(p_d)$ so that $\meas{p_d}$ converges weakly to $\mu$.
The naive approach of fixing $k$ yields a trivial limit, see Corollary \ref{cor:lim_coef}.

From their studies of the law of large numbers, Fujie and Ueda \cite{fujie2023law} observed that it may be more fruitful to look at the ratio of two consecutive coefficients, see Section \ref{ssec:intro.LLN} below. Our main result says that this is the case. We find that one can get a meaningful limit when considering the ratio $ \widetilde{\mathsf{e}}_k(p_d)/ \widetilde{\mathsf{e}}_{k+1}(p_d)$ and doing a diagonal limit, i.e. letting $k$ and $d$ tend to infinity with $k/d$ approaching some constant $t$. Furthermore, such a limit can be tracked down and has precise relation to Voiculescu's $S$-transform.

%%%%%%%%%%%%%%%%%%%%%%%%%%%%%%%%%%%%%%%%%%
\subsection{\texorpdfstring{$S$}\ -transform}

Recall that the main analytic tool to study the free multiplicative convolution is the \textit{$S$-transform}, introduced by Voiculescu \cite{voiculescu1987multiplication} and studied in detail by Bercovici and Voiculescu \cite{BerVoi1992, BerVoi1993}. Its importance in free probability stems from the fact that $S_\mu$ characterizes the measure $\mu$ and is multiplicative with respect to free multiplicative convolution:
\[
S_{\mu\boxtimes \nu} = S_\mu S_\nu \qquad \text{for }\mu,\nu \in \MM(\R_{\geq 0})\setminus\{\delta_0\}.
\] 
We refer the reader to Section \ref{ssec:prelim.S.transform} for more details.

Our results can be nicely presented if we introduce a new \textit{finite free $S$-transform}\footnote{Let us mention that Marcus already defined a modified $S$-transform in \cite{Mar21} that tends to the inverse of Voiculescu's $S$-transform. Our definition is different, and the relationd with Marcus' transform is not clear. We will address these in more detail in Remarks \ref{rem:finite.Strans} and \ref{rem:Marcus}.}. Consider again a polynomial $p$ as in \eqref{eq:polynomial.coefficients} and, for simplicity, let us assume that all the roots of $p$ are strictly positive so that the coefficients $\widetilde{\mathsf{e}}_k(p)$ are non-zero.
In this case the {\it finite $S$-transform} of $p$ is the map $S_p^{(d)}:  \left\{-k/d \mid k =1,2,\dots, d \right\} \to \R_{> 0}$ given by \[\strans{p}{d}\left(-\frac{k}{d}\right) := \frac{\coef{k-1}{d}(p)}{\coef{k}{d}(p)} \qquad \text{ for } \quad k=1,2, \dots, d.
\]

It is straightforward from \eqref{eq:finitefreemult1} that $S^{(d)}_{p\boxtimes_dq} = S^{(d)}_pS^{(d)}_q$.
Besides the multiplicative property, this map satisfies many other properties analogous to Voiculescu's $S$-transform, such as monotonicity, same image, same range and a formula for the reversed polynomial, see Section \ref{sec:properties_of_Stransform}.

More importantly, in the limit as the dimension $d$ goes to infinity, the convergence of the empirical root distribution of a sequence of polynomials $p_d$ to some measure $\mu$ is equivalent to the convergence of the finite $S$-transform of $p_d$ to the $S$-transform of $\mu$.
This is the content of our first main result. 

\begin{theorem}
\label{thm:main}
Let $(p_d)_{d\in \N}$ be a sequence of polynomials with $p_d\in \pols_d(\R_{\geq 0})$ and $\mu\in \MM(\R_{\geq 0})\backslash \{\delta_0\}$. The following are equivalent:
\begin{enumerate}
\item The weak convergence: $\meas{p_d} \weak \mu$. 
\item For every $t\in (0,1-\mu(\{0\}))$ and every sequence $(k_d)_{d\in\nn}$ with $1\leq k_d \leq d$ and $\lim_{d\to\infty} \frac{k_d}{d}=t$, one has 
\begin{equation}
\label{eq:intro.limit.S}
\lim_{\substack{d\to \infty}}\strans{p_d}{d}\left(-\frac{k_d}{d}\right)= S_\mu(-t).
\end{equation}
\end{enumerate}
\end{theorem}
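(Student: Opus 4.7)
The plan is to prove both implications via a saddle-point analysis of the generating polynomial encoding the normalized elementary symmetric functions of the roots of $p_d$. Writing $p_d(x)=\prod_{i=1}^d(x-\lambda_i^{(d)})$, I would introduce
\[
P_d(z):=\prod_{i=1}^d(1+z\lambda_i^{(d)})=\exp\!\Big(d\int\log(1+zx)\,d\meas{p_d}(x)\Big),
\]
so that $[z^k]P_d(z)=\binom{d}{k}\coef{k}{d}(p_d)$ and consequently
\[
\strans{p_d}{d}\!\Big(-\frac{k}{d}\Big)=\frac{d-k+1}{k}\cdot\frac{[z^{k-1}]P_d(z)}{[z^k]P_d(z)}.
\]
Setting $L_\mu(z):=\int\log(1+zx)\,d\mu(x)$, the identity $zL_\mu'(z)=-\psi_\mu(-z)$ rewrites the saddle-point equation $zL_\mu'(z)=t$ as $\psi_\mu(-z)=-t$, with unique positive solution $z^*(t):=-\chi_\mu(-t)$ for $t\in(0,1-\mu(\{0\}))$.

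For (1)$\Rightarrow$(2), assuming $\meas{p_d}\weak\mu$ one has $L_{\meas{p_d}}\to L_\mu$ locally uniformly on a neighborhood of the positive real axis. Applying the Laplace method to the contour representation
\[
[z^k]P_d(z)=\frac{1}{2\pi i}\oint_{|z|=z^*(k/d)}\!\exp\!\Big(d\,\bigl(L_{\meas{p_d}}(z)-\tfrac{k}{d}\log z\bigr)\Big)\,\frac{dz}{z},
\]
the leading-order contributions to $[z^{k-1}]P_d(z)$ and $[z^k]P_d(z)$ differ only by the factor $z^*(t)$, thanks to the envelope identity $\partial_t\bigl[L_\mu(z^*(t))-t\log z^*(t)\bigr]=-\log z^*(t)$. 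Combined with $(d-k_d+1)/k_d\to(1-t)/t$ along the prescribed sequence, this produces
\[
\strans{p_d}{d}\!\Big(-\frac{k_d}{d}\Big)\longrightarrow\frac{1-t}{t}\,z^*(t)=\frac{1-t}{-t}\,\chi_\mu(-t)=S_\mu(-t),
\]
using the definition $S_\mu(z)=\tfrac{1+z}{z}\chi_\mu(z)$.

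For the converse (2)$\Rightarrow$(1), the pointwise convergence of $\strans{p_d}{d}$ to $S_\mu$ throughout $(0,1-\mu(\{0\}))$, combined with the monotonicity of the finite $S$-transform, recovers $\chi_\mu$ on $(-1+\mu(\{0\}),0)$, hence $\psi_\mu$ on its image, and therefore the full moment sequence of $\mu$ via the Taylor expansion of $\psi_\mu$ at the origin. A tightness argument--for instance, controlling $\coef{1}{d}(p_d)=1/\strans{p_d}{d}(-1/d)$ from the hypothesis as $t\to 0^+$--then upgrades moment convergence to weak convergence on $\R_{\geq 0}$.

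The main obstacle will be making the saddle-point estimate uniform in the diagonal limit $k_d/d\to t$: one must ensure that $z^*(k_d/d)$ stays inside the domain where $L_{\meas{p_d}}$ is uniformly well-approximated by $L_\mu$, control the second derivative of the exponent at the saddle away from zero, and handle the boundary cases carefully (particularly delicate when $\mu$ has an atom at $0$ or unbounded support). The more conceptual route hinted at in the abstract--relating the ratio $\coef{k-1}{d}/\coef{k}{d}$ to the smallest-root statistic of the $(d-k)$-th derivative of $p_d$ and identifying its limiting distribution with a free fractional convolution of $\mu$--can plausibly sidestep these analytic subtleties.
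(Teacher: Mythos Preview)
Your approach is genuinely different from the paper's, and the core saddle-point heuristic for $(1)\Rightarrow(2)$ is sound in spirit: the ratio $[z^{k-1}]P_d/[z^k]P_d$ does concentrate at the saddle $z^*(t)=-\Psi_\mu^{-1}(-t)$, and this is essentially how Kabluchko and others have treated related problems. The paper instead bypasses all asymptotic analysis of coefficients by using two exact identities, $\strans{p}{d}(-k/d)=-G_{\meas{\diff{k}{d}p}}(0)$ and $S_\mu(-t)=-G_{\dil{t}(\mu^{\boxplus 1/t})}(0)$, and then invoking the convergence $\meas{\diff{k_d}{d}p_d}\weak\dil{t}(\mu^{\boxplus 1/t})$. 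What this buys is that the ``uniform saddle-point'' work you flag as the main obstacle is replaced by a soft argument: one only needs a uniform lower bound on the smallest root of $\diff{k_d}{d}p_d$ to make $G(0)$ continuous under the weak limit, and the paper gets this by sandwiching against Jacobi polynomials via a partial order. The extension to unbounded support and atoms at $0$ then follows by reversal and cut-off tricks, not by refining any estimate.

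Your converse $(2)\Rightarrow(1)$ has a real gap. You write that convergence of $\strans{p_d}{d}$ to $S_\mu$ recovers the moment sequence of $\mu$ and that tightness will ``upgrade moment convergence to weak convergence''; but you have not established moment convergence of $\meas{p_d}$ at all---you only know the finite $S$-transforms converge, which is a statement about ratios of coefficients, not about moments of $p_d$. Worse, $\mu$ need not have any moments, so Taylor-expanding $\psi_\mu$ is unavailable in general. The tightness claim also fails as written: monotonicity gives $\strans{p_d}{d}(-1/d)\le\strans{p_d}{d}(-k_d/d)\to S_\mu(-t)$, which bounds $m_1(p_d)$ \emph{from below}, not from above, so Markov's inequality does not apply. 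The paper handles this by Helly's selection (allowing mass to escape) and then rules out escape by comparing with the explicit family $x^{d-l}(x-a)^{l}$ through the partial order $\ll$, deriving a contradiction from $S_\mu(-t)>0$. Some such structural input seems necessary; your outline does not supply it.
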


The details of the proof are given in Section \ref{sec:finite.S.to.S}.
Similarly, we can also define a finite symmetric $S$-transform $\stranstilde{p}{2d}$ of a symmetric polynomial $p$ whose all the roots are non-zero as follows:
\[\stranstilde{p}{2d}\left(-\frac{k}{d}\right):=
\sqrt{\frac{\coef{2(k-1)}{2d}(p)}{\coef{2k}{2d}(p)}} \qquad \text{for }k=1,\dots, d,\]
and prove an analogous result, see Section \ref{sec:symmetric} for more details.

To show the effectiveness of the above result, let us give a few examples of well-known polynomials and their limiting distributions. Further related examples will be give in Section \ref{sec:examples.applications}

\begin{itemize}

\item (Laguerre polynomials) Let $L_d^{(b)}(x)=\sum_{k=0}^d (-1)^k \binom{d}{k} \frac{(bd)_k}{d^k}x^{d-k}$ be the normalized Laguerre polynomials of degree $d$ and with parameter $b\ge1$, where $(a)_0:=1$ and $(a)_k:=a(a-1)\cdots(a-k+1)$ for $a\in \R$ and $k\ge1$. Then its finite $S$-transform satisfies:
\[\strans{L_d^{(b)}}{d}\left(-\frac{k}{d}\right)= \frac{1}{b-\frac{k-1}{d}} \to \frac{1}{b-t} = S_{{\bf MP}_b}(-t) \qquad  \text{as}\quad \frac{k}{d}\to t,
\]
where ${\bf MP}_b$ is the Marchenko-Pastur distribution with parameter $b$. 
This gives an alternative proof of well-known fact: $\meas{L_{d}^{(b)}} \weak {\bf MP}_b$. For the notation and detail, see Example \ref{exm:Laguerre}.
\item (Hermite polynomials) Let $H_{2d}(x)= \sum_{k=0}^d (-1)^k\binom{2d}{2k} \frac{(2k)!}{k!(4d)^k} x^{2d-2k}$ be the normalized Hermite polynomials of degree $2d$. We have the convergence of finite symmetric $S$-transform:
\[\stranstilde{H_{2d}}{2d}\left(-\frac{k}{d}\right)= \frac{1}{\sqrt{-\tfrac{k}{d} +\tfrac{1}{2d}}}\to \frac{1}{\sqrt{-t}} = \widetilde{S}_{\mu_{\mathrm{sc}}}(-t)\qquad  \text{as}\quad \frac{k}{d}\to t,  \] 
where $\mu_{\mathrm{sc}}$ is the standard semicircular distribution.
This gives an alternative proof of well-known fact: $\meas{H_{2d}} \weak \mu_{\mathrm{sc}}$, see Example \ref{Example:Hermite}.
%\tdi{Yuki: if we add this example here, we should explain the symmetric version of S-transforms before this example.}
%\item (Chebyshev polynomials) The Chebyshev polynomials are well-known as polynomials related to sine and cosine functions. 
%\tdi{From Wikipedia}

\item The (normalized) Chebyshev polynomials of the first kind may be written as 
$T_{2d}(x)=\sum_{k=0}^{d}  \binom{2d}{2k} (-1)^k\frac{(2k)_k}{(2d-1)_{k}2^{2k}}x^{2d-2k}.$
Then we have \[\stranstilde{T_{2d}}{2d}\left(-\frac{k}{d}\right)= \sqrt{\frac{2(k-2d)}{2k-1}}\to \sqrt{\frac{t-2}{t}}=S_{\mu_{\mathrm{arc}}}(-t)\qquad  \text{as}\quad \frac{k}{d}\to t,  \] 
where $\mu_{\mathrm{arc}}$ is the arcsine law on $(-1,1)$. See \cite{arizmendi2013class} for the $S$-transform of $\mu_{\mathrm{arc}}$.

%\tdi{Yuki: It is known that the empirical root distributions of Chebychev polynomial of (1st/2nd) kind converges to arcsine law $\mu_{as}=\frac{1}{\pi \sqrt{1-x^2}}\mathbf{1}_{(-1,1)}(x)dx$ with variance $1/2$. According to [Arizmendi-Hasebe-2013, Proposition 4.5 Bernoulli], the S-transform of $\mu_{as}$ is
%$$S_{\mu_{as}}(-t) = \sqrt{-\frac{2}{t}+1}, \qquad t \in (0,1).$$
%It seems a bit different from the above limiting S-transform.

%OA: corrected, there was a sign missing since the $(-1)^k$ in the sum of the example is no the same as $(-1)^k$ of the usual notation.

%Yuki: Thanks! I understood it.
%}

\item The (normalized) Chebyshev polynomials of the second kind can be expressed as $U_{2d}(x)=\sum_{k=0}^d \binom{2d}{2k}(-1)^k\frac{(2k)_k}{(2d)_k2^{2k}}x^{2d-2k}$. Then we obtain \[\stranstilde{U_{2d}}{2d}\left(-\frac{k}{d}\right)=\sqrt{\frac{2(k-1-2d)}{2k-1}} \to \sqrt{\frac{t-2}{t}}=S_{\mu_{\mathrm{arc}}}(-t)\qquad  \text{as}\quad \frac{k}{d}\to t.\]

\end{itemize}

%%%%%%%%%%%%%%%%%%%%%%%%%%%%%%%
\subsection{Quick overview of the proof} 
\label{ssec:intro.overview.proof}

While the proof of our main theorem is technical for the general case, it relies on simple but useful results that provide a deeper understanding of the relation between finite free convolution and differentiation.

As we prove in Section \ref{Sec:Derivatives}, the finite free cumulants of the derivatives of a polynomial $p$ can be directly related to the finite free cumulants of $p$. That is, by using the notation
\[
\diff{j}{d} p := \frac{p^{(d-j)}}{\falling{d}{d-j}},
\]
we have the relation
\[
\ffc{n}{j}(\diff{j}{d}p) = \left( \frac{j}{d}\right)^{n-1} \ffc{n}{d}(p)\quad \text{ for } 1 \leq n \leq j \leq d. 
\]

Using the fact that convergence of finite free cumulants implies weak convergence, see \cite{AP}, the equation above allows us to give a new proof of the following result, relating derivatives to free convolution powers.

\begin{theorem}[\cite{hoskins2020dynamics}, \cite{AGP}] \label{Thm:AGVP_HK_S}
  Fix $K$ a compact subset on $\R$.
  Let $\mu \in \MM(\rr)$  and $(p_d)_{d\in\N}$ be a sequence of polynomials of degree $d$ such that every $p_d\in \PP_{d}(K)$ has only roots in $K$ and $\meas{p_d} \weak \mu$.
  Then, for a parameter $t \in (0, 1)$ and a sequence of integers $(j_d)_{d\in \N}$ such that $1\leq j_d \leq d$ and $\lim_{d\to\infty}\frac{j_d}{d}=t$, we have $\meas{\diff{j_d}{d} p_d} \weak \dil{t}(\mu^{\boxplus 1/t})$ as $d\to\infty$.
\end{theorem}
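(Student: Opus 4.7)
The plan is to run a cumulant-based argument, using the differentiation formula $\ffc{n}{j}(\diff{j}{d}p) = (j/d)^{n-1} \ffc{n}{d}(p)$ together with the fact (from \cite{AP}) that on polynomials whose roots lie in a fixed compact set, convergence of finite free cumulants is equivalent to weak convergence of the empirical root distribution, and in that case the limit of the finite free cumulants is the sequence of free cumulants of the limit measure.

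First, since $\meas{p_d}\weak\mu$ and all the roots of $p_d$ lie in the compact set $K$, the sequence of empirical root distributions is tight and the moments are uniformly bounded; hence $\ffc{n}{d}(p_d)\to \kappa_n(\mu)$ for every $n\geq 1$, where $\kappa_n(\mu)$ denotes the $n$-th free cumulant of $\mu$. Next, I would apply the differentiation identity from Section \ref{Sec:Derivatives}: for each fixed $n$ and all $d$ large enough so that $j_d\geq n$, we have
\[
\ffc{n}{j_d}\bigl(\diff{j_d}{d} p_d\bigr)=\left(\frac{j_d}{d}\right)^{n-1}\ffc{n}{d}(p_d).
\]
Taking $d\to\infty$ with $j_d/d\to t$ yields
\[
\lim_{d\to\infty}\ffc{n}{j_d}\bigl(\diff{j_d}{d} p_d\bigr) = t^{n-1}\kappa_n(\mu).
\]

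The next step is to recognize $t^{n-1}\kappa_n(\mu)$ as the $n$-th free cumulant of $\dil{t}(\mu^{\boxplus 1/t})$. Indeed, free cumulants are homogeneous of degree $n$ under dilation, $\kappa_n(\dil{t}\nu)=t^n\kappa_n(\nu)$, and additive convolution powers scale free cumulants linearly, $\kappa_n(\nu^{\boxplus s})=s\,\kappa_n(\nu)$. Combining these with $s=1/t$ and $\nu=\mu$ gives
\[
\kappa_n\bigl(\dil{t}(\mu^{\boxplus 1/t})\bigr)= t^{n}\cdot \frac{1}{t}\,\kappa_n(\mu)=t^{n-1}\kappa_n(\mu),
\]
matching the limit above.

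Finally, to turn the cumulant convergence into weak convergence of $\meas{\diff{j_d}{d}p_d}$ I would need a uniform control on the supports of the polynomials $\diff{j_d}{d}p_d$. This is the main technical point: since the roots of $p_d$ all lie in the compact set $K$, a standard interlacing/Gauss--Lucas type argument (or direct bound on coefficients) shows that the roots of all derivatives $\diff{j}{d}p_d$ remain in a bounded set depending only on $K$; hence the sequence $(\meas{\diff{j_d}{d}p_d})_d$ is tight with uniformly bounded moments of every order, and convergence of every finite free cumulant is then equivalent to weak convergence. Together with the identification of the limit cumulants, this yields $\meas{\diff{j_d}{d}p_d}\weak \dil{t}(\mu^{\boxplus 1/t})$. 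The hardest piece is really just the localization of the roots of the derivatives in a bounded set independent of $d$; everything else is an algebraic manipulation of the differentiation identity for finite free cumulants.
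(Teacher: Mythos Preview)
Your proposal is correct and follows essentially the same route as the paper's own proof: apply Proposition~\ref{prop:cumulant_derivatives} to get $\ffc{n}{j_d}(\diff{j_d}{d}p_d)=(j_d/d)^{n-1}\ffc{n}{d}(p_d)$, pass to the limit to obtain $t^{n-1}\freec{n}{\mu}=\freec{n}{\dil{t}(\mu^{\boxplus 1/t})}$, and then invoke Proposition~\ref{prop:convergence_cumulants}. The only point you dwell on that the paper leaves implicit is the uniform support control for the derivatives, but this is immediate from Gauss--Lucas (the roots of every $\diff{j}{d}p_d$ lie in the convex hull of $K$), so it is not really ``the hardest piece''---it is essentially free once $K$ is fixed.
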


It is worth mentioning that if in the previous result we allow $t=0$ or $t=1$, we can also draw some conclusions about the limiting distribution, see Remark \ref{rem:added}.  Also, in Theorem \ref{thm:general.AGVP} of Section \ref{ssec:generalcase} we will show that the assertion of Theorem \ref{Thm:AGVP_HK_S} still holds if we drop the assumption on the uniform compactness.

Once Theorem \ref{Thm:AGVP_HK_S} is settled, the last ingredients that connect the finite $S$-transform with the $S$-transform are two observations.
First, the $S$-transform of a measure contained on $[a,b]$ for $a>0$ is related with the value of the Cauchy transform at $0$ of the free convolution powers:
\begin{equation*}
    G_{\dil{t}{(\mu^{\boxplus 1/t})}}(0) = -S_\mu(-t) \qquad \text{(see Lemma \ref{lem:approx_box_times})}.
\end{equation*}
Second, for the finite $S$-transform, a similar relation holds but in this case for derivatives of a polynomial with strictly positive roots, namely,
\begin{equation*}
    G_{\meas{\diff{k}{d}p_d}}(0)=-\strans{p_d}{d}\left(-\frac{k}{d}\right) \qquad \text{(see Lemma \ref{lem:Strans_conditions} (2))}.
\end{equation*}

Then, since weak convergence implies the convergence of the Cauchy transform on compact sets outside the support. From the convergence of $\meas{\diff{k}{d}p_d} \to \dil{t}(\mu^{\boxplus 1/t})$ we may conclude that 
\[
\lim_{\substack{d\to \infty \\ k/d\to t}} \strans{p_d}{d}\left(-\frac{k}{d}\right) = S_\mu(-t).
\]

However, when attempting to upgrade these considerations to the case when the support of $\mu$ includes $0$ or it is unbounded, one faces technical difficulties such as $G_{\meas{\diff{k}{d}p_d}}(0)$ or $G_{\dil{t}{(\mu^{\boxplus 1/t})}}(0)$ possibly being undefined. To get around this, we use uniform bounds on the roots of polynomials after repeated differentiation.  One important tool that we use is a partial order on polynomials, see Section \ref{sec:partial.order}.

%%%%%%%%%%%%%%%%%%%%%%%%%%%%%%%%%%%%%%%%%%%%%%%%%
\subsection{Relation to Multiplicative Law of Large Numbers}
\label{ssec:intro.LLN}

Notice that the existence of the limit \eqref{eq:intro.limit.S} amounts to the fact that the ratio of two consecutive coefficients approaches some limit. 

The intuition of why this limit should exist comes from the law of large numbers for the free multiplicative convolution of Tucci, and Haagerup and M\"{o}ller, as well as its finite counterpart of Fujie and Ueda \cite{fujie2023law}.  

Recall that Tucci \cite{tucci2010}, Haagerup and M\"{o}ller \cite[Theorem 2]{HM13} proved that, for every $\mu\in \MM(\R_{\geq 0})$, there exists a  measure $\Phi(\mu) \in \MM(\R_{\geq 0})$ such that
\[
(\mu^{\boxtimes n})^{\langle1/n\rangle} \weak \Phi(\mu) \qquad \text{as }n\to \infty,
\]
where for $\nu\in \MM(\R_{\geq 0})$ and $c>0$ the measure $\nu^{\langle c\rangle}$ denotes the pushforward of $\nu$ by $t\to t^{c}$.

The measure $\Phi(\mu)$ is characterized by
\[
\Phi(\mu)\left(\left[0,\frac{1}{S_\mu(t-1)}\right]\right) =t \qquad \text{for all } t\in (0,1 - \mu(\{0\}))
\]
unless $\mu$ is not a Dirac measure. If $\mu = \delta_a$ for some $a\ge 0$, then $\Phi(\mu)=\delta_a$.

Fujie and Ueda \cite[Theorem 3.2]{fujie2023law} proved a finite free analogue of this result. Namely, for $p\in \pols_d(\R_{\geq 0})$, there exists a limiting polynomial
\[
\lim_{n\to\infty}(p^{\boxtimes_d n})^{\langle1/n\rangle}\to  \Phi_d(p).
\] 
Here, if $q\in \pols_d(\R_{\geq 0})$ has roots $\lambda_1,\dots, \lambda_d$, then $q^{\langle c\rangle}$ stands for the polynomial with roots $\lambda_1^c,\dots, \lambda_d^c$. Moreover, for $k=1, \dots, d$, the $k$-th largest root of $\Phi_d(p)$ is given by  $\lambda_k(\Phi_d(p))=\frac{\coef{k}{d}(p)}{\coef{k-1}{d}(p)}$, which is the multiplicative inverse of the finite $S$-transform $1/{\strans{p}{d}\left(-\frac{k}{d}\right)}$ at $-k/d$.

In \cite[Conjecture 4.4]{fujie2023law}, it was conjectured that the map $\Phi_d$ tends to the map $\Phi$ as $d$ tends to $\infty$. As a consequence of our main theorem, we will prove this conjecture in Section \ref{sec:approx.THM}.
\begin{theorem}
\label{thm:main3}
Let $(p_d)_{d\in \N}$ be a sequence of polynomials with $p_d\in \pols_d(\R_{\geq 0})$ and $\mu\in \MM(\R_{\geq 0})$. The convergence   $\meas{p_d} \weak \mu$ is equivalent to the convergence $\meas{\Phi_d(p_d)}\weak \Phi(\mu)$.
\end{theorem}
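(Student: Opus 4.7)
The plan is to bootstrap Theorem~\ref{thm:main3} from the main theorem (Theorem~\ref{thm:main}) via two parallel descriptions: on the polynomial side, the $k$-th largest root of $\Phi_d(p)$ is $\lambda_k(\Phi_d(p)) = 1/\strans{p}{d}(-k/d)$, so the roots of $\Phi_d(p_d)$ are precisely the reciprocals of the finite $S$-transform on a uniform grid; on the measure side, the Tucci--Haagerup--M\"{o}ller characterization identifies (the relevant portion of) the quantile function of $\Phi(\mu)$ with $t \mapsto 1/S_\mu(t-1)$. Theorem~\ref{thm:main}, which states that the finite $S$-transforms converge pointwise to the $S$-transform of $\mu$ exactly when $\meas{p_d} \weak \mu$, then serves as the bridge.

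For the forward direction, I would first dispose of the Dirac case $\mu=\delta_a$ directly: the roots of $p_d$ concentrate at $a$, and since the roots of $\Phi_d(p_d)$ all lie in $[0,\coef{1}{d}(p_d)]$ with $\coef{1}{d}(p_d) \to a$, we have $\meas{\Phi_d(p_d)} \weak \delta_a = \Phi(\delta_a)$. For non-Dirac $\mu$, Theorem~\ref{thm:main} yields
\[
\lambda_{k_d}(\Phi_d(p_d)) \;=\; \frac{1}{\strans{p_d}{d}(-k_d/d)} \;\longrightarrow\; \frac{1}{S_\mu(-t)}
\]
for every admissible $t$ and every sequence $k_d/d \to t$. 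I would then upgrade this bulk convergence of roots to weak convergence $\meas{\Phi_d(p_d)} \weak \Phi(\mu)$ by checking convergence of the complementary CDF at each continuity point $x$ of $F_{\Phi(\mu)}$ in the bulk: such $x$ equals $1/S_\mu(-t_x)$ for a unique $t_x$, and the monotone decreasing nature of $k \mapsto \lambda_k(\Phi_d(p_d))$ forces $\#\{k : \lambda_k(\Phi_d(p_d)) > x\}/d \to t_x = \Phi(\mu)((x,\infty))$.

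For the converse, assume $\meas{\Phi_d(p_d)} \weak \Phi(\mu)$. Weak convergence implies convergence of empirical quantiles at every continuity point of the quantile function of $\Phi(\mu)$, and since $\lambda_{k_d}(\Phi_d(p_d))$ is exactly a quantile of $\meas{\Phi_d(p_d)}$, this yields $\strans{p_d}{d}(-k_d/d) \to S_\mu(-t)$ at every $t$ at which the relevant quantile function of $\Phi(\mu)$ is continuous. The density of these continuity points, combined with the monotonicity of both $\strans{p_d}{d}$ and $S_\mu$, extends the convergence to every admissible $t$, and Theorem~\ref{thm:main} then produces $\meas{p_d} \weak \mu$.

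The main technical obstacle is reconciling the endpoint behaviour of $\Phi_d(p_d)$ with the range of validity of Theorem~\ref{thm:main} and the Tucci--Haagerup--M\"{o}ller characterization. When $\mu(\{0\}) > 0$, the portion of $\Phi(\mu)$ not described by the $S$-transform formula corresponds to roots $\lambda_{k_d}(\Phi_d(p_d))$ whose index ratio $k_d/d$ lies near an endpoint and which must collapse to a boundary value; and when $\mu$ has infinite first moment the largest root $\lambda_1(\Phi_d(p_d)) = \coef{1}{d}(p_d)$ may diverge while carrying only $O(1/d)$ mass. Both phenomena are compatible with weak convergence but require care: one restricts attention to continuity points of $F_{\Phi(\mu)}$ in the bulk, using that $o(d)$ diverging or collapsing roots cannot affect the weak limit.
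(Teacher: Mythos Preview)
Your approach is essentially the paper's: both recognize that the ordered roots of $\Phi_d(p_d)$ are the reciprocals of the finite $S$-transform, that the quantile function of $\Phi(\mu)$ is the reciprocal of $S_\mu$, and that Theorem~\ref{thm:main} bridges the two. The paper packages this more cleanly by passing to the finite $T$-transform $\ttrans{p}{d}$ (defined as the right-continuous inverse of $F_{\meas{\Phi_d(p)}}$) and the $T$-transform $T_\mu$, first proving a $T$-transform version of the main theorem valid on \emph{all} of $(0,1)$ and including the case $\mu=\delta_0$ (Theorem~\ref{thm:main.T.transform}), and then invoking the standard equivalence between CDF convergence and quantile convergence (Lemma~\ref{lem:oscuridad}) together with the continuity of $T_\mu$ on $(0,1)$. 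This collapses your bulk/endpoint discussion into a three-line proof.

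There is, however, a genuine gap in your handling of the Dirac case. For $a>0$ the argument you sketch is wrong: the claim $\coef{1}{d}(p_d)\to a$ requires first-moment convergence, which weak convergence alone does not provide (take $p_d(x)=(x-a)^{d-1}(x-d)$, for which $\meas{p_d}\weak\delta_a$ but $\coef{1}{d}(p_d)\to a+1$), and an upper bound on the roots of $\Phi_d(p_d)$ would in any case not force concentration at $a>0$. This instance is easily repaired, since Theorem~\ref{thm:main} still applies to $\delta_a$ with $a>0$ and your ``non-Dirac'' argument goes through verbatim. The real problem is $\mu=\delta_0$, which Theorem~\ref{thm:main} explicitly excludes and which you do not address in either direction. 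The paper treats this case separately inside Theorem~\ref{thm:main.T.transform}, squeezing $\ttrans{p_j}{d_j}$ against $\ttrans{\shift{\epsilon}p_j}{d_j}$ via the partial order $\ll$ for the forward direction, and for the converse comparing against $q_j(x)=x^{d_j-k_j}(x-a)^{k_j}$ to derive a contradiction.
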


%%%%%%%%%%%%%%%%%%%%%%%%%%
\subsection{Organization of the paper}
%%%%%%%%%%%%%%%%%%%%%%%%%%

Besides this introductory section and the upcoming section with basic preliminaries, the paper is roughly divided into three parts.

First, Sections \ref{Sec:Derivatives}, \ref{sec:partial.order} and \ref{sec:bounds.Jacobi} contain key results used in the proof of the main theorem. Each topic might be of independent interest.

\begin{itemize} \item In Section \ref{Sec:Derivatives}, we investigate the coefficients, finite free cumulants and limiting distribution of derivatives of polynomials, as a consequence we provide a new proof of Theorem \ref{Thm:AGVP_HK_S}.

 \item In Section \ref{sec:partial.order}, we equip the set of real-rooted polynomials with a partial order that allows us to reduce our study to simpler polynomials with all its roots equal to 0 or 1. 
 
 \item We bound the roots obtained after differentiating these simple polynomials several times in Section \ref{sec:bounds.Jacobi} by using classic bounds on the roots of Jacobi polynomials.

 \end{itemize}

Next, Sections  \ref{sec:finite.Strans}, \ref{sec:finite.S.to.S}, and \ref{sec:symmetric.and.unitary} concern our central object, the finite $S$-transform and its relation to Voiculescu's $S$-transform.

\begin{itemize}
\item In Section \ref{sec:finite.Strans}, we introduce the finite $S$-transform and study its basic properties.
\item Section \ref{sec:finite.S.to.S} is devoted to showing that the finite $S$-transform tends to Voiculescu's $S$-transform.
\item In Section \ref{sec:symmetric.and.unitary}, we extend the definition of the $S$-transform to symmetric polynomials and explore the case where the polynomials have roots only in the unit circle. 
\end{itemize}

Finally, in Sections  \ref{sec:approx.THM} and \ref{sec:examples.applications} we provide examples and applications.
\begin{itemize}\item Theorem \ref{thm:main3}, which relates the laws of large numbers for $\boxtimes_d$ and for $\boxtimes$, is proved in Section \ref{sec:approx.THM}.
\item Section \ref{sec:examples.applications} contains examples and applications in various directions: an approximation of free convolution, a limit for the coefficients of polynomial, examples with hypergeometric polynomials, finite analogue of some free stable laws, a finite free  multiplicative Poisson's law, and finite free max-convolution powers.
\end{itemize}

%%%%%%%%%%%%%%%%%%%%%%%%%%
\section{Preliminaries}
\label{sec:prelim}
%%%%%%%%%%%%%%%%%%%%%%%%

%%%%%%%%%%%%%%%%%%%%%%%%%%
\subsection{Measures}
\label{ssec:measures}

We use $\MM$ to denote the family of all Borel probability measures on $\C$.  When we want to specify that the support of the measure is contained in a subset $K \subset \C$ we use the notation
\begin{equation*}
  \MM(K) := \{\mu \in \MM \mid \text{the support of $\mu$ is in $K$} \}.
\end{equation*}
In most of the cases we will let $K$ be a subset of the real line $\R$, like the positive real line $\mathbb{R}_{>0}:=(0,\infty)$, the set of non-negative real line $\rr_{\geq 0}:=[0,\infty)$, or a compact interval $C=[\alpha,\beta]$. 

\begin{notation}[Basic transformations of measures]
\label{not:measure.transformations} 
We fix the following notation:
\begin{itemize}
\item \textbf{Dilation.} For $\mu \in \MM$ and $c\neq 0$, we let $\dil{c}\mu \in \MM$ be the measure satisfying that
\[\dil{c}\mu (B):= \mu(\{x/c: x\in B\}) \qquad \text{for every Borel set }B\subset \C.\]
\item \textbf{Shift.} For $\mu \in \MM$, we let $\shift{c}\mu \in \MM$ be the measure satisfying that
\[
\shift{c}\mu (B):= \mu(\{x-c: x\in B\})\qquad  \text{for every Borel set }B\subset \C.
\]
\item \textbf{Power of a measure.} For $\mu\in \MM(\R_{\geq 0})$ and $c>0$, we denote by $\mu^{\langle c \rangle}$ the measure satisfying that
\[\mu^{\langle c \rangle}(B):= \mu(\{x^{\frac{1}{c}}:x\in B\}) \qquad  \text{for every Borel set }B\subset \rr_{\geq 0}.\] 

\item \textbf{Reversed measure.} For $\mu\in \MM(\rr_{\geq 0})$ such that $\mu(\{0\})=0$, we denote by $\mu^{\langle -1 \rangle}$ the measure satisfying that
\[\mu^{\reversed}(B):=\mu(\{x^{-1}: x\in B\})\qquad  \text{for every Borel set }B\subset \rr_{> 0}.\]
\end{itemize}
\end{notation}

%%%%%%%%%%%%%%%%%%%%%%%%%%
\subsection{Polynomials}
\label{ssec:polynomials}

We denote by $\pols_d$ the family of monic polynomials of degree $d$, similar as with measures, for a subset $K\subset\cc$ we use the notation
\[
\pols_d(K):=\{p \in \pols_d\mid \text{all roots of } p_d \text{ are in } K\}.
\]

Given $p\in \pols_d$ we denote by $\lambda_1(p),\lambda_2(p),\dots,\lambda_d(p)$ the $d$ roots of $p$ (accounting for multiplicity). If $p\in \pols_d(\rr)$ we further assume that $\lambda_1(p)\geq \lambda_2(p)\geq \dots\geq \lambda_d(p)$.

Given $p\in \pols_d$, the symbol $\coef{k}{d}(p)$ denotes the normalized $k$-th elementary symmetric polynomial on the roots of $p$, namely
\[
    \coef{k}{d}(p):= {\binom{d}{k}}^{-1} \sum_{1\leq i_1<\dots < i_k\leq d} \lambda_{i_1}(p) \cdots \lambda_{i_k}(p), \qquad k=1,\dots, d,
\]
with the convention that $\coef{0}{d}(p):=1$. Hence, we can express any $p\in \pols_d$ as
\[
p(x)\,=\, \prod_{j=1
}^d(x-\lambda_j(p)) \, =\, \sum_{k=0}^d  (-1)^k \binom{d}{k} \coef{k}{d}(p)x^{d-k}.
\] 

The {empirical root distribution} of $p\in \pols_d$ is defined as
\[
\meas{p} := \frac{1}{d} \sum_{i=1}^d \delta_{\lambda_i(p)},
\]
and we let the $n$-th moment of $p$ be the $n$-th moment of $\meas{p}$, that is,
\[
m_n(p):=\frac{1}{d}\sum_{i=1}^d \lambda_i^n, \qquad n=1,2,\dots.
\]
Notice that the map $\meas{\cdot}$ provides a bijection from $\pols_d$ onto the set
\[\MM_d:=\left\{ \left.\frac{1}{d} \sum_{i=1}^d \delta_{\lambda_i} \in \MM \right| \lambda_1,\dots,\lambda_d\in\cc\right\}.\]
Moreover, we have that 
$\meas{\pols_d(K)}=\MM_d(K)$ for every $K\subset \cc$, where the latter is defined as $\MM_d(K):=\MM(K)\cap \MM_d$. Notice also that for every $d$, the subset $\MM_d$ is invariant under all the transformations in Notation \ref{not:measure.transformations}, thus we can use the bijection $\meas{\cdot}$ to define the analogous transformations on the set of polynomials:
\begin{notation}[Basic transformations of polynomials]
\label{not:measure.transfations} 
Let $p \in \pols_d$ and $c\in \cc$.
\begin{itemize}
\item \textbf{Dilation.} For $c\neq 0$, we define $[\dil{c}p](x):=c^d\ p\left(\frac{x}{c}\right)$.
\item \textbf{Shift.} For $c \in \C$, we define $[\shift{c}p](x):=p(x-c)$.
\item \textbf{Power of a polynomial.} Given $p\in \polplus$ and $c> 0$,  $p^{\langle c \rangle}$ is the polynomial with roots $\lambda_i(p^{\langle c \rangle})=(\lambda_i(p))^c$  for $i=1,\dots,d$.
\item \textbf{Reversed polynomial.}
For $p\in \pols_d(\rr_{>0})$, the reversed polynomial of $p$ is the polynomial $p^\reversed\in \pols_d(\rr_{>0})$ with coefficients
\begin{equation}
\label{eq:coef_pinverse}
\coef{k}{d}(p^\reversed)= \frac{\coef{d-k}{d}(p)}{\coef{d}{d}(p)} \qquad \text{ for }k=0,\dots d.
\end{equation}
\end{itemize}
\end{notation}

%%%%%%%%%%%%%%%%%%%%%%%%%%
\subsection{Weak convergence}

In this section, we present the well-known facts and results on the weakly convergent sequence of probability measures on $\R$. 

Given a compact interval $C \subset \R$ and a probability measure $\mu \in \MM(C)$, the \textit{Cauchy transform} of $\mu$ is
\[
G_\mu(z): = \int_C \frac{1}{z-x} \mu(\dx), \qquad z\in \C\setminus C.
\]

\begin{lemma}
\label{lem:weak_convergence_Cauchy}
Let us consider $L\in\rr$ and measures $\mu_n, \mu \in \MM(\R_{\geq L})$. The following assertions are equivalent.
\begin{enumerate}[\rm (1)]
\item The weak convergence: $\mu_n\weak \mu$ as $n\to \infty$.
\item For all $z\in \C\setminus \R_{\geq L}$, we have $\displaystyle \lim_{n\to\infty}G_{\mu_n}(z)=G_\mu(z)$.
\item Let $r<s<L$. For all $a\in[r,s]$ we have that $\displaystyle \lim_{n\to\infty}G_{\mu_n}(a)=G_\mu(a)$.
\end{enumerate}
\end{lemma}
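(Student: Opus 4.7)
The plan is to prove the cycle $(1)\Rightarrow(2)\Rightarrow(3)\Rightarrow(1)$; the first two implications are essentially bookkeeping, so the substance lies in $(3)\Rightarrow(1)$. For $(1)\Rightarrow(2)$ I would fix $z\in\C\setminus\R_{\geq L}$ and observe that the function $x\mapsto 1/(z-x)$ is continuous and bounded on $\R_{\geq L}$, with $|1/(z-x)|\leq 1/d(z,\R_{\geq L})$, so the weak convergence $\mu_n\weak\mu$ immediately yields $G_{\mu_n}(z)\to G_\mu(z)$. The implication $(2)\Rightarrow(3)$ is then trivial, since $[r,s]\subset\C\setminus\R_{\geq L}$ whenever $s<L$.

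For $(3)\Rightarrow(1)$, my first step is to upgrade pointwise convergence of $G_{\mu_n}$ on the real interval $[r,s]$ to locally uniform convergence on $\Omega:=\C\setminus\R_{\geq L}$ using the Stieltjes--Vitali theorem on normal families. Each $G_{\mu_n}$ is holomorphic on the connected open set $\Omega$, and the estimate $|G_{\mu_n}(z)|\leq 1/d(z,\R_{\geq L})$ shows that the family is locally uniformly bounded there. Since $[r,s]$ has accumulation points inside $\Omega$ and $G_{\mu_n}\to G_\mu$ pointwise on $[r,s]$, Stieltjes--Vitali yields $G_{\mu_n}\to G_\mu$ locally uniformly on $\Omega$, and in particular pointwise on the upper half-plane $\C^+$.

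Next, I would convert this Cauchy-transform convergence into weak convergence by a subsequence argument. Any subsequence of $(\mu_n)$ admits a further subsequence $(\mu_{n_k})$ that converges vaguely on the locally compact space $\R_{\geq L}$ to some sub-probability measure $\nu$. For $z\in\C^+$ the test function $x\mapsto 1/(z-x)$ belongs to $C_0(\R_{\geq L})$, and a standard truncation argument --- using the uniform mass bound $\mu_{n_k}(\R_{\geq L})=1$ together with the fact that $|1/(z-x)|$ is uniformly small outside a large compact set --- promotes vague convergence to $G_{\mu_{n_k}}(z)\to G_\nu(z)$. Combining this with the already-established $G_{\mu_{n_k}}(z)\to G_\mu(z)$ forces $G_\nu=G_\mu$ on $\C^+$, and Stieltjes inversion identifies $\nu=\mu$. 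In particular $\nu(\R_{\geq L})=1$, so no mass escapes to $+\infty$ and the vague convergence is in fact weak. Since every subsequence admits a weakly convergent further subsequence with the common limit $\mu$, the whole sequence converges weakly to $\mu$.

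The main obstacle is precisely this tightness issue hidden inside $(3)\Rightarrow(1)$: because $\R_{\geq L}$ is non-compact, vague precompactness alone only produces a sub-probability limit, and one has to rule out escape of mass to $+\infty$. The device above sidesteps a direct tail estimate by pinning down each subsequential limit through its Cauchy transform, so that mass conservation is handed to us for free from the assumption that $\mu$ itself is a probability measure.
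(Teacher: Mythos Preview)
Your proof is correct and follows essentially the same strategy as the paper: Helly's selection theorem to extract a vague subsequential limit $\nu$, identification $G_\nu=G_\mu$ via analyticity, Stieltjes inversion to conclude $\nu=\mu$, and a subsequence argument. The only cosmetic difference is the order of operations: you invoke Stieltjes--Vitali up front to extend the convergence $G_{\mu_n}\to G_\mu$ from $[r,s]$ to all of $\Omega$ before passing to subsequences, whereas the paper takes the subsequence first and then applies the identity theorem to $G_\nu-G_\mu$ (which vanishes on $[r,s]$) to get $G_\nu=G_\mu$ on $\Omega$.
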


\begin{proof}
The proof is similar to \cite[Corollary 3.1]{CB2019}, but we include the proof for the reader's convenience. For $z\in \C\setminus \R_{\geq L}$, the real and imaginary parts of $x\mapsto (z-x)^{-1}$ are bounded and continuous on $\R_{\geq L}$. Thus the definition of weak convergence shows the implication $(1)\Rightarrow (2)$. The implication $(2)\Rightarrow (3)$ is trivial. We are only left to show that $(3)\Rightarrow (1)$. By Helly's selection theorem (see e.g. \cite[Theorem 3.2.12]{durrett2019probability}), for any subsequence of $(\mu_n)_{n\in \N}$, there exists a further subsequence $(\mu_{n_k})_{k\in \N}$ and a finite measure $\nu$ (on $\R_{\geq L}$) such that $\mu_{n_k}$ vaguely converges to $\nu$, and therefore $\lim_{k \to \infty} G_{\mu_{n_k}}(z) = G_\nu(z)$ for all $z\in [r,s]$, where the Cauchy transform $G_\nu$ can be defined even if $\nu$ is a finite measure.
By assumption (3), we have $G_\mu(a)=G_\nu(a)$ for all $a\in [r,s]$.
Since $G_\mu$ and $G_\nu$ are analytic on $\C\setminus \R_{\geq L}$, the principle of analytic extension shows that $G_\nu(z)=G_\mu(z)$ for all $z\in \C\setminus \R_{\geq L}$.
Finally, we get $\nu=\mu \in \MM(\R_{\geq L})$ by the Stieltjes inversion formula, and hence $\mu_{n_k}\weak \mu$. Then \cite[Theorem 3.2.15]{durrett2019probability} yields $\mu_n\weak \mu$.
\end{proof}

Given $\mu \in \MM(\rr)$, its \textit{cumulative distribution function} is the function $\cdf{\mu}: \R \to [0,1]$ such that
\[
  \cdf{\mu}(x) := \mu((-\infty, x]) \qquad \text{for all }x\in \R.
\]
Let $\{\mu_n\}_{n\in\N}$ and $\mu$ be probability measures on $\R$.
It is well known that the weak convergence of $\mu_n$ to $\mu$ is equivalent to the convergence of their cumulative distribution functions $\cdf{\mu_n}$ to $\cdf{\mu}$ on the continuous points of $\cdf{\mu}$.
Actually, such convergence is locally uniform by Polya's theorem if $\cdf{\mu}$ is continuous.

\begin{lemma}\label{lem:luz}
  Point-wise convergence of $\cdf{\mu_n}$ to the continuous function $\cdf{\mu}$ means the locally uniform convergence.
 \end{lemma}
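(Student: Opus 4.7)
The plan is to exploit the fact that each $F_{\mu_n}$ is nondecreasing together with the uniform continuity of $F_\mu$ on compact sets, to upgrade pointwise convergence at finitely many well-chosen points into uniform control on an entire compact interval. The monotonicity of the CDFs is what makes this work: one can sandwich the value of $F_{\mu_n}$ at any intermediate point between its values at the neighboring grid points, where convergence is already known.

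Concretely, I would fix a compact interval $[a,b] \subset \R$ and $\varepsilon > 0$. Since $F_\mu$ is continuous on $\R$ and has limits $0$ and $1$ at $-\infty$ and $+\infty$ respectively, it is uniformly continuous on $\R$; in particular on $[a,b]$. Choose a partition $a = x_0 < x_1 < \cdots < x_N = b$ fine enough that
\[
F_\mu(x_{k+1}) - F_\mu(x_k) < \tfrac{\varepsilon}{3} \qquad \text{for } k = 0, 1, \dots, N-1.
\]
By the assumed pointwise convergence, there exists $n_0 \in \N$ such that for all $n \geq n_0$ and all $k=0,\dots,N$,
\[
|F_{\mu_n}(x_k) - F_\mu(x_k)| < \tfrac{\varepsilon}{3}.
\]

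Now for any $x \in [a,b]$, pick $k$ with $x \in [x_k, x_{k+1}]$. Using monotonicity of both $F_{\mu_n}$ and $F_\mu$, one has
\[
F_{\mu_n}(x) - F_\mu(x) \leq F_{\mu_n}(x_{k+1}) - F_\mu(x_k) \leq \bigl(F_{\mu_n}(x_{k+1}) - F_\mu(x_{k+1})\bigr) + \bigl(F_\mu(x_{k+1}) - F_\mu(x_k)\bigr),
\]
and symmetrically
\[
F_\mu(x) - F_{\mu_n}(x) \leq F_\mu(x_{k+1}) - F_{\mu_n}(x_k) \leq \bigl(F_\mu(x_{k+1}) - F_\mu(x_k)\bigr) + \bigl(F_\mu(x_k) - F_{\mu_n}(x_k)\bigr).
\]
Each summand is at most $\varepsilon/3$, giving $|F_{\mu_n}(x) - F_\mu(x)| < \varepsilon$ for all $x \in [a,b]$ and all $n \geq n_0$, which is the desired locally uniform convergence.

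I do not expect any real obstacle here: the argument is a standard Dini-type trick, and the only subtlety is making sure the grid is chosen using the continuity (really uniform continuity) of the limit $F_\mu$ before invoking pointwise convergence, so that the integer $n_0$ depends only on the grid and not on the point $x$. If one wanted uniform convergence on all of $\R$ rather than just on compact subsets, one would additionally choose the partition to begin and end at points where $F_\mu$ is close to $0$ and $1$, using $\lim_{x\to\pm\infty} F_\mu(x) \in \{0,1\}$; but for the statement as phrased, restricting to $[a,b]$ suffices.
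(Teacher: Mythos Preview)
Your argument is correct and is exactly the standard proof of P\'olya's theorem. The paper itself does not prove this lemma at all: it is stated without proof, with the surrounding text simply attributing the fact to ``P\'olya's theorem.'' So there is nothing to compare against beyond noting that you have supplied the omitted classical argument, and that your write-up is clean (the monotonicity-plus-grid sandwich is precisely the right idea, and your bound of $2\varepsilon/3 < \varepsilon$ is fine).
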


The \textit{right-continuous inverse} of $F_\mu$ is defined to be 
\[
  \cdf{\mu}^{-1}(t) = \sup\{x \in \R \mid \cdf{\mu}(x) < t \} \qquad \text{for }t\in (0,1),
\]
see \cite[The proof of Theorem 1.2.2.]{durrett2019probability} for example.
The following lemma is usually used to show Skorohod's representation theorem in dimension one.

\begin{lemma}[Ref. {\cite[Theorem 3.2.8.]{durrett2019probability}}]\label{lem:oscuridad}
  The convergence of $\cdf{\mu_n}$ to $\cdf{\mu}$ on the continuous points of $\cdf{\mu}$ is equivalent to the convergence of their right-continuous inverse functions $\cdf{\mu_n}^{-1}$ to $\cdf{\mu}^{-1}$ on the continuous points of $\cdf{\mu}^{-1}$.
\end{lemma}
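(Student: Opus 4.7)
The plan is to build both directions on the duality
\[
F_\mu(x) \geq t \iff F_\mu^{-1}(t) \leq x,
\]
which follows from identifying the given sup with $\inf\{x \in \R \mid F_\mu(x) \geq t\}$; this equality holds because right-continuity of $F_\mu$ forces the set $\{x \mid F_\mu(x) \geq t\}$ to be a closed half-line $[q,\infty)$. This identity converts statements about level sets of $F_\mu$ into statements about sub-level sets of $F_\mu^{-1}$, and will be applied to both $\mu$ and each $\mu_n$.

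For the forward direction, I would assume $F_{\mu_n}(y) \to F_\mu(y)$ at every continuity point $y$ of $F_\mu$, fix a continuity point $t$ of $F_\mu^{-1}$, and set $q = F_\mu^{-1}(t)$. To obtain $\liminf_n F_{\mu_n}^{-1}(t) \geq q$, I would pick any continuity point $y < q$ of $F_\mu$, which by duality satisfies $F_\mu(y) < t$; the hypothesis gives $F_{\mu_n}(y) < t$ eventually, hence $F_{\mu_n}^{-1}(t) > y$, and letting $y \uparrow q$ along continuity points (available because $F_\mu$ has only countably many discontinuities) yields the bound. For the matching upper bound, continuity of $F_\mu^{-1}$ at $t$ means that for any $\epsilon > 0$ there is $s > t$ with $F_\mu^{-1}(s) < q + \epsilon$, so I can pick a continuity point $x$ of $F_\mu$ in $(F_\mu^{-1}(s), q + \epsilon)$; monotonicity plus duality gives $F_\mu(x) \geq s > t$, hence $F_{\mu_n}(x) > t$ eventually and $F_{\mu_n}^{-1}(t) \leq x < q + \epsilon$.

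For the converse, I would assume $F_{\mu_n}^{-1}(s) \to F_\mu^{-1}(s)$ at every continuity point $s$ of $F_\mu^{-1}$, fix a continuity point $x$ of $F_\mu$, and proceed by contradiction. If $F_{\mu_n}(x) \not\to F_\mu(x)$, extract a subsequence with $|F_{\mu_{n_k}}(x) - F_\mu(x)| \geq \epsilon$ for some $\epsilon > 0$, and then pick $\epsilon' \in (0, \epsilon)$ so that $F_\mu(x) + \epsilon'$ and $F_\mu(x) - \epsilon'$ are both continuity points of $F_\mu^{-1}$ (only countably many are excluded). In the case $F_{\mu_{n_k}}(x) \geq F_\mu(x) + \epsilon'$, the duality applied to $\mu_{n_k}$ gives $F_{\mu_{n_k}}^{-1}(F_\mu(x) + \epsilon') \leq x$; passing to the limit and using right-continuity of $F_\mu$ at $x$ yields $F_\mu(x) \geq F_\mu(x) + \epsilon'$, a contradiction. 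The symmetric case is handled by approximating $x$ from the left by continuity points of $F_\mu$.

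The main obstacle, and the reason one must restrict to continuity points on each side, is the asymmetry between the right-continuity of $F_\mu$ and the left-continuity of $F_\mu^{-1}$: flat parts of $F_\mu$ are jumps of $F_\mu^{-1}$ and vice versa, so careless one-sided approximations can fail at exactly these points. The technical device that resolves this in every step above is the countability of the discontinuity sets of monotone functions, which guarantees that continuity points are available arbitrarily close to any prescribed value and on either side.
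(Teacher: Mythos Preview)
The paper does not supply a proof of this lemma: it is stated with a bare citation to \cite[Theorem 3.2.8]{durrett2019probability} and used as a black box in the proof of Theorem \ref{thm:main3}. Your argument is correct and is essentially the standard textbook proof via the Galois-type duality $F_\mu(x) \geq t \iff F_\mu^{-1}(t) \leq x$, together with the countability of discontinuities of monotone functions to furnish approximating continuity points. One small point worth making explicit in the converse direction: in Case 2 you should note that the levels $F_\mu(x)\pm\epsilon'$ must lie in $(0,1)$ for $F_\mu^{-1}$ to be defined there; if $F_\mu(x)\in\{0,1\}$ the corresponding one-sided alternative is vacuous, so this is harmless, but it deserves a sentence.
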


%%%%%%%%%%%%%%%%%%%%%%%%%%
\subsection{Free Probability}

In this section we review some of the basics of free probability. %{\color{red} For Octavio}
For a comprehensive introduction to free probability, we recommend the monographs \cite{voiculescu1992free, nica2006lectures, mingo2017free}. 

Free additive and multiplicative convolutions, denoted by $\boxplus$ and $\boxtimes$ respectively, correspond to the sum and product of free random variables, that is, $\mu_a\boxplus\mu_b=\mu_{a+b}$ and $\mu_a\boxtimes\mu_b=\mu_{ab}$ for free random variables $a$ and $b$. In this paper, rather than using the notion of free independence we will work solely with the additive and multiplicative convolutions, which can be defined in terms of the $R$ and $S$ transforms, respectively.

%%%%%%%%%%%%%%%%%%%%%%%%%%
\subsubsection{Free additive convolution and $R$-transform}
\label{ssec:prelim.R.transform}

% In this section, we introduce the free multiplicative convolution, the $S$-transform and related results \cite{BerVoi1993}.
Given measures $\mu,\nu \in \MM(\R)$, their \textit{free additive  convolution} $\mu\boxplus \nu$ is the  distribution of $X+Y$, where $X$ and $Y$ are freely independent non-commutative random variables distributed as $\mu$ and $\nu$, respectively.
The convolution $\boxplus$ was introduced by Voiculescu as a binary operation on compactly supported measures. The definition was extended to measures with unbounded support in \cite{BerVoi1993}. 

The free convolution can be described analytically by the use of the $R$-transform.
For every $\mu \in \MM(\R)$, it is known that the Cauchy transform $G_\mu$ has a unique compositional inverse, $K_\mu$, in a neighborhood of infinity, $\Gamma_\mu$.
Thus, one has
\[G_\mu(K_\mu(z))=z,\qquad z \in \Gamma_\mu.\]
The $R$-transform of $\mu$ is  defined as $R_\mu(z)=K_\mu(z)-\frac{1}{z}$.

\begin{definition}[Free additive convolution]  Let $\mu$ and $\nu$  be probability measures on the real line. 
We define the free convolution of $\mu$ and $\nu$, denoted by $\mu\boxplus \nu$, as the unique measure satisfying 
\[R_{\mu\boxplus\nu} (z)=R_{\mu}(z)+R_{\nu}(z), \qquad z \in \Gamma_\mu\cap  \Gamma_\nu.\]
\end{definition}

%%%%%%%%%%%%%%%%%%%%%%%%%%
\subsubsection{Free cumulants}

For any  probability measure $\mu$, we denote by $m_n(\mu):=\int_\R t^n \mu(dt)$ its $n$-th moment whenever $\int_\R |t|^n \mu(dt)$ is finite. The \emph{free cumulants} \cite{speicher94multiplicative} of $\mu$, denoted by $(\freec{n}{\mu})_{n=1}^\infty$, are recursively defined via the moment-cumulant formula
\begin{equation*}\label{MCF}
m_n(\mu) =\sum_{\pi\in \mathrm{NC}(n)}\freec{\pi}{\mu},
\end{equation*} 
where $\mathrm{NC}(n)$ is the noncrossing partitions of $\{1, \dots, n\}$ and $\freec{\pi}{\mu}$ are the multiplicative extension of $(\freec{n}{\mu})_{n=1}^\infty$.
It is easy to see that the sequence $(m_n(\mu))_{n=1}^\infty$ fully determines $(\freec{n}{\mu})_{n=1}^\infty$ and vice-versa. In this case we can recover the cumulants from the $R$-transform as follows:

\[R_\mu(z)=\sum_{n=0}^\infty\freec{n+1}{\mu}z^{n}.\]

Hence, we can define free convolutions of compactly supported measures on the real line via their free cumulants.  Indeed, given two compactly supported probability measures $\mu$ and $\nu$ on the real line, we define $\mu\boxplus \nu$ to be the unique measure with cumulant sequence given by
\[\freec{n}{\mu \boxplus \nu} = \freec{n}{\mu}+\freec{n}{\nu}.\]

If $\mu$ and $\nu$ are compaclty supported on $\R$ then $\mu\boxplus \nu$ is also  compactly supported probability measures on $\R$, as can be seen from \cite{speicher94multiplicative}. 

Let  $\mu^{\boxplus k} =\mu \boxplus \cdots \boxplus \mu$  be the free convolution of $k$ copies of $\mu$. From the above definition, it is clear that $\freec{n}{\mu^{\boxplus k}} = k\, \freec{n}{\mu}$.
In  \cite{nica1996multiplication} Nica and Speicher discovered that one can extend this definition to non-integer powers, we refer the reader to \cite[Section 1]{shlyakhtenko2022fractional} for a discussion on fractional powers. 

\begin{definition}[Fractional free convolution powers]
Let $\mu$ be a compactly supported probability measure on the real line. For $t\geq 1$, the \emph{fractional convolution power} $\mu^{\boxplus t}$ is defined to be the unique measure with cumulants
\[\freec{n}{\mu^{\boxplus t}} = t\, \freec{n}{\mu}. \]
\end{definition}

%%%%%%%%%%%%%%%%%%%%%%%%%%
\subsubsection{Free multiplicative convolution and $S$-transform}
\label{ssec:prelim.S.transform}

In this section, we introduce the free multiplicative convolution, the $S$-transform and related results from \cite{voiculescu1987multiplication,BerVoi1993}.
Given measures $\mu\in \MM(\R_{\geq 0})$ and $\nu\in \MM(\R)$, their \textit{free multiplicative convolution}, denoted by $\mu\boxtimes \nu$, is the  distribution of $\sqrt{X}Y\sqrt{X}$, where $X\ge 0$ and $Y$ are freely independent non-commutative random variables distributed as $\mu$ and $\nu$, respectively. The convolution $\boxtimes$ was introduced by Voiculescu as a binary operation on compactly supported measures. The definition was extended to measures with unbounded support in \cite{BerVoi1993}. 

We now introduce Voicuelscu's $S$-transform \cite{voiculescu1987multiplication}, the main analytic tool used to study the multiplicative convolution $\boxtimes$. Given a probability measure $\mu \in \MM(\R_{\geq 0})$, the \textit{moment generating function} of $\mu$ is
\[
\Psi_\mu (z):= \int_0^\infty \frac{tz}{1-tz}\mu(\dt), \quad z\in \C\setminus\R_{\geq 0}.
\]
For $\mu \in \MM(\R_{\geq 0})\setminus\{ \delta_0\}$ it is known that $\Psi_\mu^{-1}$, the compositional inverse of $\Psi_\mu$, exists in a neighborhood of $(-1 + \mu(\{0\}),0)$. The \textit{$S$-transform} of $\mu$ is defined as
\[
S_\mu(z):= \frac{1+z}{z} \Psi_\mu^{-1}(z), \qquad z\in (-1 + \mu(\{0\}),0).
\]

\begin{remark}
\label{rem:S.transform.intuition}
In this paper, it is helpful for the intuition to think of $S_\mu$ as a function on the whole interval $(-1,0)$ by allowing $S_\mu(z):=\infty$ for $z\in(-1,-1 + \mu(\{0\})]$.
We can formalize this heuristic if we instead consider the multiplicative inverse of the $S$-transform.
See the definition of $T$-transform in Equation \eqref{eq:T.transform.def}. 
\end{remark}
According to \cite[Corollary 6.6]{BerVoi1993}, for every $\mu,\nu \in \MM(\R_{\geq 0})\setminus\{\delta_0\}$ the $S$-transform satisfies an elegant product formula in some open neighborhood
\[
S_{\mu\boxtimes \nu}= S_\mu S_\nu.
\] 
For example, the formula holds on $(-1+[\mu\boxtimes \nu](\{0\}),0)$, and it is known that $[\mu\boxtimes \nu](\{0\})=\max\{\mu(\{0\}),\nu(\{0\})\}$, see \cite{belinschi2003}.

\begin{lemma}
\label{lem:original_Strans}
Let us consider $\mu \in \MM(\R_{\geq 0})\setminus\{ \delta_0\}$.
\begin{enumerate}[\rm (1)]
\item {\rm (\cite[Eq.~(3.7) and (3.11)]{BN08}) \footnote{ In principle, this is  proved in \cite{BN08} only for compactly supported measures but their argument is also valid for all measures in  $\MM(\R_{\geq 0})\setminus\{ \delta_0\}$.} }  For $t\in (0,1-\mu(\{0\}))$, $z\in (0,1)$, we have
\[
S_\mu(-tz) = S_{\dil{t}(\mu^{\boxplus 1/t})}(-z).
\]

\item {\rm (\cite[Proposition 3.13]{HS07})} If $\mu(\{0\})=0$, then 
\[
S_\mu(-t) S_{\mu^{\langle -1 \rangle}}(t-1)=1, \qquad t\in (0,1).
\]

\item {\rm (\cite[Lemma 2]{HM13})} If additionally, $\mu$ is not a Dirac measure, then $S_\mu$ is strictly decreasing on $(-1 + \mu(\{0\}),0)$.
\end{enumerate}
\end{lemma}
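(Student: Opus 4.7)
My plan is to prove each of the three parts in turn, exploiting in each case a specific functional identity or direct algebraic manipulation of the defining formula $S_\mu(u) = \tfrac{1+u}{u}\chi_\mu(u)$, where $\chi_\mu := \Psi_\mu^{-1}$.

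For Part (1), I would first establish the auxiliary formula $S_{\mu^{\boxplus s}}(u) = \tfrac{1}{s}S_\mu(u/s)$ for $s\geq 1$. This follows from the classical $R$-$S$ duality $R_\mu(uS_\mu(u)) = 1/S_\mu(u)$, which is obtained by combining $\chi_\mu(u) = \tfrac{u}{1+u}S_\mu(u)$ with $K_\mu \circ G_\mu = \mathrm{id}$. Applying the duality to $\mu^{\boxplus s}$ and using $R_{\mu^{\boxplus s}} = sR_\mu$ yields the formula after setting $w = u/s$. Combined with the trivial dilation identity $S_{\dil{c}\mu}(u) = \tfrac{1}{c}S_\mu(u)$ (immediate from $\Psi_{\dil{c}\mu}(z) = \Psi_\mu(cz)$), the choice $s = 1/t$ produces $S_{\dil{t}(\mu^{\boxplus 1/t})}(u) = S_\mu(tu)$, and evaluating at $u = -z$ finishes the proof. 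The hypothesis $t\in(0, 1-\mu(\{0\}))\subset (0,1)$ ensures $s = 1/t > 1$ and moreover places $\mu^{\boxplus 1/t}$ in $\MM(\R_{\geq 0})$ so that the $S$-transforms are well-defined on the appropriate interval.

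For Part (2), I would derive the pairing identity $\Psi_\mu(z) + \Psi_{\mu^\reversed}(1/z) = -1$ via the change of variables $t\mapsto 1/t$ inside the integral defining $\Psi$. Explicitly, $\Psi_\mu(z) = \int \tfrac{z}{u-z}\,d\mu^\reversed(u)$ and $\Psi_{\mu^\reversed}(1/z) = \int \tfrac{u}{z-u}\,d\mu^\reversed(u)$, whose integrands sum to $-1$ pointwise. Inverting, if $u = \Psi_\mu(z)$ then $\chi_{\mu^\reversed}(-1-u) = 1/\chi_\mu(u)$. Expanding the product $S_\mu(u) \, S_{\mu^\reversed}(-1-u)$ from the defining formula then telescopes: the $\chi$-factors cancel and the remaining algebraic prefactors $\tfrac{1+u}{u}\cdot \tfrac{-u}{-1-u}$ collapse to $1$. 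The substitution $u = -t$ with $t\in(0,1)$ gives the stated identity, with $t-1\in(-1,0)$ lying in the domain of $S_{\mu^\reversed}$ since $\mu^\reversed(\{0\})=\mu(\{\infty\})=0$.

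For Part (3), I would show $S'_\mu(u) < 0$ on $(-1+\mu(\{0\}), 0)$ when $\mu$ is not a Dirac measure. Writing $\Psi_\mu(z) = zh(z)$ with $h(z) := \int \tfrac{t}{1-tz}\,d\mu(t) > 0$, and parametrizing by $z = \chi_\mu(u) < 0$, the defining formula simplifies to $S_\mu(u) = z + 1/h(z)$. Since $du/dz = \Psi'_\mu(z) = h(z) + zh'(z) > 0$, the chain rule yields
\[
S'_\mu(u) \;=\; \frac{h(z)^2 - h'(z)}{h(z)^2\,\Psi'_\mu(z)}.
\]
The sign therefore reduces to the Cauchy--Schwarz inequality
\[
h(z)^2 \;=\; \Big(\int \tfrac{t}{1-tz}\,d\mu(t)\Big)^2 \;\leq\; \int \tfrac{t^2}{(1-tz)^2}\,d\mu(t) \;=\; h'(z),
\]
with equality iff $t\mapsto t/(1-tz)$ is $\mu$-a.e.\ constant. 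Since this function is strictly monotone in $t$, equality characterizes precisely the Dirac measures, so $S'_\mu(u)<0$ in all other cases.

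The main obstacle shared across the three parts is domain bookkeeping: the identities must be verified on the open intervals where all quantities are analytic, rather than merely as formal power series at $0$. In particular, one must track that $\chi_\mu$ maps $(-1+\mu(\{0\}), 0)$ into the half-line $z<0$, that $\mu^{\boxplus 1/t}$ actually lies in $\MM(\R_{\geq 0})$ for $t$ in the stated range (so the $S$-transforms on the right-hand side of Part (1) make sense), and that the auxiliary $R$-$S$ duality is valid pointwise on an open interval and not only as power series. Once these analytic inputs are secured, each part reduces either to an algebraic cancellation, a change of variables in an integral, or the single Cauchy--Schwarz inequality of Part (3).
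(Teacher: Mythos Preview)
The paper does not prove this lemma: each of the three parts is stated with a citation to the literature (\cite{BN08}, \cite{HS07}, \cite{HM13}) and no argument is supplied. So there is no ``paper's own proof'' to compare against; the lemma is quoted as background.

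That said, your proposal is correct and essentially reproduces the standard arguments found in those references. A few specific checks:

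For Part (1), your derivation of $S_{\mu^{\boxplus s}}(u)=\tfrac{1}{s}S_\mu(u/s)$ via the $R$--$S$ duality $R_\mu(uS_\mu(u))=1/S_\mu(u)$ is sound; this is exactly the computation behind \cite[Eq.~(3.11)]{BN08}. The dilation identity is trivial, and the combination gives the claim. One small clarification: the statement that ``$t\in(0,1-\mu(\{0\}))$ places $\mu^{\boxplus 1/t}$ in $\MM(\R_{\geq 0})$'' is true for any $t\in(0,1)$; the sharper role of the upper bound $1-\mu(\{0\})$ is to guarantee that $\dil{t}(\mu^{\boxplus 1/t})$ has no atom at $0$, so that its $S$-transform is defined on all of $(-1,0)$ and in particular at $-z$.

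For Part (2), the pairing $\Psi_\mu(z)+\Psi_{\mu^{\reversed}}(1/z)=-1$ is the key identity in \cite{HS07}, and your integrand-level verification is clean. The algebraic cancellation afterward is correct.

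For Part (3), your Cauchy--Schwarz argument is precisely the one in \cite[Lemma~2]{HM13}. The reduction $S_\mu(u)=z+1/h(z)$ is correct, and the equality case analysis (strict monotonicity of $t\mapsto t/(1-tz)$) correctly identifies the Dirac measures.

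Your closing remarks about domain bookkeeping are apt: all three arguments require that the relevant inverses exist on genuine open intervals, not just formally. Since the paper only cites these facts, your write-up would in fact add detail beyond what the paper contains.
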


The multiplicative inverse of the $S$-transform, which goes by the name of $T$-transform \cite[Eq.~(15)]{dykema2007multilinear} and plays the same role as the $S$-transform.
For $\mu\in\MM(\rr_{\geq 0})$, we define its \emph{(shifted) $T$-transform} as the function $T_\mu:
(0,1)\to\rr_{\geq 0}$ such that 
\begin{equation}
\label{eq:T.transform.def}
 T_{\mu}(t)=\begin{cases}
0& \text{if } t\in(0,\mu(\{0\})], \vspace{2mm}\\ 
\dfrac{1}{S_{\mu}(t-1)} & \text{if } t\in(\mu(\{0\}),1).
\end{cases}
\end{equation}

The $T$-transform is continuous on $(0,1)$ because
\begin{equation*}
\lim_{t^+\to \mu(\{0\})} T_\mu(t)=0,
\end{equation*}
which follows from the fact that $S_\mu(t)\to\infty$ as $t$ approaches $-1 + \mu(\{0\})$ from the right when $\mu(\{0\})>0$.

\begin{remark}
\label{rem.T.tranform}
Notice that the $T$-transform contains the same information of the $S$-transform and thus it determines the measure $\mu$. Also, all the properties of the $S$-transform mentioned above can be readily adapted to the $T$-transform. We highlight two advantages of using the new transform. First is that it can be defined over the whole interval $(0,1)$, formalizing the intuition mentioned in Remark \ref{rem:S.transform.intuition}. In particular, we can define the $T$-transform of $\delta_0$ as $T_{\delta_0}(t)=0$ for all $t\in(0,1)$. Moreover the $T$-tranform can be understood as the inverse of the cumulative distribution function of the so-called law of large numbers for the multiplicative convolution that we introduce below. 
\end{remark}

Tucci \cite{tucci2010}, Haagerup and M\"{o}ller \cite[Theorem 2]{HM13} proved that, for any $\mu\in \MM(\rr_{\geq 0})$, there exists $\Phi(\mu) \in \MM(\rr_{\geq 0})$ such that
\[
(\mu^{\boxtimes n})^{\langle 1/n \rangle} \weak \Phi(\mu) \qquad \text{as } n\to \infty.
\]
If $\mu$ is a Dirac measure, then $\Phi(\mu) =\mu$.
Otherwise, the distribution $\Phi(\mu)$ is determined by
\begin{align}\label{eq:THMcharacterization}
\Phi(\mu)(\{0\})=\mu(\{0\}) \quad \text{and} \quad F_{\Phi(\mu)}\left(T_\mu(t)\right)= t\qquad \text{for all } t\in (\mu(\{0\}),1).
\end{align}
Moreover, the support of the measure $\Phi(\mu)\in\MM(\rr_{\geq 0})$ is the closure of the interval
\begin{equation} \label{eq:cierra}
    (\alpha, \beta) = \left( \left(\int_0^\infty x^{-1} \mu(dx)\right)^{-1}, \int_0^\infty x\, \mu(dx)  \right).
\end{equation}
In other words, $T_{\mu}$ and $\cdf{\Phi(\mu)}$ are inverse functions of each other.
Besides, as long as $\mu$ is not a Dirac measure, $T_\mu$ is strictly increasing and then these functions provide a one-to-one correspondence between $(\mu(\{0\}),1)$ and $(\alpha,\beta)$.

%%%%%%%%%%%%%%%%%%%%%%%%%%
\subsection{Finite free probability}

In this section, we summarize some definitions and basic results on finite free probability.

%%%%%%%%%%%%%%%%%%%%%%%%%%
\subsubsection{Finite free convolutions}
The finite free additive and multiplicative convolutions that correspond to two classical polynomial convolutions were studied a century ago by Szeg\"{o} \cite{szego1922bemerkungen} and Walsh \cite{walsh1922location} and they were recently rediscovered in \cite{MSS} as expected characteristic polynomials of the sum and product of randomly rotated matrices.

\begin{definition}\label{def:finite.free}
Let $p,q\in\pols_d$ be polynomials of degree $d$.
\begin{itemize}

\item The finite free additive convolution of $p$ and $q$ is the polynomial $p\boxplus_d q\in \pols_d$ uniquely determined by
\begin{equation}
    \label{eq:coeffAddConv}
  \coef{k}{d}(p\boxplus_d q) = \sum_{j=0}^k \binom{k}{j} \coef{j}{d}(p) \coef{k-j}{d}(q) \qquad \text{for }k=0,1,\dots,d.
\end{equation}
\item The finite free multiplicative convolution of $p$ and $q$ is the polynomial $p\boxtimes_d q\in\pols_d$ uniquely determined by
\begin{equation}
    \label{eq:coeffMultiConv}
  \coef{k}{d}(p\boxtimes_d q) = \coef{k}{d}(p) \coef{k}{d}(q) \qquad \text{for }k=0,1,\dots,d.
\end{equation}
\end{itemize}
\end{definition}

In many circumstances the finite free convolution of two real-rooted polynomials is also real-rooted. For $p, q \in \pols_d$, then
\begin{enumerate}[(i)]
\item   $p,q\in \pols_d(\rr) \ \Longrightarrow \ p\boxplus_d q\in \pols_d(\rr)$.
\item  $p\in \pols_d(\rr),\ q\in \pols_d(\rr_{\geq 0}) \ \Longrightarrow \ p\boxtimes_d q\in \pols_d(\rr)$.
\item  $p,q\in \pols_d(\rr_{\geq 0}) \ \Longrightarrow \ p\boxtimes_n q\in \pols_d(\rr_{\geq 0})$.
    \end{enumerate}

If we replace above the sets $\R_{\ge 0}$ by strict inclusion $\R_{>0}$ the statements remain valid.
Moreover we can use a \emph{rule of signs} to determine the location of the roots when doing a multiplicative convolution of polynomials in $\pols_d(\rr_{\geq 0})$ or $\pols_d(\rr_{\leq 0})$, see \cite[Section 2.5]{mfmp2024hypergeometric}.

\begin{definition}[Interlacing] 
Let $p,q\in\pols_d(\rr)$. We say that $q$ {interlaces} $p$, denoted by $p \preccurlyeq q$, if 
\begin{equation} \label{interlacing1}
    \quad \lambda_d(p) \leq \lambda_d(q) \leq \lambda_{d-1}(p) \leq \lambda_{d-1}(q) \leq \cdots \leq  \lambda_1(p) \leq \lambda_1(q).
\end{equation}
We use the notation $p \prec q$ when all inequalities in \eqref{interlacing1} are strict.
\end{definition}

From the real-root preservation and the linearity of the free finite convolution, one can derive an interlacing-preservation property for the free convolutions, see \cite[Proposition 2.11]{mfmp2024hypergeometric}.
\begin{proposition}[Preservation of interlacing]
\label{lem:preservinginterlacingMult}
If $p, q \in \pols_d(\R)$ and $p \preccurlyeq q$, then
\[
r \in \pols_d(\R) \quad  \Rightarrow \quad p\boxplus_d r \preccurlyeq q \boxplus_d r
\]
and 
\[
r \in \pols_d(\R_{\ge 0}) \quad  \Rightarrow \quad p\boxtimes_d r \preccurlyeq q \boxtimes_d r.
\]
The same statements hold if we replace all $\preccurlyeq$ by $\prec$.
\end{proposition}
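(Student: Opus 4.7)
The plan is to invoke the Hermite--Kakeya--Obreschkoff (HKO) theorem to reduce preservation of interlacing to preservation of real-rootedness (properties (i)--(iii) stated just before the proposition). Recall HKO: for monic $f, g \in \pols_d(\R)$ of equal degree, one has $f \preccurlyeq g$ or $g \preccurlyeq f$ if and only if $\alpha f + (1-\alpha) g \in \pols_d(\R)$ for every $\alpha \in [0,1]$.

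Starting from $p \preccurlyeq q$, HKO produces a family $p_\alpha := \alpha p + (1-\alpha) q \in \pols_d(\R)$ for every $\alpha \in [0,1]$. The coefficient formulas \eqref{eq:coeffAddConv} and \eqref{eq:coeffMultiConv} show that both $\boxplus_d$ and $\boxtimes_d$ are linear in each argument, so
\[
p_\alpha \boxplus_d r = \alpha (p \boxplus_d r) + (1-\alpha) (q \boxplus_d r), \qquad p_\alpha \boxtimes_d r = \alpha (p \boxtimes_d r) + (1-\alpha) (q \boxtimes_d r).
\]
For $r \in \pols_d(\R)$, property (i) places the left-hand side of the first identity in $\pols_d(\R)$ for all $\alpha \in [0,1]$; for $r \in \pols_d(\R_{\geq 0})$, property (ii) does the same for the second identity. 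A reverse application of HKO then yields that $p \boxplus_d r$ and $q \boxplus_d r$ (resp. $p \boxtimes_d r$ and $q \boxtimes_d r$) interlace in \emph{some} direction.

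To pin down the correct direction, I rely on the elementary fact that among two real-rooted polynomials of degree $d$ that mutually interlace, the ordering is determined by the sum of their roots: $f \preccurlyeq g$ iff $\coef{1}{d}(f) \leq \coef{1}{d}(g)$, since equality of sums combined with interlacing forces $f = g$. For the additive case this gives
\[
\coef{1}{d}(p \boxplus_d r) = \coef{1}{d}(p) + \coef{1}{d}(r) \leq \coef{1}{d}(q) + \coef{1}{d}(r) = \coef{1}{d}(q \boxplus_d r),
\]
so $p \boxplus_d r \preccurlyeq q \boxplus_d r$. The multiplicative case follows from $\coef{1}{d}(p \boxtimes_d r) = \coef{1}{d}(p)\coef{1}{d}(r)$ and $\coef{1}{d}(r) \geq 0$; the only degenerate situation, $\coef{1}{d}(r) = 0$, forces $r = x^d$ (all non-negative roots summing to zero) and hence $p \boxtimes_d r = x^d = q \boxtimes_d r$ trivially.

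For the strict version, I would use a perturbation argument: $p \prec q$ is equivalent to $p_\alpha \in \pols_d(\R)$ for all $\alpha$ in some open interval containing $[0,1]$, namely $(-\epsilon, 1+\epsilon)$ for some $\epsilon > 0$, with roots remaining simple throughout. The same propagation through the convolution, together with real-rootedness preservation, extends the real-rooted family on this larger interval; applying the strict version of HKO then yields $p \boxplus_d r \prec q \boxplus_d r$ and $p \boxtimes_d r \prec q \boxtimes_d r$. The main obstacle, and the step I expect to require the most care, is ensuring that no interlacing inequality collapses to equality after convolution: this would follow by tracing an equality $\lambda_i(p \boxplus_d r) = \lambda_j(q \boxplus_d r)$ through the convex pencil back to an equality between roots of $p$ and $q$, contradicting $p \prec q$.
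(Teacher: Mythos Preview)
Your argument for the non-strict statement is correct and is precisely the route the paper indicates (real-rootedness preservation plus bilinearity, cited from \cite{mfmp2024hypergeometric}): the HKO theorem converts interlacing into real-rootedness of the convex pencil, which the convolutions preserve, and your $\coef{1}{d}$ comparison cleanly resolves in which direction the resulting interlacing goes.

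The strict part, which you rightly flag as incomplete, has a concrete flaw beyond missing details. Your proposed equivalence --- $p \prec q$ iff $p_\alpha$ is real-rooted with simple roots for $\alpha$ in some open interval around $[0,1]$ --- is false: with $p=(x-1)(x-2)$ and $q=(x-1)(x-3)$ one has $p \preccurlyeq q$ but not $p \prec q$, yet $p_\alpha$ has simple real roots for every $\alpha \neq 2$, in particular on any neighbourhood of $[0,1]$. The correct characterization (for $p\neq q$) requires simple real roots for \emph{all} $\alpha \in \R$, equivalently that the Wronskian $pq'-p'q$ is nowhere zero on $\R$. Even granting that, real-rootedness preservation under $\boxplus_d$ or $\boxtimes_d$ says nothing about simplicity of $p_\alpha \boxplus_d r$, so the pencil argument alone does not yield strict interlacing of the convolutions; one needs an additional argument ruling out common roots after convolution. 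A separate warning: the strict multiplicative claim is literally false for $r(x)=x^d$, since then $p \boxtimes_d r = q \boxtimes_d r = x^d$; some nondegeneracy on $r$ is tacitly needed.
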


The finite free cumulants were defined in \cite{AP} as an analogue of the free cumulants \cite{speicher94multiplicative}. Below we define the finite free cumulants using the coefficient-cumulant formula from \cite[Remark 3.5]{AP} and briefly mention the basic facts that will be used in this paper. For a detailed explanation of these objects we refer the reader to \cite{AP}.

\begin{definition}[Finite free cumulants] 
\label{defi.finite.free.cumulants}
The \textit{finite free cumulants} of a polynomial $p\in \pols_d$ are the sequence $(\ffc{n}{d}(p))_{n=1}^d$ defined in terms of the coefficients as
\begin{equation}
\label{eq:cumulants}
\ffc{n}{d}(p) := \frac{(-d)^{n-1}}{(n-1)!} \sum_{\pi \in \partlat(n)} \ (-1)^{\blocks{\pi}-1}
(\blocks{\pi}-1)! \prod_{V\in \pi}  \coef{|V|}{d}(p) \qquad \text{ for } n=1,2,\dots,d,
\end{equation}
where $\partlat(n)$ is the set of all partitions of $\{1,\dots, n\}$, $\blocks{\pi}$ is the number of blocks of $\pi$ and $|V|$ is the size of the block $V\in\pi$.
\end{definition}

The main property of finite free cumulants is that they linearize the finite free additive convolution $\boxplus_d$.
\begin{proposition}[Proposition 3.6 of \cite{AP}]
For $p,q\in \pols_d$ it holds that \[\ffc{n}{d}(p\boxplus_d q)=\ffc{n}{d}(p) + \ffc{n}{d}(q) \qquad \text{for } n=1,\dots,d.\] 
\end{proposition}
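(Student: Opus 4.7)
The plan is to reduce the statement to the well-known additivity of classical cumulants under convolution, by recognizing the right exponential generating function attached to a polynomial.

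First, I would associate to each $p \in \pols_d$ the exponential generating function
\[
M_p(z) := \sum_{k=0}^{d} \coef{k}{d}(p) \frac{z^k}{k!}.
\]
The coefficient identity \eqref{eq:coeffAddConv} defining $\boxplus_d$ is precisely the convolution formula for such generating functions: indeed,
\[
M_p(z) \, M_q(z) \;=\; \sum_{k\geq 0} \frac{z^k}{k!} \sum_{j=0}^{k} \binom{k}{j} \coef{j}{d}(p)\,\coef{k-j}{d}(q) \;=\; M_{p \boxplus_d q}(z),
\]
at least as formal power series (truncation to degree $d$ causes no issue for the identities we need). Consequently, setting $K_p(z) := \log M_p(z)$, we obtain the additivity
\[
K_{p \boxplus_d q}(z) = K_p(z) + K_q(z).
\]

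Next I would extract coefficients. Define $\widetilde{\kappa}_n(p) := n!\,[z^n]K_p(z)$ for $n \geq 1$. Since $M_p(0)=\coef{0}{d}(p)=1$, the standard exponential formula (equivalently, Möbius inversion on the partition lattice $\partlat(n)$) gives the classical moment-cumulant relation
\[
\widetilde{\kappa}_n(p) \;=\; \sum_{\pi \in \partlat(n)} (-1)^{\blocks{\pi}-1}(\blocks{\pi}-1)! \prod_{V\in\pi} \coef{|V|}{d}(p).
\]
Comparing with Definition \ref{defi.finite.free.cumulants}, we see that
\[
\ffc{n}{d}(p) \;=\; \frac{(-d)^{n-1}}{(n-1)!}\,\widetilde{\kappa}_n(p).
\]

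Finally, since $K_{p\boxplus_d q} = K_p + K_q$ implies $\widetilde{\kappa}_n(p\boxplus_d q) = \widetilde{\kappa}_n(p) + \widetilde{\kappa}_n(q)$ for all $n\geq 1$, multiplying through by the fixed factor $(-d)^{n-1}/(n-1)!$ yields exactly
\[
\ffc{n}{d}(p\boxplus_d q) = \ffc{n}{d}(p) + \ffc{n}{d}(q), \qquad n=1,\dots,d,
\]
which is the desired linearization. There is essentially no obstacle here: the proof is purely algebraic, and the only subtlety is the book-keeping to recognize that the finite free cumulants are a rescaling of the classical cumulants of the sequence of normalized elementary symmetric functions $(\coef{k}{d}(p))_{k}$, after which the result is immediate from the familiar fact that classical cumulants are additive under convolution of exponential generating functions.
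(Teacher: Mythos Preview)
Your proof is correct. Note, however, that the paper does not actually give a proof of this proposition: it is quoted verbatim as ``Proposition 3.6 of \cite{AP}'' and used as a black box. So there is no ``paper's own proof'' to compare against here.

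That said, your argument is precisely the standard one (and essentially the one in \cite{AP}): the coefficient identity \eqref{eq:coeffAddConv} says that $\boxplus_d$ is binomial convolution of the sequences $(\coef{k}{d}(p))_k$, so the exponential generating functions multiply, their logarithms add, and the finite free cumulants in \eqref{eq:cumulants} are a fixed rescaling of the classical cumulants of that sequence. Your handling of the truncation issue is also correct: since only $n\le d$ is required, the equality $M_p M_q \equiv M_{p\boxplus_d q} \pmod{z^{d+1}}$ suffices to conclude additivity of $\widetilde\kappa_n$ for $n\le d$.
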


\subsubsection{Limit theorems}

The connection between free and finite free probability is revealed in the asymptotic regime when we take the degree $d\to \infty$. To simplify the presentation throughout this paper, we introduce some notation for a sequence of polynomials that converge to a measure.

\begin{notation}
\label{not:converging}
We say that a sequence of polynomials $\left(p_j\right)_{j\in \nn}$ {converges to a measure $\mu\in\MM$} if
\begin{equation}
p_j\in \pols_{d_j} \,\, \text{for all}\,\,
j\in \nn, \qquad
\lim_{j\to\infty}d_j=\infty \qquad \text{and}\qquad 
\meas{p_j}\weak\mu\,\, \text{as}\,\, j\to\infty.
\end{equation}

Furthermore, given a degree sequence $(d_j)_{j\in\N}$ we say that $(k_j)_{j\in\nn}$ is a \emph{diagonal sequence with ratio limit $t$} if 
\begin{equation}
k_j\in \{1,\dots,d_j\} \,\, \text{for all}\,\,
j\in \nn \qquad\text{and}\qquad 
\lim_{j\to\infty} \frac{k_j}{d_j}=t.
\end{equation}
\end{notation}
Notice that if $C\subset \cc$ is a closed subset and $\mfp\subset \pols_d(C)$ converges to $\mu$ then necessarily $\mu\in\MM(C)$.

In case when the limit distribution is compactly supported, we can characterize the convergence in distribution via the moments or cumulants. While not stated explicitly, the proof of this proposition is implicit in \cite{AP}.

\begin{proposition}
\label{prop:convergence_cumulants}
Fix $K$ a compact subset on $\R$.
Let $\mu \in \MM(\rr)$  and $(p_d)_{d\in\N}$ be a sequence of polynomials of degree $d$ such that every $p_d\in \PP_{d}(K)$ has only roots in $K$.  
The following assertions are equivalent.
  \begin{enumerate}[\rm (1)]
  \item The sequence $\left(\meas{p_d}\right)_{d\in \N}$ converges to $\mu$ in distribution.
  \item Moment convergence: $\lim_{d\rightarrow \infty} m_n(p_d)=m_n(\mu)$ for all $n\in \N$.
  \item Cumulant convergence: $\lim_{d\rightarrow \infty} \ffc{n}{d}(p_d)=\freec{n}{\mu}$ for all $n\in \N$.
  \end{enumerate}
\end{proposition}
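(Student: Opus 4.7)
The plan is to establish $(1) \Longleftrightarrow (2)$ by the classical method of moments and then $(2) \Longleftrightarrow (3)$ by making the moment-cumulant relations explicit in both the finite free and free settings and passing to the limit $d\to\infty$.

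For $(1) \Longleftrightarrow (2)$, I would use that every $\meas{p_d}$ is supported in the common compact set $K$, so the family is tight and uniformly moment-bounded, and the limit $\mu$ is also supported in $K$. Weak convergence implies convergence against any continuous bounded function; restricting to polynomials gives moment convergence. Conversely, by Weierstrass approximation on $K$ and the uniqueness of the Hausdorff moment problem for compactly supported measures, convergence of moments forces weak convergence. This step is standard and uniform compactness of supports is essential.

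For $(2) \Longleftrightarrow (3)$, I would make the dependence of finite free cumulants on moments explicit. By Newton's identities, each normalized elementary symmetric polynomial $\coef{k}{d}(p_d)$ is a polynomial in $m_1(p_d),\dots,m_k(p_d)$ whose coefficients are rational in $d$. Substituting into Definition \ref{defi.finite.free.cumulants} shows that for each fixed $n$ there exists a polynomial $Q_n$ such that
\[
\ffc{n}{d}(p_d) \,=\, Q_n\bigl(m_1(p_d),\dots,m_n(p_d);\, 1/d\bigr),
\]
and conversely $m_n(p_d)$ can be expressed as a polynomial in $\ffc{1}{d}(p_d),\dots,\ffc{n}{d}(p_d)$ and $1/d$ by inverting this relation. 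The crucial combinatorial fact, established in \cite{AP}, is that $Q_n(x_1,\dots,x_n;0)$ agrees with the classical free moment-cumulant polynomial, reflecting that in the asymptotic expansion only non-crossing partitions survive. Assuming $(2)$, since $m_k(p_d)\to m_k(\mu)$ and $1/d\to 0$, continuity of $Q_n$ yields $\ffc{n}{d}(p_d)\to\freec{n}{\mu}$, giving $(3)$; the converse follows by the same continuity argument applied to the inverse polynomial, by induction on $n$.

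The main obstacle is the combinatorial identification of $Q_n(\,\cdot\,;0)$ with the free moment-cumulant formula, i.e.\ the fact that the $1/d$-leading order picks out precisely the non-crossing partitions in the sum over $P(n)$. This is exactly the content of \cite[Remark 3.5]{AP} and the surrounding discussion, so I would invoke it rather than re-derive it. Once this is in hand, both directions of $(2) \Longleftrightarrow (3)$ reduce to continuity of a finite list of polynomials in the variables $(m_1,\dots,m_n,1/d)$ as $d\to\infty$.
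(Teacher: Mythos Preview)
Your proposal is correct and is precisely the argument the paper has in mind: the paper does not write out a proof of this proposition but states that it ``is implicit in \cite{AP}'', and what you outline---the classical method of moments for $(1)\Leftrightarrow(2)$ on a common compact set, together with the fact that the finite free moment--cumulant relations are polynomial in $1/d$ and degenerate to the free moment--cumulant formula at $1/d=0$---is exactly the content of \cite{AP} that the paper invokes.
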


\begin{proposition}[Corollary 5.5 in \cite{AP} and Theorem 1.4 in \cite{AGP}]
    \label{prop:finiteAsymptotics}
Let $p_d, q_d \in \pols_d(\R)$ be two sequences of polynomials converging to compactly supported measures $\mu,\nu\in \MM(\R)$, respectively.
Then
\begin{enumerate}
    \item[\rm (1)]  The sequence $\left( \meas{p_d \boxplus_{d} q_d}\right)_{d\in \N}$ converges to $\mu \boxplus \nu$ in distribution.
    \item[\rm (2)] If additionally $q_d\in \pols_{d}(\rr_{\geq 0})$ and $\nu\in \MM(\rr_{\geq 0})$ then $\left(\meas{p_d \boxtimes_{d} q_d}\right)_{d\in \N}$ converges to $\mu \boxtimes \nu$ in distribution.
\end{enumerate}
\end{proposition}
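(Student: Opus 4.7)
The plan is to reduce both statements to convergence of finite free cumulants via Proposition \ref{prop:convergence_cumulants}, which in turn requires (i) a uniform compact bound on the roots of the convolution, and (ii) convergence of the relevant cumulants. Throughout, I will tacitly use that weak convergence to a compactly supported measure together with the natural polynomial-bound estimates force the roots of $p_d$ and $q_d$ to lie in a common compact interval $[-M,M]$ for all $d$ sufficiently large.

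For Part (1), uniform compactness of the roots of $p_d\boxplus_d q_d$ is classical: the random-matrix interpretation of $\boxplus_d$ via \cite{MSS} together with a Weyl-type inequality confines the roots of $p_d\boxplus_d q_d$ to $[-2M,2M]$. Since finite free cumulants linearize the additive convolution, one has
\begin{equation*}
\ffc{n}{d}(p_d\boxplus_d q_d) \,=\, \ffc{n}{d}(p_d) \,+\, \ffc{n}{d}(q_d) \,\longrightarrow\, \freec{n}{\mu} + \freec{n}{\nu} \,=\, \freec{n}{\mu\boxplus\nu}
\end{equation*}
as $d\to\infty$ for every fixed $n$, where the intermediate limits come from applying Proposition \ref{prop:convergence_cumulants} to $(p_d)$ and $(q_d)$ separately. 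A final application of the same proposition gives the desired weak convergence.

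For Part (2), Walsh's bound \cite{walsh1922location} gives $\lambda_1(p_d\boxtimes_d q_d) \le \lambda_1(p_d)\,\lambda_1(q_d) \le M^2$, and the roots remain non-negative, so they are confined to $[0,M^2]$. The delicate point is that cumulants do \emph{not} linearize under $\boxtimes_d$; instead, I would exploit the multiplicativity of the normalized coefficients, $\coef{k}{d}(p_d\boxtimes_d q_d) = \coef{k}{d}(p_d)\,\coef{k}{d}(q_d)$. Inverting \eqref{eq:cumulants} to express coefficients as polynomial functions of cumulants (with factors rational in $d$), substituting the product identity, and then re-extracting the cumulants of the convolution, one obtains an identity of the form
\begin{equation*}
\ffc{n}{d}(p_d\boxtimes_d q_d) \,=\, \sum_{\pi,\sigma\in\partlat(n)} c^{(d,n)}_{\pi,\sigma}\, \ffc{\pi}{d}(p_d)\, \ffc{\sigma}{d}(q_d),
\end{equation*}
with coefficients $c^{(d,n)}_{\pi,\sigma}$ rational in $d$. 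The goal is to show that as $d\to\infty$ the only surviving terms are those with $\pi\in\mathrm{NC}(n)$ and $\sigma$ equal to the Kreweras complement $K(\pi)$, each appearing with unit weight, so that the right-hand side converges to $\sum_{\pi\in\mathrm{NC}(n)}\freec{\pi}{\mu}\,\freec{K(\pi)}{\nu} = \freec{n}{\mu\boxtimes\nu}$, after which Proposition \ref{prop:convergence_cumulants} concludes.

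The main obstacle is precisely this combinatorial-asymptotic step for the multiplicative case: identifying the Kreweras-complement contribution as the leading term and verifying that all crossing partition pairs decay in $d$. This is the substance of \cite[Corollary 5.5]{AP} (at the moment level) and of \cite[Theorem 1.4]{AGP} (for full weak convergence). A cleaner execution, and the one I would follow in practice, uses the generating-series language for finite free cumulants developed in \cite{AP}, where the factors $(-d)^{n-1}/(n-1)!$ in \eqref{eq:cumulants} can be tracked simultaneously across all $n$ and the crossing contributions are visibly of lower order in $1/d$ via direct series manipulations.
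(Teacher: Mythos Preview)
Your reduction to Proposition \ref{prop:convergence_cumulants} is the right idea for Part (1) and matches the cited approach from \cite{AP}. However, the sentence ``weak convergence to a compactly supported measure together with the natural polynomial-bound estimates force the roots of $p_d$ and $q_d$ to lie in a common compact interval'' is false as stated: take $p_d(x)=x^{d-1}(x-d)$, so that $\meas{p_d}\weak\delta_0$ while $\lambda_1(p_d)=d\to\infty$. Weak convergence alone gives no uniform root bound, and Proposition \ref{prop:convergence_cumulants} explicitly requires the hypothesis $p_d\in\PP_d(K)$ for a fixed compact $K$. You either need to add this as a hypothesis (which is in effect what \cite[Corollary 5.5]{AP} does, working at the level of moments), or argue separately for the general case as in \cite{AGP}. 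The same issue affects your uniform bound for $p_d\boxtimes_d q_d$ in Part (2); note also that $p_d$ is only assumed real-rooted, so ``the roots remain non-negative'' is not available.

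For Part (2), your outline follows the combinatorial route of \cite{AP,AGP}: expand $\ffc{n}{d}(p_d\boxtimes_d q_d)$ via the coefficient product formula, and show that only the Kreweras-paired non-crossing terms survive in the $d\to\infty$ limit. That is a legitimate strategy, but the present paper does not reprove Part (2) that way. Instead, it gives a new proof (Proposition \ref{prop:approx_boxtimesd}) based on its main result: by multiplicativity of the finite $T$-transform \eqref{eq:rico} and Theorem \ref{thm:main.T.transform}, one has $\ttrans{p_d\boxtimes_d q_d}{d}(t)=\ttrans{p_d}{d}(t)\ttrans{q_d}{d}(t)\to T_\mu(t)T_\nu(t)=T_{\mu\boxtimes\nu}(t)$, and Theorem \ref{thm:main.T.transform} then yields the weak limit directly. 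This bypasses the combinatorial asymptotics entirely and, in exchange for restricting to $p_d,q_d\in\pols_d(\R_{\geq 0})$, removes any compactness assumption. So your approach and the paper's are genuinely different: yours is the historical cumulant/Kreweras argument (requiring uniform support control), while the paper's is an $S$-transform argument that trades the sign restriction for generality in the support.
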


A finite free version of Tucci, Haagerup and M\"{o}ller limit from \eqref{eq:THMcharacterization} was studied by Fujie and Ueda \cite{fujie2023law}.
Given $p\in \pols_d(\rr_{\geq 0})$ a polynomial with multiplicity $r$ at root $0$, then \cite[Theorem 3.2]{fujie2023law} asserts that there exists a limiting polynomial
\begin{equation}
\label{eq:phi.polynomial}
 \Phi_d(p):=
\lim_{n\to\infty} (p^{\boxtimes_d n})^{\langle {1}/{n}\rangle}.   
\end{equation}
Moreover, the roots of $\Phi_d(p)$ can be explicitly written in terms of the coefficients of $p$:
\begin{equation}
\label{eq:phi.roots}
\lambda_k\left( \Phi_d(p) \right)=\begin{cases}
\dfrac{\coef{k}{d}(p)}{\coef{k-1}{d}(p)} & \text{if }1\leq k\leq d-r, \vspace{2mm} \\
0 & \text{if } d-r+1\leq k\leq d.
\end{cases}
\end{equation}

%%%%%%%%%%%%%%%%%%%%%%%%%%%SECTION_DERIVATIVE_OF_POLYNOMIALS
\section{Finite free cumulants of derivatives}
\label{Sec:Derivatives}

Since the work of Marcus \cite{Mar21} and Marcus, Spielman and Srivastava \cite{MSS}, it has been clear that the finite free convolutions behave well when applying differential operators, in particular, with respect to differentiation. One instance of such behaviour is the content of Theorem \ref{Thm:AGVP_HK_S} from the introduction, stating that the asymptotic root distribution of polynomials after repeated differentiation tends to fractional free convolution.  

In this section, we will collect some simple but powerful lemmas regarding finite free cumulants and differentiation that will a source of useful insight for the rest of the paper. In particular, some of the ideas are key steps in the proof of our main theorem.
Interestingly enough, the results of this section allow us to provide a more direct proof of Theorem \ref{Thm:AGVP_HK_S}, which is given at the end of this section. 
Note that later in Section \ref{ssec:case.unbounded.support} we will generalize this result to Theorem \ref{thm:general.AGVP}.

\begin{notation}
We will denote by $\diff{k}{d}: \pols_d \to \pols_k$ the operation 
\[
\diff{k}{d} p := \frac{p^{(d-k)}}{\falling{d}{d-k}} 
\]
that differentiates $d-k$ times a polynomial of degree $d$ and then normalizes by $\frac{1}{\falling{d}{d-k}}$ to obtain a monic polynomial of degree $k$.
\end{notation}

Notice that directly from the definition, we have that
\[\diff{j}{k} \circ \diff{k}{d} = \diff{j}{d}\qquad \text{for }j \le k \le d.\] 
This can be understood as the finite free analogue of the following well-known property of the fractional convolution powers:
\begin{equation*}
\left(\mu^{\boxplus t}\right)^{\boxplus u} = \mu^{\boxplus tu}\qquad \text{for $t,u \ge 1$}.
\end{equation*}
This will be clear from the next series of lemmas in particular from Corollary \ref{cor:cumulant_derivatives}.

Our first main claim is that the normalized coefficients of a polynomial are invariant under the operations that we just introduced. 

\begin{lemma}
\label{lem:coefficients_derivatives}
If $p\in \pols_d$ then 
\[
\coef{j}{k}(\diff{k}{d} p ) = \coef{j}{d}(p) \qquad \text{ for } 1\leq j\leq k \leq d.
\]
\end{lemma}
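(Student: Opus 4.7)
The plan is to prove this by a direct coefficient-by-coefficient comparison. I would write $p$ in its normalized form from \eqref{eq:polynomial.coefficients}, differentiate term-by-term, and then verify that the coefficients of $\diff{k}{d} p$ match the claim.

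Concretely, starting from
\[
p(x)=\sum_{i=0}^d(-1)^i\binom{d}{i}\coef{i}{d}(p)\,x^{d-i},
\]
I would differentiate $d-k$ times. For each index $i$ with $0\le i\le k$, the term $x^{d-i}$ becomes $\tfrac{(d-i)!}{(k-i)!}\,x^{k-i}$, while terms with $i>k$ vanish. After dividing by $\falling{d}{d-k}=\tfrac{d!}{k!}$, the coefficient of $x^{k-i}$ is
\[
\frac{k!}{d!}\cdot(-1)^i\binom{d}{i}\coef{i}{d}(p)\cdot\frac{(d-i)!}{(k-i)!}
=(-1)^i\binom{k}{i}\coef{i}{d}(p),
\]
where the binomial simplification uses $\binom{d}{i}=\tfrac{d!}{i!(d-i)!}$ so that the $(d-i)!$ factors cancel and $\tfrac{k!}{i!(k-i)!}=\binom{k}{i}$ appears. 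Thus
\[
\diff{k}{d}p\,(x)=\sum_{i=0}^{k}(-1)^i\binom{k}{i}\coef{i}{d}(p)\,x^{k-i}.
\]
Comparing with the defining expansion $\diff{k}{d}p\,(x)=\sum_{i=0}^k(-1)^i\binom{k}{i}\coef{i}{k}(\diff{k}{d}p)\,x^{k-i}$ and using uniqueness of polynomial coefficients yields $\coef{i}{k}(\diff{k}{d}p)=\coef{i}{d}(p)$ for all $0\le i\le k$, which covers the range $1\le j\le k\le d$ in the statement.

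I do not expect any real obstacle here: the only potentially fiddly step is the binomial simplification, and it is a matter of lining up $\binom{d}{i}$, $\tfrac{(d-i)!}{(k-i)!}$ and $\tfrac{k!}{d!}$ carefully. Conceptually, the lemma is saying that the normalized elementary symmetric functions are the ``right'' coordinates on $\pols_d$, because in these coordinates the differentiation-and-normalize map $\diff{k}{d}$ is simply the truncation that forgets $\coef{i}{d}(p)$ for $i>k$. This explains at once both the semigroup property $\diff{j}{k}\circ\diff{k}{d}=\diff{j}{d}$ mentioned before the lemma and the behaviour of finite free cumulants under differentiation that the section will derive.
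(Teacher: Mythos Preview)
Your proof is correct and follows essentially the same approach as the paper: a direct term-by-term differentiation of the normalized expansion followed by binomial simplification. The only cosmetic difference is that the paper differentiates once to show $\coef{j}{d-1}(\diff{d-1}{d}p)=\coef{j}{d}(p)$ and then iterates, whereas you differentiate $d-k$ times in one shot.
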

\begin{proof}
Recall that we write the polynomial $p$ of degree $d$ as
\[
p(x)=\sum_{j=0}^d (-1)^j \binom{d}{j}\coef{j}{d}(p) x^{d-j}.
\]
Then
\begin{align*}
\frac{p'(x)}{d} = \sum_{j=0}^{d-1}(-1)^j \binom{d-1}{j} \coef{j}{d}(p)  x^{d-j-1}.
\end{align*}
Thus $\coef{j}{d}(p)=\coef{j}{d-1}(\diff{d-1}{d} p)$ for $j=1,\dots,d-1$.
If we now fix $j$ and iterate this procedure then we conclude that
\[
\coef{j}{d}(p) =\coef{j}{d-1}(\diff{d-1}{d} p)=\coef{j}{d-2}(\diff{d-2}{d} p)=\dots =\coef{j}{k}(\diff{k}{d} p)
\]
as desired.
\end{proof}

\begin{remark}
\label{rem:diff_addtive_multiplicative}
Since additive and multiplicative convolutions only depend on these normalized coefficients, a direct implication is that convolutions are operations that commute with differentiation. Specifically, for $k\leq d$ and $p,q\in \pols_d$, one has
\[
\diff{k}{d}(p\boxplus_d q) = \left(\diff{k}{d} p\right)\boxplus_{k}  \left(\diff{k}{d} q\right) \qquad \text{ and}\qquad \diff{k}{d}(p\boxtimes_d q) = \left(\diff{k}{d} p\right)\boxtimes_{k}  \left(\diff{k}{d} q\right).
\]
To the best of our knowledge, these identities have not appeared before in the literature. However, the formula for the additive convolution follows easily from two facts mentioned in \cite{MSS}: additive convolution commutes with differentiation (Section 1.1) and a relation between polynomials with different degrees (Lemma 1.16).
\end{remark}

Lemma \ref{lem:coefficients_derivatives} can be converted to a similar expression, but here we use finite free cumulants instead of coefficients.

\begin{proposition}
\label{prop:cumulant_derivatives}
Given a polynomial $p\in \pols_d$, one has 
\[
\ffc{n}{j}(\diff{j}{d}p) = \left( \frac{j}{d}\right)^{n-1} \ffc{n}{d}(p) \quad \text{ for } 1 \leq n \leq j \leq d. 
\]
\end{proposition}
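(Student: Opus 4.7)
The plan is to apply the coefficient-cumulant formula directly to $\diff{j}{d}p$ and then use Lemma \ref{lem:coefficients_derivatives} to rewrite the coefficients of $\diff{j}{d}p$ in terms of those of $p$, after which the identity will drop out up to a single power of $j/d$ coming from the prefactor.

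More concretely, by Definition \ref{defi.finite.free.cumulants} applied to the polynomial $\diff{j}{d}p \in \pols_j$, for $1 \leq n \leq j$ one has
\[
\ffc{n}{j}(\diff{j}{d}p) = \frac{(-j)^{n-1}}{(n-1)!} \sum_{\pi \in \partlat(n)} (-1)^{\blocks{\pi}-1}(\blocks{\pi}-1)! \prod_{V\in \pi}  \coef{|V|}{j}(\diff{j}{d}p).
\]
The next step is to invoke Lemma \ref{lem:coefficients_derivatives}, which gives $\coef{|V|}{j}(\diff{j}{d}p) = \coef{|V|}{d}(p)$ for every block $V$ appearing in a partition $\pi \in \partlat(n)$, since $|V| \leq n \leq j \leq d$. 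Substituting yields
\[
\ffc{n}{j}(\diff{j}{d}p) = \frac{(-j)^{n-1}}{(n-1)!} \sum_{\pi \in \partlat(n)} (-1)^{\blocks{\pi}-1}(\blocks{\pi}-1)! \prod_{V\in \pi}  \coef{|V|}{d}(p).
\]

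Finally, I factor $(-j)^{n-1} = (j/d)^{n-1}(-d)^{n-1}$ and recognize the remaining sum as $\ffc{n}{d}(p)$ by the very same formula. The result is
\[
\ffc{n}{j}(\diff{j}{d}p) = \left(\frac{j}{d}\right)^{n-1} \ffc{n}{d}(p),
\]
which is the claim. There is no real obstacle here: the entire statement is a transparent bookkeeping consequence of the coefficient-cumulant formula once Lemma \ref{lem:coefficients_derivatives} is in hand, with the only nontrivial ingredient being the observation that replacing $d$ by $j$ in the prefactor is the sole change, producing the clean scaling factor $(j/d)^{n-1}$.
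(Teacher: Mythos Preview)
Your proof is correct and follows essentially the same approach as the paper's: apply the coefficient-cumulant formula \eqref{eq:cumulants} to $\diff{j}{d}p$, invoke Lemma \ref{lem:coefficients_derivatives} to replace each $\coef{|V|}{j}(\diff{j}{d}p)$ by $\coef{|V|}{d}(p)$, and then factor $(-j)^{n-1} = (j/d)^{n-1}(-d)^{n-1}$ to recognize the expression for $\ffc{n}{d}(p)$.
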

\begin{proof}
For $1\le n \le j \le d$, we compute
\begin{align*}
\ffc{n}{j}(\diff{j}{d}p) 
&= \frac{(-j)^{n-1}}{(n-1)!} \sum_{\pi \in \partlat(n)} \ 
(-1)^{\blocks{\pi}-1}(\blocks{\pi}-1)! \prod_{V\in \pi}  \coef{|V|}{j}(\diff{j}{d}p)& \text{(by \eqref{eq:cumulants})} \\
&= \left(\frac{j}{d}\right)^{n-1} \frac{(-d)^{n-1}}{(n-1)!} \sum_{\pi \in \PP(n)} (-1)^{\blocks{\pi}-1}(\blocks{\pi}-1)! \prod_{V\in \pi}  \coef{|V|}{d}(p) & \text{(by Lemma \ref{lem:coefficients_derivatives})} \\
&=\left(\frac{j}{d}\right)^{n-1} \ffc{n}{d}(p). & \text{(by \eqref{eq:cumulants})}
\end{align*}
\end{proof}

By basic properties of cumulants, the factor $(j/d)^{n-1}$ can be interpreted as doing a dilation and a fractional convolution:

\begin{corollary}
\label{cor:cumulant_derivatives}
Given a polynomial $p\in \pols_d$, one has
\[
\ffc{n}{j}\left(  \diff{j}{d}p\right) =\ffc{n}{d}\left( \dil{\frac{j}{d}}p^{\boxplus_d\frac{d}{j}} \right)\quad \text{for $1 \leq n \leq j \leq d$,}
\] 
where the free fractional finite free convolution power $p^{\boxplus_d t}$ is a polynomial determined by $\ffc{n}{d}\left(p^{\boxplus_dt}\right) = t\ffc{n}{d}\left(p\right)$ for $t\ge 1$.
\end{corollary}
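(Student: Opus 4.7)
The plan is to derive this corollary as a direct algebraic manipulation of Proposition \ref{prop:cumulant_derivatives}, recognizing that the factor $(j/d)^{n-1}$ can be decomposed into the product of a dilation factor $(j/d)^n$ and a fractional convolution factor $d/j$.

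First I would establish a lemma on how finite free cumulants behave under dilation: for any $c \in \cc \setminus \{0\}$ and $q \in \pols_d$,
\[
\ffc{n}{d}(\dil{c} q) = c^n \, \ffc{n}{d}(q) \qquad \text{for } 1 \leq n \leq d.
\]
This is immediate from the fact that dilating a polynomial scales its roots by $c$, whence $\coef{k}{d}(\dil{c} q) = c^k \coef{k}{d}(q)$, combined with the defining formula \eqref{eq:cumulants}: each partition $\pi \in \partlat(n)$ contributes a product $\prod_{V \in \pi} \coef{|V|}{d}(\dil{c} q) = c^{\sum_{V \in \pi}|V|} \prod_{V \in \pi} \coef{|V|}{d}(q) = c^n \prod_{V \in \pi} \coef{|V|}{d}(q)$, so $c^n$ factors out of the sum.

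Next, using that the fractional finite free convolution power $p^{\boxplus_d t}$ is defined (at the level of cumulants, which are in bijection with coefficients for fixed degree $d$, so the polynomial exists in $\pols_d$) by $\ffc{n}{d}(p^{\boxplus_d t}) = t \, \ffc{n}{d}(p)$ for $t \geq 1$, I would observe that $t := d/j \geq 1$ since $j \leq d$, so the right-hand side of the corollary is well-defined. Combining the dilation lemma with this definition yields
\[
\ffc{n}{d}\!\left(\dil{\tfrac{j}{d}} p^{\boxplus_d \frac{d}{j}}\right) = \left(\tfrac{j}{d}\right)^n \ffc{n}{d}\!\left(p^{\boxplus_d \frac{d}{j}}\right) = \left(\tfrac{j}{d}\right)^n \cdot \tfrac{d}{j} \, \ffc{n}{d}(p) = \left(\tfrac{j}{d}\right)^{n-1} \ffc{n}{d}(p).
\]
By Proposition \ref{prop:cumulant_derivatives}, the final expression equals $\ffc{n}{j}(\diff{j}{d} p)$, completing the proof.

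There is no substantial obstacle: the entire argument is a one-line computation once Proposition \ref{prop:cumulant_derivatives} and the dilation identity are in place. The only conceptual point worth flagging explicitly is that $p^{\boxplus_d d/j}$ should be understood as a (possibly non-real-rooted) element of $\pols_d$ uniquely specified by its finite free cumulants, which is legitimate because \eqref{eq:cumulants} sets up a bijection between the sequences $(\coef{k}{d})_{k=0}^d$ and $(\ffc{n}{d})_{n=1}^d$ for polynomials of fixed degree $d$.
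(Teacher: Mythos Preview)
Your proof is correct and matches the paper's approach exactly: the paper states the corollary as an immediate consequence of Proposition~\ref{prop:cumulant_derivatives} via ``basic properties of cumulants,'' namely that dilation by $c$ scales $\ffc{n}{d}$ by $c^n$ and that the fractional power $p^{\boxplus_d t}$ scales it by $t$, which is precisely the decomposition $(j/d)^{n-1}=(j/d)^n\cdot(d/j)$ you spell out. Your added remark on the well-definedness of $p^{\boxplus_d d/j}$ as an element of $\pols_d$ via the cumulant--coefficient bijection is a nice clarification that the paper leaves implicit.
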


As a direct consequence of this result we will now prove that derivatives of a sequence of polynomials tend to the free fractional convolution of the limiting distribution.
This result was conjectured by Steinerberger \cite{steinerberger2019nonlocal, steinerberger2020} and then proved formally by Hoskins and Kabluchko \cite{hoskins2020dynamics} using differential equations. A proof using finite free multiplicative convolution was given by Arizmendi, Garza-Vargas and Perales \cite{AGP}. Notice that the main upshot of this new proof is that we no longer need to use the fact that finite free multiplicative convolution tends to free multiplicative convolution. Instead, we will use this result later to give a new proof that finite free multiplicative convolution tends to free multiplicative convolution. 

\begin{proof}[New proof of Theroem \ref{Thm:AGVP_HK_S}]
By Proposition \ref{prop:cumulant_derivatives} we know that 
\[
\ffc{n}{j_i}\left( \diff{j_i}{d_i} p_i \right) =\left( \frac{j_i}{d_i}\right)^{n-1} \ffc{n}{d_i}(p_i) \qquad \text{ for } 1 \leq n \leq j_i.
\]  
Clearly, since $\lim_{i\to\infty}\frac{j_i}{d_i}= t>0$ then $\lim_{i\to\infty}j_i=\infty$. Thus, if we fix $n\in \N$, the above equality holds for all sufficiently large $i$. Then 
\[
\lim_{i\to \infty} \ffc{n}{j_i}\left( \diff{j_i}{d_i} p_i \right)=\lim_{i\to \infty}\left( \frac{j_i}{d_i}\right)^{n-1} \ffc{n}{d_i}(p_i)=t^{n-1} \freec{n}{\mu}=\freec{n}{\dil{t}\left(\mu^{\boxplus\frac{1}{t}}\right)},
\]
and convergence of all finite free cumulants for a measure with compact support is equivalent to weak convergence of measures by Proposition \ref{prop:convergence_cumulants}.
\end{proof}

In Section \ref{ssec:generalcase}, we will show that the assertion of Theorem \ref{Thm:AGVP_HK_S} still holds if we drop the assumption that $\mu$ is compact, see Theorem \ref{thm:general.AGVP}.

\begin{remark} \label{rem:added}
We should add a few words on Theorem \ref{Thm:AGVP_HK_S} and its generalization, Theorem \ref{thm:general.AGVP}.
%In Theorem \ref{Thm:AGVP_HK_S} we may include $t=0$ and $1$ without any difficulty since they are uniformly supported by a compact set $K$.
%However, in the case Theorem \ref{thm:general.AGVP} .
We can always extend the result for $t=1$ but not for $t=0$.
Let us explain this case in detail.
First we consider $t=1$.
Given $p_d \in \PP_d(\R)$ and any interval $I \subset \R$ by interlacing property, we can see that
\[
  \left|\meas{p_d} (I) - \meas{\diff{(d-1)}{d}p_d}(I)\right| \le \frac{2}{d-1}.
\]

Indeed, if the number of roots of $p_d$ included in $I$ is $k$ then the possibility for the number of roots of $p_d'$ included in $I$ is only $k-1$, $k$, or $k+1$.
This together with the inequality
$\frac{k+1}{d-1} > \frac{k}{d} > \frac{k-1}{d-1}$, gives the bound above. 

Hence, if $j/d \to 1$ then 
\[
  \left|\meas{p_d} (I) - \meas{\diff{j}{d}p_d}(I)\right| \le \sum_{i=1}^{d-j} \frac{2}{d-i} \le \frac{2(d-j)}{j} \to 0
\]
when $d \to \infty$.
This means that $\meas{\diff{j}{d}p_d} \xrightarrow{w} \mu$.

For $t=0$, the situation becomes a bit more complicated. This case is essentially related to the law of large numbers of (finite) free probability.
For instance, let us consider the sequence of polynomials $p_d(x) = x^{d-1}(x-d)$.
Clearly, $\meas{p_d}  \xrightarrow{w} \delta_0$, but  $\diff{1}{d} p_d=(x-1)$ and then $\meas{\diff{1}{d} p_d} = \delta_1 \not \to \delta_0$.
So, there should be some additional assumptions.
If polynomials are uniformly supported by some compact set, we can use the finite free cumulants and their convergence.
Precisely, 
\[
    \ffc{n}{j}(\diff{j}{d} p_d) = \left(\frac{j}{d}\right)^{n-1}  \ffc{n}{d}(p_d).
\]
Hence, if $j/d \to 0$ and $j \to \infty$,the finite cumulants satify that $\ffc{n}{j}(\diff{j}{d} p_d)\to0$ for $n\geq1$ and $\ffc{1}{j}(\diff{j}{d} p_d)= k_1(p_d)=m_1(p_d)$. Thus, the limit measure of $\meas{\diff{j}{d} p_d}$ is $\delta_a$ where $a$ is the mean of $\mu$, which corresponds to the limit of $\mu_t = \dil{t}(\mu^{\boxplus 1/t})$ as $t\to 0$ by the law of large numbers.
In other words, it is necessary to assume that the limit measure $\mu$ has the first moment $a$ and also the convergence of first moments of $p_d$, i.e. $\coef{1}{d}(p_d) \to a$ because the limit of $\meas{\diff{1}{d} p_d}$ should be $\delta_a$.

Besides, let us consider the polynomials $p_d(x) = x^{d-2}(x-d)(x+d)$ for $d\ge 3$.
It is trivial that $\meas{p_d} \weak \delta_0$ and $\coef{1}{d}(p_d) = 0$ but $\coef{2}{d}(p_d) = \frac{2d}{(d-1)} \to 2$ as $d \to \infty$, which means $\diff{2}{d}p_d \to x^2 - 2$.
Thus, we additionally need to assume $\coef{2}{d}(p_d) \to a^2$.

Under this two assumptions, we may conclude by the following key formula:
\[
  \frac{\ffc{2}{d}(p_d)}{d} = \coef{1}{d}(p_d)^2 - \coef{2}{d}(p_d) = \frac{\Var(p_d)}{d-1} = \frac{\Var(\diff{j}{d}p_d)}{j-1}.
\]
Hence, if $\coef{2}{d}(p_d) \to a^2$ then $\Var(\diff{j}{d}p_d) \to 0$ for a fixed integer $j$.
That is, $\meas{\diff{j}{d}p_d} \weak \delta_a$.
For $j\to \infty$ and $j/d \to 0$, we need a bit stronger assumption on the boundedness of $\Var(p_d)$:
if
\[
    \Var(\diff{j}{d}p_d) = \frac{j-1}{d-1} \Var(p_d) \to 0,
\]
that is, $\ffc{d}{2}(p_d) = o(d/j)$, then we have $\meas{\diff{j}{d}p_d} \to \delta_a$.

Finally, we conclude this remark by mentioning that the CLT for $t=0$ was recently dealt with by A. Campbell, S. O'rourke, and D. Renfrew in \cite{campbell2024universality}. The considerations above can be modified to give a proof of this result, using finite free cumulants.
\end{remark}

%%%%%%%%%%%%%%%%%%%%%%%%%%
\section{A partial order on polynomials}
\label{sec:partial.order}
%%%%%%%%%%%%%%%%%%%%%%%%%%

In this section we equip $\pols_d(\rr)$ with a partial order $\ll$ that compares the roots of a pair of polynomials.
This partial order, defined through the bijection between $\pols_d(\rr)$ and $\MM_d(\rr)$, comes from a partial order in measures which was studied in connection to free probability in \cite{BerVoi1993}.

\begin{notation}[A partial order on measures]
Given $\mu, \nu\in \MM(\rr)$ we say that $\mu \ll \nu$ if their cumulative distribution functions satisfy
\[F_{\mu}(t) \geq F_{\nu}(t) \qquad \text{ for all }t\in \rr.\]
\end{notation}

In \cite[Propositions 4.15 and 4.16]{BerVoi1993} it was shown that for measures $\mu, \nu\in \MM(\rr)$ such that $\mu \ll \nu$,
\begin{enumerate}[(i)]
\item if $\rho\in \MM(\rr)$, then $(\mu\boxplus \rho)\ll (\nu\boxplus \rho)$, and
\item if $\rho\in \MM(\rr_{\geq 0})$, then $(\mu\boxtimes \rho)\ll (\nu\boxtimes \rho)$.
\end{enumerate}

In, particular, by considering $\rho=(1-t)\delta_0 + t\delta_1$ in {\rm (ii)} above, since $\mu \boxtimes \rho = (1-t)\delta_0 + t\dil{t}(\mu^{\boxplus 1/t})$, we readily obtain the following.

\begin{corollary}
\label{cor:majorization powers}
If $\mu \ll \nu$ then  $\dil{t}(\mu^{\boxplus 1/t}) \ll \dil{t}(\nu^{\boxplus 1/t})$ for $t \in (0,1)$.
\end{corollary}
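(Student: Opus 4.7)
The plan is to apply property (ii) above with the Bernoulli--type measure $\rho := (1-t)\delta_0 + t\delta_1 \in \MM(\R_{\geq 0})$. Since $\mu \ll \nu$, this immediately yields $\mu \boxtimes \rho \ll \nu \boxtimes \rho$. The identity
\[
\sigma \boxtimes \rho \;=\; (1-t)\delta_0 + t\, \dil{t}(\sigma^{\boxplus 1/t}),
\]
recorded in the line preceding the corollary (a classical consequence of the Nica--Speicher description of $\sigma^{\boxplus 1/t}$ via free compression by a projection of trace $t$), then reduces the whole problem to verifying that the order $\ll$ is preserved under stripping a common atom of mass $1-t$ at $0$ and rescaling by $1/t$.

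To make this last step explicit, I would set $\mu_t := \dil{t}(\mu^{\boxplus 1/t})$ and $\nu_t := \dil{t}(\nu^{\boxplus 1/t})$ and observe that for every $s \in \R$,
\[
F_{\mu \boxtimes \rho}(s) \;=\; (1-t)\, \mathbf{1}_{[0,\infty)}(s) + t\, F_{\mu_t}(s),
\]
together with the analogous identity for $\nu$. The pointwise inequality $F_{\mu\boxtimes \rho}(s) \geq F_{\nu \boxtimes \rho}(s)$ then cancels the common indicator term appearing on both sides and, upon dividing by $t>0$, becomes $F_{\mu_t}(s) \geq F_{\nu_t}(s)$ for all $s \in \R$. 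This is precisely $\mu_t \ll \nu_t$, which is the claim.

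No substantial obstacle is expected here: the Bernoulli--compression identity for $\mu \boxtimes \rho$ is the only nontrivial input, and it is already supplied in the text preceding the corollary. Everything else is simply an unpacking of the CDF characterization of $\ll$ and the linearity of the CDF in the mixture $(1-t)\delta_0 + t\mu_t$.
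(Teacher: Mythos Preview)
Your proof is correct and follows exactly the same approach as the paper: apply property (ii) with $\rho = (1-t)\delta_0 + t\delta_1$ and use the identity $\sigma \boxtimes \rho = (1-t)\delta_0 + t\,\dil{t}(\sigma^{\boxplus 1/t})$. The only difference is that you spell out explicitly, via the CDF decomposition, why the order $\ll$ survives after removing the common atom $(1-t)\delta_0$ and dividing by $t$, whereas the paper leaves this as ``readily obtained''.
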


The goal of this section is to prove finite analogues of the previous results. First we define a partial order in $\pols_d(\rr)$. Recall that given a polynomial $p\in\PP_d(\rr)$ its roots are denoted by $\lambda_1(p)\geq \lambda_2(p) \geq \dots \geq \lambda_d(p)$. 

\begin{notation}[A partial order on polynomials]
Given $p,q\in\PP_d(\rr)$ we say that $p \ll q$ if the roots of $p$ are smaller than the roots of $q$ in the following sense:
\[\lambda_i(p) \leq \lambda_i(q) \qquad \text{for all }i=1,2,\dots,d.\]
\end{notation}
It is readily seen that
\[p \ll q \qquad \Longleftrightarrow \qquad \meas{p} \ll \meas{q}.\]

\begin{theorem}[$\boxplus_d$ and $\boxtimes_d$ preserve $\ll$]
\label{thm:delicioso}
Let $p,q\in\PP_d(\rr)$ such that $p\ll q$.
\begin{enumerate}
    \item If $r\in\pols_d(\rr)$ then $(p\boxplus_d r) \ll (q\boxplus_d r)$.
    \item If $r\in\polplus$ then $(p\boxtimes_d r) \ll (q\boxtimes_d r)$.
\end{enumerate}
\end{theorem}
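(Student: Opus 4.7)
The plan is to reduce both claims to the interlacing preservation result, Proposition \ref{lem:preservinginterlacingMult}, by connecting $p$ and $q$ through a finite chain of polynomials whose consecutive pairs satisfy the stronger interlacing relation $\preccurlyeq$. Given $p, q \in \pols_d(\R)$ with $p \ll q$, I would define a sequence $p = p^{(0)}, p^{(1)}, \dots, p^{(d)} = q$ in $\pols_d(\R)$, where $p^{(i)}$ is the monic polynomial whose roots in decreasing order are
\[
\lambda_1(q) \ge \lambda_2(q) \ge \cdots \ge \lambda_i(q) \ge \lambda_{i+1}(p) \ge \cdots \ge \lambda_d(p).
\]
This list is indeed correctly sorted because $\lambda_i(q) \ge \lambda_{i+1}(q) \ge \lambda_{i+1}(p)$, where the first inequality uses the ordering of the roots of $q$ and the second uses $p \ll q$; so the intermediate polynomials $p^{(i)}$ are well-defined.

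The key step is to verify that $p^{(i-1)} \preccurlyeq p^{(i)}$ for every $i=1,\dots, d$. These two polynomials share all of their roots except the $i$-th one, which jumps from $\lambda_i(p)$ up to $\lambda_i(q)$. The common neighbors of this root are $\lambda_{i-1}(q)$ above and $\lambda_{i+1}(p)$ below, and the chain
\[
\lambda_{i+1}(p) \le \lambda_i(p) \le \lambda_i(q) \le \lambda_{i-1}(q)
\]
(where the middle inequality is the hypothesis $p \ll q$ and the outer two are the root-orderings already mentioned) is precisely the interlacing condition for this single differing root. All of the remaining interlacing inequalities reduce to identities between equal roots.

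With the chain in place, Proposition \ref{lem:preservinginterlacingMult} yields
\[
p^{(i-1)} \boxplus_d r \preccurlyeq p^{(i)} \boxplus_d r \qquad \text{for } r \in \pols_d(\R),
\]
and analogously $p^{(i-1)} \boxtimes_d r \preccurlyeq p^{(i)} \boxtimes_d r$ when $r \in \pols_d(\R_{\geq 0})$. Since $\preccurlyeq$ trivially implies $\ll$ by reading off $k$-th roots, and $\ll$ is transitive, chaining the $d$ relations produces $p \boxplus_d r \ll q \boxplus_d r$ and $p \boxtimes_d r \ll q \boxtimes_d r$, which is exactly the conclusion.

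The argument is mostly bookkeeping once the chain is chosen, so I do not expect a serious technical obstacle. The one delicate point is picking the right order in which to promote the roots: moving from largest to smallest keeps every single-root jump inside the interlacing window $[\lambda_{i+1}(p), \lambda_{i-1}(q)]$ of its neighbors, whereas the opposite order can already fail to produce a correctly sorted root list in tiny examples (e.g.\ $p$ with roots $(1,0)$ and $q$ with roots $(5,3)$, where inserting $\lambda_2(q)=3$ first would leave the pair $(1,3)$, which is not decreasing).
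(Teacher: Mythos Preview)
Your proposal is correct and, in fact, cleaner than the paper's own argument. Both proofs ultimately rest on Proposition~\ref{lem:preservinginterlacingMult} (preservation of interlacing under $\boxplus_d$ and $\boxtimes_d$) together with the trivial implication $\preccurlyeq \Rightarrow \ll$ and transitivity of $\ll$. The difference lies in how one bridges from $p$ to $q$ through interlacing steps. The paper builds a \emph{continuous} path $p_t$ with roots $(1-t)\lambda_i(p)+t\lambda_i(q)$, shows that $p_t \preccurlyeq p_{t+\varepsilon}$ for $\varepsilon$ small enough (this step requires $p$ and $q$ to be simple, so that a positive minimal gap $m$ exists), and then handles the general case by approximating with simple polynomials and invoking continuity of the convolutions. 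Your discrete chain $p^{(0)},\dots,p^{(d)}$, promoting one root at a time from largest to smallest, achieves the same in exactly $d$ steps and works uniformly for all $p,q$, with no simplicity assumption and no limiting argument. The only price is the small combinatorial check that the chosen promotion order keeps each intermediate root list correctly sorted and each consecutive pair interlacing, which you carry out correctly. Your closing remark about the order of promotion is apt: the largest-to-smallest order is what makes the bookkeeping trivial.
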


\begin{proof}
The case of $p=q$ is clear, so below we assume that $p\neq q$.

First we assume that $p$ and $q$ are both simple. For every $t\in [0,1]$ we construct the polynomial $p_t\in\PP_d(\rr)$ with roots given by \[\lambda_i(p_t)=(1-t) \lambda_i(p)+t\lambda_i(q)\qquad \text{ for }i=1,\dots, d,\]
so that $(p_t)_{t\in[0,1]}$ is a continuous interpolation from $p_0=p$ to $p_1=q$. 
Consider the constant
\[M:=\max \left\{|\lambda_i(q)-\lambda_i(p)| : i=1,2,\dots,d\right\}>0.\]
Notice that the assumption, $\lambda_i(p) \leq \lambda_i(q)$, implies that for every $ 0\leq s\leq t\leq 1$ and $i=1,\dots, d$ it holds that 
\begin{equation}
\label{eq:pt.root.i}
0\leq \lambda_i(p_{t}) - \lambda_i(p_{s})= (t-s)(\lambda_i(q)-\lambda_i(p)) \le (t-s)M.
\end{equation}
We now consider the constant
\[m:=\min\left\{|\lambda_i(p)-\lambda_j(p)|, |\lambda_i(q)-\lambda_j(q)| : 1\le i < j \le d\right\}.\]
Notice that for every $t\in [0,1]$ and for $1\leq i<j\leq d$, one has the lower bound 
\begin{align*}
 \lambda_i(p_t)-\lambda_j(p_t)&= (1-t)\lambda_i(p)+t\lambda_i(q)-((1-t)\lambda_j(p)+t\lambda_j(q)) \\
 &= (1-t)(\lambda_i(p)-\lambda_j(p))+t (\lambda_i(q)-\lambda_j(q))\\
 &\geq (1-t) m+tm\\
 &=m.
\end{align*}
This bound, together with \eqref{eq:pt.root.i}, guarantees that for $0 \leq t \leq t+\varepsilon\leq 1$ and $\varepsilon < \frac{m}{M}$.
Then, we have
\[0\leq \lambda_{i+1}(p_{t+\varepsilon}) - \lambda_{i+1}(p_t)\leq \varepsilon M< m \leq \lambda_{i}(p_t) - \lambda_{i+1}(p_t).\]
By adding $\lambda_{i+1}(p_t)$, we obtain that 
\[ \lambda_{i+1}(p_{t})\leq \lambda_{i+1}(p_{t+\varepsilon})  \leq \lambda_{i}(p_{t}),\]
which means that the following interlacing inequality holds:
\[ p_t \preccurlyeq p_{t+\varepsilon} \qquad \text{for $\varepsilon < \frac{m}{M}$ and $0 \leq t \leq t+\varepsilon\leq 1$.}\]
Thus, the family of polynomials $(p_t)_{t\in[0,1]}$ is monotonically increasing, i.e., their roots increase as $t$ increases.

We now explain how to prove part (1).
Since additive convolution preserves interlacing, then 
\[ (p_t\boxplus_d r) \preccurlyeq (p_{t+\varepsilon} \boxplus_d r) \qquad \text{for }\varepsilon < \frac{m}{M}\text{ and  }0 \leq t \leq t+\varepsilon\leq 1.\]
In particular, we get that $(p_t\boxplus_d r)\ll (p_{t+\varepsilon} \boxplus_d r)$. 
In other words, for every $i=1,\dots, d$, the function $f_i:[0,1]\to \rr$ defined as $f_i(t)=\lambda_i(p_t\boxplus_d r)$ is increasing, and (1) follows from
\[\lambda_i(p\boxplus_d r)=\lambda_i(p_0\boxplus_d r) \leq \lambda_i(p_1\boxplus_d r) =\lambda_i(q\boxplus_d r).\]
Part (2) follows by a similar method.

For the case, when $p$ or $q$ have multiple roots, we can simply approximate them with polynomials that are simple.
For example, for $n\in\nn$ consider the polynomials $p_n$ and $q_n$ with roots \[\lambda_1(p)-\tfrac{1}{n}, \lambda_2(p)-\tfrac{2}{n},\dots, \lambda_d(p)-\tfrac{d}{n}\qquad\text{and}\qquad\lambda_1(q)-\tfrac{1}{n}, \lambda_2(q)-\tfrac{2}{n},\dots, \lambda_d(q)-\tfrac{d}{n},\]
respectively. Then $p_n, q_n$ are simple, and satisfy $p_n\ll q_n$,  $\lim_{n\to\infty} p_n=p$ and $\lim_{n\to\infty} q_n=q$. The general result then follows from the continuity of $\boxplus_d$ and $\boxtimes_d$.
\end{proof}

As a direct corollary we get an inequality for the derivatives of polynomials, which can be seen as the finite free analogue of Corollary \ref{cor:majorization powers}.
\begin{corollary}[Differentiation preserves $\ll$]
\label{cor:derivatives.monotonicity}
Given $p,q\in\pols_d(\rr)$ such that $p\ll q$, one has 
\[(\diff{k}{d} p) \ll (\diff{k}{d}q) \qquad \text{for }k=1,\dots,d.\]
\end{corollary}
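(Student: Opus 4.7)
The plan is to derive the corollary directly from Theorem~\ref{thm:delicioso}(2) by realising the differentiation operator $\diff{k}{d}$ as a finite free multiplicative convolution against a fixed non-negative polynomial. The polynomial I have in mind is
\[
h_{d,k}(x) := x^{d-k}(x-1)^k \in \polplus,
\]
whose normalized coefficients can be read off from the binomial expansion: they equal $\binom{k}{j}\big/\binom{d}{j}$ for $0\le j\le k$ and vanish for $j>k$.

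Using the coefficient formula \eqref{eq:coeffMultiConv} together with Lemma~\ref{lem:coefficients_derivatives} (which identifies $\coef{j}{d}(p)$ with $\coef{j}{k}(\diff{k}{d}p)$), a short coefficient-by-coefficient check yields the key identity
\[
p \boxtimes_d h_{d,k} \,=\, x^{d-k}\cdot \diff{k}{d} p \qquad \text{for every } p \in \pols_d.
\]
Once this identity is in hand, the corollary is almost immediate: from $p \ll q$ and $h_{d,k} \in \polplus$, Theorem~\ref{thm:delicioso}(2) gives
\[
x^{d-k}\cdot \diff{k}{d} p \,=\, p\boxtimes_d h_{d,k} \,\ll\, q\boxtimes_d h_{d,k} \,=\, x^{d-k}\cdot \diff{k}{d} q.
\]

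The last step is to cancel the common factor $x^{d-k}$ from this $\ll$-inequality, and this is the only point requiring a brief argument. Multiplying by $x^{d-k}$ merely inserts the same $d-k$ zero roots into each sorted root list, so that for every $t\in\R$ the count of roots exceeding $t$ differs from the corresponding count for $\diff{k}{d}p$ or $\diff{k}{d}q$ by the same additive constant ($0$ if $t\ge 0$ and $d-k$ if $t<0$). Thus the CDF-inequality characterising $\ll$ descends to $\diff{k}{d}p$ and $\diff{k}{d}q$, completing the argument. No step presents a serious obstacle; the substantive content is the identification $p\boxtimes_d h_{d,k} = x^{d-k}\diff{k}{d}p$, which is the lever that converts differentiation into a multiplicative convolution with a positive-rooted polynomial and lets the theorem do the work.
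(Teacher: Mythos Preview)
Your proof is correct and follows essentially the same approach as the paper: both use the polynomial $r(x)=x^{d-k}(x-1)^k\in\polplus$, invoke the identity $p\boxtimes_d r = x^{d-k}\,\diff{k}{d}p$, apply Theorem~\ref{thm:delicioso}(2), and then observe that the relation $x^{d-k}\,\diff{k}{d}p \ll x^{d-k}\,\diff{k}{d}q$ is equivalent to $\diff{k}{d}p \ll \diff{k}{d}q$. Your write-up simply supplies more detail at the final cancellation step than the paper does.
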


\begin{proof}
Fix a $k=1,\dots,d$ and recall that if $r(x)=x^{d-k}(x-1)^k$ then $p\boxtimes_d r(x) = x^{d-k} \diff{k}{d} p(x)$ and $q\boxtimes_d r= x^{d-k} \diff{k}{d} q$. By Theorem \ref{thm:delicioso} we get that $(x^{d-k}\diff{k}{d} p) \ll (x^{d-k}\diff{k}{d}q)$ and this is equivalent to $(\diff{k}{d} p) \ll (\diff{k}{d}q)$.
\end{proof}

Another interesting consequence of the theorem, which will be useful later, is that the map $\Phi_d: \pols_d(\rr_{\ge 0}) \to \pols_d(\rr_{\ge 0})$ from Equation \eqref{eq:phi.polynomial} also preserves the partial order.

\begin{proposition}
\label{prop.Phi.preserves.order}
Given $p,q\in\PP_d(\rr_{\geq 0})$ such that $p\ll q$, one has $\Phi_d(p)\ll \Phi_d(q)$.  
\end{proposition}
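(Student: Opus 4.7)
The plan is to exploit the defining limit $\Phi_d(p) = \lim_{n\to\infty}(p^{\boxtimes_d n})^{\langle 1/n \rangle}$ from \eqref{eq:phi.polynomial} and propagate the relation $\ll$ through each of the three operations involved: repeated finite free multiplicative convolution, the $1/n$-power, and the limit in $n$. The key input is Theorem \ref{thm:delicioso}(2), which says that multiplying on the right by any polynomial in $\polplus$ preserves $\ll$, together with the fact (from Section \ref{ssec:polynomials}, property (iii)) that $\boxtimes_d$ preserves $\polplus$, so that $p^{\boxtimes_d n}, q^{\boxtimes_d n} \in \polplus$ for all $n$.

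First I would show by induction on $n$ that $p^{\boxtimes_d n} \ll q^{\boxtimes_d n}$ for every $n \geq 1$. The base case $n=1$ is the hypothesis. For the inductive step, assuming $p^{\boxtimes_d n} \ll q^{\boxtimes_d n}$, Theorem \ref{thm:delicioso}(2) with $r = p \in \polplus$ gives
\[
p^{\boxtimes_d (n+1)} = p^{\boxtimes_d n} \boxtimes_d p \ \ll\ q^{\boxtimes_d n} \boxtimes_d p = p \boxtimes_d q^{\boxtimes_d n},
\]
while Theorem \ref{thm:delicioso}(2) applied to $p \ll q$ with $r = q^{\boxtimes_d n} \in \polplus$ gives
\[
p \boxtimes_d q^{\boxtimes_d n} \ \ll\ q \boxtimes_d q^{\boxtimes_d n} = q^{\boxtimes_d (n+1)}.
\]
Transitivity of $\ll$ closes the induction.

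Second, I would observe that the power operation $r \mapsto r^{\langle 1/n \rangle}$ on $\polplus$ preserves $\ll$: by Notation \ref{not:measure.transfations}, its effect is to replace each root $\lambda_i$ by $\lambda_i^{1/n}$, and since $x \mapsto x^{1/n}$ is monotonically increasing on $\R_{\geq 0}$, the ordered roots remain ordered in the same direction. Hence
\[
(p^{\boxtimes_d n})^{\langle 1/n \rangle} \ \ll\ (q^{\boxtimes_d n})^{\langle 1/n \rangle}\qquad\text{for every } n \geq 1.
\]

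Finally, I would pass to the limit $n \to \infty$. By \eqref{eq:phi.polynomial}, both sides converge (as polynomials, hence coefficient-wise, hence their ordered roots converge by continuity of roots on $\pols_d(\R)$) to $\Phi_d(p)$ and $\Phi_d(q)$ respectively. Since each of the finitely many inequalities $\lambda_i((p^{\boxtimes_d n})^{\langle 1/n \rangle}) \leq \lambda_i((q^{\boxtimes_d n})^{\langle 1/n \rangle})$ is preserved under passage to the limit, we conclude $\Phi_d(p) \ll \Phi_d(q)$. The argument is direct and I do not anticipate any real obstacle; the only point requiring a little care is making the induction step use both directions of Theorem \ref{thm:delicioso}(2) simultaneously via transitivity, and keeping track that all intermediate factors lie in $\polplus$ so that the hypothesis of that theorem applies.
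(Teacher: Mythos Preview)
Your proof is correct and follows essentially the same route as the paper: induction via two applications of Theorem \ref{thm:delicioso}(2) to get $p^{\boxtimes_d n} \ll q^{\boxtimes_d n}$, then passing to the limit defining $\Phi_d$. You are in fact a bit more explicit than the paper about why the $\langle 1/n\rangle$-power and the $n\to\infty$ limit preserve $\ll$, which the paper leaves implicit.
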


\begin{proof}
First notice that using induction we can prove  $p^{\boxtimes_d n}  \ll q^{\boxtimes_d n}$. Indeed, the base case $n=1$ is just our assumption and the inductive step follows from applying Theorem \ref{thm:delicioso} twice:
\[p^{\boxtimes_d (n+1)}=\left(p^{\boxtimes_d n}\boxtimes_d p\right) \ll\left(p^{\boxtimes_d n} \boxtimes_d q\right) \ll\left(q^{\boxtimes_d n} \boxtimes_d q\right)=q^{\boxtimes_d (n+1)}. \]
Letting $n\to\infty$, we conclude that
\[\Phi_d(p)=\lim_{n\to \infty}   \left(p^{\boxtimes_d n}\right)^{\langle {1}/{n}\rangle}\ll  \lim_{n\to \infty} \left(q^{\boxtimes_d n}\right)^{\langle {1}/{n}\rangle}=\Phi_d(q).\]
\end{proof}

%%%%%%%%%%%%%%%%%%%%%%%%%%
\section{Root bounds on Jacobi polynomials}
\label{sec:bounds.Jacobi}
%%%%%%%%%%%%%%%%%%%%%%%%%%

The purpose of this section is to show a uniform bound on the extreme roots of Jacobi polynomials, which will be crucial in our proof of Theorem \ref{thm:main}. The bound readily follows from a classic result of Moak, Saff and Varga \cite{moak1979zeros} after reparametrization of Jacobi polynomials, see Theorem \ref{thm:asymptotic.jacobi}.

The Jacobi polynomials have been studied thoroughly (notably by Szeg\"{o} \cite{szego1975orthogonal}) and the literature on them is extensive.
In this section we restrict ourselves to those results that are  necessary to prove the bound.
Notice that these polynomials are a particular case of the much larger class of hypergeometric polynomials.
These polynomials will be reviewed in detail later in Section \ref{ssec:hypergeometric}, providing plenty of examples using our main theorem once proved.

\subsection{Basic facts from a free probability perspective.}
We will adopt a slightly different point of view on Jacobi polynomials to emphasize the intuition coming from free probability. Specifically, by making a simple change of variables we can make the parameters of the polynomials coincide with those parameters of the measures obtained as a weak limit of the polynomials.
This provides interesting insights into the roles of these polynomials within finite free probability by drawing an analogy of the corresponding measures in free probability. 

Following the notation from \cite[Section 5]{mfmp2024hypergeometric}, if we fix a degree $d\in\nn$ and parameters $a\in \R\setminus\left\{\tfrac{1}{d},\tfrac{2}{d}, \dots, \tfrac{d-1}{d}\right\}$ and $b\in \R$, the \emph{(modified) Jacobi polynomial of parameters $a,b$} is the polynomial $\HGP{d}{b}{a}\in\pols_d$ with coefficients given by
\begin{equation}
\label{eq:coef.Jacobi}
\coef{j}{d}\left(\HGP{d}{b}{a}\right):= \frac{\falling{bd}{j}}{\falling{ad}{j}} \qquad \text{ for } j=1,\dots, d.
\end{equation}

Notice that with a simple reparametrization this new notation can be readily translated into the more common expression in terms of ${}_2F_1$ hypergeometric functions or in terms of the standard notation used for Jacobi polynomials $P^{(\alpha,\beta)}_d$.
In particular, this is the notation in the literature \cite{szego1975orthogonal,moak1979zeros}, from where we will import some results:
\begin{align*}
 \HGP{d}{b}{a}(x)&=\frac{(-1)^d\falling{b d}{d}}{\falling{ ad}{d}} \HGF{2}{1}{-d, ad-d+1}{bd -d+1}{x} &\text{\cite[Eq. (80)]{mfmp2024hypergeometric} }
\\&=\frac{d!}{\falling{ad}{d}}  \, P^{((-1+a-b)d,(b-1)d)}_{d}(2x-1).&\text{\cite[Eq. (27)]{mfmp2024hypergeometric}}
\end{align*}

With the standard notation $P^{(\alpha,\beta)}_d$, the classical Jacobi polynomials correspond to parameters $\alpha, \beta \in[-1,\infty)$ and are orthogonal on $[-1,1]$ with respect to the weight function $(1-x)^\alpha(1+x)^\beta$.
In particular, they have only simple roots, all contained in $[-1,1]$.
In our new notation, this means that
for $b>1-\tfrac{1}{d}$ and $a>b+1-\tfrac{1}{d}$
we obtain that 
\begin{equation*}
% \label{eq:Jacobi.roots}
\HGP{d}{b}{a}\in\pols_d([0,1]).
\end{equation*}

The derivatives of Jacobi polynomials are again Jacobi polynomials \cite[Eq.~(4.21.7)]{szego1975orthogonal}. Specifically, from Eq.~\eqref{eq:coef.Jacobi} and Lemma  \ref{lem:coefficients_derivatives} we know that for any integer $k\leq d$ and arbitrary parameters $a,b$, one has
\begin{equation}
\label{eq:derivatives.Jacobi}
\diff{k}{d}\HGP{d}{b}{a}= \HGP{k}{\frac{bd}{k}}{\frac{ad}{k}}.
\end{equation}

%%%%%%%%%%%%%%%%%%%%%%%%%%%%%%%%%
\subsection{Bernoulli and free-binomial distribution}
For non-standard parameters, the Jacobi polynomials may have multiple roots. We are particularly interested in polynomials with all roots equal to $0$ or $1$.
From \cite[Page 40]{mfmp2024hypergeometric} we know that
\begin{equation}
    \label{eq:Jacobi.zero.one}
   \HGP{d}{\frac{l}{d}}{1}(x)= x^{d-l}(x-1)^l\qquad\text{for }0\leq l \leq d
\end{equation}
are polynomials whose empirical root distribution follows a Bernoulli distribution. Clearly, if we let $d\to\infty$ and $l/d\to u\in[0,1]$ then we can approximate an arbitrary Bernoulli distribution $\beta_u:=(1-u)\delta_0 +u\delta_1$ with atoms in $\{0,1\}$ and probability $u$. 

We want to understand the behaviour of these polynomials after repeated differentiation. From \cite[Lemma 3.5]{AGP} we know that this is the same as studying multiplicative convolution of two of these polynomials. Moreover, using \eqref{eq:derivatives.Jacobi} we have that
\begin{equation} \label{eq:libro}
  \HGP{d}{\frac{k}{d}}{1} \boxtimes_d \HGP{d}{\frac{l}{d}}{1}(x) = x^{d-k} \diff{k}{d}\HGP{d}{\frac{l}{d}}{1}(x) = x^{d-k} \HGP{k}{\frac{l}{k}}{\frac{d}{k}}(x)  \qquad \text{for } k,l=1,\dots, d.
\end{equation}
By Theorem \ref{Thm:AGVP_HK_S}, when $d$ tends to infinity with $\tfrac{k}{d} \to t$ and $\tfrac{l}{d} \to u$, the limit of empirical root distributions is given by
\[
  \beta_t \boxtimes \beta_u = (1-t)\delta_0 + \dil{t}(\beta_u^{\boxplus 1/t}).
\]
The distribution $\dil{t}(\beta_u^{\boxplus 1/t})$ has been studied in connection to free probability under the name of free binomial distribution.
It is also related to the free beta distributions of Yoshida \cite{Yoshida}, see also \cite{SaitoYoshida}.

\begin{remark}
\label{rem:symmetric.roles}  
Since the expression on the left-hand side of \eqref{eq:libro} is symmetric, the roles of $k$ and $l$ can be interchanged. In particular, the largest roots of them coincide: 
\begin{equation}
\label{eq:largest.root.coincide}
\lambda_1\left(\HGP{d}{\frac{k}{d}}{1} \boxtimes_d \HGP{d}{\frac{l}{d}}{1}\right)=\lambda_1\left( \HGP{k}{\frac{l}{k}}{\frac{d}{k}}\right)=\lambda_1\left( \HGP{l}{\frac{k}{l}}{\frac{d}{l}}\right).
\end{equation}

\end{remark}

In the next section we will uniformly bound the roots of these polynomials.

%%%%%%%%%%%%%%%%%%%%%%%%%%%%%%%%%%%%%%%%%%%%%%%%
\subsection{Uniform bound on the extreme roots}

Finally, our main ingredient for bounding the roots is the well-understood limiting behaviour of the empirical root distribution of the Jacobi polynomials.
In particular, we will use the following result.

\begin{theorem}
\label{thm:asymptotic.jacobi}
Let us consider a degree sequence $(d_j)_{j\in\nn}$ and sequences $(a_j)_{j\in\nn}$ and $(b_j)_{j\in\nn}$ such that  
\begin{equation}
\label{eq:inequalities.Jacobi.parameters.standard}
b_j>1-\tfrac{1}{d_j}\qquad \text{and} \qquad a_j>b_j+1-\tfrac{1}{d_j} \qquad \text{for all }j\in\nn,
\end{equation}
\[\text{and with limits }\qquad \lim_{j\to\infty}a_j=a \qquad \text{and} \qquad \lim_{j\to\infty}b_j=b.\]   
Then, by \cite[page 42]{mfmp2024hypergeometric} the sequence of polynomials $p_j:= \HGP{d_j}{b_j}{a_j}$ weakly converges to the measure $\mu_{b,a}$ with density  
\begin{equation*}
    d\mu_{b,a}=\frac{a}{4\pi}\frac{\sqrt{(L_+-x)(x-L_-)}}{x(1-x)}dx, 
\end{equation*}
where $L_{+}$ and $L_{-}$ are the extremes of the support and depend on the parameters $a,b$:
\begin{equation}
\label{eq:extremes.of.support.jacobi}
L_{\pm}(a,b)=\left(\frac{\sqrt{a-b}\pm \sqrt{(a-1)b}}{a} \right)^2.
\end{equation}
Furthermore, \cite[Theorem 1]{moak1979zeros} guarantees the smallest and largest root converges to the extremes of the support. Namely, 
\begin{equation}
\label{eq:limit.min.max.root.Jacobi}
\lim_{j\to\infty} \lambda_{1}(p_j)=L_+(a,b) \qquad \text{and} \qquad \lim_{j\to\infty} \lambda_{d_j}(p_j)=L_-(a,b).
\end{equation}
\end{theorem}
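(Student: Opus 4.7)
Since the statement is essentially a reformulation of two classical results about the asymptotic zero distribution of Jacobi polynomials in the regime where both parameters grow linearly with the degree, the plan is to reduce to the standard normalization $P_d^{(\alpha,\beta)}$ and then invoke those results. The key translation identity is
\[
\HGP{d}{b}{a}(x)=\frac{d!}{\falling{ad}{d}}\,P_d^{((a-b-1)d,(b-1)d)}(2x-1),
\]
cited earlier from \cite{mfmp2024hypergeometric}. Under the hypotheses \eqref{eq:inequalities.Jacobi.parameters.standard}, the classical parameters $\alpha_j:=(a_j-b_j-1)d_j$ and $\beta_j:=(b_j-1)d_j$ satisfy $\alpha_j,\beta_j>-1$, so the polynomials $P_{d_j}^{(\alpha_j,\beta_j)}$ are genuine orthogonal polynomials with all roots simple and contained in $[-1,1]$.

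Next I would note that the normalized parameters have clear limits:
\[
\frac{\alpha_j}{d_j}\longrightarrow A:=a-b-1,\qquad \frac{\beta_j}{d_j}\longrightarrow B:=b-1.
\]
This places us exactly in the asymptotic regime studied by Moak, Saff and Varga \cite{moak1979zeros}, whose main theorem guarantees that the empirical root distribution of $P_{d_j}^{(\alpha_j,\beta_j)}$ converges weakly to an explicit absolutely continuous measure supported on an interval $[\ell_-,\ell_+]\subset[-1,1]$ and, moreover, that the extreme zeros of $P_{d_j}^{(\alpha_j,\beta_j)}$ converge to $\ell_-$ and $\ell_+$ respectively.

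The remaining work is purely a change of variable. The affine map $y\mapsto (y+1)/2$ sends the support $[\ell_-,\ell_+]$ to an interval $[L_-,L_+]\subset[0,1]$; pushing forward the Moak--Saff--Varga density gives a density on $[0,1]$, and a direct computation should verify that the resulting expression is the one displayed in the statement, with $L_\pm(a,b)$ as in \eqref{eq:extremes.of.support.jacobi}. The constant factor $d!/\falling{ad}{d}$ plays no role for root statistics, and the convergence of extreme roots in \eqref{eq:limit.min.max.root.Jacobi} follows immediately from \cite[Theorem 1]{moak1979zeros} combined with the same affine map.

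The only real obstacle is bookkeeping: verifying that the exact density formula and the closed-form endpoints $L_\pm(a,b)$ stated here match the formulas obtained by applying the affine change of variables to the Moak--Saff--Varga limit. This is a straightforward but slightly tedious algebraic check, most easily organized by computing $L_\pm$ as the images of the endpoints of the classical Kuijlaars--Van Assche/Moak--Saff--Varga arcsine-type support and then simplifying $\sqrt{(L_+-x)(x-L_-)}/(x(1-x))$ to match the pushforward density. Once this is in place, both conclusions of the theorem follow at once.
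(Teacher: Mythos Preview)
Your proposal is correct and matches the paper's approach: the paper does not give a separate proof of this theorem at all, instead treating it as a compilation of the two cited results (\cite[page 42]{mfmp2024hypergeometric} for the limiting density and \cite[Theorem 1]{moak1979zeros} for the extreme roots), with the reparametrization identity $\HGP{d}{b}{a}(x)=\tfrac{d!}{\falling{ad}{d}}P_d^{((a-b-1)d,(b-1)d)}(2x-1)$ doing the translation. Your write-up simply makes explicit the affine change of variables and the bookkeeping that the paper leaves to those references.
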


\begin{remark}
\label{rem:identities.alpha.jacobi}
For $t,u\in(0,1)$, we define the values 
\[\alpha_{\pm}(t,u):=L_{\pm}\left(\tfrac{1}{t},\tfrac{u}{t}\right)= \left(\sqrt{t(1-u)}\pm\sqrt{u(1-t)}\right)^2\]
in preparation for our next result.
Notice that this value is symmetric with respect to $t$ and $u$;
\[\alpha_{\pm}(t,u)=\alpha_{\pm}(u,t).\]
Furthermore, it is easily seen that the following identities hold:
\[\alpha_{\pm}(t,u)=\alpha_{\pm}(1-t,1-u),\]
\[\alpha_{\pm}(t,u)=1-\alpha_{\mp}(t,1-u).\]
% Of course, these three identities can be combined to get other identities.
\end{remark}

We are now ready to prove the main result of this section, which concerns the asymptotic behaviour of the extreme roots of $\HGP{d}{\frac{l}{d}}{1}$ after repeated differentiation.

\begin{lemma} \label{lem:bound}
  Fix $t, u\in (0,1)$ and let $(d_j)_{j=1}^{\infty}$ be a divergent sequence of integers and $(k_d)_{d=1}^\infty$, $(l_d)_{d=1}^\infty$ diagonal sequences with limit $t,u \in (0, 1)$, respectively.
  \begin{enumerate}
    \item If $t + u <1$ then the max roots $\lambda_1\left(\diff{k_j}{d_j}\HGP{d_j}{\frac{l_j}{d_j}}{1}\right)$ converge to $\alpha(t,u)_+$.
    \item If $t<u$ then the min roots $\lambda_{k_j}\left(\diff{k_j}{d_j} \HGP{d_j}{\frac{l_j}{d_j}}{1}\right)$ converge to $\alpha(t,u)_-$.
  \end{enumerate}
\end{lemma}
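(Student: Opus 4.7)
The plan is to reduce both statements to a single application of the classical asymptotic result for Jacobi polynomials recorded in Theorem \ref{thm:asymptotic.jacobi}. For part (1), I would first use formula \eqref{eq:libro} to identify $\diff{k}{d}\HGP{d}{l/d}{1}$ with $\HGP{k}{l/k}{d/k}$, together with the symmetry from Remark \ref{rem:symmetric.roles} which gives $\HGP{k}{l/k}{d/k}(x) = x^{k-l}\HGP{l}{k/l}{d/l}(x)$ whenever $k \geq l$. Passing to subsequences if necessary, I may assume that either $k_j \geq l_j$ eventually or $k_j \leq l_j$ eventually. In the first case, the largest root of $\diff{k_j}{d_j}\HGP{d_j}{l_j/d_j}{1}$ coincides with the largest root of the classical Jacobi polynomial $\HGP{l_j}{k_j/l_j}{d_j/l_j}$; the hypothesis $t+u<1$ guarantees $d_j \geq k_j + l_j$ for large $j$, which together with $k_j/l_j \geq 1$ yields the parameter inequalities \eqref{eq:inequalities.Jacobi.parameters.standard}. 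Theorem \ref{thm:asymptotic.jacobi} then gives $\lambda_1 \to L_+(1/u, t/u)$.

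In the complementary case $k_j \leq l_j$, the polynomial $\HGP{k_j}{l_j/k_j}{d_j/k_j}$ is itself a classical Jacobi polynomial (the same parameter check applies, again relying on $t+u<1$), and Theorem \ref{thm:asymptotic.jacobi} yields $\lambda_1 \to L_+(1/t, u/t)$. A direct computation from \eqref{eq:extremes.of.support.jacobi} shows that both values equal $\alpha_+(t,u)$, so every subsequence has a further subsequence converging to $\alpha_+(t,u)$, which establishes part (1).

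For part (2) I would use the reflection $\sigma(x) = 1-x$, which sends $\HGP{d}{l/d}{1}(x) = x^{d-l}(x-1)^l$ to $(-1)^d\HGP{d}{(d-l)/d}{1}(x) = (-1)^d x^l(x-1)^{d-l}$ and commutes, up to an overall sign, with the operator $\diff{k}{d}$. This translates the smallest root into a largest root through the identity
\[
\lambda_{k_j}\!\left(\diff{k_j}{d_j}\HGP{d_j}{l_j/d_j}{1}\right) \;=\; 1 - \lambda_1\!\left(\diff{k_j}{d_j}\HGP{d_j}{(d_j-l_j)/d_j}{1}\right).
\]
Since $t<u$ forces $t + (1-u) < 1$, part (1) applies to the pair $(t, 1-u)$, giving a limit of $\alpha_+(t, 1-u)$, and the identity $\alpha_-(t,u) = 1 - \alpha_+(t, 1-u)$ from Remark \ref{rem:identities.alpha.jacobi} finishes the argument. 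The main obstacle I anticipate is the careful bookkeeping of the case split in part (1) and the verification of the parameter inequalities of Theorem \ref{thm:asymptotic.jacobi} under $t+u<1$; the algebraic identification $L_+(1/u, t/u) = L_+(1/t, u/t) = \alpha_+(t,u)$ is a routine, though slightly tedious, computation using \eqref{eq:extremes.of.support.jacobi}.
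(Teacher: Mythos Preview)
Your proposal is correct and follows the paper's strategy closely: identify $\diff{k}{d}\HGP{d}{l/d}{1}$ with a Jacobi polynomial via \eqref{eq:libro} and Remark \ref{rem:symmetric.roles}, apply the Moak--Saff--Varga asymptotics of Theorem \ref{thm:asymptotic.jacobi} for the largest root, and reduce part (2) to part (1) through the reflection $x\mapsto 1-x$ together with the identity $\alpha_-(t,u)=1-\alpha_+(t,1-u)$ from Remark \ref{rem:identities.alpha.jacobi}.

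The only organizational difference is in the handling of the case $t=u$ in part (1). The paper splits into $t<u$, $t>u$, and $t=u$, treating the last one by a squeeze argument using the partial order $\ll$ (Corollary \ref{cor:derivatives.monotonicity}): it sandwiches $k_j$ between auxiliary diagonal sequences with ratio limits $t_1<t<t_2$ and lets $t_1,t_2\to t$. Your subsequence device---pass to a further subsequence on which either $k_j\geq l_j$ or $k_j\leq l_j$ holds throughout, then apply Theorem \ref{thm:asymptotic.jacobi} directly---avoids the partial order machinery altogether and is a bit more economical. It is valid because Theorem \ref{thm:asymptotic.jacobi}, as stated, does not exclude the boundary limit $b=1$, and the constraint $t+u<1$ with $t=u$ forces $a=1/t>2$, keeping $L_{+}$ safely inside $(0,1)$. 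Either route works; the paper's squeeze has the mild advantage of not relying on the Moak--Saff--Varga result at the edge of its parameter range.
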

\begin{proof}
We first prove (1) under the assumption $t<u$. By Equation \eqref{eq:derivatives.Jacobi}, we have
\begin{equation*} 
    \diff{k_j}{d_j}\HGP{d_j}{\frac{l_j}{d_j}}{1}=\HGP{k_j}{\frac{l_j}{k_j}}{\frac{d_j}{k_j}}.
\end{equation*}
Thus in Theorem \ref{thm:asymptotic.jacobi} we shall consider the case $a_j=\tfrac{d_j}{k_j}$ and $b_j=\tfrac{l_j}{k_j}$. Since 
\[\lim_{j\to\infty}\tfrac{k_j}{d_j}=t<u=\lim_{j\to\infty}\tfrac{l_j}{d_j},\] we have that $k_j< l_j$ for $j$ large enough.
Similarly, since $u+t<1$, we obtain that $k_j+l_j< d_j$ for $j$ large enough.
So both inequalities in hypothesis \eqref{eq:inequalities.Jacobi.parameters.standard} are satisfied for every $j$ larger than some $N$.
Since $\lim_{j\to \infty} a_j = \frac{1}{t}$ and $\lim_{j\to \infty} b_j= \frac{u}{t}$, then the max root of $\HGP{d_j}{\frac{l_j}{d_j}}{1}$  
converges to $\alpha_{+}\left(t,u\right)$ as desired.

In the case $t>u$, since \(\lambda_1\left(\diff{l_j}{d_j}\HGP{d_j}{\frac{k_j}{d_j}}{1}\right) = \lambda_1\left(\diff{k_j}{d_j}\HGP{d_j}{\frac{l_j}{d_j}}{1}\right)\) by Eq.~\eqref{eq:libro}, we have the same conclusion.
  
In the case $t=u$, let $t_1<t<t_2$ such that $t_2 + u<1$ and diagonal sequences $(k_j^{(1)})_{j=1}^\infty$, $(k_j^{(2)})_{j=1}^\infty$ with limit $t_1, t_2 \in (0, 1)$, respectively.
Then, by Corollary \ref{cor:derivatives.monotonicity},
\[
  \lambda_1\left(\diff{l_j}{d_j}\HGP{d_j}{\frac{k_j^{(1)}}{d_j}}{\overset{\ }{\scalebox{1.01}{1}}}\right) \le \lambda_1\left(\diff{l_j}{d_j}\HGP{d_j}{\frac{k_j}{d_j}}{1}\right) \le  \lambda_1\left(\diff{l_j}{d_j}\HGP{d_j}{\frac{k_j^{(2)}}{d_j}}{\overset{\ }{\scalebox{1.01}{1}}}\right)
  \]
  for large enough $j$ and 
  \[
    \lim_{j\to\infty} \lambda_1\left(\diff{l_j}{d_j}\HGP{d_j}{\frac{k_j^{(i)}}{d_j}}{\overset{\ }{\scalebox{1.01}{1}}}\right) = \alpha(t_i,u)_+
  \]
  for $i=1,2$.
  It implies 
  \[
    \alpha(t_1,u)_+ \le \liminf_{j\to\infty} \lambda_1\left(\diff{l_j}{d_j}\HGP{d_j}{\frac{k_j}{d_j}}{1}\right)  \le \limsup_{j\to\infty} \lambda_1\left(\diff{l_j}{d_j}\HGP{d_j}{\frac{k_j}{d_j}}{1}\right) \le \alpha(t_2,u)_+.
  \]
  Letting $t_1, t_2 \to t$, we obtain the result \( \lim_{j\to\infty} \lambda_1\left(\diff{l_j}{d_j}\HGP{d_j}{\frac{k_j}{d_j}}{1}\right) = \alpha(t,u)_+\).  

  For the proof of (2), note that $\HGP{d}{\frac{l}{d}}{1}(x) = (-1)^d \HGP{d}{\frac{d-l}{d}}{1}(1-x)$, which is just the changing of the variables.
  Then one has $\lambda_{k_j}\left(\diff{k_j}{d_j} \HGP{d_j}{\frac{l_j}{d_j}}{1}\right) = 1 - \lambda_{1}\left(\diff{k_j}{d_j} \HGP{d_j}{\frac{d_j-l_j}{d_j}}{1}\right)$.
  Hence, if $t<u$ (equivalently $t + (1-u) <1$) then
  \[
    \lim_{j\to\infty} \lambda_{k_j}\left(\diff{k_j}{d_j}\HGP{d_j}{\frac{l_j}{d_j}}{1}\right) = 1 - \alpha(t,1-u)_+ =\alpha(t,u)_-
  \]
  from the result of (1) and Remark \ref{rem:identities.alpha.jacobi}.
\end{proof}

\begin{lemma}\label{lem:support.fin.freeconv}
Let $(p_j)_{j\in\nn}$ be a sequence of polynomials converging to a measure $\mu\in\MM(\rr_{\ge 0})$.
For every diagonal sequence $(k_j)_{j \in \N}$ with limit $t \in (0, 1 - \mu(\{0\}))$, there exists $\epsilon >0$ such that \[
\liminf_{j\to\infty} \lambda_{k_j} (\diff{k_j}{d_j} p_j) \ge \epsilon.
\]
\end{lemma}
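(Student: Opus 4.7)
The plan is to exploit the partial order $\ll$ from Section~\ref{sec:partial.order} to bound $p_j$ from below by a simple Bernoulli-type polynomial with atoms at $0$ and at some well-chosen $\epsilon_0>0$, and then use the fact that differentiation preserves $\ll$ together with the uniform Jacobi bound of Lemma~\ref{lem:bound}\,(2).

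First I would use the hypothesis $t < 1 - \mu(\{0\})$ to fix some $u$ with $\mu(\{0\}) < u < 1-t$, so that $t + u < 1$. Since $F_\mu(0) = \mu(\{0\}) < u$ and $F_\mu$ is right-continuous at $0$, one can pick a continuity point $\epsilon_0 > 0$ of $F_\mu$ satisfying $F_\mu(\epsilon_0) < u$. Setting $l_j := \lceil u d_j\rceil$, the weak convergence $\meas{p_j} \weak \mu$ guarantees that for all large $j$, fewer than $l_j$ roots of $p_j$ lie in $[0,\epsilon_0]$, hence at least $d_j - l_j + 1$ roots of $p_j$ are strictly greater than $\epsilon_0$.

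Next, define the comparison polynomial
\[
q_j(x) \;:=\; x^{l_j}(x - \epsilon_0)^{d_j - l_j} \;=\; \dil{\epsilon_0}\HGP{d_j}{\frac{d_j-l_j}{d_j}}{1}(x),
\]
using the identification \eqref{eq:Jacobi.zero.one}. Comparing sorted roots directly shows $q_j \ll p_j$: for $1 \le i \le d_j - l_j$ one has $\lambda_i(q_j) = \epsilon_0 < \lambda_i(p_j)$, while for $d_j - l_j < i \le d_j$ one has $\lambda_i(q_j) = 0 \le \lambda_i(p_j)$. Since dilation commutes with $\diff{k_j}{d_j}$ and this operator preserves $\ll$ by Corollary~\ref{cor:derivatives.monotonicity}, one obtains
\[
\lambda_{k_j}(\diff{k_j}{d_j} p_j) \;\ge\; \lambda_{k_j}(\diff{k_j}{d_j} q_j) \;=\; \epsilon_0 \cdot \lambda_{k_j}\left(\diff{k_j}{d_j}\HGP{d_j}{\frac{d_j-l_j}{d_j}}{1}\right).
\]

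Finally, since $(d_j - l_j)/d_j \to 1-u$ and $k_j/d_j \to t < 1-u$, Lemma~\ref{lem:bound}\,(2) yields
\[
\lim_{j\to\infty} \lambda_{k_j}\left(\diff{k_j}{d_j}\HGP{d_j}{\frac{d_j-l_j}{d_j}}{1}\right) \;=\; \alpha_-(t,\, 1-u) \;=\; \left(\sqrt{tu} - \sqrt{(1-t)(1-u)}\right)^{2},
\]
which is strictly positive precisely because $t+u<1$. Any positive $\epsilon$ less than $\epsilon_0 \cdot \alpha_-(t,1-u)$ then satisfies the conclusion. The main subtlety lies in the opening step: the strict inequality $\mu(\{0\}) < 1 - t$ is exactly what allows us to simultaneously choose $u$ with $\mu(\{0\}) < u$ (so that few roots of $p_j$ cluster near zero) and $t + u < 1$ (so that the Jacobi min-root limit is strictly positive); the rest is a soft translation of weak convergence into a root-count bound combined with the estimates from Section~\ref{sec:bounds.Jacobi}.
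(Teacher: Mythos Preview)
Your proof is correct and follows essentially the same approach as the paper: compare $p_j$ via $\ll$ with a Bernoulli-type polynomial $x^{m}(x-a)^{d_j-m}$, push the comparison through $\diff{k_j}{d_j}$ using Corollary~\ref{cor:derivatives.monotonicity}, and invoke Lemma~\ref{lem:bound}(2) for the lower bound on the smallest root. The only cosmetic difference is a relabeling of the auxiliary parameter: the paper chooses $u\in(t,1-\mu(\{0\}))$ and puts $l_j\approx ud_j$ roots at the positive atom, whereas you choose $u\in(\mu(\{0\}),1-t)$ and put $d_j-l_j\approx(1-u)d_j$ roots there; your $1-u$ is the paper's $u$, and accordingly your limit $\alpha_-(t,1-u)$ matches the paper's $\alpha_-(t,u)$.
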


\begin{proof}
    Let us take $u \in (t, 1- \mu(\{0\})).$
    Since $\mu(\{0\}) < 1-u$, there exists $a>0$ such that $F_{\mu}(a) <1-u$ and $a$ is a continuous point of $F_\mu$.
Then we consider some sequence of integers $(l_j)_{j\in\N}$ such that $k_j\le l_j \le d_j$ and $l_j/d_j \to u$ and use it to construct a sequence of polynomials
\[
    q_j(x) := \dil{a}\HGP{d_j}{\frac{l_j}{d_j}}{1}(x) = x^{d_j-l_j}(x-a)^{l_j} \qquad \text{for } j\in\N.
\]

By construction we know $q_j \ll p_j$, and using Corollary \ref{cor:derivatives.monotonicity} we obtain that $\diff{k_j}{d_j} q_j \ll \diff{k_j}{d_j} p_j$. Finally, Lemma \ref{lem:bound} yields that $\lim \lambda_{k_j}(\diff{k_j}{d_j} q_j) = a \alpha(t,u)_- >0$. Thus, letting $\varepsilon:= a \alpha(t,u)_-$ we conclude that $\liminf \lambda_{k_j}(\diff{k_j}{d_j} p_j) \ge \epsilon$.
\end{proof}

\begin{corollary}\label{cor. supporfreeconv}
Let $\mu$ be a measure on $\mu \in \MM(\R_{\geq 0})\setminus\{ \delta_0\}$.    For any $t\in (0,1-\mu(\{0\}))$, there exists $\epsilon>0$ such that the measure $\mu^{\boxplus 1/t}$ is supported on $[\epsilon,\infty)$.
\end{corollary}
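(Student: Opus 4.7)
The plan is to dominate $\mu$ from below in the stochastic order by a simple Bernoulli-type measure and then exploit the monotonicity of the operation $\rho\mapsto\dil{t}(\rho^{\boxplus 1/t})$ recorded in Corollary \ref{cor:majorization powers}. Since the corresponding statement at the polynomial level (Lemma \ref{lem:support.fin.freeconv}) was proven by exactly this kind of Bernoulli comparison, we will mirror that argument on the level of measures.

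First, I would pick any $u\in(t,1-\mu(\{0\}))$. Because $\lim_{a\to 0^+}F_\mu(a)=\mu(\{0\})<1-u$ and $F_\mu$ has at most countably many discontinuities, there exists a continuity point $a>0$ of $F_\mu$ with $F_\mu(a)<1-u$. Define the two-atom measure
\[
\nu_a := (1-u)\delta_0 + u\,\delta_a \;=\; \dil{a}\beta_u, \qquad \text{where } \beta_u=(1-u)\delta_0+u\,\delta_1.
\]
A direct case-check on cumulative distribution functions (splitting on $x<0$, $0\le x<a$, and $x\ge a$) gives $F_{\nu_a}(x)\ge F_\mu(x)$ for every $x\in\R$, i.e.\ $\nu_a\ll\mu$.

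Next, I would apply Corollary \ref{cor:majorization powers} to conclude
\[
\dil{t}(\nu_a^{\boxplus 1/t})\ \ll\ \dil{t}(\mu^{\boxplus 1/t}).
\]
Since free cumulants scale multiplicatively under dilation, we have $\nu_a^{\boxplus 1/t}=\dil{a}(\beta_u^{\boxplus 1/t})$, and therefore
\[
\dil{t}(\nu_a^{\boxplus 1/t})\ =\ \dil{a}\bigl(\dil{t}(\beta_u^{\boxplus 1/t})\bigr).
\]
Now I would identify $\dil{t}(\beta_u^{\boxplus 1/t})$ as the weak limit given by Theorem \ref{Thm:AGVP_HK_S} applied to the compactly supported sequence $p_j=\HGP{d_j}{l_j/d_j}{1}=x^{d_j-l_j}(x-1)^{l_j}$ (which converges to $\beta_u$) together with any diagonal sequence $(k_j)$ satisfying $k_j/d_j\to t$. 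Since $t<u$, Lemma \ref{lem:bound}(2) yields $\lim_{j\to\infty}\lambda_{k_j}(\diff{k_j}{d_j}p_j)=\alpha_-(t,u)>0$, so the weak limit $\dil{t}(\beta_u^{\boxplus 1/t})$ is supported in $[\alpha_-(t,u),\infty)$. Consequently, $\dil{t}(\nu_a^{\boxplus 1/t})$ is supported in $[a\,\alpha_-(t,u),\infty)$.

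Finally, combining the majorization with this support bound, for every $x<a\,\alpha_-(t,u)$ one has
\[
F_{\dil{t}(\mu^{\boxplus 1/t})}(x)\ \le\ F_{\dil{t}(\nu_a^{\boxplus 1/t})}(x)\ =\ 0,
\]
so $\dil{t}(\mu^{\boxplus 1/t})$ is supported in $[a\,\alpha_-(t,u),\infty)$. Undoing the dilation, $\mu^{\boxplus 1/t}$ is supported in $[\epsilon,\infty)$ with $\epsilon:=a\,\alpha_-(t,u)/t>0$, as claimed. The only mildly delicate points are choosing the Bernoulli approximant $\nu_a$ carefully enough that $\nu_a\ll\mu$ holds and verifying the dilation--convolution identity $\dil{c}(\rho^{\boxplus s})=(\dil{c}\rho)^{\boxplus s}$; both are straightforward once the free-cumulant viewpoint is used.
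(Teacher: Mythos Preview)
Your proof is correct and follows the same strategy as the paper's proof of Lemma~\ref{lem:support.fin.freeconv}: dominate from below by a two-atom Bernoulli measure and invoke monotonicity of the map $\rho\mapsto\dil{t}(\rho^{\boxplus 1/t})$, together with the explicit Jacobi root bounds from Lemma~\ref{lem:bound}. The paper leaves the corollary unproved, and your argument is exactly the measure-level transcription of that lemma's polynomial-level proof, with Corollary~\ref{cor:majorization powers} playing the role of Corollary~\ref{cor:derivatives.monotonicity}.
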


%%%%%%%%%%%%%%%%%%%%%%%%%%
\section{Finite \texorpdfstring{$S$}\ -transform}
\label{sec:finite.Strans}
%%%%%%%%%%%%%%%%%%%%%%%%%%

In this section we introduce our main object, the finite $S$-transform. The development is parallel to that of Section \ref{ssec:prelim.S.transform}, but in the finite free probability framework. After defining the finite $S$-transform we also introduce a finite $T$-transform, which contains the same information but is more suitable to deal with certain cases. Then we study all the basic properties of the finite $S$-transform. These properties can be readily transferred to properties of the finite $T$-transform.

\subsection{Definition of Finite \texorpdfstring{$S$}\ -transform and Finite \texorpdfstring{$T$}\ -transform}
We are now ready to introduce the finite $S$-transform that was advertised in the introduction. 

\begin{definition}[Finite $S$-transform]
\label{def:finite.Strans}
Let $p \in \polplus$ such that $p(x)\neq x^d$ and $r$ the multiplicity of the root $0$ in $p$. We define the {finite $S$-transform} of $p$ as the map 
\[S_p^{(d)} : \left\{\left.-\frac{k}{d}\ \right|\ k =1,2,\dots, d-r \right\} \to \R_{> 0}\] such that
\begin{equation}
\label{eq:def.Strans}
\strans{p}{d}\left(-\frac{k}{d}\right) := \frac{\coef{k-1}{d}(p)}{\coef{k}{d}(p)} \quad \text{ for } \quad k=1,2, \dots, d-r.
\end{equation}
\end{definition}

\begin{remark}
\label{rem:finite.Strans}
Notice that since $p$ has all non-negative roots, then $\coef{k}{d}(p)>0$ if and only if $0\leq k \leq d-r$. Thus, the $S$-transform is well defined, and cannot be extended to $k=d-r+1,\dots,d$ as it would produce a division by 0. Similar to what was pointed out in Remark \ref{rem:S.transform.intuition}, it is useful for the intuition to allow the values $\strans{p}{d}\left(-\tfrac{k}{d}\right) := \infty$ when $k=d-r+1,\dots,d$.
This will be formally explained below when considering the modified $T$-transform.

Another natural question is why the domain is a discrete set.
Notice that in principle the domain of $S_p^{(d)}$ can be extended to the whole interval $\left(-1 + \tfrac{r}{d},0\right)$ so that it resembles more Voiculescu's $S$-transform.
There are several ways to achieve this: defining a continuous piece-wise linear function whose non-differentiable points are at $-\frac{k}{d}$; using Lagrange's interpolation\footnote{Seems like this option produces a function that is closer to Marcus' $m$-finite $S$-transform in \cite[Eq.~(14)]{Mar21}.
We believe this because looking at \cite[page 20]{Mar21}, the function $f_A$, that is related to Marcus' $S$-transform, is obtained as Lagrange interpolation in the same set. However, we were unable to devise a clear connection between our definition and Marcus' definition. } to define a polynomial with values $\frac{\coef{k-1}{d}(p)}{\coef{k}{d}(p)}$ at $-\frac{k}{d}$; or simply defining a step function.
Although the last option do not produce a continuous function, it seems to be in more agreement with the intuition coming from the $T$-transform. 

Since it is not clear which extension is the best, we simply opted to restrict the definition to the discrete set $\left\{\left.-\frac{k}{d}\ \right|\ k =1,2,\dots, d-r \right\}$. Recall that the values of the finite $S$-transform at this point  are enough to recover the coefficients. Indeed, we just need to multiply them:
\begin{equation}
\label{eq:Strans.to.coef}
\coef{k}{d}(p)=\frac{1}{\strans{p}{d}\left(-\frac{k}{d}\right)\strans{p}{d}\left(-\frac{k-1}{d}\right)\cdots \strans{p}{d}\left(-\frac{1}{d}\right)}.
\end{equation}
\end{remark}

Recall from Equation \eqref{eq:T.transform.def} that the $T$-transform in free probability is a shift of the multiplicative inverse of the $S$-transform. Alternatively, from Equation \eqref{eq:THMcharacterization}, the $T$-transform of $\mu\in\MM(\rr_{\geq 0})$ can be understood as the inverse of the cumulative distribution function of $\Phi(\mu)$. Since  the map $\Phi_d$ from \eqref{eq:phi.polynomial} is the finite version of map $\Phi$, we will use the last interpretation to define the finite counterpart of the $T$-transform.

\begin{definition}[Finite $T$-transform]
Given a polynomial $p\in\pols_d(\rr_{\geq 0})$ we define the \emph{finite $T$-transform} of $p$ as the function $\ttrans{d}{p}(t) :(0,1)\to\rr_{\geq 0}$ that is the
right-continuous inverse of $F_{\meas{\Phi_d(p)}}$ in $(0,1)$.
\end{definition}

\begin{remark}
\label{rem:finite.Ttransform}
Using \eqref{eq:phi.roots} it is straightforward that the finite $T$-transform can be explicitly defined in terms of the coefficients of $p$. Indeed, if $r$ is the multiplicity of the root $0$ of $p$, then $\ttrans{d}{p} :(0,1)\to\rr_{\geq 0}$ is the map such that
\begin{equation}
T^{(d)}_{p}(t) :=\begin{cases} 0 & \text{if } t\in \left(0, \tfrac{r}{d}\right) \vspace{2mm}\\
\dfrac{\coef{d-k+1}{d}(p)}{\coef{d-k}{d}(p)} & \text{if } t\in \left[\tfrac{k-1}{d}, \tfrac{k}{d}\right) \text{ for }k=r+1,\dots, d.
\end{cases}
\end{equation}

Then, it is also clear that 
\begin{equation}
\label{eq:relation.T.S.finite}
T^{(d)}_{p}\left( \frac{d-k}{d}\right)= \frac{1}{\strans{p}{d}\left(-\frac{k}{d}\right)}\qquad \text{for }k=1,\dots,d-r.   
\end{equation}
\end{remark}

\subsection{Basic properties of \texorpdfstring{$S$}\ -transform}
\label{sec:properties_of_Stransform}
We now turn to the study the basic properties of $\strans{p}{d}$.
All these basic facts are analogous to those of Voiculescu's $S$-transform.
Notice also that these properties can be readily adapted to fit the finite $T$-transform; after each result we will leave a brief comment pointing out the corresponding result.

First we notice that the $S$-transform is non-increasing and compute its extreme values.
\begin{proposition}[Monotonicity and extreme values]
\label{prop:monotone.extreme.values}
Let $p \in \polplus$ such that $p(x)\neq x^d$. Let $\lambda_1\geq \dots \geq\lambda_d$ be the roots of $p$, and denote by $r$ the multiplicity of the root $0$ in $p$, namely $\lambda_{d-r}>\lambda_{d-r+1}=0$. Then
\begin{enumerate}
    \item If $p(x)=(x-c)^d$ for some constant $c>0$, then 
\begin{equation}
\label{eq:Strans.dirac.constant}
\strans{p}{d}\left(-\frac{k}{d}\right)=\frac{1}{c}\qquad \text{for }k=1,\dots,d.
\end{equation}
\item Otherwise, whenever $p$ has at least two distinct roots, the finite $S$-transform is strictly decreasing:
\begin{equation}
\label{eq:Strans.decreasing}
\strans{p}{d}\left(-\frac{k+1}{d}\right) > \strans{p}{d}\left(-\frac{k}{d}\right)\qquad \text{for }k=1,\dots,d-r-1.
\end{equation}
\end{enumerate}
Moreover the smallest and largest values are, respectively,
\begin{equation}
\label{eq:Strans.extrem.values}
\strans{p}{d}\left(-\frac{1}{d}\right)=\left(\frac{1}{d}\sum_{j=1}^d \lambda_j\right)^{-1}\quad \text{and}\quad \strans{p}{d}\left(-\frac{d-r}{d}\right)=\frac{r+1}{d-r}\sum_{j=1}^{d-r} \frac{1}{\lambda_j}.    
\end{equation}
When $p$ has no roots at 0 (when $r=0$), we can identify the latter as the value at 0 of the Cauchy transform of the empirical root distribution of $p$
\begin{equation}
\label{eq:Strans.Cauchy}
\strans{p}{d}(-1)=\frac{1}{d}\sum_{j=1}^{d} \frac{1}{\lambda_j}=-G_{\meas{p}}(0).
\end{equation}
\end{proposition}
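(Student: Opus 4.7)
The plan is to handle the easy parts of the proposition by direct computation and to reserve Newton's inequality for the strict monotonicity in Part (2). For Part (1), $p(x)=(x-c)^d$ yields $\coef{k}{d}(p) = c^k$, so \eqref{eq:Strans.dirac.constant} is immediate from Definition \ref{def:finite.Strans}. For the extreme values \eqref{eq:Strans.extrem.values}, the computation at $-1/d$ is trivial since $\coef{0}{d}(p) = 1$ and $\coef{1}{d}(p)$ equals the first moment. For $-(d-r)/d$, I would exploit that any summand of $e_{d-r}$ or $e_{d-r-1}$ that hits a zero root drops out, leaving $e_{d-r}(\lambda) = \prod_{j=1}^{d-r}\lambda_j$ and $e_{d-r-1}(\lambda) = \bigl(\prod_{j=1}^{d-r}\lambda_j\bigr)\sum_{j=1}^{d-r}\lambda_j^{-1}$. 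Dividing and simplifying the ratio of binomial coefficients $\binom{d}{d-r}/\binom{d}{d-r-1} = (r+1)/(d-r)$ gives the announced formula. The identification \eqref{eq:Strans.Cauchy} with $-G_{\meas{p}}(0)$ in the case $r=0$ is then just the definition of the Cauchy transform evaluated at the origin.

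The content of Part (2) is the strict log-concavity inequality
\[
\coef{k}{d}(p)^2 > \coef{k-1}{d}(p)\,\coef{k+1}{d}(p) \qquad\text{for } 1\leq k \leq d-r-1,
\]
in which all three coefficients are manifestly strictly positive, since any index at most $d-r$ admits a subset of non-zero roots of the required size. My plan is to reduce this to the classical Newton inequality for positive reals. Factoring $p(x) = x^r q(x)$ with $q \in \pols_{d-r}(\rr_{>0})$, the relation $\coef{k}{d}(p) = \tbinom{d-r}{k}\tbinom{d}{k}^{-1}\coef{k}{d-r}(q)$ together with a short binomial manipulation should yield the factorization
\[
\frac{\coef{k}{d}(p)^2}{\coef{k-1}{d}(p)\coef{k+1}{d}(p)} = \left(1+\frac{r}{(d-r-k)(d-k+1)}\right) \cdot \frac{\coef{k}{d-r}(q)^2}{\coef{k-1}{d-r}(q)\,\coef{k+1}{d-r}(q)}.
\]

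The main obstacle is that Newton's inequality on $q$ can degenerate to equality precisely when the positive roots are all equal, so a small case split will be needed. If $r=0$ then $p = q$ is not of the form $(x-c)^d$ by hypothesis, so $q$ has at least two distinct positive roots and the strict Newton inequality for positive reals forces the second factor to strictly exceed $1$; the prefactor equals $1$ and the conclusion follows. If $r>0$ and $q \ne (x-c)^{d-r}$, the same argument applies to the second factor, while the prefactor is also strictly greater than $1$. In the remaining case $r>0$ and $q = (x-c)^{d-r}$ (which is possible even when $p \ne (x-c)^d$, e.g., $p(x) = x^r (x-c)^{d-r}$) the second factor equals $1$, but now the combinatorial prefactor carries the strictness because $r>0$ and $1\leq k \leq d-r-1$ force $(d-r-k)(d-k+1)$ to be a finite positive quantity. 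In every case the product exceeds $1$, proving the strict monotonicity claimed in \eqref{eq:Strans.decreasing}.
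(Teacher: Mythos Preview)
Your proof is correct and follows the same route as the paper: direct computation for Part~(1) and the extreme values, and Newton's inequality for the strict monotonicity in Part~(2). The paper's treatment of Part~(2) is a one-line appeal to Newton's inequality applied directly to the normalized elementary symmetric functions of all $d$ roots (zeros included), tacitly using that the inequality is strict once the roots are not all equal and $\coef{k+1}{d}(p)>0$; your factorization $p=x^r q$ together with the explicit prefactor $1+\tfrac{r}{(d-r-k)(d-k+1)}$ is a clean, self-contained way of unpacking precisely that strictness, and the case split you describe covers exactly the situation (namely $p(x)=x^r(x-c)^{d-r}$ with $r>0$) where Newton's inequality on $q$ alone degenerates to equality.
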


\begin{proof}
The coefficients of $p(x)=(x-c)^d$ are of the form $\coef{k}{d}(p)=c^k$, which directly implies Equation \eqref{eq:Strans.dirac.constant}.

Assertion \eqref{eq:Strans.decreasing} follows from Newton's inequality:
\[
  \strans{p}{d}\left(-\frac{k+1}{d}\right) = \frac{\coef{k}{d}(p)}{\coef{k+1}{d}(p)} > \frac{\coef{k-1}{d}(p)}{\coef{k}{d}(p)} = \strans{p}{d}\left(-\frac{k}{d}\right)\qquad \text{for }k=1,\dots,d-r-1.
\]
This in turn implies that the smallest and largest values of $\strans{p}{d}$ are attained at $-\frac{1}{d}$ and $-\frac{d-r}{d}$, respectively. Using the definition we can compute these values explicitly:
\begin{align*}
\strans{p}{d}\left(-\frac{1}{d}\right)&=\frac{\coef{0}{d}(p)}{\coef{1}{d}(p)}=\left(\frac{1}{d}\sum_{j=1}^d \lambda_j\right)^{-1}\quad \text{and} \\
\strans{p}{d}\left(-\frac{d-r}{d}\right)&=\frac{\coef{d-r-1}{d}(p)}{\coef{d-r}{d}(p)}=\frac{\binom{d}{d-r}}{\binom{d}{d-r-1}} \sum_{j=1}^{d-r}\frac{\lambda_1\cdots\lambda_{j-1}\lambda_{j+1}\cdots \lambda_{d-r}}{\lambda_1\cdots \lambda_{d-r}}=\frac{r+1}{d-r}\sum_{j=1}^{d-r} \frac{1}{\lambda_j}.    
\end{align*}
\end{proof}

\begin{remark}
\label{rem.finite.T.trans.basic.facts}
The previous result implies that the $T$-transform is non-decreasing step function with smallest value 
\begin{equation*}
\lim_{t\to 0}\ttrans{p}{d}(t)=\begin{cases}
\left(\frac{1}{d}\sum_{i=1}^{d} \frac{1}{\lambda_i}\right)^{-1} & \text{if }r=0,  \\ 
0 & \text{if } r>0,
\end{cases}    
\end{equation*}
and largest value
\[\lim_{t\to 1}\ttrans{p}{d}(t)=\frac{1}{d}\sum_{j=1}^d \lambda_j.\] 
\end{remark}

A fundamental property of Voiculescu's $S$-transform is that it is multiplicative with respect to $\boxtimes$. The analogous property for our finite $S$-transform is an easy consequence of the fact that the coefficients of the polynomial are multiplicative with respect to $\boxtimes_d$.

\begin{proposition}\label{lem:S_multiplicative}
Let $p,q \in \pols_d(\R_{\geq 0})$ with $p(x)\neq x^d\neq q(x)$ and let $r,s$ be the multiplicities of root $0$ in $p,q$, respectively. Then we have
\[
\strans{p\boxtimes_d q}{d} \left(-\frac{k}{d}\right)= \strans{p}{d}  \left(-\frac{k}{d}\right)\strans{q}{d} \left(-\frac{k}{d}\right) \qquad \text{for all }k=1,2\dots ,d-\max\{r,s\}.
\]
\end{proposition}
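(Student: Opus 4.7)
The proof plan is essentially a direct computation from the definitions, with the only subtlety being to verify that the domain of $\strans{p\boxtimes_d q}{d}$ contains $-k/d$ for the claimed range of $k$.

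First, I would recall from Definition \ref{def:finite.free}, Equation \eqref{eq:coeffMultiConv}, that the normalized coefficients are multiplicative under $\boxtimes_d$, namely
\[
\coef{k}{d}(p\boxtimes_d q) = \coef{k}{d}(p)\,\coef{k}{d}(q) \qquad \text{for } k=0,1,\dots,d.
\]
Next, I would determine the multiplicity of $0$ as a root of $p\boxtimes_d q$. Since $p,q\in\pols_d(\rr_{\geq 0})$, all their normalized coefficients are non-negative, and $\coef{k}{d}(p)>0$ precisely when $k\leq d-r$, while $\coef{k}{d}(q)>0$ precisely when $k\leq d-s$. By the multiplicativity displayed above, $\coef{k}{d}(p\boxtimes_d q)>0$ if and only if $k\leq d-\max\{r,s\}$, so the multiplicity of $0$ in $p\boxtimes_d q$ equals $\max\{r,s\}$. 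In particular, by Definition \ref{def:finite.Strans}, the finite $S$-transform $\strans{p\boxtimes_d q}{d}$ is well-defined at the points $-k/d$ with $k=1,2,\dots,d-\max\{r,s\}$, matching the claimed range.

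Finally, for each such $k$ both denominators $\coef{k}{d}(p)$ and $\coef{k}{d}(q)$ are strictly positive, so I can compute
\[
\strans{p\boxtimes_d q}{d}\!\left(-\frac{k}{d}\right)
= \frac{\coef{k-1}{d}(p\boxtimes_d q)}{\coef{k}{d}(p\boxtimes_d q)}
= \frac{\coef{k-1}{d}(p)\,\coef{k-1}{d}(q)}{\coef{k}{d}(p)\,\coef{k}{d}(q)}
= \strans{p}{d}\!\left(-\frac{k}{d}\right)\strans{q}{d}\!\left(-\frac{k}{d}\right),
\]
which is the desired identity. There is no real obstacle here; the only point worth emphasizing is the bookkeeping on the multiplicity at $0$, which is the finite-dimensional analogue of the identity $[\mu\boxtimes \nu](\{0\})=\max\{\mu(\{0\}),\nu(\{0\})\}$ used in the free probability setting.
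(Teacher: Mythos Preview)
Your proof is correct and follows essentially the same direct computation as the paper, which simply applies the definition of the finite $S$-transform together with the multiplicativity of the coefficients under $\boxtimes_d$. If anything, you are slightly more careful than the paper in explicitly verifying that the multiplicity of $0$ in $p\boxtimes_d q$ is $\max\{r,s\}$, whereas the paper leaves this implicit.
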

\begin{proof}
From the definitions of finite $S$-transform \eqref{eq:def.Strans} and finite multiplicative convolution it follows that
\[
\strans{p\boxtimes_d q}{d} \left(-\frac{k}{d}\right)= \frac{\coef{k-1}{d}(p\boxtimes_d q)}{\coef{k}{d}(p\boxtimes_d q)} =\frac{\coef{k-1}{d}(p)\coef{k-1}{d}(q)}{\coef{k}{d}(p)\coef{k}{d}(q)} =\strans{p}{d}  \left(-\frac{k}{d}\right)\strans{q}{d} \left(-\frac{k}{d}\right)
\]
for $k=1,2\dots, d-\max\{r,s\}$.
\end{proof}

Notice that in terms of the finite $T$-transform we do not need to worry about excluding $x^d$ or the multiplicity of the root 0. For $p,q \in \pols_d(\R_{\geq 0})$ one has that 
\begin{equation} \label{eq:rico}
    \ttrans{p\boxtimes_d q}{d} (t)= \ttrans{p}{d}(t)\ttrans{q}{d}(t) \qquad \text{for all }t\in(0,1).
\end{equation}

\begin{remark}
\label{rem:Marcus}
Let us mention that Marcus \cite[Lemma 4.10]{Mar21} defined a modified finite $S$-transform  and proves its multiplicativity, similar to Proposition \ref{lem:S_multiplicative}.
However, while he relates his finite $S$-transform with Voiculescu's $S$-transform, to the best of our knowledge, this cannot be used to connect $\boxtimes_d$ with $\boxtimes$.
\end{remark}

We present here a direct application of the finite $S$-transform (or $T$-transform which is easier to handle in the situation below).
\begin{proposition}
    Let $p, q \in \pols_d (\R_{\ge 0})$ and assume $p \boxtimes_d q(x) = (x-c)^d$ for some $c \in \R_{\ge 0}$.
    If $c=0$, then $p(x) = x^d$ or $q(x) = x^d$.
    If $c>0$, there exist $a, b \in \R_{> 0}$ such that $ab=c$, $p(x) = (x-a)^d$, and $q(x) = (x-b)^d$.
\end{proposition}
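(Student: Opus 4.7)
The proof splits naturally into two cases according to the value of $c$, and I would dispatch the degenerate case directly from the coefficients before bringing in the finite $S$-transform for the main case.

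\textbf{Case $c=0$.} The hypothesis $p \boxtimes_d q = x^d$ forces $\coef{k}{d}(p)\coef{k}{d}(q) = 0$ for every $k = 1, \ldots, d$. Specializing to $k=1$ yields $\coef{1}{d}(p)\coef{1}{d}(q) = 0$, so without loss of generality $\coef{1}{d}(p) = 0$. Since $\coef{1}{d}(p) = \frac{1}{d}\sum_{i=1}^d \lambda_i(p)$ is an average of nonnegative numbers, every root of $p$ must be zero, i.e.\ $p(x) = x^d$.

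\textbf{Case $c>0$.} Since $\coef{d}{d}(p\boxtimes_d q) = c^d > 0$ and $\coef{d}{d}$ factors through $\boxtimes_d$, neither $p$ nor $q$ has a root at $0$; hence $p, q \in \pols_d(\R_{>0})$ and both finite $S$-transforms are well-defined on the entire discrete domain $\{-k/d : k = 1, \ldots, d\}$. Combining Proposition \ref{lem:S_multiplicative} (multiplicativity) with Proposition \ref{prop:monotone.extreme.values}(1) applied to $(x-c)^d$, I obtain
\[
\strans{p}{d}\!\left(-\tfrac{k}{d}\right)\strans{q}{d}\!\left(-\tfrac{k}{d}\right) \;=\; \strans{(x-c)^d}{d}\!\left(-\tfrac{k}{d}\right) \;=\; \tfrac{1}{c}\qquad \text{for all } k = 1, \ldots, d.
\]
The core observation is then that the product of two positive non-increasing sequences can be constant only if each sequence is individually constant: if, say, $\strans{p}{d}$ were not constant, Proposition \ref{prop:monotone.extreme.values}(2) would make it strictly decreasing, and since $\strans{q}{d} > 0$ is also non-increasing, the product would strictly decrease at some pair of consecutive points, contradicting the constant value $1/c$. (When $d=1$ the claim is already trivial, as every polynomial in $\pols_1(\R_{\ge 0})$ is linear.)

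Thus both $\strans{p}{d}$ and $\strans{q}{d}$ are constant on $\{-1/d, \ldots, -1\}$. By the contrapositive of Proposition \ref{prop:monotone.extreme.values}(2), this forces $p$ and $q$ to have a single repeated root, so $p(x) = (x-a)^d$ and $q(x) = (x-b)^d$ for some $a, b > 0$, whose $S$-transforms are $1/a$ and $1/b$ respectively by Proposition \ref{prop:monotone.extreme.values}(1). The identity $\frac{1}{a}\cdot\frac{1}{b} = \frac{1}{c}$ yields $ab = c$, completing the proof. The only mildly subtle point is the monotonicity comparison in the key observation, which is really just the fact that a nontrivial strict decrease in one factor cannot be compensated by a non-increasing second factor to produce a constant product.
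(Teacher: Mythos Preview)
Your proof is correct and follows essentially the same approach as the paper. The only cosmetic difference is that the paper phrases the $c>0$ case using the finite $T$-transform (which is non-decreasing with product identically $c$) rather than the finite $S$-transform (non-increasing with product identically $1/c$); since these two transforms are reciprocals of each other, the monotonicity arguments are exact duals.
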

\begin{proof}
    If $c=0$, we have $\coef{1}{d}(p) \coef{1}{d}(q) = 0$ and hence $\coef{1}{d}(p)= 0$ or $\coef{1}{d}(q)= 0$.
    Since $p$ and $q$ have only non-negative roots, it means $p(x) = x^d$ or $q(x) = x^d$.

    If $c>0$, we have $\ttrans{p}{d}(t)\ttrans{q}{d}(t) \equiv c$.
    Since the finite $T$-transform is non-negative and weakly increasing, $\ttrans{p}{d}(t) \equiv a$ and $\ttrans{q}{d}(t) \equiv b$ for some $a,b >0$ such that $ab=c$.
    It implies $p(x) = (x-a)^d$ and $q(x) = (x-b)^d$.    
\end{proof}
Note that similar arguments lead to the corresponding result for free multiplicative convolution. Now we show a formula for the reversed polynomial.

\begin{proposition}[Reversed polynomial]
\label{prop:reversed.polynomial}
Given a polynomial $p\in \pols(\rr_{>0})$ and its reversed polynomial $p^\reversed$, their $S$-transforms satisfy the relation
\begin{equation}
\label{eq:Strans.reversed}
\strans{p}{d}\left(-\frac{k}{d}\right) \strans{p^\reversed}{d} \left(-\frac{d+1-k}{d}\right)=1,\qquad \text{for }k=1,\dots, d.
\end{equation}
\end{proposition}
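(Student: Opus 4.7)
The proof plan is a direct computation using the definition of the finite $S$-transform and the explicit formula for the coefficients of the reversed polynomial given in \eqref{eq:coef_pinverse}. Since $p\in\pols_d(\rr_{>0})$ has all strictly positive roots, every coefficient $\coef{k}{d}(p)$ is strictly positive (including $\coef{d}{d}(p)>0$), so the reversed polynomial $p^\reversed$ is well-defined and also has all strictly positive roots; in particular $r=0$ in Definition \ref{def:finite.Strans} for both $p$ and $p^\reversed$, so the quantities on the left-hand side of \eqref{eq:Strans.reversed} make sense for every $k=1,\dots,d$.

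First I would expand the second factor using Definition \ref{def:finite.Strans} applied to $p^\reversed$ at the point $-(d+1-k)/d$:
\[
\strans{p^\reversed}{d}\!\left(-\frac{d+1-k}{d}\right)=\frac{\coef{d-k}{d}(p^\reversed)}{\coef{d+1-k}{d}(p^\reversed)}.
\]
Next I would substitute the formula \eqref{eq:coef_pinverse}, giving
\[
\coef{d-k}{d}(p^\reversed)=\frac{\coef{k}{d}(p)}{\coef{d}{d}(p)}\qquad\text{and}\qquad \coef{d+1-k}{d}(p^\reversed)=\frac{\coef{k-1}{d}(p)}{\coef{d}{d}(p)},
\]
so that the factors $\coef{d}{d}(p)$ cancel and one obtains
\[
\strans{p^\reversed}{d}\!\left(-\frac{d+1-k}{d}\right)=\frac{\coef{k}{d}(p)}{\coef{k-1}{d}(p)}.
\]

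Finally, multiplying this by $\strans{p}{d}(-k/d)=\coef{k-1}{d}(p)/\coef{k}{d}(p)$ gives $1$, proving \eqref{eq:Strans.reversed}. There is no real obstacle here: the statement is essentially a bookkeeping check that \eqref{eq:coef_pinverse} is exactly the right normalization to make consecutive coefficient ratios invert under reversal. The only subtlety worth flagging in the write-up is the verification that $\coef{d}{d}(p)\ne 0$ (ensured by the assumption $p\in\pols_d(\rr_{>0})$), which is what makes $p^\reversed$ well-defined and allows the division by $\coef{d}{d}(p)$ to be canceled.
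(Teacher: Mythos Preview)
Your proof is correct and follows essentially the same approach as the paper: both compute $\strans{p^\reversed}{d}\!\left(-\frac{d+1-k}{d}\right)$ directly from Definition~\ref{def:finite.Strans}, substitute formula~\eqref{eq:coef_pinverse} for the coefficients of $p^\reversed$, and observe that the result is the reciprocal of $\strans{p}{d}(-k/d)$.
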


\begin{proof}
By formula \eqref{eq:coef_pinverse}, the coefficients of the reversed polynomial are
\[\coef{d-k}{d}(p^\reversed)
=\frac{\coef{k}{d} (p)} {\coef{d}{d} (p)}.\]
In terms of the $S$-transform, this yields 
\[
\strans{p^\reversed}{d}\left(-\frac{d+1-k}{d}\right) =\frac{\coef{d-k}{d}(p^\reversed)}{\coef{d+1-k}{d}(p^\reversed)}=\frac{\coef{k}{d} (p)} {\coef{k-1}{d} (p)} = \frac{1}{S_p^{(d)}\left(-\frac{k}{d}\right)}
\]
as desired.
\end{proof}
 
We then continue to study some other properties of the finite $S$-transform in connection to the behaviour under taking derivatives and shifts of polynomials. We can study these operations using the Cauchy transform of the empirical root distribution of the polynomials.
These properties can also be understood as finite counterparts of known facts in free probability, specifically Lemma \ref{lem:original_Strans}.

\begin{lemma}[Derivatives and shifts]\label{lem:Strans_conditions}
Consider $p\in \polplus$ and let $r$ be the multiplicity of root $0$ in $p$.
\begin{enumerate}[\rm (1)]
\item For any $k \le l \le d-r$, we have
\[
\strans{p}{d}\left(-\frac{k}{d}\right) = \strans{\diff{l}{d}p}{l}\left(-\frac{k}{l}\right).
\]

\item Given $a>0$, we obtain
\[
\strans{\shift{a}p}{d}\left(-\frac{k}{d}\right) = -G_{\meas{\diff{k}{d}p}}(-a) \qquad \text{for } 1\le k \le d.
\]
If $r=0$, then this can be extended to $a=0$:
\[
\strans{p}{d}\left(-\frac{k}{d}\right) = -G_{\meas{\diff{k}{d}p}}(0).
\]
\end{enumerate}
\end{lemma}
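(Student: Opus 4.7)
The identity is a direct consequence of Lemma \ref{lem:coefficients_derivatives}. By definition of the finite $S$-transform,
\[
\strans{\diff{l}{d}p}{l}\left(-\frac{k}{l}\right) = \frac{\coef{k-1}{l}(\diff{l}{d}p)}{\coef{k}{l}(\diff{l}{d}p)},
\]
and since $k \leq l \leq d-r$, the denominator is nonzero and Lemma \ref{lem:coefficients_derivatives} lets us replace $\coef{j}{l}(\diff{l}{d}p)$ by $\coef{j}{d}(p)$ for $j = k-1,k$. The right-hand side then becomes $\coef{k-1}{d}(p)/\coef{k}{d}(p) = \strans{p}{d}(-k/d)$, as desired. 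This is essentially a one-line verification.

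\textbf{Plan for Part (2).} The strategy is to rewrite everything in terms of the polynomial $q := \diff{k}{d}p \in \pols_k$ by exploiting two compatibilities: differentiation commutes with shifts, and coefficients are preserved under $\diff{\cdot}{\cdot}$. First, I would observe that $[\shift{a} p]^{(d-k)}(x) = p^{(d-k)}(x-a) = [\shift{a}(p^{(d-k)})](x)$, and therefore $\diff{k}{d}(\shift{a}p) = \shift{a}(\diff{k}{d}p) = \shift{a}q$. Next, applying Lemma \ref{lem:coefficients_derivatives} to $\shift{a}p$, the coefficients of $\shift{a}p$ in degrees $k-1$ and $k$ coincide with those of $\shift{a}q$, so
\[
\strans{\shift{a}p}{d}\left(-\frac{k}{d}\right) = \frac{\coef{k-1}{k}(\shift{a}q)}{\coef{k}{k}(\shift{a}q)}.
\]

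\textbf{Plan for Part (2), continued.} Now I would compute the right-hand side directly from the roots of $\shift{a}q$, which are $\lambda_i(q) + a$ for $i=1,\dots,k$. Since $a>0$ these are strictly positive, so both coefficients are nonzero. By the definition of $\coef{k}{k}$ and $\coef{k-1}{k}$ as normalized elementary symmetric polynomials,
\[
\frac{\coef{k-1}{k}(\shift{a}q)}{\coef{k}{k}(\shift{a}q)} = \frac{\frac{1}{k}\sum_{i=1}^k \prod_{j\neq i}(\lambda_j(q)+a)}{\prod_{i=1}^k (\lambda_i(q)+a)} = \frac{1}{k}\sum_{i=1}^k \frac{1}{\lambda_i(q)+a} = -G_{\meas{q}}(-a),
\]
which is the claimed identity. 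For the final extension to $a=0$ under the assumption $r=0$, I note that $\shift{0}p = p$ and that Rolle's theorem (interlacing) guarantees that $\diff{k}{d}p$ has only strictly positive roots whenever $p$ does, so $G_{\meas{\diff{k}{d}p}}(0)$ is well defined and the same computation goes through.

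\textbf{Expected obstacles.} None of the steps is technically hard; the main thing is to be careful about domains of definition and to use the correct normalization of the coefficients. The only nontrivial point is the extension to $a=0$, which requires the Rolle/interlacing observation to justify that $0$ is not a root of $\diff{k}{d}p$.
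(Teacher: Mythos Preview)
Your proposal is correct and follows essentially the same approach as the paper: Part (1) is proved identically via Lemma \ref{lem:coefficients_derivatives}, and Part (2) via the commutation $\diff{k}{d}\circ\shift{a}=\shift{a}\circ\diff{k}{d}$ together with the coefficient-preservation lemma. The only cosmetic difference is that the paper packages your direct computation of $\coef{k-1}{k}(\shift{a}q)/\coef{k}{k}(\shift{a}q)$ as an appeal to the already-established identity $\strans{q}{k}(-1)=-G_{\meas{q}}(0)$ from Proposition \ref{prop:monotone.extreme.values}, whereas you redo that elementary-symmetric calculation inline; your added Rolle/interlacing remark for the $a=0$ case is a welcome clarification of a point the paper leaves implicit.
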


\begin{remark}
The equation in part (1) has the following analogue in free probability
\begin{equation*} \label{eq:sora}
S_{\dil{u}(\mu^{\boxplus 1/u})}\left(-\frac{t}{u}\right) = S_{\mu}(-t) \qquad \text{for }0 < t <u < 1- \mu(\{0\}),
\end{equation*}
that follows easily from Lemma \ref{lem:original_Strans} (1).
\end{remark}

\begin{proof}
Part (1) follows from the fact that coefficients of a polynomial are preserved under differentiation, Lemma \ref{lem:coefficients_derivatives}:
\[
\strans{p}{d}\left(-\frac{k}{d}\right) =\frac{\coef{k-1}{d}(p)}{\coef{k}{d}(p)} = \frac{\coef{k-1}{l}(\diff{l}{d}p)}{\coef{k}{l}(\diff{l}{d}p)} = \strans{\diff{l}{d}p}{l}\left(-\frac{k}{l}\right) \qquad\text{for all }k \leq l \leq d-r.
\]
For part (2) we use that differentiation and shift operator commute $\diff{k}{d} \circ \shift{a}=\shift{a} \circ \diff{k}{d}$, and thus \begin{align*}
\strans{\shift{a}p}{d}\left(-\frac{k}{d}\right)
&= \strans{\diff{k}{d}\shift{a}p}{k}(-1) & \text{(by part (1) above)}\\
&= \strans{\shift{a}\diff{k}{d}p}{k}(-1)\\
&= -G_{\meas{\shift{a}\diff{k}{d}p}} (0) & \text{(by Equation \eqref{eq:Strans.Cauchy})}\\
&=-G_{\meas{\diff{k}{d}p}} (-a).
\end{align*}
Notice that the assumption $a>0$ ensures the polynomial has no roots at $0$ and hence $\strans{\shift{a}\diff{k}{d}p}{k}(-1)$ is well defined. In the case $r=0$, the previous computation holds for $a=0$ as well.
\end{proof}

To finish this section we prove the interesting fact that the partial order studied in Section \ref{sec:partial.order} implies an inequality for the finite $S$-transforms of the polynomial. 

\begin{lemma}
\label{lem:ll.strans.inequality}
Let  $p,q\in \polplus$ with $r$ the multiplicity of the root 0 in $p$. If $p\ll q$ then 
\[
\strans{p}{d}\left(-\frac{k}{d}\right) \geq  \strans{q}{d}\left(-\frac{k}{d}\right)\qquad \text{for all } 1\leq k \leq d-r.
\]
\end{lemma}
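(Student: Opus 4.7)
The plan is to reduce the claim immediately to Proposition \ref{prop.Phi.preserves.order} by using the explicit description of the roots of $\Phi_d(p)$ from \eqref{eq:phi.roots}. The key observation is that the finite $S$-transform is nothing but the reciprocal of the root of $\Phi_d(p)$ of the matching index: for $1 \le k \le d-r$,
\[
\strans{p}{d}\!\left(-\frac{k}{d}\right) \;=\; \frac{\coef{k-1}{d}(p)}{\coef{k}{d}(p)} \;=\; \frac{1}{\lambda_k(\Phi_d(p))}.
\]
So the inequality to be proved is equivalent to $\lambda_k(\Phi_d(p)) \le \lambda_k(\Phi_d(q))$ on the appropriate range of $k$, which is exactly the partial order $\Phi_d(p) \ll \Phi_d(q)$ read off indexwise.

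First, I would check that the right-hand side of the identity above is well defined for $q$ on the full range $1 \le k \le d-r$. Since $p \in \pols_d(\rr_{\ge 0})$ and $p \ll q$, any index $i$ with $\lambda_i(q)=0$ forces $\lambda_i(p)\le 0$, hence $\lambda_i(p)=0$. Thus the multiplicity $r'$ of the root $0$ in $q$ satisfies $r' \le r$, so $\coef{k}{d}(q)>0$ for every $k \le d-r$ and $\strans{q}{d}(-k/d)$ is defined on the same range where $\strans{p}{d}(-k/d)$ is.

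Next, I would invoke Proposition \ref{prop.Phi.preserves.order}: since $p \ll q$, we have $\Phi_d(p) \ll \Phi_d(q)$, i.e.\ $\lambda_k(\Phi_d(p)) \le \lambda_k(\Phi_d(q))$ for every $k=1,\dots,d$. For $1 \le k \le d-r$ both sides are strictly positive (by the first step and the fact that $\coef{k-1}{d}(p)>0$), so taking reciprocals and using the identity displayed above yields exactly
\[
\strans{p}{d}\!\left(-\frac{k}{d}\right) \;=\; \frac{1}{\lambda_k(\Phi_d(p))} \;\ge\; \frac{1}{\lambda_k(\Phi_d(q))} \;=\; \strans{q}{d}\!\left(-\frac{k}{d}\right),
\]
which is the desired inequality.

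There is no real obstacle here beyond the bookkeeping of multiplicities at $0$; the whole substance of the lemma is absorbed into Proposition \ref{prop.Phi.preserves.order}, whose proof in turn is an easy induction from Theorem \ref{thm:delicioso}. If one preferred to bypass $\Phi_d$ entirely, the inequality could instead be rephrased as $\coef{k-1}{d}(p)\coef{k}{d}(q) \ge \coef{k-1}{d}(q)\coef{k}{d}(p)$, but proving this directly for all $p \ll q$ via manipulations of elementary symmetric polynomials looks genuinely more painful than routing through $\Phi_d$, so I would stick with the route above.
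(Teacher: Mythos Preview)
Your proof is correct and follows essentially the same route as the paper: invoke Proposition~\ref{prop.Phi.preserves.order} to get $\Phi_d(p)\ll\Phi_d(q)$, then use \eqref{eq:phi.roots} to identify $\strans{p}{d}(-k/d)=1/\lambda_k(\Phi_d(p))$ and take reciprocals. Your additional check that the multiplicity of $0$ in $q$ is at most $r$ (so that $\strans{q}{d}(-k/d)$ is defined on the full range) is a nice point of care that the paper's proof leaves implicit.
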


\begin{proof}
By Proposition \ref{prop.Phi.preserves.order} we know that the map $\Phi_d$ preserves the partial order $\ll$. Thus we get that $\Phi_d(p)\ll \Phi_d(q)$, and  Using \eqref{eq:phi.roots} we conclude that
\[
\strans{p}{d}\left(-\frac{k}{d}\right) =\frac{\coef{k-1}{d}(p)}{\coef{k}{d}(p)} =\frac{1}{\lambda_k(\Phi_d(p))}\geq \frac{1}{\lambda_k(\Phi_d(q))} =\frac{\coef{k-1}{d}(q)}{\coef{k}{d}(q)} = \strans{q}{d}\left(-\frac{k}{d}\right)
\]
for all $1\leq k \leq d-r$.
\end{proof}

%%%%%%%%%%%%%%%%%%%%%%%%%%
\section{Finite \texorpdfstring{$S$}\ -transform tends to \texorpdfstring{$S$}\ -transform }
\label{sec:finite.S.to.S}

The goal of this section is to prove Theorem \ref{thm:main} advertised in the introduction, which is our main approximation theorem. In order to simplify the presentation, we will make use of the concept of converging sequence of polynomials and diagonal sequence introduced in Notation \ref{not:converging}.
Recall that given a sequence of polynomials $(p_j)_{j\in \N}$ we say $(p_j)_{j\in \N}$ \emph{converges to} $\mu\in\MM$ if
\begin{equation}
\begin{cases}
p_{j}\in \pols_{d_j} \quad \text{for all } j\in\N,\\
\displaystyle \lim_{j\to\infty}d_j=\infty,\\
\meas{p_j}\weak\mu\text{ as }j\to\infty.
\end{cases}
\end{equation}
Furthermore, if $(p_j)_{j\in \N}$ is a sequence of polynomials converging to $\mu$ with degree sequence $(d_j)_{j\in\N}$ we say that $(k_j)_{j\in\nn}$ is a \emph{diagonal sequence with ratio limit $t$} if 
\begin{equation}
\begin{cases}
k_j\in \{1,\dots,d_j\} \text{ for every $j$,}\\
\displaystyle \lim_{j\to\infty} \frac{k_j}{d_j}=t. 
\end{cases}
\end{equation}

With this new terminology, Theorem \ref{thm:main} can be rephrased as follows. Given a measure $\mu\in\MM(\rr_{\geq 0})\setminus\{\delta_0\}$, 
the following are equivalent:
\begin{enumerate}
    \item The sequence of polynomials  $(p_j)_{j\in\nn}\subset \pols(\rr_{\geq 0})$ converges to $\mu$.
    \item For every diagonal sequence $(k_j)_{j\in\nn}$ with ratio limit $t\in(0,1-\mu(\{0\}))$, it holds that
\[\lim_{j\to\infty}\strans{p_j}{d_j}\left(-\frac{k_j}{d_j}\right)=S_\mu(-t).\]
\end{enumerate}

The following lemma relating the $S$-transform of a measure evaluated at $t$ with the Cauchy transform of the $t$-fractional convolution evaluated at $0$ will be useful throughout this section.

\begin{lemma} \label{lem:approx_box_times} Let $\mu$ be measure on $\mu \in \MM(\R_{\geq 0})\setminus\{ \delta_0\}$.    Then, 
$$S_\mu(-t) = - G_{\dil{t}(\mu^{\boxplus 1/t})}(0)\qquad \text{for }t\in (0,1-\mu(\{0\})).$$
As a consequence,  $S_{\mu\boxplus \delta_a}(-t)=-G_{\dil{t}(\mu^{\boxplus 1/t})}(-a)$ for all $t\in (0,1)$ and $a>0$.
\end{lemma}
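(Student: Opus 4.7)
The plan is to apply Lemma \ref{lem:original_Strans} (1), which gives $S_\mu(-tz) = S_\sigma(-z)$ for $z\in (0,1)$, where $\sigma := \dil{t}(\mu^{\boxplus 1/t})$, and then take the boundary limit $z\to 1^-$. The left-hand side tends to $S_\mu(-t)$ by continuity of $S_\mu$ on the open interval $(-1+\mu(\{0\}),0)$, so the main task is to identify the boundary value $\lim_{z\to 1^-}S_\sigma(-z)$ with $-G_\sigma(0)$. First I would use Corollary \ref{cor. supporfreeconv} to note that $\mu^{\boxplus 1/t}$ is supported on $[\epsilon,\infty)$ for some $\epsilon > 0$, so $\sigma$ is supported on $[t\epsilon,\infty)$; in particular $\sigma(\{0\}) = 0$ and $G_\sigma(0) = -\int x^{-1}\sigma(dx)$ is finite.

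To compute the boundary limit, I would invoke the inversion formula from Lemma \ref{lem:original_Strans} (2):
$$S_\sigma(-z)\,S_{\sigma^{\reversed}}(z-1) = 1\qquad \text{for } z\in(0,1),$$
which is valid since $\sigma(\{0\}) = 0$. The reversed measure $\sigma^{\reversed}$ is compactly supported in $(0,1/(t\epsilon)]$ with first moment $m_1(\sigma^{\reversed}) = \int x^{-1}\sigma(dx) = -G_\sigma(0) > 0$. Expanding $\Psi_{\sigma^{\reversed}}(w) = m_1(\sigma^{\reversed})w + O(w^2)$ near $w=0$ gives $\Psi_{\sigma^{\reversed}}^{-1}(y) = y/m_1(\sigma^{\reversed}) + O(y^2)$ and hence
$$\lim_{y\to 0^-} S_{\sigma^{\reversed}}(y) = \lim_{y\to 0^-}\frac{(1+y)\,\Psi_{\sigma^{\reversed}}^{-1}(y)}{y} = \frac{1}{m_1(\sigma^{\reversed})}.$$
Setting $y = z-1$ and inverting yields $\lim_{z\to 1^-} S_\sigma(-z) = m_1(\sigma^{\reversed}) = -G_\sigma(0)$, as desired.

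For the consequence, I would invoke the standard compatibility of free additive convolution with dilations and Dirac shifts: $\delta_a^{\boxplus 1/t} = \delta_{a/t}$ yields $(\mu\boxplus\delta_a)^{\boxplus 1/t} = \mu^{\boxplus 1/t}\boxplus\delta_{a/t}$, and then $\dil{t}(\nu\boxplus\delta_{a/t}) = \dil{t}\nu\boxplus\delta_a$ gives $\dil{t}((\mu\boxplus\delta_a)^{\boxplus 1/t}) = \sigma\boxplus\delta_a$. Since $(\mu\boxplus\delta_a)(\{0\}) = 0$ for $a>0$, the main formula applies to $\mu\boxplus\delta_a$ on the full range $t\in(0,1)$; combined with the shift identity $G_{\sigma\boxplus\delta_a}(w) = G_\sigma(w-a)$ evaluated at $w=0$, this yields $S_{\mu\boxplus\delta_a}(-t) = -G_\sigma(-a)$.

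The technical heart of the argument is the boundary limit in the second paragraph; the rest is bookkeeping with standard identities. The appeal to the inversion formula nicely avoids the need to analyze $\Psi_\sigma^{-1}$ directly near $-1$, where it diverges to $-\infty$.
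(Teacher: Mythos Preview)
Your proof is correct and follows the same overall strategy as the paper: apply Lemma \ref{lem:original_Strans}~(1) to write $S_\mu(-tz)=S_\sigma(-z)$, let $z\to 1^-$, and identify the boundary value $\lim_{z\to 1^-}S_\sigma(-z)$ with $-G_\sigma(0)$ after using Corollary \ref{cor. supporfreeconv} to ensure $\sigma$ stays away from $0$. The only difference is in how that boundary limit is justified: the paper cites \cite[Lemma~4]{HM13} directly for $\lim_{z\to 1}S_\sigma(-z)=\int x^{-1}\,\sigma(dx)$, whereas you rederive this fact by passing to the compactly supported reversed measure $\sigma^{\reversed}$ via Lemma \ref{lem:original_Strans}~(2) and reading off $S_{\sigma^{\reversed}}(0^-)=1/m_1(\sigma^{\reversed})$ from the power series of $\Psi_{\sigma^{\reversed}}$. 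Your route is self-contained within the paper's stated lemmas (avoiding the external citation), at the cost of one extra transformation; the paper's route is shorter but leans on \cite{HM13}. Your derivation of the consequence for $\mu\boxplus\delta_a$ is also correct and more detailed than the paper, which leaves it implicit.
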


\begin{proof}
Fix $t\in (0,1-\mu(\{0\}))$, and consider the measure $\mu_t:=\dil{t}(\mu^{\boxplus 1/t})$.
By Corollary \ref{cor. supporfreeconv}, $\mu_t$ is supported on $[\epsilon,\infty)$ for some $\epsilon>0$.

Then, the Cauchy transform is well-defined in $\mathbb{C}\setminus [\epsilon,\infty)$, in particular, we know that 
\[
G_{\mu_t}(0)=\int^\infty_0 x^{-1} \mu_t(dx) \le \frac{1}{\varepsilon} <  \infty.
\] 
On the other hand, since $\mu_t(\{0\})=0$, \cite[Lemma 4]{HM13} yields
\begin{equation} \label{eq. StoG1} \lim_{z\to 1} S_{\mu_t}(-z)=\int^\infty_0 x^{-1} \mu_t(dx)= G_{\mu_t}(0).\end{equation}

Finally, using relation $S_\mu(-tz)= S_{\mu_t}(-z)$ from Lemma \ref{lem:original_Strans} (1) and the continuity of the $S$-transform on $(0,1-\mu(\{0\}))$ we get
\begin{equation}\label{eq. StoG2} S_\mu(-t)=\lim_{z \to 1} S_\mu(-zt) = \lim_{z \to 1} S_{\mu_t}(-z).\end{equation}
Putting \eqref{eq. StoG1} and \eqref{eq. StoG2} together, we obtain the desired result.
\end{proof}

In its simplest form, when $ (p_j)_{j\in\nn}\subset \pols(C)$ and $C=[\alpha,\beta]\subset (0,\infty)$ is a compact interval that does not contain 0, the proof of $(1)\Rightarrow(2)$ follows naturally from the basic properties of the $S$-transform. However, the same proof in the most general case, requires of several steps, where we gradually generalize each result, building upon simpler cases. On the other hand, the proof of $(2)\Rightarrow (1)$ in the general case follows from using Helly's selection Theorem to guarantee that there exist a limit, and then using the implication $(1)\Rightarrow (2)$ to assure that the limit must coincide with the given measure. 

To guide the reader on how our claim is generalized in each step, this section is divided into several cases.
Each case builds upon the previous until we reach the most general case. 
In Section \ref{ssec:compac.interval.not.zero} we illustrate the simplicity of the ideas used in the case where all the roots of the polynomials lie on a compact interval that does not contain 0. Then, in Section \ref{ssec:compac.interval.zero} we use a uniform bound of the smallest root after differentiation, to reduce the case when all the roots of the polynomials lie on a compact interval that contains 0, to the previous case (where the interval does not contain 0). In Section \ref{ssec:case.unbounded.support} we use the reversed polynomial of a shift, to generalize Theorem \ref{Thm:AGVP_HK_S} to allow measures with unbounded support, then the same is done for our main result, using the same ideas from the previous section. Finally, in \ref{ssec:generalcase} we explain how to obtain the converse statement and  prove Theorem \ref{thm:main}.

\subsection{Compact interval not containing 0.}
\label{ssec:compac.interval.not.zero}

We first prove it for the case where all the roots of the polynomials lie on a compact interval that does not contain zero.
In this case, the proof of the first implication follows easily from the basic properties of the $S$-transform.

\begin{proposition}[Compact interval not containing 0]
\label{prop:Strans.convergence.compact}
Fix a compact interval $C=[\alpha,\beta] \subset (0,\infty)$. If $(p_j)_{j\in \N}  \subset \pols(C)$ is a sequence of polynomials converging to $\mu\in\MM(C)$, then for every diagonal sequence $(k_j)_{j\in \N}$ with ratio limit $t\in (0,1)$ it holds that
\[
\lim_{j\to \infty} \strans{p_j}{d_j}\left(-\frac{k_j}{d_j}\right) = S_\mu(-t).
\]
\end{proposition}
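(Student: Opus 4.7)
The strategy is to express both sides of the desired identity as Cauchy transforms evaluated at $0$, and then use Theorem \ref{Thm:AGVP_HK_S} together with Lemma \ref{lem:weak_convergence_Cauchy} to pass to the limit. Since the roots of every $p_j$ lie in $[\alpha,\beta]$ with $\alpha>0$, no $p_j$ has $0$ as a root, so part (2) of Lemma \ref{lem:Strans_conditions} (applied with $a=0$) gives the identity
\[
\strans{p_j}{d_j}\!\left(-\frac{k_j}{d_j}\right) = -G_{\meas{\diff{k_j}{d_j} p_j}}(0).
\]
On the free probability side, Lemma \ref{lem:approx_box_times} provides the parallel identity
\[
S_\mu(-t) = -G_{\dil{t}(\mu^{\boxplus 1/t})}(0),
\]
where the right-hand side is finite since, by Corollary \ref{cor. supporfreeconv}, $\dil{t}(\mu^{\boxplus 1/t})$ is supported away from $0$.

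Hence, the claim reduces to showing
\[
\lim_{j\to\infty} G_{\meas{\diff{k_j}{d_j} p_j}}(0) = G_{\dil{t}(\mu^{\boxplus 1/t})}(0).
\]
Theorem \ref{Thm:AGVP_HK_S} applied to the compact set $C=[\alpha,\beta]$ already gives the weak convergence $\meas{\diff{k_j}{d_j} p_j} \weak \dil{t}(\mu^{\boxplus 1/t})$, so only the point-evaluation of the Cauchy transforms remains. For this, observe that by Rolle's theorem (or the interlacing property), differentiating a polynomial with all roots in $[\alpha,\beta]$ produces a polynomial whose roots are again in $[\alpha,\beta]$. Consequently, every $\meas{\diff{k_j}{d_j} p_j}$ is supported in $[\alpha,\beta]\subset [\alpha,\infty)$, and so is the limit measure. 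Since $0\notin [\alpha,\infty)$, we can apply the implication $(1)\Rightarrow(3)$ in Lemma \ref{lem:weak_convergence_Cauchy} (with $L=\alpha>0$) to conclude that $G_{\meas{\diff{k_j}{d_j} p_j}}(0) \to G_{\dil{t}(\mu^{\boxplus 1/t})}(0)$, which finishes the argument.

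In this clean setting there is essentially no obstacle: the compact separation from $0$ ensures that the Cauchy transform is continuous in the weak topology at the relevant point, and the $S$-transform is well-defined for every $t\in(0,1)$ because $\mu(\{0\})=0$. The actual difficulties of Theorem \ref{thm:main} arise in two later regimes not present here, namely when $\mu$ may have an atom at $0$ (so that $G_{\dil{t}(\mu^{\boxplus 1/t})}(0)$ or $G_{\meas{\diff{k_j}{d_j}p_j}}(0)$ may blow up near the boundary $t=1-\mu(\{0\})$) and when $\mu$ has unbounded support (so that Theorem \ref{Thm:AGVP_HK_S} is not directly applicable); these are to be addressed in Sections \ref{ssec:compac.interval.zero} and \ref{ssec:case.unbounded.support}. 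The present proposition simply packages the most transparent instance of the general correspondence.
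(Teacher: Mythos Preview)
Your proof is correct and follows essentially the same approach as the paper's own proof: both express the finite $S$-transform and the $S$-transform as Cauchy transforms at $0$ via Lemma \ref{lem:Strans_conditions}(2) and Lemma \ref{lem:approx_box_times}, invoke Theorem \ref{Thm:AGVP_HK_S} for the weak convergence of the differentiated polynomials, and then use Lemma \ref{lem:weak_convergence_Cauchy} to pass to the limit. Your version is slightly more explicit in justifying why Lemma \ref{lem:weak_convergence_Cauchy} applies at $0$ (via Rolle's theorem keeping the roots in $[\alpha,\beta]$), but the argument is otherwise identical.
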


\begin{proof}
% We fix a $t\in[0,1]$ and a sequence of integers $(k_j)_{j\in\N}$ such that $\lim_{j\to\infty} \tfrac{k_j}{d_j}=t$. 
By Lemma \ref{lem:Strans_conditions} (2) we know that
\begin{align*}%\label{eq:S(-k/d)}
\strans{p_j}{d_j}\left(-\frac{k_j}{d_j}\right)  = - G_{\meas{\diff{k_j}{d_j}p_j}}(0).
\end{align*}
Since $\meas{p_j}\weak \mu$, then Theorem \ref{Thm:AGVP_HK_S} implies that $\meas{\diff{k_j}{d_j}p_j}\weak \dil{t}{(\mu^{\boxplus 1/t})}$. 

Using Lemma \ref{lem:weak_convergence_Cauchy} and Lemma \ref{lem:approx_box_times}, we conclude that 
\[
\lim_{j\to\infty}\strans{p_j}{d_j}\left(-\frac{k_j}{d_j}\right)= -\lim_{j\to\infty} G_{\meas{\diff{k_j}{d_j}p_j}}(0) = -G_{\dil{t}{(\mu^{\boxplus 1/t})}}(0) = S_\mu(-t)\]
for every $t\in (0,1)$, as desired.
\end{proof}

%%%%%%%%%%%%%%%%%%%%%%%%%%%%%%%%%%%%%%%%%%%%%%%%%
\subsection{Compact interval containing 0.}
\label{ssec:compac.interval.zero}

The goal of this section is to extend Proposition \ref{prop:Strans.convergence.compact} to the case where the interval is allowed to contain $0$.
The approach is to reduce to the previous case by observing that after repeatedly differentiating polynomials with non-negative roots, we can find a uniform lower bound (away from zero) of the smallest roots. To achieve this we make use of the bounds obtained in Section \ref{sec:bounds.Jacobi} (that in turn rely on classical bounds of Jacobi polynomials), and then we use the partial order from Section \ref{sec:partial.order} to extrapolate this bound to an arbitrary polynomial. 

\begin{proposition}[Compact support containing 0]
\label{prop:Strans_convergence_compact_zero}
Fix a compact interval $C=[0,\beta]$ and let $(p_j)_{j\in \N} \subset \pols(C)$ be sequence of polynomials converging to $\mu\in\MM(C)\setminus\{\delta_0\}$. Then for every diagonal sequence $(k_j)_{j\in \N}$ with ratio limit $t\in (0,1-\mu(\{0\}))$ it holds that
\[
\lim_{j\to \infty} \strans{p_j}{d_j}\left(-\frac{k_j}{d_j}\right) = S_\mu(-t).
\]
\end{proposition}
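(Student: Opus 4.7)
The plan is to reduce to Proposition \ref{prop:Strans.convergence.compact} by showing that after applying $\diff{k_j}{d_j}$, the smallest root of the resulting polynomial stays uniformly bounded away from $0$. This removes the obstruction caused by possible roots of $p_j$ at $0$, which was the only reason the previous argument did not already cover this case.

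First I would invoke Lemma \ref{lem:support.fin.freeconv} to obtain $\varepsilon > 0$ with $\liminf_{j\to\infty} \lambda_{k_j}(\diff{k_j}{d_j}p_j) \geq \varepsilon$. In particular, for all sufficiently large $j$ the polynomial $\diff{k_j}{d_j}p_j$ has no root at $0$. Since $p_j$ has all roots in $[0,\beta]$, repeated application of Rolle's theorem (i.e. interlacing) shows the same for $\diff{k_j}{d_j}p_j$, so $\meas{\diff{k_j}{d_j}p_j}$ is supported in $[\varepsilon/2, \beta]$ for $j$ large.

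Next I would string together the identities of Lemma \ref{lem:Strans_conditions}. Since $k_j/d_j \to t < 1 - \mu(\{0\})$ and $\meas{p_j}(\{0\}) \to \mu(\{0\})$, one has $k_j \leq d_j - r_j$ for $j$ large, where $r_j$ is the multiplicity of $0$ in $p_j$; part (1) of that lemma therefore applies and gives
\[
\strans{p_j}{d_j}\!\left(-\frac{k_j}{d_j}\right) = \strans{\diff{k_j}{d_j}p_j}{k_j}(-1).
\]
The lower bound on the smallest root of $\diff{k_j}{d_j}p_j$ then allows Equation \eqref{eq:Strans.Cauchy} to rewrite the right-hand side as $-G_{\meas{\diff{k_j}{d_j}p_j}}(0)$.

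To conclude, I would pass to the limit as in the compact-support case. By Theorem \ref{Thm:AGVP_HK_S}, $\meas{\diff{k_j}{d_j}p_j} \weak \dil{t}(\mu^{\boxplus 1/t})$, and by Corollary \ref{cor. supporfreeconv} the limit measure is supported in $[\alpha,\infty)$ for some $\alpha > 0$. Combining this with the uniform support bound from the first step, Lemma \ref{lem:weak_convergence_Cauchy} gives $G_{\meas{\diff{k_j}{d_j}p_j}}(0) \to G_{\dil{t}(\mu^{\boxplus 1/t})}(0)$, and Lemma \ref{lem:approx_box_times} identifies this limit as $-S_\mu(-t)$. The main obstacle is therefore the uniform positivity of $\lambda_{k_j}(\diff{k_j}{d_j}p_j)$, and this has already been absorbed into Lemma \ref{lem:support.fin.freeconv} via the partial-order machinery of Section \ref{sec:partial.order} and the Jacobi root bounds of Section \ref{sec:bounds.Jacobi}; beyond this, the argument is purely bookkeeping.
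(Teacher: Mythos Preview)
Your proof is correct and follows the same approach as the paper: establish a uniform positive lower bound on the smallest root of $\diff{k_j}{d_j}p_j$, then repeat the argument of Proposition~\ref{prop:Strans.convergence.compact} verbatim. Your citation of Lemma~\ref{lem:support.fin.freeconv} is in fact more precise than the paper's own reference to Lemma~\ref{lem:bound}; one small slip is the claim that $\meas{p_j}(\{0\}) \to \mu(\{0\})$, which should read $\limsup_j \meas{p_j}(\{0\}) \leq \mu(\{0\})$ via Portmanteau, but this is already enough to conclude $k_j \leq d_j - r_j$ eventually.
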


\begin{proof}
  % Fix $0 < t < 1-\mu(\{0\})$ and a diagonal sequence $(k_j)_{j\in \N}$ with ratio limit $\lim_{j\to\infty} \tfrac{k_j}{d_j}=t$.
  Since $\meas{p_j}\weak \mu$, Theorem \ref{Thm:AGVP_HK_S} implies that $\meas{\diff{k_j}{d_j}p_j}\weak \dil{t}{(\mu^{\boxplus 1/t})}$.
  Also, by Lemma \ref{lem:bound}, there exists some \(\epsilon>0\) such that \(\meas{\diff{k_j}{d_j}p_j} \in \MM(\R_{\ge \epsilon})\) for large enough $j$.
  Hence, by Lemmas \ref{lem:Strans_conditions} (2) and \ref{lem:weak_convergence_Cauchy}, 
  we conclude that 
  $$
  \lim_{j\to\infty}\strans{p_j}{d_j}\left(-\frac{k_j}{d_j}\right)= -\lim_{j\to\infty} G_{\meas{\diff{k_j}{d_j}p_j}}(0) = -G_{\dil{t}{(\mu^{\boxplus 1/t})}}(0) = S_\mu(-t).$$
  \end{proof}

\begin{remark}
The authors thank Jorge Garza-Vargas for some insightful discussions regarding relations between the supports of $\mu^{\boxplus t}$ and of $\meas{\diff{k_j}{d_j}p_j}$ that ultimately helped in the proof of Proposition \ref{prop:Strans_convergence_compact_zero}. 
\end{remark}

\subsection{Case with unbounded support}
\label{ssec:case.unbounded.support}

We now turn to the study of measures with unbounded support. The key idea here is that if $\mu\in\MM(\rr_{\geq 0})$, then if we shift $\mu$ by a positive constant $a$ and then take the reversed measure, we obtain a measure contained in a compact interval.
In this way we can reduce the problem to the previous case.
First, we introduce the cut-down and cut-up measures. 

\begin{notation}[Cut-up and cut-down measures]
\label{not:cut.operators}
Given a measure $\mu\in \MM(\R)$ with cumulative distribution function $F_\mu$, we define the cut-down measure at $a\in\rr$ as the measure $\mu|_a\in\MM(\rr_{\geq a})$ with cumulative distribution function
    \[
        F_{\mu|_a}(x) = 
        \begin{cases}
        0 & \text{if }x<a,\\
        F_{\mu}(x) & \text{if }x \ge a.
        \end{cases}
    \]
Similarly, we define the cut-up measure at $a\in\R$ as the measure $\mu|^a\in\MM(\rr_{\leq a})$ with cumulative distribution function
    \[
        F_{\mu|^a}(x) = \begin{cases}
        F_{\mu}(x) & \text{if }x < a,\\
        1 & \text{if }x \geq a,
    \end{cases}
    \]
We can define the corresponding cut-down and cut-up measures on polynomials using the bijection $\meas{\cdot}$ between $\PP_d(\R)$ and $\MM_d(\R)$. Then 
for every $p\in \PP_d(\R)$ and $a\in \rr$ the cut-down polynomial $p|_a\in\PP_d(\R_{\geq a})$ and cut-up polynomial $p|^a\in \PP_d(\R_{\leq a})$ have roots given by 
\begin{equation*}
\lambda_i(p|_a) = 
\begin{cases}
    a & \text{if }\lambda_i(p) \leq a,\\
    \lambda_i(p) & \text{if }\lambda_i(p) > a
\end{cases}    
\qquad \text{and} \qquad  
\lambda_i(p|^a) = 
\begin{cases}
    \lambda_i(p) & \text{if }\lambda_i(p) < a,\\
    a & \text{if }\lambda_i(p) \geq a.
\end{cases} 
\end{equation*}
\end{notation}

\begin{remark}
\label{rem:basic.prop.cut}
We will use three basic properties of the cut-down and cut-up measures that follow directly from the definition. 
\begin{enumerate}
    \item $\mu|_a\weak \mu$ as $a\to -\infty$ and $\mu|^a\weak \mu$ as $a\to \infty$.
    \item $F_{\mu|_a} \leq F_{\mu}$, so $\mu \ll (\mu|_a)$. And similarly, $(\mu|^a) \ll \mu$.
    \item If $(p_j)_{j\in\nn}$ is a sequence of polynomials converging to $\mu$ then $(p_j|_a)_{j\in\nn}$ converges to $\mu|_a$, and similarly $(p_j|^a)_{j\in\nn}$ converges to $\mu|^a$.
\end{enumerate} 
\end{remark}

We are now ready to extend Theorem \ref{Thm:AGVP_HK_S} on the limits of derivatives of polynomials to the case of measures with unbounded support.

\begin{theorem}
\label{thm:general.AGVP}
Let $(p_j)_{j\in \N}\subset\pols(\rr)$ be a sequence of polynomials converging to $\mu \in \MM(\R)$ and let $(k_j)_{d\in \N}\subset\nn$ be diagonal sequence with limit $t \in (0, 1)$. Then, 
\begin{equation}
\label{eq.claim.AGVP}
\meas{\diff{k_j}{d_j} p_j} \weak \dil{t}(\mu^{\boxplus 1/t})\qquad \text{ as }\qquad j\to\infty.    
\end{equation}
\end{theorem}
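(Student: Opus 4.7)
The plan is to reduce to Theorem \ref{Thm:AGVP_HK_S} via the cut-down and cut-up truncations of Notation \ref{not:cut.operators}, and then to use the partial order $\ll$ from Section \ref{sec:partial.order} together with Corollary \ref{cor:derivatives.monotonicity} to control the truncation error. Concretely, for each pair of continuity points $a<b$ of $F_\mu$, I would introduce the clipped polynomials $p_j^{[a,b]}:=(p_j|_a)|^b\in\pols_{d_j}([a,b])$, which by Remark \ref{rem:basic.prop.cut} converge to $\mu^{[a,b]}:=(\mu|_a)|^b$, a measure supported on the compact set $[a,b]$.

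Applying Theorem \ref{Thm:AGVP_HK_S} to these compactly supported approximations yields
\begin{equation*}
\meas{\diff{k_j}{d_j}p_j^{[a,b]}} \weak \dil{t}\bigl((\mu^{[a,b]})^{\boxplus 1/t}\bigr) \qquad \text{as } j\to\infty.
\end{equation*}
Since $\mu^{[a,b]}\weak \mu$ as $a\to -\infty$ and $b\to\infty$ along continuity points of $F_\mu$, and since the map $\nu\mapsto \dil{t}(\nu^{\boxplus 1/t})$ is continuous with respect to weak convergence (by the continuity of free additive convolution on $\MM(\R)$ established in \cite{BerVoi1993}), we also have $\dil{t}((\mu^{[a,b]})^{\boxplus 1/t})\weak \dil{t}(\mu^{\boxplus 1/t})$.

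To close the argument I would compare $\diff{k_j}{d_j} p_j$ to $\diff{k_j}{d_j}p_j^{[a,b]}$. The crucial observation is the double sandwich $p_j|^b \ll p_j\ll p_j|_a$ together with $p_j|^b \ll p_j^{[a,b]}\ll p_j|_a$, all of which follow directly from the definitions of the cut operations. Corollary \ref{cor:derivatives.monotonicity} propagates these inequalities to the derivatives, and translating $\ll$ into a reversed inequality on cumulative distribution functions gives
\begin{equation*}
\bigl|F_{\meas{\diff{k_j}{d_j}p_j}}(x)-F_{\meas{\diff{k_j}{d_j}p_j^{[a,b]}}}(x)\bigr| \le F_{\meas{\diff{k_j}{d_j}(p_j|^b)}}(x)-F_{\meas{\diff{k_j}{d_j}(p_j|_a)}}(x)
\end{equation*}
for every $x\in\R$. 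An $\varepsilon/3$ argument, comparing $\meas{\diff{k_j}{d_j}p_j}$ to $\meas{\diff{k_j}{d_j}p_j^{[a,b]}}$, then to $\dil{t}((\mu^{[a,b]})^{\boxplus 1/t})$, and finally to $\dil{t}(\mu^{\boxplus 1/t})$, will then finish the proof.

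The hard part will be the last estimate: showing that the right-hand side above can be made uniformly small in $x$ (and in $j$) by choosing $|a|$ and $b$ large. The polynomials $p_j|_a$ and $p_j|^b$ differ from $p_j$ only at the roots lying outside $(a,b)$, whose proportion tends to $F_\mu(a)+(1-F_\mu(b))$ as $j\to\infty$ by the weak convergence $\meas{p_j}\weak\mu$. Iterated Cauchy interlacing suggests that shifting a single root of $p_j$ can move at most one root of $\diff{k_j}{d_j}p_j$ across any fixed threshold, which should bound the uniform distance between $F_{\meas{\diff{k_j}{d_j}(p_j|^b)}}$ and $F_{\meas{\diff{k_j}{d_j}(p_j|_a)}}$ by a constant multiple of $\mu\bigl((-\infty,a]\cup[b,\infty)\bigr)$ plus $o(1)$. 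Making this interlacing bookkeeping quantitative, perhaps via the commutation identity in Remark \ref{rem:diff_addtive_multiplicative} applied to a judicious additive perturbation of $p_j$ that only redistributes roots outside $(a,b)$, is the main technical obstacle.
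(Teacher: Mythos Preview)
Your overall plan---truncate, use the partial order $\ll$ and Corollary~\ref{cor:derivatives.monotonicity} to sandwich $\diff{k_j}{d_j}p_j$, then pass to the limit---is exactly the skeleton of the paper's argument for the general case. The difference lies in how the sandwich is closed.

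The paper does \emph{not} attempt a combinatorial estimate on how many roots of $\diff{k_j}{d_j}p_j$ can cross a threshold when a few roots of $p_j$ are moved. Instead it first proves the theorem under the one-sided hypothesis $(p_j)\subset\pols(\R_{\ge L})$: shift by $a>-L$, take the reversed polynomial $q_j:=(\shift{a}p_j)^{\reversed}$, which now lives in a compact interval $(0,\tfrac{1}{a+L}]$, and apply Proposition~\ref{prop:Strans_convergence_compact_zero} together with Proposition~\ref{prop:reversed.polynomial} and Lemma~\ref{lem:Strans_conditions}(2) to obtain
\[
-G_{\meas{\diff{k_j}{d_j}p_j}}(-a)=\strans{\shift{a}p_j}{d_j}\Bigl(-\tfrac{k_j}{d_j}\Bigr)\longrightarrow S_{\mu\boxplus\delta_a}(-t)=-G_{\dil{t}(\mu^{\boxplus 1/t})}(-a)
\]
for all such $a$, whence weak convergence by Lemma~\ref{lem:weak_convergence_Cauchy}. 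Once the one-sided case is in hand, the sandwich $p_j|^L\ll p_j\ll p_j|_{-L}$ gives outer sequences whose limits after differentiation are \emph{known} to be $\dil{t}((\mu|^L)^{\boxplus 1/t})$ and $\dil{t}((\mu|_{-L})^{\boxplus 1/t})$; letting $L\to\infty$ squeezes the CDFs and finishes the proof. No quantitative interlacing estimate is needed.

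Your route avoids the $S$-transform machinery but replaces it with the claim that moving $m$ roots of $p$ can move at most $m$ roots of $\diff{k}{d}p$ across any fixed threshold. You correctly flag this as ``the main technical obstacle,'' and indeed it is a genuine gap: the partial order $\ll$ only gives $\lambda_i(\diff{k}{d}p)\le\lambda_i(\diff{k}{d}q)$, which does \emph{not} by itself bound the Kolmogorov distance $\sup_x\bigl(F_{\meas{\diff{k}{d}p}}(x)-F_{\meas{\diff{k}{d}q}}(x)\bigr)$ in terms of the number of roots where $p$ and $q$ differ. Neither Cauchy interlacing nor the commutation identity of Remark~\ref{rem:diff_addtive_multiplicative} yields this directly (the latter concerns how $\diff{k}{d}$ distributes over $\boxplus_d$ and $\boxtimes_d$, not how it interacts with perturbations of individual roots). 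The statement you want may well be true---it holds in small examples---but it would require an independent argument, whereas the paper's reversal trick reduces everything to the already-proven compact case.
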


\begin{proof}
We first prove the claim assuming that $(p_j)_{j\in \N}\subset\pols(\rr_{\geq L})$ for some $L\in\rr$.
We fix $a\in\rr$ such that $a+L>0$, so that $\shift{a}p_j \in \pols_{d_j}([a+L,\infty))$. Thus, we can consider $q_j:=(\shift{a}p_j)^\reversed \subset \pols_{d
_j}\left(\left(0,\tfrac{1}{a+L}\right]\right)$. By Proposition \ref{prop:reversed.polynomial} and Lemma \ref{lem:Strans_conditions} (2) we know that
\[
-G_{\meas{\diff{k_j}{d_j}p_j}}(-a)=\strans{\shift{a}p_j}{d_j}\left(-\frac{k_j}{d_j}\right)  =\frac{1}{\strans{q_j}{d_j}\left(-\frac{d_j+1-k_j}{d_j}\right) }.
\]
On the other hand, since the roots of the polynomials $(q_j)_{j\in\nn}$ are contained in a compact interval and $\meas{q_i}$ converges weakly to $(\mu\boxplus\delta_a)^\reversed$, Proposition \ref{prop:Strans_convergence_compact_zero} yields
\[
\lim_{j\to\infty} \frac{1}{\strans{q_j}{d_j}\left(-\frac{d_j+1-k_j}{d_j}\right) } =\frac{1}{S_{(\mu \boxplus \delta_a)^\reversed}(t-1)}=S_{\mu \boxplus \delta_a}(-t) = -G_{\dil{t}(\mu^{\boxplus{1/t}})}(-a).
\]
Therefore,
\[
\lim_{j\to\infty} G_{\meas{\diff{k_j}{d_j}p_j}}(-a)=G_{\dil{t}(\mu^{\boxplus{1/t}})}(-a), \qquad \text{for all } -a < L
\]
and by Lemma \ref{lem:weak_convergence_Cauchy} we conclude that $\meas{\diff{k_j}{d_j}p_j}\weak \dil{t}(\mu^{\boxplus{1/t}})$ as $j\to \infty$.
Thus, the claim is proved in the case  $(p_j)_{j\in \N}\subset\pols(\rr_{\geq L})$ for some $L\in\rr$. 

Notice that the claim is also true if $(p_j)_{j\in \N}\subset\pols(\rr_{\leq L})$ for some $L\in\rr$. Indeed, we simply apply the reflection map $\dil{-1}$ to the polynomials, and use the previous case on the new sequence.

Now, the proof of the general case, when $(p_j)_{j\in \N}\subset \pols(\rr)$, follows from using the cut-down and cut-up measures from Notation \ref{not:cut.operators} to reduce the problem to the previous case. Indeed, if we fix an $L\in\nn$ then from Remark \ref{rem:basic.prop.cut} (2) and Corollary \ref{cor:derivatives.monotonicity} we know that 
\[\left(\diff{k_j}{d_j}\left.p_j\right|^L\right) \ll \left(\diff{k_j}{d_j} p_j\right) \ll \left(\diff{k_j}{d_j}p_j|_{-L}\right)\qquad \text{for every }j\in\nn.\]
By Remark \ref{rem:basic.prop.cut} (3), as $j\to\infty$ we have the weak convergence 
\[\meas{\diff{k_j}{d_j}p_j|_{-L}} \weak \dil{t}(\mu|_{-L})^{\boxplus 1/t}\qquad\text{and} \qquad \meas{\diff{k_j}{d_j}p_j|^{L}} \weak \dil{t}(\mu|^{L})^{\boxplus 1/t}.\]
Therefore, at every point $x\in \rr$ one has
\[
    F_{\dil{t}(\mu|_{-L})^{\boxplus 1/t}} (x)
    \le \liminf F_{\meas{\diff{k_j}{d_j} p_j}}(x) 
    \le \limsup F_{\meas{\diff{k_j}{d_j} p_j}}(x) 
    \le F_{\dil{t}(\mu|^L)^{\boxplus 1/t}}(x).
\]
  Letting $L \to \infty$, we conclude that
$\displaystyle
\lim_{j\to\infty} F_{\meas{\diff{k_j}{d_j} p_j}}(x)= F_{\dil{t}(\mu^{\boxplus 1/t})}(x)$ for every continuous point  $x\in\rr$ of $\dil{t}(\mu^{\boxplus 1/t})$ and this is equivalent to \eqref{eq.claim.AGVP}.
\end{proof}

With this theorem, we finally upgrade our main result to measures with unbounded support.

\begin{proposition}[Unbounded support]
\label{prop:Strans.unbounded} 
Let $(p_j)_{j\in \N}\subset \pols(\rr_{\geq 0})$ be a sequence of polynomials converging to $\mu\in\MM(\rr_{\geq 0})\setminus\{\delta_0\}$. Then for every diagonal sequence $(k_j)_{j\in \N}$ with limit $t\in (0,1-\mu(\{0\}))$ it holds that
\[
\lim_{j\to \infty} \strans{p_j}{d_j}\left(-\frac{k_j}{d_j}\right) = S_\mu(-t).
\]
\end{proposition}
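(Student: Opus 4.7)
The plan is essentially to mirror the proof of the bounded case (Proposition \ref{prop:Strans_convergence_compact_zero}), but to use the unbounded-support version of the derivative limit, Theorem \ref{thm:general.AGVP}, rather than Theorem \ref{Thm:AGVP_HK_S}. All the heavy lifting (reduction via reversed polynomial of a shift, cut-up/cut-down measures, partial order, Jacobi bounds) has been done in earlier sections, so this proposition should drop out quickly once we assemble the right pieces.

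First I would observe that the finite $S$-transform at $-k_j/d_j$ equals the Cauchy transform of the derivative at $0$. Indeed, writing $r_j$ for the multiplicity of $0$ in $p_j$, the weak convergence $\meas{p_j}\weak\mu$ implies $r_j/d_j\to\mu(\{0\})$, so for $k_j/d_j\to t<1-\mu(\{0\})$ we eventually have $k_j\le d_j-r_j$. Hence $\diff{k_j}{d_j}p_j$ has no root at $0$, and by Lemma \ref{lem:Strans_conditions}(1) together with \eqref{eq:Strans.Cauchy},
\[
\strans{p_j}{d_j}\!\left(-\tfrac{k_j}{d_j}\right)=\strans{\diff{k_j}{d_j}p_j}{k_j}(-1)=-G_{\meas{\diff{k_j}{d_j}p_j}}(0).
\]
On the measure side, Lemma \ref{lem:approx_box_times} yields $S_\mu(-t)=-G_{\dil{t}(\mu^{\boxplus 1/t})}(0)$. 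So the proposition reduces to showing
\[
\lim_{j\to\infty} G_{\meas{\diff{k_j}{d_j}p_j}}(0)=G_{\dil{t}(\mu^{\boxplus 1/t})}(0).
\]

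For this convergence I would invoke two ingredients. Theorem \ref{thm:general.AGVP} gives the weak convergence $\meas{\diff{k_j}{d_j}p_j}\weak\dil{t}(\mu^{\boxplus 1/t})$ without any compactness hypothesis on $\mu$. Lemma \ref{lem:support.fin.freeconv} provides $\varepsilon>0$ such that $\liminf_{j}\lambda_{k_j}(\diff{k_j}{d_j}p_j)\ge\varepsilon$, so for all sufficiently large $j$ the measure $\meas{\diff{k_j}{d_j}p_j}$ is supported in $[\varepsilon/2,\infty)$; passing to the weak limit (or alternatively, invoking Corollary \ref{cor. supporfreeconv}) the limit measure $\dil{t}(\mu^{\boxplus 1/t})$ is also supported in $[\varepsilon/2,\infty)$. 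Since $0\in\C\setminus[\varepsilon/2,\infty)$, Lemma \ref{lem:weak_convergence_Cauchy} delivers the desired pointwise convergence of Cauchy transforms at $0$, completing the argument.

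There is no serious obstacle here, because the machinery handling unboundedness was already built in Theorem \ref{thm:general.AGVP}. The only subtle point worth double-checking is that the uniform lower bound on the smallest root provided by Lemma \ref{lem:support.fin.freeconv} really requires only the weak convergence of $\meas{p_j}$ to a non-trivial $\mu$ and $t\in(0,1-\mu(\{0\}))$, which matches our hypotheses exactly; this uniform bound is crucial because without it the unbounded upper tail of $\meas{\diff{k_j}{d_j}p_j}$ would not be a problem for Cauchy transform convergence at $0$, but the possible accumulation of mass toward $0$ would be. Once this bound is secured, the argument is a direct copy of the one given for Proposition \ref{prop:Strans_convergence_compact_zero}.
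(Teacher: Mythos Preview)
Your proof is correct and follows exactly the approach indicated by the paper, which simply says to rerun the proof of Proposition \ref{prop:Strans_convergence_compact_zero} with Theorem \ref{thm:general.AGVP} in place of Theorem \ref{Thm:AGVP_HK_S}. One minor quibble: weak convergence only guarantees $\limsup_j r_j/d_j \le \mu(\{0\})$, not $r_j/d_j \to \mu(\{0\})$, but the upper bound is all you need (and the whole point becomes redundant once you invoke Lemma \ref{lem:support.fin.freeconv}).
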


\begin{proof}
The proof is almost identical to the proof of Proposition \ref{prop:Strans_convergence_compact_zero}, except that Theorem \ref{Thm:AGVP_HK_S} is replaced by Theorem \ref{thm:general.AGVP}.
% We fix a $t\in[0,1-\mu(\{0\}))$ and a sequence of integers $(k_j)_{j\in\N}$ such that $\lim_{j\to\infty} \tfrac{k_j}{d_j}=t$. Proceeding as in the proof of Proposition \ref{prop:Strans_convergence_compact_zero} we know that there exist a $N\in \nn$ and $\varepsilon >0$ such that for all $j>N$ the polynomial after differentiation has all positive roots, $q_j:=\diff{k_j}{d_j}p_j\in \pols_d(\rr_{\geq \varepsilon})$. Consider the measure $\mu_t:=\dil{t}(\mu^{\boxplus 1/t})\in \MM(\rr_{\geq \varepsilon})$. Then parts (1) and (2) of Lemma \ref{lem:Strans_conditions}  yield
% \begin{align} %\label{eq:S(-k/d)}
% \strans{p_j}{d_j}\left(-\frac{k_j}{d_j}\right)  = \strans{q_j}{k_j}\left(-\frac{k_j}{k_j}\right)  = - G_{\meas{q_j}}(0)\qquad \text{ for all } j\geq N.
% \end{align}
% While Theorem \ref{thm:general.AGVP} and Lemma \ref{lem:weak_convergence_Cauchy} imply that
% \begin{align*}
% -\lim_{j\to\infty} G_{\meas{q_j}}(0) = -G_{\mu_t}(0) = S_\mu(-t), \qquad \text{for every }t\in[0,1-\mu(\{0\})]
% \end{align*}
% putting these together the conclusion follows.
\end{proof}

\subsection{The converse and proof of Theorem \ref{thm:main}}
\label{ssec:generalcase}

For the proof of our main theorem to be complete, we must prove that if the finite $S$-transform converges to the $S$-transform of a measure, then the sequence of polynomials converge to the measure.

\begin{proposition}[Converse]
\label{prop:converse.main.thm}  
Let $(p_j)_{j\in \N}$ be a sequence of polynomials with $p_j\in \pols_{d_j}(\rr_{\geq 0})$. Assume there exist a $t_0 \in [0,1)$ and a function $S:(-1+t_0,0) \to \rr_{>0}$ such that for every $t\in(0,1-t_0)$ and  every diagonal sequence $(k_j)_{j\in \N}$ with limit $t$ one has 
\[
\lim_{\substack{j\to \infty}}\strans{p_j}{d_j}\left(-\frac{k_j}{d_j}\right)= S(-t).
\]
Then there exists a measure $\mu \in \MM(\R_{\ge 0})$ such that $\meas{p_d} \weak \mu$, $\mu(\{0\})\leq t_0$, and $S_\mu(-t) = S(-t)$ for all $t \in (0, 1 - t_0)$.
\end{proposition}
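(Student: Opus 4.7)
The plan is to extract a weakly convergent subsequence of $\{\meas{p_j}\}$ via Helly's selection theorem, identify the limit $\mu$ through the forward direction of the main theorem (Proposition \ref{prop:Strans.unbounded}), and conclude convergence of the whole sequence from the uniqueness of the $S$-transform.

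\emph{Tightness and extraction.} If $\{\meas{p_j}\}$ were not tight, there would exist $\varepsilon>0$ and a subsequence $(j_i)$ with $\meas{p_{j_i}}([i,\infty))\ge\varepsilon$. Pick $s\in(0,\min(\varepsilon,1-t_0))$, set $l_i:=\lfloor s\,d_{j_i}\rfloor$, and form $r_i(x):=x^{d_{j_i}-l_i}(x-i)^{l_i}$. Since at least $\lfloor \varepsilon d_{j_i}\rfloor\ge l_i$ roots of $p_{j_i}$ exceed $i$, we have $r_i\ll p_{j_i}$, and Lemma \ref{lem:ll.strans.inequality} together with a direct computation of the coefficients of $r_i$ gives
\[
\strans{p_{j_i}}{d_{j_i}}\!\left(-\tfrac{m}{d_{j_i}}\right)\le \strans{r_i}{d_{j_i}}\!\left(-\tfrac{m}{d_{j_i}}\right)=\frac{1}{i}\cdot\frac{d_{j_i}-m+1}{l_i-m+1},\qquad 1\le m\le l_i.
\]
Evaluating along a diagonal $(m_{j_i})$ with ratio $s'\in(0,s)$ forces the right-hand side to tend to $0$, contradicting $\strans{p_j}{d_j}(-m_j/d_j)\to S(-s')>0$. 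Hence $\{\meas{p_j}\}$ is tight, and Helly's theorem yields a subsequence (still denoted $j_i$) with $\meas{p_{j_i}}\weak\mu$ for some $\mu\in\MM(\R_{\ge 0})$.

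\emph{Bounding $\mu(\{0\})$.} Fix $t\in(0,1-t_0)$ and a diagonal $(k_{j_i})$ with ratio $t$. Finiteness of $S(-t)$ forces $k_{j_i}+r_{j_i}\le d_{j_i}$ eventually (where $r_{j_i}$ denotes the multiplicity of $0$ in $p_{j_i}$), so $\diff{k_{j_i}}{d_{j_i}}p_{j_i}$ has no root at $0$; combining Lemma \ref{lem:Strans_conditions}(1) with \eqref{eq:Strans.Cauchy} gives
\[
\strans{p_{j_i}}{d_{j_i}}\!\left(-\tfrac{k_{j_i}}{d_{j_i}}\right)=-G_{\meas{\diff{k_{j_i}}{d_{j_i}}p_{j_i}}}(0).
\]
By Theorem \ref{thm:general.AGVP}, $\meas{\diff{k_{j_i}}{d_{j_i}}p_{j_i}}\weak \nu:=\dil{t}(\mu^{\boxplus 1/t})$. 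The standard free-convolution atom formula $\mu^{\boxplus 1/t}(\{0\})=\max\{0,\,1-(1-\mu(\{0\}))/t\}$ shows $\nu(\{0\})>0$ precisely when $t>1-\mu(\{0\})$. In that case, weak convergence on the open set $[0,\varepsilon)$ combined with the fact that the approximating measures have no atom at $0$ gives $\liminf_i \meas{\diff{k_{j_i}}{d_{j_i}}p_{j_i}}((0,\varepsilon))\ge \nu(\{0\})$, whence $\liminf_i\bigl(-G_{\meas{\diff{k_{j_i}}{d_{j_i}}p_{j_i}}}(0)\bigr)\ge \nu(\{0\})/\varepsilon\to\infty$ as $\varepsilon\to 0$ along continuity points, contradicting the assumed finite value $S(-t)$. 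Hence $\mu(\{0\})\le 1-t$ for every $t\in(0,1-t_0)$, i.e.\ $\mu(\{0\})\le t_0$, and in particular $\mu\ne\delta_0$.

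\emph{Identifying $S_\mu$ and uniqueness.} Since $\mu\ne\delta_0$, Proposition \ref{prop:Strans.unbounded} applies on $(0,1-\mu(\{0\}))\supseteq(0,1-t_0)$ and yields $\strans{p_{j_i}}{d_{j_i}}(-k_{j_i}/d_{j_i})\to S_\mu(-t)$; comparison with the hypothesis gives $S_\mu=S$ on $(0,1-t_0)$. If $\mu'$ is any other subsequential weak limit of $\{\meas{p_j}\}$, the same argument produces $S_{\mu'}=S=S_\mu$ on this nonempty open interval, so $\mu'=\mu$ since the $S$-transform determines the measure uniquely (via $\Psi_\mu^{-1}(z)=zS_\mu(z)/(1+z)$ together with the analyticity of $\Psi_\mu$ on $\cc\setminus\R_{\ge 0}$). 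Therefore every subsequence of $\{\meas{p_j}\}$ has a further subsequence converging to the same $\mu$, which forces $\meas{p_j}\weak\mu$. The principal obstacle is the atom bound of the middle step: ruling out limiting mass at $0$ exceeding $t_0$ requires translating the atom produced by the free fractional power when $t>1-\mu(\{0\})$ into divergence of the Cauchy transform at $0$ for the derivative polynomials, which then contradicts the assumed finiteness of $S(-t)$.
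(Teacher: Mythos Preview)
Your proof is correct and follows the same skeleton as the paper: tightness via comparison with $x^{d-l}(x-a)^l$ and Lemma \ref{lem:ll.strans.inequality}, Helly's selection, identification of the limit through Proposition \ref{prop:Strans.unbounded}, and uniqueness from the fact that the $S$-transform determines the measure. Your middle step establishing $\mu(\{0\})\le t_0$ --- via Theorem \ref{thm:general.AGVP}, the atom formula for $\mu^{\boxplus 1/t}$, and a Portmanteau estimate forcing $-G_{\meas{\diff{k}{d}p}}(0)\to\infty$ when $t>1-\mu(\{0\})$ --- is actually more explicit than the paper's own argument, which only records $S_\mu=S$ on the intersection $(0,\min\{1-\mu(\{0\}),1-t_0\})$ and does not spell out the atom bound that is claimed in the statement.
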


\begin{proof}
Consider the sequence of cumulative distribution functions $(F_{\meas{p_j}})_{j\in\nn}$.
By Helly's selection, every subsequence of functions has a further subsequence, denoted by $(F_{i})_{i\in\N}$, converging to some function $F$.
It is clear that $F$ is non-decreasing, $F$ is equal to $0$ on the negative real line, and the image of $F$ is contained in $[0,1]$.
In order to justify that $F$ is the cumulative distribution function of some probability measure $\mu$, we just need to check that $\lim_{x\to\infty} F(x)=1$.
For the sake of contradiction, we assume that there exists $u\in(0,1)$ such that $F(x) < 1 - u$ for all $x\in\R$. Let $a>0$ and $l_j:=\lfloor u d_j\rfloor$ so that $\lim_{j\to\infty}\frac{l_j}{d_j}=u$. Then the sequence of polynomials $q_j:= x^{d_j-l_j}(x-a)^{l_j}$ satisfies that $q_j\ll p_j$ for large enough $j$. By Lemma \ref{lem:ll.strans.inequality}, this implies
\[
 \strans{q_j}{d} \left(-\frac{k_j}{d_j}\right)  \geq   \strans{p_j}{d} \left(-\frac{k_j}{d_j}\right).
\]
Since the $q_j$ converge to $\dil{a}{\nu} = \delta_a \boxtimes \nu$ where $\nu=(1-u) \delta_0 + u\delta_1$. Then, in the limit we obtain 
  \[
    \frac{S_{\nu}(-t)}{a} \geq S(-t) >0
  \]
On the other hand, we can choose $a$ arbitrarily large so that $\frac{S_{\nu}(-t)}{a}< S(-t)$, which yields a contradiction.

Therefore, $F=F_\mu$ is the cumulative distribution function of some probability measure $\mu$. By Proposition \ref{prop:Strans.unbounded} we obtain that $S_\mu(-t) = S(-t)$ for all $t \in (0, \min\{1-\mu(\{0\}),t_0\})$.

Recall that this $\mu$ was obtained as the convergent subsequence of an arbitrary subsequence of the original sequence of polynomials. However, since the $S$-transforms of any two such measures coincide in a small neighborhood $(-\epsilon, 0)$, the measures are the same. Thus, there is a unique limiting measure and we conclude that $\meas{p_d} \weak \mu$.
\end{proof}

The proof of the main Theorem \ref{thm:main} now follows from the previous two results.

\begin{proof}[Proof of Theorem \ref{thm:main}]

(1) $\Rightarrow$ (2). This implication follows from Proposition \ref{prop:Strans.unbounded}.

(2) $\Rightarrow$ (1). This implication is a particular case of Proposition \ref{prop:converse.main.thm}.
\end{proof}

%%%%%%%%%%%%%%%%%%%%%%%%%%
\section{Symmetric and unitary case}
\label{sec:symmetric.and.unitary}

\subsection{Symmetric case} \label{sec:symmetric}

We say that a probability measure $\mu\in \MM(\rr)$ is \emph{symmetric} if $\mu(-B) = \mu(B)$ for all Borel sets $B$ of $\R$, we denote by $\MM^S(\rr)$ the set of symmetric probability measures on the real line. There is a natural bijection from $\MM^S(\rr)$ to $\MM(\rr_{\geq 0})$ by taking the square of the measure. Specifically, we denote by $\mathbf{Sq}(\mu) \in \MM(\R_{\ge 0})$ the pushforward of $\mu$ by the map $x \mapsto x^2$ for $x\in\R$.
Arizmendi and Pérez-Abreu \cite{arizmendi2009} used this map to extend the 
definition of $S$-transform to symmetric measures:
\begin{equation} \label{eq:symmetric_S}
    \widetilde{S}_{\mu}(z):= \sqrt{\frac{1+z}{z}S_{\mathbf{Sq}(\mu) }(z)} \qquad \text{for } z\text{ in some neighborhood of 0}.
\end{equation}
A similar approach works to define a finite $S$-transform for symmetric polynomials. We say that $p\in \pp_{2d}(\rr)$ is a \emph{symmetric polynomial} if its roots are of the form:
\[\lambda_1(p)\geq \lambda_2(p)\geq \dots \geq \lambda_d(p)\geq 0\geq -\lambda_d(p)\geq \dots \geq -\lambda_2(p)\geq -\lambda_1(p),\]
and denote by $\pols_{2d}^S(\rr)$ the subset of symmetric polynomials. Given $p\in\pols_{2d}^S(\rr)$ we denote by $\mathbf{Sq}(p)\in\pols_d(\rr_{\geq 0})$ the polynomial with roots
\[(\lambda_1(p))^2\geq (\lambda_2(p))^2\geq \dots \geq (\lambda_d(p))^2\geq 0.\]

It is readily seen that $\mathbf{Sq}\left(\meas{p}\right)=\meas{\mathbf{Sq}(p)}$. Moreover, $p$ and $\mathbf{Sq}(p)$ are easily related by the formula
\[\mathbf{Sq}(p)(x^2)=p(x).\]
In particular,
\begin{equation} \label{eq:sugar}
\binom{2d}{2k}\widetilde{\mathsf{e}}_{2k}(p) =  (-1)^k \binom{d}{k}\widetilde{\mathsf{e}}_{k}(\mathbf{Sq}(p))\qquad \text{for }k=1,\dots, d.
\end{equation}

With this in hand, we can extend our definition.

\begin{definition}[$S$-transform for symmetric polynomials]
Let $p\in \PP_{2d}^S
(\rr)$ with a multiplicity of $2r$ in the root 0. We define its finite $S$-transform map
\[\stranstilde{p}{2d} : \left\{\left.-\frac{k}{d}\ \right|\ k =1,2,\dots, d-r \right\} \to (\sqrt{-1})\R_{\geq 0}\]
such that
\[\stranstilde{p}{2d}\left(-\frac{k}{d}\right):=
\sqrt{\frac{\coef{2(k-1)}{2d}(p)}{\coef{2k}{2d}(p)}} \qquad \text{for }k=1,\dots, d-r.\]
\end{definition}

\begin{remark}
Notice that the $S$-transform is well defined because $\coef{2k}{2d}(p)\neq 0$ for $k=1,\dots,d-r$. Moreover, from \eqref{eq:sugar} it follows that $\frac{\coef{2(k-1)}{2d}(p)}{\coef{2k}{2d}(p)}<0$ for $k=1,\dots,d-r$. Thus, the image is actually contained in the positive imaginary line $(\sqrt{-1})\R_{\geq 0}$. Using \eqref{eq:sugar}, one can also verify that
\begin{equation} \label{eq:symmetric.finite.S}
\stranstilde{p}{2d}\left(-\frac{k}{d}\right)= \sqrt{\frac{1-\frac{k}{d}+\frac{1}{2d}}{-\frac{k}{d}+\frac{1}{2d}} \strans{\mathbf{Sq}(p) }{d}\left(-\frac{k}{d}\right)}
\end{equation}
\end{remark}

We can use this new transform to study the multiplicative convolution of polynomials, one of which is symmetric.

\begin{proposition}
\label{prop:stransform_multiplicative_symmetricpols}
Let $p\in\pols_{2d}^S(\rr)$, $q\in\pols_{2d}(\rr_{\geq 0})$ and let $r$ be the maximum of the multiplicities at the root 0 of $p$ and $q$. Then 
\[\left(\stranstilde{p\boxtimes_{2d} q}{2d}\left(-\frac{k}{d}\right)\right)^2=\left(\stranstilde{p}{2d} \left(-\frac{k}{d}\right) \right)^2\strans{q}{2d}\left(-\frac{2k}{2d}\right) \strans{q}{2d}\left(-\frac{2k-1}{2d}\right)\] 
for $k = 1,2,\dots,d-r$.
\end{proposition}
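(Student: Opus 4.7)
The plan is to unravel both sides using the definitions and then reduce to two elementary identities: the multiplicativity of normalized coefficients under $\boxtimes_{2d}$, and a telescoping of two consecutive values of $\strans{q}{2d}$.

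First I would square both sides to eliminate the square roots built into $\widetilde{S}$. Applying the definition of the symmetric finite $S$-transform gives
\[
\left(\stranstilde{p\boxtimes_{2d} q}{2d}\left(-\tfrac{k}{d}\right)\right)^2 = \frac{\coef{2(k-1)}{2d}(p\boxtimes_{2d} q)}{\coef{2k}{2d}(p\boxtimes_{2d} q)}.
\]
Next I would invoke Equation \eqref{eq:coeffMultiConv}, which says $\coef{j}{2d}(p\boxtimes_{2d} q)=\coef{j}{2d}(p)\coef{j}{2d}(q)$, to split the ratio into a product of ratios for $p$ and $q$:
\[
\frac{\coef{2(k-1)}{2d}(p)\coef{2(k-1)}{2d}(q)}{\coef{2k}{2d}(p)\coef{2k}{2d}(q)} = \left(\stranstilde{p}{2d}\left(-\tfrac{k}{d}\right)\right)^2\cdot \frac{\coef{2k-2}{2d}(q)}{\coef{2k}{2d}(q)}.
\]

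The final step is a telescoping identity for $q$: inserting $\coef{2k-1}{2d}(q)$ into the numerator and denominator yields
\[
\frac{\coef{2k-2}{2d}(q)}{\coef{2k}{2d}(q)} = \frac{\coef{2k-2}{2d}(q)}{\coef{2k-1}{2d}(q)}\cdot\frac{\coef{2k-1}{2d}(q)}{\coef{2k}{2d}(q)} = \strans{q}{2d}\left(-\tfrac{2k-1}{2d}\right)\strans{q}{2d}\left(-\tfrac{2k}{2d}\right),
\]
which gives the claimed identity. I do not expect any serious obstacle here: the only bookkeeping item is to check that every coefficient appearing in a denominator is nonzero. Since the multiplicity of $0$ in $p\boxtimes_{2d}q$ equals $\max\{\mathrm{mult}_0(p),\mathrm{mult}_0(q)\}\le r$, the hypothesis $k\le d-r$ ensures that $\coef{2k}{2d}(p)$, $\coef{2k}{2d}(q)$, $\coef{2k-1}{2d}(q)$ and $\coef{2k}{2d}(p\boxtimes_{2d}q)$ are all positive, so every ratio and every square root in the argument is well-defined.
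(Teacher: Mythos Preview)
Your proof is correct and follows exactly the same approach as the paper's: expand the left-hand side via the definition, use the multiplicativity $\coef{j}{2d}(p\boxtimes_{2d}q)=\coef{j}{2d}(p)\coef{j}{2d}(q)$, and telescope the $q$-factor by inserting $\coef{2k-1}{2d}(q)$. Your additional remark on well-definedness is a helpful bit of bookkeeping (the paper omits it), though note a small slip: by definition $r$ \emph{equals} $\max\{\mathrm{mult}_0(p),\mathrm{mult}_0(q)\}$, and the multiplicity of $0$ in $p\boxtimes_{2d}q$ can actually be $r$ or $r+1$ (it must be even since the product is symmetric) --- but the only thing you need, namely that $\coef{2k}{2d}(p)$, $\coef{2k}{2d}(q)$, $\coef{2k-1}{2d}(q)$ are nonzero for $k\le d-r$, follows directly from $2k\le 2d-r$.
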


\begin{proof}
Using the definition, we compute
\begin{align*}
 \left(\stranstilde{p\boxtimes_{2d} q}{2d}\left(-\frac{k}{d}\right)\right)^2 &= \frac{\coef{2(k-1)}{2d}(p\boxtimes_{2d} q)}{\coef{2k}{2d}(p\boxtimes_{2d} q)} \\
 &= \frac{\coef{2k-2}{2d}(p)}{\coef{2k}{2d}(p)}\cdot\frac{\coef{2k-2}{2d}(q)}{\coef{2k}{2d}(q)}\\
 &= \frac{\coef{2k-2}{2d}(p)}{\coef{2k}{2d}(p)}\cdot\frac{\coef{2k-2}{2d}(q)\coef{2k-1}{2d}(q)}{\coef{2k}{2d}(q)\coef{2k-1}{2d}(q)}\\
 &=\left(\stranstilde{p}{2d} \left(-\frac{k}{d}\right) \right)^2\strans{q}{2d}\left(-\frac{2k}{2d}\right) \strans{q}{2d}\left(-\frac{2k-1}{2d}\right).
\end{align*} 
\end{proof}

It is also easy to check that our finite symmetric $S$-tranform tends to the symmetric $S$-transform from \cite{arizmendi2009} in the limit.

\begin{proposition}
\label{prop:Strans.symmetric.convergence}
Let $(p_j)_{j\in\nn}\subset \pols^S(\rr)$ be a sequence of symmetric polynomials with degree sequence $(2d_j)_{j\in\nn}$ and assume $(p_j)_{j\in\nn}$ converges to $\mu\in \MM^S(\rr)$. Then for every diagonal sequence $(2k_j)_{j\in\nn}\subset\nn$ with limit $t\in(0,1-\mu(\{0\}))$, it holds that
\[\lim_{j\to\infty}\stranstilde{p_j}{2d_j}\left(-\frac{k_j}{d_j}\right)= \widetilde{S}_\mu(-t).\]
\end{proposition}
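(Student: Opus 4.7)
The plan is to leverage the bijection $p \leftrightarrow \mathbf{Sq}(p)$ and reduce the symmetric case directly to our main Theorem \ref{thm:main} via the explicit formula \eqref{eq:symmetric.finite.S}, which relates the finite symmetric $S$-transform to the ordinary finite $S$-transform of the squared polynomial.

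First, I would verify that the squared sequence converges in the right way. Since the map $x \mapsto x^2$ is continuous, the continuous mapping theorem (combined with $\meas{\mathbf{Sq}(p_j)} = \mathbf{Sq}(\meas{p_j})$) yields $\meas{\mathbf{Sq}(p_j)} \weak \mathbf{Sq}(\mu)$, so $(\mathbf{Sq}(p_j))_{j\in\nn}$ is a sequence in $\pols_{d_j}(\R_{\geq 0})$ converging to $\mathbf{Sq}(\mu)\in\MM(\R_{\geq 0})$. Moreover, because $x^{-1}(0)=\{0\}$ under squaring, we have $\mathbf{Sq}(\mu)(\{0\})=\mu(\{0\})$, so the assumption $t\in(0,1-\mu(\{0\}))$ is precisely the assumption needed to apply Theorem \ref{thm:main} to the squared sequence. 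The diagonal sequence $(k_j)_{j\in\nn}$ with $k_j/d_j\to t$ is admissible for the squared polynomials, and hence
\[
\lim_{j\to\infty}\strans{\mathbf{Sq}(p_j)}{d_j}\left(-\frac{k_j}{d_j}\right)=S_{\mathbf{Sq}(\mu)}(-t).
\]

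Next, I would pass to the limit in the identity \eqref{eq:symmetric.finite.S}:
\[
\stranstilde{p_j}{2d_j}\left(-\frac{k_j}{d_j}\right)=\sqrt{\frac{1-\tfrac{k_j}{d_j}+\tfrac{1}{2d_j}}{-\tfrac{k_j}{d_j}+\tfrac{1}{2d_j}}\,\strans{\mathbf{Sq}(p_j)}{d_j}\left(-\frac{k_j}{d_j}\right)}.
\]
Since $\tfrac{1}{2d_j}\to 0$ and $\tfrac{k_j}{d_j}\to t\in(0,1)$, the rational prefactor converges to $\tfrac{1-t}{-t}$, which is a finite nonzero (negative) real. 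Combined with the convergence of the finite $S$-transform of $\mathbf{Sq}(p_j)$ to the positive, finite number $S_{\mathbf{Sq}(\mu)}(-t)$, the argument of the square root converges to a fixed nonzero value. Using the continuity of the branch of the square root taking values in $(\sqrt{-1})\R_{\geq 0}$ that defines both $\stranstilde{p}{2d}$ and $\widetilde{S}_\mu$ (cf.\ \eqref{eq:symmetric_S}), we conclude
\[
\lim_{j\to\infty}\stranstilde{p_j}{2d_j}\left(-\frac{k_j}{d_j}\right)=\sqrt{\frac{1-t}{-t}\,S_{\mathbf{Sq}(\mu)}(-t)}=\widetilde{S}_\mu(-t).
\]

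I do not anticipate a serious obstacle: the proof is essentially a two-line reduction plus a limit computation in the algebraic identity \eqref{eq:symmetric.finite.S}. The only minor point requiring care is matching the branches of the square roots on the two sides (so that both take values in the same half of the imaginary axis) and verifying that $\mu(\{0\})=\mathbf{Sq}(\mu)(\{0\})$, so that the hypothesis $t<1-\mu(\{0\})$ is exactly what Theorem \ref{thm:main} requires for $\mathbf{Sq}(\mu)$. Once these bookkeeping items are in place, the convergence is immediate.
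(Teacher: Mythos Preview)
Your proposal is correct and follows essentially the same approach as the paper: apply Theorem \ref{thm:main} to the squared sequence $(\mathbf{Sq}(p_j))_{j\in\nn}$ and pass to the limit in the identity \eqref{eq:symmetric.finite.S} to recover \eqref{eq:symmetric_S}. You are simply more explicit than the paper about the bookkeeping (continuous mapping theorem, $\mathbf{Sq}(\mu)(\{0\})=\mu(\{0\})$, and the branch of the square root), but the argument is the same.
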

\begin{proof}
Using Theorem \ref{thm:main} with the sequence $\left(\mathbf{Sq}(p_j)\right)_{j\in\nn}$, Equation \eqref{eq:symmetric.finite.S} tends to Equation \eqref{eq:symmetric_S} in the limit: 
\[\lim_{j\to\infty}\stranstilde{p_j}{2d_j}\left(-\frac{k_j}{d_j}\right)=  \lim_{j\to\infty}\sqrt{\frac{1-\frac{k_j}{d_j}+\frac{1}{2d_j}}{-\frac{k_j}{d_j}+\frac{1}{2d_j}} \strans{\mathbf{Sq}(p_j)}{d}\left(-\frac{k}{d}\right)}= \sqrt{\frac{1-t}{-t} S_{\mathbf{Sq}(\mu) }(-t)}=\widetilde{S}_\mu(-t).\]
\end{proof}

\subsection{Unitary case}

It would be interesting to construct an $S$-transform that can handle the set $\pols_d(\mathbb{T})$ of polynomials with roots in the unit circle $\tt:=\{z\in\cc : |z|=1\}$. However, if we naively try to apply the same approach used in the previous cases, we run into some problems.
To illustrate such difficulties, let us consider the following example. When considering polynomials that resemble the Haar unitary measure in $\cc$, namely polynomials with roots uniformly distributed in $\tt$, there are at least two natural candidates:
\[
  h_d(x) = x^d - 1 = \prod_{k=1}^d (x - e^{\frac{2\pi i k}{d}}),
\]
\[
   \widehat{h}_{d-1}(x) = \frac{x^d-1}{x-1} = \sum_{j=0}^{d-1} x^j=\prod_{k=1}^{d-1} (x - e^{\frac{2\pi i k}{d}}).
\]
Notice that $h_d$ has the same roots as $\widehat{h}_{d-1}$ with an extra root in 1.
Thus, when $d\to\infty$, the empirical distributions of $h_d$ and $\widehat{h}_{d-1}$ both tend to $\chi$, the uniform distribution on $\mathbb{T}$.

For $h_d$, the only non-vanishing coefficients are $\coef{0}{d}\left(h_d\right)$ and $\coef{d}{d}\left(h_d\right)$.
Thus, our method of looking at the quotient of coefficients $\{\coef{k}{d}\left(h_d\right)\}_{k=0}^n$ does not work at all simply because all the quotients are undefined.
On the other hand, for $\widehat{h}_d$,
\[
\coef{k}{d}(\widehat{h}_d) = (-1)^k\binom{d}{k}^{-1}.
\]
Thus, their ratio limit is
\[
\frac{\coef{k-1}{d}(\widehat{h}_d)}{\coef{k}{d}(\widehat{h}_d)} =-\frac{\binom{d}{k}}{\binom{d}{k-1}} = -\frac{d-k+1}{k} \to -\frac{1-t}{t},
\]
as $d\to \infty$ with $k/d \to t$. On the other hand, since $m_1(\chi)=0$, the $S$-transform of $\chi$ cannot be defined and thus there is no relation to the last limit.

Even though our approach does not seem to work every sequence of polynomials contained in $\pp(\tt)$, in some cases we do obtain the expected limit. For instance, fix $t\geq 0$ and consider the unitary Hermite polynomials 
$$H_d(z;t) = \sum_{k=0}^d (-1)^k\binom{d}{k}\exp\left(-\frac{tk(d-k)}{2d}\right)$$ that where studied in \cite[Section 6]{arizmendifujieueda} and \cite{kabluchko2022lee}. Then, if we take the ratio of consecutive coefficients and take the corresponding diagonal limit approaching $t\in(0,1)$, we obtain the $S$-transform of $\sigma_t$, the free normal distribution on $\tt$:
$$S_{\sigma_t}(z) = \exp\left(t\left(z+\frac{1}{2}\right) \right).$$

We can also prove that $\boxtimes_d$ approaches to $\boxtimes$ as $d\to\infty$ on the unitary case without using the $S$-transform, to the best our knowledge, there is no literature which explicitly states the assertion:
\begin{proposition}
    Let $p_d, q_d \in \pols_d(\mathbb{T})$ for $d \in \N$ and $\mu, \nu \in \MM(\mathbb{T})$.
    If $\meas{p_d} \weak \mu$ and $\meas{q_d} \weak \nu$ as $d\to \infty$, repectively, then $\meas{p_d \boxtimes_d q_d} \weak \mu \boxtimes \nu$.
\end{proposition}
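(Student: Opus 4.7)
The plan is to reduce the unitary case to the real nonnegative case treated in Proposition \ref{prop:finiteAsymptotics}(2) by a polynomial identity argument, exploiting that the algebraic relations governing $\boxtimes_d$ and the conversion between moments and coefficients are independent of where the roots lie. Since $\mathbb{T}$ is compact and trigonometric polynomials are dense in $C(\mathbb{T})$, weak convergence on $\mathbb{T}$ is equivalent to convergence of all moments $m_n$ for $n \in \Z$; because $m_{-n} = \overline{m_n}$ for probability measures on $\mathbb{T}$, it suffices to show $m_n(p_d \boxtimes_d q_d) \to m_n(\mu \boxtimes \nu)$ for every $n \ge 1$.

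For any $p \in \pols_d$, Newton's identities express $m_n(p)$ as a polynomial in $\coef{1}{d}(p), \ldots, \coef{n}{d}(p)$ whose coefficients are rational functions of $d$, and conversely. Combining this with the algebraic multiplicativity $\coef{k}{d}(p_d \boxtimes_d q_d) = \coef{k}{d}(p_d)\,\coef{k}{d}(q_d)$ yields an expression
\[
m_n(p_d \boxtimes_d q_d) \;=\; F_n\!\bigl(d;\,m_1(p_d),\ldots,m_n(p_d),\,m_1(q_d),\ldots,m_n(q_d)\bigr),
\]
where $F_n(d;\cdot)$ is a polynomial in the moment variables whose coefficients are rational in $d$ and admit finite limits as $d \to \infty$; denote the limiting polynomial by $F_n^{\infty}$. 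On the other hand, by the free moment--cumulant relations together with the Kreweras-complement expansion of $\boxtimes$ (see, e.g., \cite[Chapter 14]{nica2006lectures}), $m_n(\mu \boxtimes \nu)$ is itself a universal polynomial $\Psi_n$ in the moments of $\mu$ and $\nu$.

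The heart of the argument is to identify $F_n^{\infty} \equiv \Psi_n$ as complex polynomials in the $2n$ moment variables. By Proposition \ref{prop:finiteAsymptotics}(2), the identity $F_n^{\infty}(\vec a;\vec b) = \Psi_n(\vec a;\vec b)$ holds whenever $(\vec a,\vec b)$ are moment sequences of compactly supported measures on $\R_{\ge 0}$. The image of the map sending such a measure to its first $n$ moments contains a non-empty open subset of $\R^n$ (visible from a Vandermonde-type Jacobian computation at a convex combination of $n+1$ distinct point masses on a fixed interval), so the polynomial difference $F_n^{\infty} - \Psi_n$ in $2n$ complex variables vanishes on a set with non-empty interior in $\R^{2n}\subset\C^{2n}$, forcing $F_n^{\infty} \equiv \Psi_n$ identically. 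Substituting $a_j = m_j(\mu)$, $b_j = m_j(\nu)$ for $\mu, \nu \in \MM(\mathbb{T})$ and invoking moment convergence of $p_d$ and $q_d$ concludes the proof. The main technical point is exactly this density step; once it is in hand, the rest is a mechanical unpacking of Newton's identities and of the algebraic definition of $\boxtimes_d$.
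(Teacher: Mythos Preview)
Your argument is correct and takes a genuinely different route from the paper. The paper's sketch simply reruns the cumulant machinery of \cite{AP,AGP} on $\mathbb{T}$: since $\mathbb{T}$ is compact, weak convergence is equivalent to convergence of the moments $m_n$ for $n\in\Z$, which in turn is equivalent to convergence of the finite free cumulants; one then feeds this into the combinatorial formula expressing $m_n(p\boxtimes_d q)$ in terms of cumulants and moments and checks that it converges to the corresponding free formula. You instead treat Proposition~\ref{prop:finiteAsymptotics}(2) as a black box. Writing $m_n(p\boxtimes_d q)=F_n(d;\,m_1(p),\dots,m_n(p),m_1(q),\dots,m_n(q))$ with $F_n(d;\cdot)$ a polynomial whose coefficients are rational in $d$, you identify the coefficientwise limit $F_n^\infty$ with the free moment polynomial $\Psi_n$ by observing that they agree on all moment vectors coming from compactly supported measures on $\R_{\geq 0}$, and that this set has nonempty interior in $\R^{2n}$ via the Vandermonde argument. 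This buys you the unitary result without reopening any cumulant combinatorics; the price is the small density step, which is standard. Note also that the existence of $F_n^\infty$ can itself be deduced from Proposition~\ref{prop:finiteAsymptotics}(2) together with the bounded-degree interpolation argument, so you need not appeal to the explicit cumulant formulas even for that.

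One point both arguments use without comment is that $p_d\boxtimes_d q_d\in\pols_d(\mathbb{T})$, which you need when you reduce from $n\in\Z$ to $n\geq 1$ via $m_{-n}=\overline{m_n}$. This is classical: the Szeg\H{o} composition preserves the closed unit disk, and $\boxtimes_d$ preserves the self-inversive coefficient symmetry $\coef{d-j}{d}=\coef{d}{d}\,\overline{\coef{j}{d}}$, so roots must lie on $\mathbb{T}$. It is worth flagging this explicitly.
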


Since the proof of this result is very similar to the proof in the real case (see Propositions \ref{prop:convergence_cumulants} and \ref{prop:finiteAsymptotics}), we only provide the idea of the proof. For the details, we refer the reader to \cite[proof of Corollary 5.5]{AP}.

\begin{proof}[Idea of the proof]
Since $\mathbb{T}$ is compact, then the moment convergence of polynomial sequence $(p_d)_{d\in\N} \subset \pols(\mathbb{T})$ is equivalent to weak convergence.
The proof then follows from the equivalence of the convergences of moments and of finite free cumulants of $(p_d)$, which is similar to how it is done in the real case. 
\end{proof}

%%%%%%%%%%%%%%%%%%%%%%%%%%
\section{Approximation of Tucci, Haagerup, and M\"{o}ller}
\label{sec:approx.THM}
%%%%%%%%%%%%%%%%%%%%%%%%%%

The purpose of this section is to prove Theorem \ref{thm:main3} stating that Fujie and Ueda's limit theorem \cite{fujie2023law}  is an approximation of Tucci, Haagerup and M\"{o}ller's limit theorem \cite{tucci2010,HM13}.

The main idea is that this approximation is equivalent to the convergence of the finite $T$-transform from the Section \ref{sec:finite.Strans} to the $T$-transform introduced in \eqref{eq:T.transform.def}. This in turn is almost equivalent to the convergence of the finite $S$-transform to the $S$-transform, except that the $T$-transform is more adequate to handle the case where the polynomial has roots in 0. Notice that we also need include the case of $\delta_0$. First, we will adapt Theorem \ref{thm:main} to a version with $T$-transforms.

\begin{theorem} \label{thm:main.T.transform}
Given a measure $\mu\in\MM(\rr_{\geq 0})$ and a sequence of polynomials $(p_j)_{j\in\nn}\subset \pols(\rr_{\geq 0})$,
the following are equivalent:
\begin{enumerate}
\item The weak convergence of $(p_j)_{j\in\nn}$ to $\mu$.
\item For every diagonal sequence $(k_j)_{j\in\nn}$ with limit $t\in(0,1)$, it holds that
\[\lim_{j\to\infty}\ttrans{p_j}{d_j}\left(\frac{k_j}{d_j}\right)=T_\mu(t).\]
\item For every $t\in(0,1)$, it holds that
\[\lim_{j\to\infty}\ttrans{p_j}{d_j}(t)=T_\mu(t).\]
\end{enumerate}
\end{theorem}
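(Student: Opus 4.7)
The proof rests on the identity from Equation \eqref{eq:relation.T.S.finite},
\[
\ttrans{p}{d}\!\left(\tfrac{k}{d}\right)=\frac{1}{\strans{p}{d}\!\left(-\tfrac{d-k}{d}\right)},
\]
valid whenever the denominator is nonzero, together with the two-branch description $T_\mu(t)=1/S_\mu(t-1)$ on $(\mu(\{0\}),1)$ and $T_\mu\equiv 0$ on $(0,\mu(\{0\})]$. My plan is to establish $(2)\Leftrightarrow(3)$, then $(1)\Rightarrow(2)$, and finally $(3)\Rightarrow(1)$.

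The equivalence $(2)\Leftrightarrow(3)$ is routine. For $(2)\Rightarrow(3)$, specialize to $k_j=\lfloor td_j\rfloor$: then $k_j/d_j\to t$ and $\ttrans{p_j}{d_j}$ is constant on the step of length $1/d_j$ containing both $t$ and $k_j/d_j$. For $(3)\Rightarrow(2)$, use that $\ttrans{p_j}{d_j}$ is weakly increasing on $(0,1)$ and that $T_\mu$ is continuous there: squeeze $k_j/d_j$ between fixed points $t_-<t<t_+$ and apply (3) at $t_\pm$.

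For $(1)\Rightarrow(2)$, fix $t\in(0,1)$ and a diagonal sequence $k_j/d_j\to t$, and split into three cases. If $\mu\ne\delta_0$ and $t>\mu(\{0\})$, the Portmanteau inequality applied to the closed set $\{0\}$ gives $\limsup r_j/d_j\le\mu(\{0\})<t$ (here $r_j$ is the multiplicity of $0$ in $p_j$), so eventually $\coef{d_j-k_j}{d_j}(p_j)\ne 0$ and the identity above applies; Theorem \ref{thm:main} yields $\strans{p_j}{d_j}(-l_j/d_j)\to S_\mu(-(1-t))$ with $l_j=d_j-k_j$, hence $\ttrans{p_j}{d_j}(k_j/d_j)\to 1/S_\mu(t-1)=T_\mu(t)$. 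If $\mu\ne\delta_0$ and $t\le\mu(\{0\})$, then $T_\mu(t)=0$; pick $u\in(\mu(\{0\}),1)$ and a diagonal sequence $u_j/d_j\to u$, use monotonicity to bound $\ttrans{p_j}{d_j}(k_j/d_j)\le\ttrans{p_j}{d_j}(u_j/d_j)\to T_\mu(u)$, and let $u\to\mu(\{0\})^+$ using continuity of $T_\mu$. For $\mu=\delta_0$, employ the cut-down polynomial $p_j|_\varepsilon$ from Notation \ref{not:cut.operators}: since $\meas{p_j|_\varepsilon}\weak\delta_\varepsilon$, the previous case gives $\ttrans{p_j|_\varepsilon}{d_j}(k_j/d_j)\to\varepsilon$, and $p_j\ll p_j|_\varepsilon$ combined with Lemma \ref{lem:ll.strans.inequality} yields $\limsup\ttrans{p_j}{d_j}(k_j/d_j)\le\varepsilon$; let $\varepsilon\downarrow 0$.

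For $(3)\Rightarrow(1)$, assume (3), equivalently (2). If $\mu\ne\delta_0$, then for any $s\in(0,1-\mu(\{0\}))$ and diagonal sequence $l_j/d_j\to s$ the identity yields $\strans{p_j}{d_j}(-l_j/d_j)\to S_\mu(-s)$; Proposition \ref{prop:converse.main.thm} with $t_0=\mu(\{0\})$ then produces $\mu'$ with $\meas{p_j}\weak\mu'$, $\mu'(\{0\})\le\mu(\{0\})$, and $S_{\mu'}=S_\mu$ on $(-1+\mu(\{0\}),0)$. To identify $\mu'=\mu$, I invoke that $S_\mu(z)\to\infty$ as $z\to(-1+\mu(\{0\}))^+$ (Remark \ref{rem.T.tranform}): if $\mu'(\{0\})<\mu(\{0\})$ were true, $S_{\mu'}$ would be analytic and hence bounded on a neighborhood of $-1+\mu(\{0\})$, contradicting the matching; so $\mu'(\{0\})=\mu(\{0\})$ and analytic continuation of $S$-transforms gives $\mu'=\mu$. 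For $\mu=\delta_0$, suppose for contradiction that $\meas{p_j}\not\weak\delta_0$; extracting subsequences, either $\meas{p_j}\weak\mu'$ with $\mu'\ne\delta_0$, in which case $(1)\Rightarrow(3)$ applied to $\mu'$ gives $\ttrans{p_j}{d_j}(t)\to T_{\mu'}(t)>0$ on the nonempty interval $(\mu'(\{0\}),1)$, contradicting (3); or $\meas{p_j}$ is not tight, in which case one finds $\delta>0$, $M_j\to\infty$, and $m_j\ge\delta d_j$ roots of $p_j$ above $M_j$, so that $q_j(x):=x^{d_j-m_j}(x-M_j)^{m_j}\ll p_j$ and a direct computation via the formula in Remark \ref{rem:finite.Ttransform} gives $\ttrans{q_j}{d_j}(t)\to\infty$ for $t>1-\delta$, again contradicting (3) through Lemma \ref{lem:ll.strans.inequality}. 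The main obstacle is this converse direction: both the identification $\mu'=\mu$ via the boundary blow-up of $S_\mu$ and the $\delta_0$ case cannot be handled through the $S$-transform and require the partial-order comparisons of Section \ref{sec:partial.order}.
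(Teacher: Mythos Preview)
Your proof is correct and follows essentially the same route as the paper's: reduce to Theorem~\ref{thm:main} via the identity \eqref{eq:relation.T.S.finite}, extend from $(\mu(\{0\}),1)$ to $(0,1)$ by monotonicity and continuity of $T_\mu$, and handle $\mu=\delta_0$ separately through order comparisons. The minor variations (cut-down versus shift for the forward $\delta_0$ case, and your tight/not-tight dichotomy versus the paper's single observation that $\meas{p_j}\not\weak\delta_0$ already yields $\limsup_j\meas{p_j}(\R_{\ge a})\ge\epsilon$ for some $a,\epsilon>0$) are inessential.

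The one unnecessary detour is your treatment of $(3)\Rightarrow(1)$ for $\mu\ne\delta_0$. Once you have established that $\strans{p_j}{d_j}(-l_j/d_j)\to S_\mu(-s)$ for every $s\in(0,1-\mu(\{0\}))$ and every diagonal sequence with limit $s$, this is precisely condition~(2) of Theorem~\ref{thm:main}, and that theorem is stated as an equivalence: its direction $(2)\Rightarrow(1)$ gives $\meas{p_j}\weak\mu$ immediately. Going instead through Proposition~\ref{prop:converse.main.thm} to produce an auxiliary $\mu'$, and then arguing $\mu'=\mu$ via the boundary blow-up of $S_\mu$ at $-1+\mu(\{0\})$, is correct but superfluous. (Note also that Proposition~\ref{prop:converse.main.thm} is itself the engine behind the $(2)\Rightarrow(1)$ direction of Theorem~\ref{thm:main}, so you are essentially re-deriving part of that theorem.)
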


\begin{proof}
Using the definition of $T$-transform in terms of $S$-transform from \eqref{eq:T.transform.def}, its finite analogue \eqref{eq:relation.T.S.finite} and our main Theorem \ref{thm:main}, we obtain that $\meas{p_j} \weak \mu$ with $\mu\neq\delta_0$, if and only if for every diagonal sequence $(k_j)_{j\in\nn}$ with limit $t\in(\mu(\{0\}),1)$ one has that
\[\lim_{j\to\infty} \ttrans{p_j}{d_j}\left( \frac{k_j}{d_j}\right)=\lim_{j\to\infty} \frac{1}{\strans{p_j}{d_j}\left( \tfrac{k_j-d_j}{d_j}\right) }= \frac{1}{S_{\mu}(t-1)}=T_\mu(t).\]
Since the functions $\ttrans{p_j}{d_j}$ are positive and non-decreasing (see Remark \ref{rem.finite.T.trans.basic.facts}) and $T_\mu$ is positive, continuous and increasing, then the previous limit can be extended to the whole interval $(0,1)$, and this is equivalent to part (2).

The equivalence between (2) and (3) follows by the increasing property of the $T$-transform. Indeed,  for any $t\in(0,1)$ one can find diagonal sequences $(k_j)_j$ and $(k'_j)_j$ with $k_j/d_j \leq t \leq  k'_j/d_j$ with limit $t$  and then
\[T_\mu(t)=\lim_{j\to\infty}\ttrans{p_j}{d_j}\left(\frac{k_j}{d_j}\right)\leq \lim_{j\to\infty}\ttrans{p_j}{d_j}\left(t \right)\leq \lim_{j\to\infty}\ttrans{p_j}{d_j}\left(\frac{k_j'}{d_j}\right)=T_\mu(t). \]
The converse statement follows by a similar argument.

Therefore, we are only left to check what happens when $\mu=\delta_0$ and $T_\mu(t) = 0$ for $t\in (0,1)$.
If $(p_j)_{j\in\N}$ converges to $\delta_0$, it is clear $\meas{\shift{\epsilon}(p_j)} \xrightarrow{w} \delta_\epsilon$ as $j\to\infty$ for any $\epsilon>0$.
Thus, 
$$ \limsup_{j\to\infty}  \ttrans{p_j}{d_j}(t) \le \epsilon \qquad \text{for }t\in (0,1)$$
because $\displaystyle \lim_{j\to\infty} \ttrans{\shift{\epsilon}(p_j)}{d_j}(t) = T_{\delta_\epsilon}(t) = \epsilon$ and $\ttrans{p_j}{d_j}(t) \le \ttrans{\shift{\epsilon}(p_j)}{d_j}(t)$ by Proposition \ref{prop.Phi.preserves.order}.
Letting $\epsilon$ tend to 
$0$  we obtain $\displaystyle \lim_{{j \to \infty}} \ttrans{p_j}{d_j}(t) = 0$.

For the converse, assume that $\displaystyle \lim_{j\to \infty} \ttrans{p_j}{d_j}(t) = 0$ for $t\in(0,1)$. For the sake of contradiction, we assume that $(p_j)_{j\in\N}$ does not converge to $\delta_0$. Then there exist $\epsilon>0$ and $a\in\R_{>0}$ such that
\[ 
 \limsup_{j\to\infty} \meas{p_j}(\R_{\ge a}) \ge \epsilon.
\]
By taking a subsequence, we may assume $\meas{p_j}(\R_{\ge a}) \ge \epsilon$ for all $j$.
Let us set another sequence of polynomials $q_j(x):= x^{d_j-k_j}(x-a)^{k_j}$ where $k_j = \lfloor \epsilon d_j \rfloor$.
Then it is clear that $\meas{q_j} \xrightarrow{w} (1 - \epsilon) \delta_0 + \epsilon \delta_{a} =: \nu$ and $\ttrans{q_j}{d_j}(t) \le \ttrans{p_j}{d_j}(t)$ for all $t \in (0,1)$ by Proposition \ref{prop.Phi.preserves.order}.
However, $\ttrans{q_j}{d_j}(t) \to T_\nu(t) \not \equiv 0$.
This is a contradiction.
Therefore we conclude that $(p_j)_{j\geq 1}$ converges to $\delta_0$. 
\end{proof}

Recall that given a measure $\mu\in\MM(\rr_{\geq 0})$ the map $\Phi$ from \eqref{eq:THMcharacterization} yields a measure $\Phi(\mu)\in\MM(\rr_{\geq 0})$ such that $T_{\mu}$ and $\cdf{\Phi(\mu)}$ are inverse functions 

We are now ready to give a proof of Theorem \ref{thm:main3}, namely that
\[\meas{p_d} \weak \mu \qquad\Leftrightarrow\qquad \meas{\Phi_d(p_d)}\weak \Phi(\mu).\]

\begin{proof}[Proof of Theorem \ref{thm:main3}]
Notice that $\meas{\Phi_d(p_d)}\weak \Phi(\mu)$ is equivalent to the convergence $F_{\meas{\Phi_d(p_d)}}(x)\to F_{\Phi(\mu)}$ as $d\to\infty$ for every $x\in\rr$ that is a continuous point of $F_{\Phi(\mu)}$. In turn, Lemma \ref{lem:oscuridad} and the definition of the $T$-transform assures us that the later is equivalent to the convergence $T^{(d)}_{p_d}(x)\to T_\mu(x)$ for every continuous point $x\in(0,1)$ of $T_\mu$. Since $T_\mu$ is continuous on $(0,1)$, the later is equivalent to $\meas{p_d}\weak \mu$ due to Theorem \ref{thm:main.T.transform}. 
\end{proof}

%%%%%%%%%%%%%%%%%%%%%%%%%%
\section{Examples and applications}
\label{sec:examples.applications}
%%%%%%%%%%%%%%%%%%%%%%%%%%

In this section, we present various limit theorems relating finite free probability to free probability. Thus, throughout the whole section, we will consider situations where the dimension $d$ or $d_j$ tends to infinity, and assume that the polynomials converge to a measure, as in Notation \ref{not:converging}.

\subsection{\texorpdfstring{$p_d\boxtimes_d q_d$}\ \  approximates \texorpdfstring{$\mu \boxtimes \nu$}\ }

As announced, we present the first application of our main theorems. That is, we present a new independent proof of part (2) in Proposition \ref{prop:finiteAsymptotics} which includes the general case.

\begin{proposition}
\label{prop:approx_boxtimesd}
Let $(p_d)_{d\in \N}$ and $(q_d)_{d\in \N}$ be sequences of polynomials such that $p_d,q_d \in \pols_d(\R_{\geq 0})$ and let $\mu, \nu \in \MM(\R_{\geq 0})$ such that $(p_d)_{d\in\N}$ and $(q_d)_{d\in \N}$ weakly converge to $\mu$ and $\nu$, respectively.
Then $(p_d\boxtimes_d q_d)_{d\in \N}$ weakly converges to $\mu\boxtimes \nu$.
\end{proposition}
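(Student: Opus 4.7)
The plan is to deduce this approximation directly from our characterization of weak convergence via the finite $T$-transform, exploiting the fact that the finite $T$-transform is multiplicative under $\boxtimes_d$.

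First, I would invoke Theorem \ref{thm:main.T.transform} twice: the hypotheses $\meas{p_d} \weak \mu$ and $\meas{q_d} \weak \nu$ translate into the pointwise convergences
\[
\lim_{d\to\infty} \ttrans{p_d}{d}(t) = T_\mu(t) \qquad \text{and} \qquad \lim_{d\to\infty} \ttrans{q_d}{d}(t) = T_\nu(t)
\]
for every $t\in(0,1)$. Next, the multiplicativity formula \eqref{eq:rico} (which is an immediate consequence of $\coef{k}{d}(p\boxtimes_d q)=\coef{k}{d}(p)\coef{k}{d}(q)$) gives
\[
\ttrans{p_d\boxtimes_d q_d}{d}(t) = \ttrans{p_d}{d}(t)\,\ttrans{q_d}{d}(t) \xrightarrow{d\to\infty} T_\mu(t)\,T_\nu(t)
\]
for every $t\in(0,1)$.

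The remaining step is to identify the limit $T_\mu\, T_\nu$ with $T_{\mu\boxtimes\nu}$. This should follow from a short case analysis based on the definition \eqref{eq:T.transform.def}: for $t\in(\max\{\mu(\{0\}),\nu(\{0\})\},1)$ one uses Voiculescu's product formula $S_{\mu\boxtimes\nu}(t-1)=S_\mu(t-1)S_\nu(t-1)$, and for $t\in(0,\max\{\mu(\{0\}),\nu(\{0\})\}]$ one uses the identity $[\mu\boxtimes\nu](\{0\}) = \max\{\mu(\{0\}),\nu(\{0\})\}$ recalled in Section \ref{ssec:prelim.S.transform} to see that both sides equal $0$. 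Once the equality $T_\mu\,T_\nu=T_{\mu\boxtimes\nu}$ on $(0,1)$ is in place, applying Theorem \ref{thm:main.T.transform} in the converse direction to the sequence $(p_d\boxtimes_d q_d)_{d\in\N}$ yields the desired weak convergence $\meas{p_d\boxtimes_d q_d}\weak \mu\boxtimes\nu$.

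The argument is essentially a clean transfer through the $T$-transform, so there is no single hard step; the only thing to watch is the treatment of atoms at $0$, which is precisely the reason for working with $T$ rather than $S$ (so that every object is defined on the full interval $(0,1)$, including the degenerate case $\mu=\delta_0$ or $\nu=\delta_0$, where the claim reduces to $\mu\boxtimes\nu=\delta_0$ and $T_{\delta_0}\equiv 0$).
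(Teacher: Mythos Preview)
Your proposal is correct and matches the paper's own proof essentially line for line: both use Theorem \ref{thm:main.T.transform} to pass to finite $T$-transforms, invoke the multiplicativity \eqref{eq:rico}, identify $T_\mu T_\nu = T_{\mu\boxtimes\nu}$, and then apply Theorem \ref{thm:main.T.transform} in reverse. If anything, you spell out the identification $T_\mu T_\nu = T_{\mu\boxtimes\nu}$ (via the case split on atoms at $0$) more carefully than the paper does.
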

\begin{proof}
By Equation \eqref{eq:rico} and Theorem \ref{thm:main.T.transform}, we obtain
\[
T^{(d)}_{p_d\boxtimes_d q_d}(t) =T_{p_d}^{(d)}(t)T_{q_d}^{(d)}(t) \rightarrow T_\mu(t) T_\nu(t) = T_{\mu\boxtimes \nu}(t)
\]
as $d\to\infty$ for every $t \in (0,1)$.
Hence, $\meas{p_d\boxtimes_d q_d}\weak \mu\boxtimes \nu$ by Theorem \ref{thm:main.T.transform} again.
\end{proof}

\subsection{A limit for the coefficients of a sequence of polynomials.}

Our main theorem provides a limit for the ratio of consecutive coefficients, in a converging sequence of polynomials. The convergence of ratios can be easily translated to understand other ratios, or the behaviour of the coefficients alone.
\begin{proposition}
Fix a measure $\mu \in \MM(\R_{\geq 0})$ and fix $0<t<u<1-\mu(\{0\})$. Let $p_j\in \PP_{d_j}(\rr)$ be a sequence of polynomials converging to $\mu$ and let $(k_j)_{j\in \N}$ and $(l_j)_{j\in \N}\subset\nn$ be diagonal sequences with ratio limit $t$ and $u$, respectively.
Then 
\[\lim_{j\to\infty}  \sqrt[d_j]{\frac{\coef{l_j}{d_j}(p_j)}{\coef{k_j}{d_j}(p_j)}} = \exp\left(- \int_t^u \log S_{\mu}(-x) dx \right).\] 
Additionally, unless $\mu = \delta_a$ for some $a>0$, 
\[\lim_{j\to\infty}  \sqrt[d_j]{\frac{\coef{l_j}{d_j}(p_j)}{\coef{k_j}{d_j}(p_j)}} = \exp\left(-\int_t^u \log S_{\mu}(-x) dx \right)=\exp\left(\int_{T_{\mu}(1-u)}^{T_{\mu}(1-t)}\log x \,  \Phi(\mu) (dx) \right).\] 
\end{proposition}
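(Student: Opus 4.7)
The plan is to express the ratio of coefficients as a telescoping product of values of the finite $S$-transform and then interpret its $d_j$-th root as the exponential of a Riemann sum. By Equation~\eqref{eq:Strans.to.coef}, one has $\coef{k}{d}(p) = \prod_{i=1}^k \strans{p}{d}(-i/d)^{-1}$ whenever all the coefficients involved are nonzero. Since $u < 1-\mu(\{0\})$, the Portmanteau theorem ensures that for $j$ large enough the multiplicity $r_j$ of the root $0$ in $p_j$ satisfies $r_j/d_j < 1-u$, so $\strans{p_j}{d_j}(-l_j/d_j)$ is well-defined; similarly $k_j < l_j$ for large $j$. Telescoping then yields
\[
\sqrt[d_j]{\frac{\coef{l_j}{d_j}(p_j)}{\coef{k_j}{d_j}(p_j)}} = \exp\!\left(-\frac{1}{d_j}\sum_{i=k_j+1}^{l_j}\log \strans{p_j}{d_j}\!\left(-\frac{i}{d_j}\right)\right).
\]

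The main step is to show that the Riemann sum in the exponent converges to $\int_t^u \log S_\mu(-x)\,dx$. Theorem~\ref{thm:main} gives pointwise convergence $\strans{p_j}{d_j}(-i/d_j)\to S_\mu(-x)$ along every diagonal sequence with $i/d_j\to x \in (0, 1-\mu(\{0\}))$, but pointwise convergence alone is insufficient for Riemann-sum convergence. To upgrade it, I would recast everything through the finite $T$-transform via Equation~\eqref{eq:relation.T.S.finite}, so that $-\log \strans{p_j}{d_j}(-i/d_j) = \log \ttrans{p_j}{d_j}((d_j-i)/d_j)$. Since each $\ttrans{p_j}{d_j}$ is a nondecreasing step function on $(0,1)$ that converges pointwise to the continuous function $T_\mu$ (by Theorem~\ref{thm:main.T.transform}), Polya's theorem (Lemma~\ref{lem:luz}) yields locally uniform convergence on $(0,1)$. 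Consequently the Riemann sum converges to $\int_{1-u}^{1-t}\log T_\mu(s)\,ds$, which equals $-\int_t^u \log S_\mu(-x)\,dx$ after the substitution $s=1-x$ together with the identity $T_\mu(1-x)=1/S_\mu(-x)$ on $(\mu(\{0\}),1)$.

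For the second equality, which additionally requires $\mu\neq \delta_a$, the function $T_\mu$ is continuous and strictly increasing on $(\mu(\{0\}),1)$ with inverse $F_{\Phi(\mu)}$ on the open support $(\alpha,\beta)$ of $\Phi(\mu)$. Since the assumption $u<1-\mu(\{0\})$ places $[1-u,1-t]$ inside $(\mu(\{0\}),1)$, the change of variables $y=T_\mu(s)$ (so that $s=F_{\Phi(\mu)}(y)$ and $ds=\Phi(\mu)(dy)$) yields
\[
\int_{1-u}^{1-t}\log T_\mu(s)\,ds = \int_{T_\mu(1-u)}^{T_\mu(1-t)}\log y\,\Phi(\mu)(dy),
\]
completing the proof.

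The main obstacle is justifying the convergence of the Riemann sum, which requires uniform control over $[t,u]$ rather than the pointwise convergence directly provided by Theorem~\ref{thm:main}; the cleanest route I see is through the $T$-transform reformulation, exploiting the monotonicity of $\ttrans{p_j}{d_j}$ together with the continuity of $T_\mu$ to invoke Polya's theorem.
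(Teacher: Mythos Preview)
Your proposal is correct and follows essentially the same route as the paper: both express the ratio of coefficients as a telescoping product, rewrite the resulting Riemann sum in terms of the finite $T$-transform, invoke Theorem~\ref{thm:main.T.transform} for pointwise convergence and then Polya's theorem (monotone functions converging to a continuous limit) to upgrade to locally uniform convergence, and finally perform the change of variables $y=T_\mu(s)$ (with inverse $F_{\Phi(\mu)}$) for the second identity. The only cosmetic difference is that the paper works in the $T$-transform from the outset while you start with the $S$-transform and then recast; your explicit remark that the finite $S$-transform is well-defined at $-l_j/d_j$ for large $j$ is a welcome clarification that the paper leaves implicit.
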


\begin{proof}
By Theorem \ref{thm:main.T.transform}, one has the convergence of $\ttrans{p_j}{d_j}$ to $T_\mu$.
Note that it is locally uniform by Polya's theorem since they are monotone functions.
The same is true for the convergence of $\log \ttrans{p_j}{d_j}$ to $\log T_\mu$.
Hence, 
\begin{equation} \label{eq:casa}
    \lim_{j\to\infty} \int_t^u \log \ttrans{p_j}{d_j}(x) dx = \int_t^u \log T_\mu(x) dx.
\end{equation}
Besides, if $\mu$ is not a Dirac measure then $T_\mu(x)$ is a strictly monotone function.
Thus, by the change of variables $y = T_\mu(x)$ that is equivalent to $F_{\Phi(\mu)}(y) = x$, one has
\[
\int_t^u \log T_\mu(x) dx = \int_{T_{\mu}(t)}^{T_{\mu}(u)} \log y\; \Phi(\mu)(dy).
\]

For diagonal sequences $(k_j)_{j\in \N}, (l_j)_{j\in \N}$ with ratio limits $\lim_{j\to\infty}\frac{k_j}{d_j}=t$ and $\lim_{j\to\infty}\frac{l_j}{d_j}=u$, the left-hand limit of \eqref{eq:casa} coincides with the limit of 
\[
   \begin{split}
    \int_{\frac{k_j}{d_j}}^{\frac{l_j}{d_j}} \log \ttrans{p_j}{d_j}(x) dx &= \frac{1}{d_j}\sum_{i=k_j}^{l_j-1} \log \ttrans{p_j}{d_j}\left(\frac{i}{d_j}\right)\\
    &= \frac{1}{d_j}\sum_{i=k_j}^{l_j-1} \log\left( \frac{\coef{d_j-i}{d_j}(p_j)}{\coef{d_j-i-1}{d_j}(p_j)}\right) \\
    &= \frac{1}{d_j} \log\left( \frac{\coef{d_j-l_j-1}{d_j}(p_j)}{\coef{d_j-k_j-1}{d_j}(p_j)}\right) 
   \end{split}
\]
by the approximation of Riemann sum.
Taking the exponential, we have 
\[
\lim_{j\to\infty}  \sqrt[d_j]{\frac{\coef{d_j-l_j-1}{d_j}(p_j)}{\coef{d_j-k_j-1}{d_j}(p_j)}} = \exp\left(\int_t^u \log T_\mu(x) dx \right).
\] 
Finally, by the simple parameter change, we obtain the desired result. 
\end{proof}

Notice that if we may take $t = 0$ in the previous result then we have the following:

\begin{corollary} \label{cor:lim_coef}
Fix a measure $\mu \in \MM(\R_{\geq 0})$ and a sequence of polynomials $p_j\in \PP_{d_j}(\rr)$ converging to $\mu$.
Assume $S_{\mu}(0) = 1/T_\mu(1) = 1/m_1(\mu) \in (0,\infty)$ and
\[
  \strans{p_j}{d_j}\left(-\frac{1}{d_j}\right) = \frac{1}{\ttrans{p_j}{d_j}\left(\frac{d_j-1}{d_j}\right)} = \frac{1}{\coef{1}{d_j}(p_j)} \to \frac{1}{m_1(\mu)}
\]
as $j\to \infty$, which means the first moment convergence of $p_j$ to $\mu$.
Then for every $t\in(0, 1-\mu(\{0\}))$ and diagonal sequence $(k_j)_{j\in \N}\subset\nn$ with ratio limit $t$, we have
\[\lim_{j\to\infty}  \left(\coef{k_j}{d_j}(p_j)\right)^{\frac{1}{d_j}} = \exp\left(-\int_0^t \log S_{\mu}(-x) dx \right).\]  
Additionally, unless $\mu = \delta_a$ for some $a>0$, 
\[\lim_{j\to\infty}  \left(\coef{k_j}{d_j}(p_j)\right)^{\frac{1}{d_j}} = \exp\left(-\int_0^t \log S_{\mu}(-x) dx \right) = \exp\left(\int_{T_{\mu}(1-t)}^{T_{\mu}(1)}\log x \,  \Phi(\mu) (dx) \right),\]
where we may replace $T_\mu(1) = m_1(\mu)$ by $\infty$ because the support of $\Phi(\mu)$ is included in $[T_{\mu}(0), T_{\mu}(1)]$, see Equation \eqref{eq:cierra}.
\end{corollary}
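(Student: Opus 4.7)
The plan is to reduce the assertion to a Riemann-sum-type convergence, extending the idea of the preceding proposition to the endpoint $t=0$. First, using $\coef{0}{d_j}(p_j)=1$, the definition of the finite $T$-transform, and the identity $\ttrans{p}{d}((d-i)/d) = \coef{i}{d}(p)/\coef{i-1}{d}(p)$, I would write
\[
\tfrac{1}{d_j}\log \coef{k_j}{d_j}(p_j) \,=\, \tfrac{1}{d_j}\sum_{i=1}^{k_j} \log \ttrans{p_j}{d_j}\!\left(\tfrac{d_j-i}{d_j}\right) \,=\, \int_{1-k_j/d_j}^{1}\log \ttrans{p_j}{d_j}(x)\,dx,
\]
where the last equality uses that $\ttrans{p_j}{d_j}$ is constant on the dyadic intervals $[m/d_j,(m+1)/d_j)$. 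The corollary is thus reduced to proving $\int_{1-k_j/d_j}^{1}\log \ttrans{p_j}{d_j}(x)\,dx \to \int_{1-t}^{1}\log T_\mu(x)\,dx$, after which the substitution $y=1-x$ and the identity $T_\mu(1-y)=1/S_\mu(-y)$ produce the first displayed equality.

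Next, by Theorem \ref{thm:main.T.transform} together with Polya's theorem (noting $T_\mu$ is continuous on $(0,1)$ with finite limit at $1$ under the standing hypothesis), $\ttrans{p_j}{d_j}\to T_\mu$ locally uniformly on $(0,1)$. Splitting the integral at $1-\epsilon$ for a small $\epsilon\in(0,t)$, the bulk part $\int_{1-k_j/d_j}^{1-\epsilon}$ converges to $\int_{1-t}^{1-\epsilon}\log T_\mu(x)\,dx$ by local uniform convergence. For the tail on $[1-\epsilon,1)$, monotonicity of the finite $T$-transform gives $\ttrans{p_j}{d_j}(1-\epsilon) \le \ttrans{p_j}{d_j}(x) \le \coef{1}{d_j}(p_j)$; the lower bound converges to $T_\mu(1-\epsilon)>0$ and the upper bound converges to $m_1(\mu)<\infty$ by the first-moment hypothesis, so $|\log \ttrans{p_j}{d_j}|$ is uniformly bounded on $[1-\epsilon,1)$ for large $j$ and the tail contributes at most $O(\epsilon)$ uniformly in $j$, with the same bound holding for $T_\mu$. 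Letting $\epsilon\to 0$ closes the argument. The principal obstacle is precisely this tail control near $x=1$, which is why the extra assumption $\coef{1}{d_j}(p_j)\to m_1(\mu)$ is needed beyond mere weak convergence.

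For the second equality (assuming $\mu$ is not a Dirac measure), Lemma \ref{lem:original_Strans} (3) gives strict monotonicity of $T_\mu$ on $(\mu(\{0\}),1)$, whose inverse on the appropriate range equals $F_{\Phi(\mu)}$. The substitution $y=T_\mu(x)$ yields $dx=dF_{\Phi(\mu)}(y)=\Phi(\mu)(dy)$, converting the integral to $\int_{T_\mu(1-t)}^{T_\mu(1)}\log y\,\Phi(\mu)(dy)$, which delivers the final identity.
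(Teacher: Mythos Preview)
Your proof is correct and follows essentially the same approach as the paper: the paper states the corollary as the $t=0$ case of the preceding proposition (whose proof uses exactly the Riemann-sum identity, Polya's theorem for locally uniform convergence of $\ttrans{p_j}{d_j}\to T_\mu$, and the change of variables $y=T_\mu(x)$), and your argument spells out the one extra ingredient needed at the endpoint, namely the tail control on $[1-\epsilon,1)$ via monotonicity and the first-moment hypothesis $\coef{1}{d_j}(p_j)\to m_1(\mu)$. A minor terminological slip: the intervals $[m/d_j,(m+1)/d_j)$ are not ``dyadic,'' but this does not affect the argument.
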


\begin{remark} In \cite{HM13} it is shown that the following integrals are all equal:
\[
\int_0^1 \log S_{\mu}(-x) dx =\int_0^\infty \log x \, \mu (dx) =
\int_0^\infty \log x \Phi(\mu) (dx),\]
whenever one of them is finite. Now, recall from \cite{fuglede1952determinant} that the Fuglede-Kadison determinant of a positive operator $T$ in a tracial $W^*$-algebra, with distribution $\mu_T$ is given by

$$\Delta(T)=\exp\left(\int_0^\infty \log x\, \mu_{T} (dx)\right).$$

In the case of a positive matrix $A_d$ of dimension $d$ with eigenvalues $\{\lambda_j\}_{j=1}^{d}$, the Fuglede-Kadison determinant can be written as 
$$\Delta(A_d)=(\det A_d)^{\frac1d}= \prod^d_i{\lambda_i}^{\frac1d}= \left(\widetilde{\mathsf{e}}_d^{(d)}\right)^{\frac{1}{d}}.$$
Hence, the statements above can be seen as a generalization of the convergence of the Fuglede-Kadison determinant for finite dimensional operators that converge weakly to an operator $T$.

\end{remark}

\subsection{Hypergeometric polynomials}
\label{ssec:hypergeometric}

The hypergeometric polynomials are a particular family of generalized hypergeometric functions. They were studied in connection with finite free probability in \cite{mfmp2024hypergeometric} and several families of parameters where the polynomials have all positive roots were determined. This large family of polynomials contains as particular cases some important families of orthogonal polynomials, such as Laguerre, Bessel and Jacobi polynomials. In this section, we compute their finite $S$-transform and use it to directly obtain the $S$-transform of their limiting root distribution.

\begin{definition}[Hypergeometric polynomials]
For $a_1, \dots, a_i\in \R\setminus \left\{\tfrac{1}{d},\tfrac{2}{d}, \dots, \tfrac{d-1}{d} \right\}$ and $b_1, \dots, b_j\in \R$, we denote\footnote{Our notation slightly differs from that in \cite{mfmp2024hypergeometric}, by a dilation of $d^{i-j}$ on the roots. This simplifies the study of the asymptotic behaviour of the roots, and does not change the fact that roots are all real (or all positive).} by $\HGP{d}{b_1, \dots, b_j}{a_1, \dots, a_i}$ the unique monic polynomial of degree $d$ with coefficients given by
\[
\coef{k}{d}\left(\HGP{d}{b_1, \dots, b_j}{a_1, \dots, a_i}\right):= d^{k(i-j)} \frac{\prod_{s=1}^j\falling{b_s d}{k}}{\prod_{r=1}^i\falling{a_r d}{k}} \qquad \text{ for } k=1,\dots, d.
\]
\end{definition}
Notice that the reason why we do not allow a parameter below to be of the form $\frac{k}{d}$ is to avoid indeterminacy (a division by 0). 

We also allow the cases where there is no parameter below or no parameter above ($i=0$ or $j=0$), in these cases the coefficients are
\[
\coef{k}{d}\left(\HGP{d}{b_1,\dots,b_j}{-}\right):= d^{-kj}\prod_{s=1}^j\falling{b_s d}{k} \quad \text{and} \quad \coef{k}{d}\left(\HGP{d}{-}{a_1,\dots, a_i}\right):= d^{ki}\prod_{r=1}^i \frac{1}{\falling{a_r d}{k}}.
\]

Since the ratio of two consecutive coefficients of a hypergeometric polynomial is easily expressed in terms of the parameters, a direct computation yields their finite $S$-transform. 

\begin{lemma}
\label{lem:Strans.hypergeometric}
For parameters $a_1, \dots, a_i\in \R\setminus\left\{\tfrac{1}{d},\tfrac{2}{d}, \dots, \tfrac{d-1}{d}\right\}$ and $b_1, \dots, b_j\in \R$, the finite $S$-transform of the polynomial $p:=\HGP{d}{b_1,\dots, b_j}{a_1, \dots, a_i}$ is 
\[
\strans{p}{d}\left(-\frac{k}{d}\right)=  \frac{\prod_{r=1}^i(a_r -\frac{k-1}{d})}{\prod_{s=1}^j(b_s-\frac{k-1}{d})}.
\]
Equivalently, the roots of the polynomial $\Phi_d(p)$ are given by:
\[\frac{\prod_{s=1}^j(b_s-\frac{k-1}{d})}{\prod_{r=1}^i(a_r -\frac{k-1}{d})}\qquad\text{for } k=1,\dots, d.\]
\end{lemma}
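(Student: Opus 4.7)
The proof is a direct computation from the definition of the finite $S$-transform as a ratio of consecutive coefficients, so the plan is essentially bookkeeping. I would start by writing out
\[
\strans{p}{d}\left(-\frac{k}{d}\right) = \frac{\coef{k-1}{d}(p)}{\coef{k}{d}(p)} = d^{-(i-j)}\cdot\frac{\prod_{s=1}^j \falling{b_s d}{k-1}}{\prod_{s=1}^j \falling{b_s d}{k}}\cdot\frac{\prod_{r=1}^i \falling{a_r d}{k}}{\prod_{r=1}^i \falling{a_r d}{k-1}},
\]
using the defining formula for the coefficients of $\HGP{d}{b_1,\dots,b_j}{a_1,\dots,a_i}$.

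The next step is to exploit the telescoping identity $\falling{x}{k}/\falling{x}{k-1} = x - (k-1)$, which reduces each quotient of falling factorials to a single factor. This gives
\[
\strans{p}{d}\left(-\frac{k}{d}\right) = d^{j-i}\cdot\frac{\prod_{r=1}^i (a_r d - (k-1))}{\prod_{s=1}^j (b_s d - (k-1))}.
\]
Then I would factor out a $d$ from each of the $i+j$ linear factors, producing an overall $d^{i-j}$ that cancels the prefactor $d^{j-i}$, leaving the claimed formula
\[
\strans{p}{d}\left(-\frac{k}{d}\right) = \frac{\prod_{r=1}^i (a_r - \frac{k-1}{d})}{\prod_{s=1}^j (b_s - \frac{k-1}{d})}.
\]

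For the second (equivalent) statement, I would invoke Equation \eqref{eq:phi.roots}, which says that $\lambda_k(\Phi_d(p)) = \coef{k}{d}(p)/\coef{k-1}{d}(p)$ whenever the denominator is nonzero, i.e., the multiplicative inverse of $\strans{p}{d}(-k/d)$. Inverting the expression above yields the desired formula for the roots of $\Phi_d(p)$. The only point one needs to be mildly careful with is the condition $a_r \notin \{1/d,\dots,(d-1)/d\}$, which guarantees that none of the coefficients $\coef{k}{d}(p)$ vanishes for $k<d$, so the ratio defining $\strans{p}{d}$ is well-defined across its whole domain. There is no real obstacle here: the proof is a two-line cancellation, and the value lies in providing a clean closed form from which applications (Jacobi, Laguerre, Bessel, etc.) follow by choosing parameters.
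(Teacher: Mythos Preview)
Your proposal is correct and follows essentially the same approach as the paper: both compute the ratio $\coef{k-1}{d}(p)/\coef{k}{d}(p)$ directly from the definition of the hypergeometric coefficients, use the identity $\falling{x}{k}/\falling{x}{k-1}=x-(k-1)$, and absorb the remaining power of $d$ into the linear factors. Your write-up is, if anything, slightly cleaner in separating the numerator and denominator products, and your explicit appeal to \eqref{eq:phi.roots} for the second statement matches exactly how the paper relates $\Phi_d(p)$ to the finite $S$-transform.
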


\begin{proof}
Directly from the definition we compute
\begin{align*}
\strans{p}{d}\left(-\frac{k}{d}\right) 
&=\prod_{r=1}^i\prod_{s=1}^j\frac{d^{(k-1)(i-j)}\falling{b_s d}{k-1}}{\falling{a_r d}{k-1}}\frac{\falling{a_r d}{k}}{d^{k(i-j)}\falling{b_s d}{k}}\\
&=\prod_{r=1}^i\prod_{s=1}^j d^{j-i}\frac{a_r d-k+1}{b_s d-k +1}\\
&=\prod_{r=1}^i\prod_{s=1}^j \frac{a_r -\frac{k-1}{d}}{b_s-\frac{k-1}{d}}
\end{align*}
as desired. 
\end{proof}

As a direct corollary we can determine the $S$-transform of the limiting measure whenever the hypergeometric polynomials have all positive real roots. Notice that a more general result for hypergeometric polynomials that does not necessarily have real roots was recently proved in \cite[Theorem 3.7]{martinez2024zeros}; there, the approach relies on the three-term recurrence relation that is specific to this family of polynomials. We would like to highlight that with our approach, the computation of the limiting measure using our main theorem is straightforward.

\begin{corollary}
\label{cor:hypergeometric}
For every $d\geq 1$, consider parameters $a_{1}^{(d)}, \dots, a^{(d)}_{i}\in \R\setminus\left\{\tfrac{1}{d},\tfrac{2}{d}, \dots, \tfrac{d-1}{d}\right\}$ and $b_{1}^{(d)}, \dots, b_{j}^{(d)}\in \R$ such that the following limits exist
\[\lim_{d\to\infty}a_{1}^{(d)}= a_1,\quad  \dots, \quad \lim_{d\to\infty}a_{i}^{(d)}= a_i,\quad \lim_{d\to\infty}b_{1}^{(d)}= b_1, \quad \dots,\quad  \lim_{d\to\infty}b_{j}^{(d)}= b_j.\] 
Assume further $p_d:=\HGP{d}{b_{1}^{(d)},\ \dots,\ b_{j}^{(d)}}{a_{1}^{(d)},\ \dots,\ a^{(d)}_{i}} \in \pols_d(\rr_{>0})$ for every $d$. Then
\[
\lim_{d\to\infty}\meas{p_d} =\mu,
\]
where $\mu$ is the measure supported in the positive real line, with $S$-transform given by 
\[S_\mu(t)= \frac{ \prod_{r=1}^i(a_r+t)}{\prod_{s=1}^j(b_s+t)} \qquad \text{for } t\in (-1,0).
\]
\end{corollary}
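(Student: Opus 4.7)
The plan is to apply Proposition \ref{prop:converse.main.thm} --- the converse direction of the main theorem --- using the explicit formula for the finite $S$-transform of a hypergeometric polynomial given in Lemma \ref{lem:Strans.hypergeometric}. That lemma yields
$$\strans{p_d}{d}\left(-\frac{k}{d}\right) = \frac{\prod_{r=1}^i\bigl(a_r^{(d)} - \tfrac{k-1}{d}\bigr)}{\prod_{s=1}^j\bigl(b_s^{(d)} - \tfrac{k-1}{d}\bigr)}.$$
For any diagonal sequence $(k_d)_{d\in\N}$ with $k_d/d \to t$, the hypothesized convergences $a_r^{(d)} \to a_r$, $b_s^{(d)} \to b_s$ together with $(k_d-1)/d \to t$ give
$$\lim_{d\to\infty} \strans{p_d}{d}\left(-\frac{k_d}{d}\right) = R(t), \qquad R(t) := \frac{\prod_{r=1}^i(a_r - t)}{\prod_{s=1}^j(b_s - t)},$$
at every $t \in (0,1)$ that is not a pole of $R$.

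Next I would locate an interval of strict positivity for $R$. Because $p_d \in \pols_d(\R_{>0})$, Proposition \ref{prop:monotone.extreme.values} tells us $\strans{p_d}{d}$ is strictly positive on its domain, so passing to the limit yields $R(t) \geq 0$ wherever defined. Since $R$ is a rational function with only finitely many zeros and poles in $(0,1)$, there exists $t_0 \in [0,1)$ for which $R$ is finite, continuous, and strictly positive on the entire interval $(0, 1-t_0)$. Setting $S(-t) := R(t)$ on $(-1+t_0, 0)$, the hypotheses of Proposition \ref{prop:converse.main.thm} are met, and that proposition produces a measure $\mu \in \MM(\R_{\geq 0})$ with $\mu(\{0\}) \leq t_0$, $\meas{p_d} \weak \mu$, and $S_\mu(-t) = R(t)$ for $t \in (0, 1-t_0)$. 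Substituting $t \mapsto -t$ and invoking real-analyticity of the $S$-transform on its domain, the claimed formula
$$S_\mu(t) = \frac{\prod_{r=1}^i(a_r + t)}{\prod_{s=1}^j(b_s + t)}$$
then extends throughout the full domain of definition.

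The main technical obstacle I foresee is securing $t_0 < 1$. In principle, parameters $a_r$ or $b_s$ lying in $(0,1)$ could produce interior zeros or poles of $R$ that compromise strict positivity. However, the positivity of the finite $S$-transforms --- inherited from $p_d \in \pols_d(\R_{>0})$ --- together with the rational structure of $R$, forces a consistent sign configuration near $t=0$; in particular, the leading behaviour of $R$ at $0^+$ must match the limit of the positive quantities $\strans{p_d}{d}(-1/d) = 1/\coef{1}{d}(p_d)$, which guarantees that $R$ is strictly positive on some open right-neighborhood of $0$, making such a choice of $t_0$ possible.
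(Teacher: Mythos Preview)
Your approach is essentially the paper's: the paper states this as a ``direct corollary'' of Lemma~\ref{lem:Strans.hypergeometric} together with Theorem~\ref{thm:main} (whose converse direction is Proposition~\ref{prop:converse.main.thm}), and you have correctly spelled out that argument. Your additional care about locating an interval $(0,1-t_0)$ of strict positivity for $R$ is a detail the paper leaves implicit; the cleanest justification is simply that $R$ is a nonzero rational function with $R\ge 0$ on $(0,1)$ (as a pointwise limit of positive quantities), hence strictly positive on each open subinterval between its finitely many zeros and poles---in particular on some $(0,c)$.
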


\begin{remark}
\label{rem:hypergeometric.negative}
Notice that in Corollary \ref{cor:hypergeometric} one can also consider polynomials with all negative roots, or equivalently, one can let the sequence of polynomials be 
\[p_d:=\dil{-1}\HGP{d}{b_{1}^{(d)},\ \dots,\ b_{j}^{(d)}}{a_{1}^{(d)},\ \dots,\ a^{(d)}_{i}} \in \pols_d(\rr_{<0}).\]
In this case, the odd coefficients of the polynomials change sign, and this means that the finite $S$-transforms changes sign, ultimately we obtain that the $S$-transform of the limiting measure is 
\[S_\mu(t)=- \frac{ \prod_{r=1}^i(a_r+t)}{\prod_{s=1}^j(b_s+t)} \qquad \text{for } t\in (-1,0).
\]
\end{remark}

\begin{example}[Identity for $\boxtimes_d$]
When there are no parameters above nor below, we obtain the polynomial  
\[p_d=\HGP{d}{-}{-}(x)=(x-1)^d,\]
which is the identity for the multiplicative convolution. Its finite $S$-transform is given by
\[\strans{p_d}{d}\left(-\frac{k}{d}\right)= 1\qquad \text{for } k=1,\dots ,d.\]
Thus the limiting distribution is the $\delta_1$ measure.
\end{example}

\begin{example}[Laguerre polynomials]
\label{exm:Laguerre}
In the case where we just consider one parameter $b$ above and no parameter below, the polynomial $p=\HGP{d}{b}{-}$ is the well-known Laguerre polynomial and has all positive roots whenever $b>1-\frac{1}{d}$. By Lemma \ref{lem:Strans.hypergeometric} the finite $S$-transform is
\[\strans{p}{d}\left(-\frac{k}{d}\right)= \frac{1}{b-\frac{k-1}{d}}\qquad \text{for } k=1,\dots ,d.\]
With respect to the asymptotic behaviour, notice that using Corollary \ref{cor:hypergeometric} we can retrieve the known result that the limiting zero counting measure of a sequence of Laguerre polynomials is the Marchenko-Pastur distribution. Indeed, for a sequence $(b_d)_{d\geq 1}\subset[1,\infty)$ with $\lim_{d\to\infty}b_d=b\geq 1$, then in the limit we obtain the Marchenko-Pastur distribution of parameter $b$:
\[
\lim_{d\to\infty}\meas{\HGP{d}{b_d}{-}} ={\bf MP}_b,
\]
which is determined by the $S$-transform
\[S_{{\bf MP}_b}(t)= \frac{ 1}{b+t} \qquad \text{for } t\in (-1,0).
\]
On the other hand, the spectral measure of $\Phi_d(p)$ is the uniform distribution on $\{b-1,b-1+\frac{1}{d},\dots, b-\frac{1}{d}\}$ and, in the limit, we retrieve the fact that
\[\Phi({\bf MP}_b)={\bf U}(b-1,b).\]
\end{example}

As an application of the previous example and Proposition \ref{prop:approx_boxtimesd}, we know that the multiplicative convolution of a polynomial with a Laguerre polynomial provides us with a finite approximation of compound free Poisson distribution.

\begin{corollary}
\label{cor:free_compound}
Let $(p_j)_{j\in \N}$ be a sequence of polynomials with $p_j\in \pols_{d_j}(\R_{\geq 0
})$ such that $\meas{p_j}\weak \nu\in \MM(\R_{\geq 0
})$. Then $\left(\HGP{d_j}{b}{-} \boxtimes_{d_j} p_j\right)_{j\in \N}$ weakly converges to the compound free Poisson distribution ${\rm \bf MP_b}\boxtimes \nu$ (with rate $b$ and jump distribution $\nu$).
\end{corollary}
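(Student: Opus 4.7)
This corollary is an immediate application of two results already established in the paper, so the proof proposal is short. First, I would observe that the hypergeometric polynomial $\HGP{d}{b}{-}$ is precisely the (normalized) Laguerre polynomial of parameter $b$, and by Example \ref{exm:Laguerre} one has
\[
\meas{\HGP{d}{b}{-}} \weak {\bf MP}_b \qquad \text{as } d \to \infty.
\]
In particular, along the degree sequence $(d_j)_{j \in \N}$, the sequence of Laguerre polynomials $\left(\HGP{d_j}{b}{-}\right)_{j\in\N}$ converges weakly to ${\bf MP}_b$. Implicit here is the assumption that $b\ge 1$ (or more generally $b>1-\tfrac{1}{d_j}$ eventually) so that $\HGP{d_j}{b}{-}\in\pols_{d_j}(\R_{\geq 0})$ and the finite free multiplicative convolution is defined on real non-negative-rooted polynomials.

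Next, both input sequences $\left(\HGP{d_j}{b}{-}\right)_{j\in\N}$ and $(p_j)_{j\in\N}$ lie in $\pols_{d_j}(\R_{\geq 0})$, and their empirical root distributions converge weakly to ${\bf MP}_b$ and $\nu$, respectively. Applying Proposition \ref{prop:approx_boxtimesd} directly yields
\[
\meas{\HGP{d_j}{b}{-}\boxtimes_{d_j} p_j} \weak {\bf MP}_b \boxtimes \nu,
\]
and by definition the measure ${\bf MP}_b \boxtimes \nu$ is the compound free Poisson distribution with rate $b$ and jump distribution $\nu$. There is no real obstacle: the heavy lifting is carried out by Proposition \ref{prop:approx_boxtimesd} (whose proof rests on Theorem \ref{thm:main.T.transform} via multiplicativity of the finite $T$-transform), and the corollary is simply the instantiation where one of the two input polynomial sequences is taken to be the Laguerre family.
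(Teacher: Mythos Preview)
Your proposal is correct and matches the paper's approach exactly: the paper states the corollary as an immediate application of Example \ref{exm:Laguerre} (convergence of Laguerre polynomials to ${\bf MP}_b$) together with Proposition \ref{prop:approx_boxtimesd}, which is precisely what you do.
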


We now turn our attention to the reversed Laguerre polynomials.

\begin{example}[Bessel polynomials]
\label{exm:Bessel}

In the case where we just consider one parameter $a$ below and no parameter above, the polynomial $p(x)=\HGP{d}{-}{a}(-x)$ goes by the name of Bessel polynomial and has all positive roots whenever $a<0$. By Remark \ref{rem:hypergeometric.negative} the finite $S$-transform is
\[\strans{p}{d}\left(-\frac{k}{d}\right)= -a+\frac{k-1}{d} \qquad \text{for } k=1,\dots ,d.\]
If we consider a sequence $(a_d)_{d\geq 1}\subset (-\infty,0)$ with $\lim_{d\to\infty}a_d=a<0$, then the limiting $S$-transform is equal to 
\[S_{{\bf RMP}_{1-a}}(t)= -t-a \qquad \text{for } t\in (-1,0)
\]
that corresponds to the measure ${\bf RMP}_{1-a}=({\bf MP}_{1-a})^\reversed$ that is the reciprocal distribution of a Marchenko-Pastur distribution of parameter $1-a$, see \cite[Proposition 3.1]{Yoshida}.
\end{example}

\begin{example}[Jacobi polynomials]
\label{exm:Jacobi}
In the case where we consider one parameter $b$ above and one parameter $a$ below, we obtain the Jacobi polynomials $p=\HGP{d}{b}{a}$. Recall that we already encountered these polynomials in Section \ref{sec:bounds.Jacobi} and proved some bounds for some specific parameters. We now complete the picture. There are several regions of parameters where the polynomial has all positive roots.
Below we highlight the three main regions of parameters where we get a polynomial with positive roots.
The reader is referred to \cite[Section 5.2 and Table 2]{mfmp2024hypergeometric} for the complete description of all the regions.
Same as for the previous examples, we can consider sequences $(a_d)_{d\geq 1}$, $(b_d)_{d\geq 1}$ in those region of parameters and with limits $\lim_{d\to\infty}a_d=a$, $\lim_{d\to\infty}b_d=b$. By Corollary \ref{cor:hypergeometric} we can compute the $S$-transform of the limiting measure that we denote by $\mu_{a,b}$.
\begin{itemize}
\item For $b>1$ and $a>b+1$ then $\HGP{d}{b}{a}\in \pols_d([0,1])$. The $S$-transform of the limiting measure $\mu_{a,b}$ is given by
\[S_{\mu_{a,b}}(t)= \frac{a+t}{b+t}=1+\frac{a-b}{b+t} \qquad \text{for } t\in (-1,0).\]

\item For $b>1$ and $a<0$ then $\HGP{d}{b}{a}(-x)\in\polplus$. The $S$-transform of the limiting measure $\mu_{a,b}$ is given by
\[S_{\mu_{a,b}}(t)= \frac{-a-t}{b+t} \qquad \text{for } t\in (-1,0).\]

Notice that in this case, $\mu_{a,b}={\bf MP}_{b}\boxtimes ({\bf MP}_{1-a})^\reversed$ is the free beta prime distribution $f\beta'(b,1-a)$ studied by Yoshida \cite[Eq (2)]{Yoshida}. In other words, a simple change of variable yields that for $a,b>1$ the limiting measure of the polynomials $\HGP{d}{a_d}{1-b_d}(-x)$ tends to the measure $f\beta'(a,b)$.

\item For $a<0$ and $b<a-1$, then $\HGP{d}{b}{a}\in \pols_d(\R_{>0})$. The $S$-transform of the limiting measure $\mu_{a,b}$ is given by
\[S_{\mu_{a,b}}(t)= \frac{a+t}{b+t} \qquad \text{for } t\in (-1,0).\]
\end{itemize}
\end{example}

To finish this section we want to highlight a case where our main result applies for hypergeometric polynomials with several parameters. 

\begin{proposition}
\label{prop:hypergeometric}
For every $d\geq 1$, let $a_{1}^{(d)}, \dots, a^{(d)}_{i}<0$ and $b_{1}^{(d)}, \dots, b_{j}^{(d)}>1$ such that the following limits exist
\[\lim_{d\to\infty}a_{1}^{(d)}= a_1,\quad  \dots, \quad \lim_{d\to\infty}a_{i}^{(d)}= a_i,\quad \lim_{d\to\infty}b_{1}^{(d)}= b_1, \quad \dots,\quad  \lim_{d\to\infty}b_{j}^{(d)}= b_j.\]  
Let $p_d(x):=\HGP{d}{b_{1}^{(d)},\ \dots,\ b_{j}^{(d)}}{a_{1}^{(d)},\ \dots,\ a^{(d)}_{i}} \left((-1)^ix\right)$ for every $d$.
Then
\[
\lim_{d\to\infty}\meas{p_d} =\mu,
\]
where $\mu$ is the measure supported in the positive real line with $S$-transform given by 
\[S_\mu(t)= \frac{ \prod_{r=1}^i(-a_r-t)}{\prod_{s=1}^j(b_s+t)} \qquad \text{for } t\in (-1,0).
\]
\end{proposition}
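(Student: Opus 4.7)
The plan is to factor $p_d$ (up to a scalar multiple that does not affect its empirical root distribution) as an iterated finite free multiplicative convolution of Laguerre and suitably reflected Bessel polynomials, and then transfer the known limits of these factors through Proposition \ref{prop:approx_boxtimesd}.

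I would first introduce the auxiliary polynomial
\[
\tilde p_d := \dil{(-1)^i}\HGP{d}{b_1^{(d)}, \ldots, b_j^{(d)}}{a_1^{(d)}, \ldots, a_i^{(d)}},
\]
which is a nonzero scalar multiple of $p_d$ and therefore satisfies $\meas{p_d} = \meas{\tilde p_d}$. Using $\coef{k}{d}(\dil{c}q) = c^k\coef{k}{d}(q)$ together with the multiplicativity of coefficients under $\boxtimes_d$, a direct coefficient comparison establishes the factorization
\[
\tilde p_d = L_d^{(b_1^{(d)})} \boxtimes_d \cdots \boxtimes_d L_d^{(b_j^{(d)})} \boxtimes_d \tilde B_d^{(a_1^{(d)})} \boxtimes_d \cdots \boxtimes_d \tilde B_d^{(a_i^{(d)})},
\]
where $L_d^{(b)} := \HGP{d}{b}{-}$ is the normalized Laguerre polynomial and $\tilde B_d^{(a)} := \dil{-1}\HGP{d}{-}{a}$ is the sign-reflected Bessel polynomial. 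By Example \ref{exm:Laguerre}, each Laguerre factor lies in $\pols_d(\R_{>0})$ (since $b_s^{(d)} > 1$) and converges weakly to ${\bf MP}_{b_s}$; by Example \ref{exm:Bessel}, each reflected Bessel factor lies in $\pols_d(\R_{>0})$ (since $a_r^{(d)} < 0$) and converges weakly to ${\bf RMP}_{1-a_r}$.

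Iterating Proposition \ref{prop:approx_boxtimesd} along this factorization then yields $\meas{\tilde p_d} \weak \mu$, where
\[
\mu := {\bf MP}_{b_1}\boxtimes\cdots\boxtimes{\bf MP}_{b_j}\boxtimes{\bf RMP}_{1-a_1}\boxtimes\cdots\boxtimes{\bf RMP}_{1-a_i},
\]
and hence $\meas{p_d} \weak \mu$. Multiplicativity of Voiculescu's $S$-transform under $\boxtimes$, combined with $S_{{\bf MP}_b}(t) = (b+t)^{-1}$ and $S_{{\bf RMP}_{1-a}}(t) = -a - t$, immediately produces the claimed product formula for $S_\mu$. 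The only delicate point is the sign bookkeeping: the Bessel polynomials $\HGP{d}{-}{a_r^{(d)}}$ themselves lie in $\pols_d(\R_{<0})$ and cannot be fed into Proposition \ref{prop:approx_boxtimesd} directly, so each must be flipped by $\dil{-1}$; the $i$ such flips combine into the overall correction $\dil{(-1)^i}$, which is precisely what the substitution $x \mapsto (-1)^i x$ in the statement of the proposition encodes. Beyond this careful sign tracking, no real obstacle arises — the argument reduces to a direct application of Proposition \ref{prop:approx_boxtimesd} together with the two reference examples.
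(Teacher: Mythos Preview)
Your argument is correct and gives a genuinely different route from the paper's. The paper does not factorize: it invokes an external result (\cite[Theorem 4.6]{martinez2024zeros}) to guarantee that each $p_d$ has only positive real roots, and then applies Corollary~\ref{cor:hypergeometric} (resp.\ Remark~\ref{rem:hypergeometric.negative} when $i$ is odd) to the whole hypergeometric polynomial at once, reading off the limiting $S$-transform directly from Lemma~\ref{lem:Strans.hypergeometric}.

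Your approach instead establishes real-rootedness \emph{internally}: since each Laguerre factor and each reflected Bessel factor lies in $\pols_d(\rr_{>0})$, and $\boxtimes_d$ is closed on $\pols_d(\rr_{\geq 0})$, the positivity of the roots of $\tilde p_d$ is automatic, with no need for the external citation. You then transfer convergence through Proposition~\ref{prop:approx_boxtimesd} iteratively. This is essentially the factorization the paper records \emph{after} the proof (in the remark identifying $\mu$ as a $\boxtimes$-product of Marchenko--Pastur and reversed Marchenko--Pastur laws), promoted from an afterthought to the engine of the argument. The trade-off: the paper's route is a single direct computation that showcases the finite $S$-transform machinery, while yours is more self-contained and makes the structural reason for both the real-rootedness and the form of $S_\mu$ transparent from the outset.
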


\begin{proof}
From \cite[Theorem 4.6]{martinez2024zeros} we know that all polynomials $p_d$ have positive real roots, so we can apply Corollary \ref{cor:hypergeometric} when $i$ is even (or Remark \ref{rem:hypergeometric.negative} when $i$ is odd) and the conclusion follows directly.
\end{proof}

\begin{remark}
Notice that in the previous result, we can identify $\mu$ as the free multiplicative convolution of Marchenko-Pastur and reversed Marchenko-Pastur distributions. Indeed, it follows from the multiplicativity of the $S$-transform and Examples \ref{exm:Laguerre} and \ref{exm:Bessel} that
\[\mu= {\bf MP}_{b_1}\boxtimes \cdots \boxtimes {\bf MP}_{b_j}\boxtimes ({\bf MP}_{1-a_1})^\reversed\boxtimes \cdots \boxtimes ({\bf MP}_{1-a_i})^\reversed.\]
\end{remark}

%%%%%%%%%%%%%%%%%%%%%%%%%%%%%%%%%%%%%%%%%%%%%%%%%%%
\subsection{Finite analogue of some free stable laws}

The purpose of this section is to give some finite analogue of free stable laws. Free stable laws are defined as distributions $\mu$ satisfying that for every $a,b>0$ there exist $c>0$ and $t\in \R$ such that
\[\dil{a}\mu \boxplus \dil{b}\mu = \dil{c}\mu \boxplus \delta_t.\]
Any free stable law can be uniquely (up to a scaling parameter) characterized by a pair $(\alpha,\rho)$, which is in the set of admissible parameters:
\[\mathcal{A}:=\{(\alpha,\rho): 0<\alpha\le 1,\ 0\le \rho \le 1 \} \cup \{(\alpha,\rho):1< \alpha \le 2,\ 1-\alpha^{-1} \le \rho \le \alpha^{-1}\}.\]
More precisely, the Voiculescu transform $\varphi_\mu(z):={G_\mu^{\langle -1\rangle}(z^{-1})-z}$ of the free stable law $\mu$ is given by
\[\varphi_\mu(z)=-e^{i\pi \alpha \rho}z^{-\alpha+1}, \qquad (\alpha,\rho) \in \mathcal{A}, \quad z\in \C^+,\]
see \cite{berpata1999, arizmendi2016classical, hasebe2020some} for details.
We then denote by ${\bf fs}_{\alpha,\rho}$ the free stable law with an admissible parameter $(\alpha,\rho) \in \mathcal{A}$. In particular, the following cases are well-known.
\begin{itemize}
\item (Wigner's semicircle law) ${\bf fs}_{2, \frac{1}{2}}(dx) = \mu_{\mathrm{sc}}(dx):=\frac{1}{2\pi} \sqrt{4-x^2}\mathbf{1}_{[-2,2]}(x)dx.$
\item (Cauchy distribution) ${\bf fs}_{1, \frac{1}{2}}(dx) = \frac{1}{\pi (1+x^2)} \mathbf{1}_{\R}(x)dx$.
\item (Positive free $\frac{1}{2}$-stable law) ${\bf fs}_{\frac{1}{2},1}(dx)= \frac{\sqrt{4x-1}}{2\pi x^2} \mathbf{1}_{[\frac{1}{4},\infty)}(x)dx$.
\end{itemize}

In the following, we construct some finite analogues of free stable laws.

\begin{example}[Hermite polynomials] \label{Example:Hermite}
It is well known that the Hermite polynomials (with an appropiate normalization) converge in distribution to the semicircle law, see for instance \cite{Mar21}. Specifically, if let
    \[H_{2d}(x):=\sum_{k=0}^{d} \binom{2d}{2k} (-1)^k \frac{(2k)!}{k!(4d)^k} x^{2d-2k} \in \pols_{2d}^S(\rr)\]
denote the Hermite polynomial of degree $2d$, then $\meas{H_{2d}} \weak \mu_{\mathrm{sc}}$ as $d\to \infty$.
Thus, we can interpret $H_{2d}$ as the finite analogue of the symmetric free $2$-stable law ${\bf fs}_{2,\frac{1}{2}}$.
The finite symmetric $S$-transform of $H_{2d}$ can be easily computed:
\[\stranstilde{H_{2d}}{2d}\left(-\frac{k}{d}\right)= \frac{1}{\sqrt{-\tfrac{k}{d} +\tfrac{1}{2d}}}.\]
\end{example}

\begin{example}[Positive finite $\frac{1}{2}$-stable]
\label{exm:positive.half.stable}
 From \cite{berpata1999} we know that
${\bf MP}_1^\reversed={\bf fs}_{\frac{1}{2},1}$ is the positive free $\frac{1}{2}$-stable law. We also have that the  compound free Poisson distribution ${\bf MP}_1\boxtimes {\bf fs}_{\frac{1}{2},1}$ coincides with the positive boolean $\frac{1}{2}$-stable law, see \cite{SW1997}. Now, we provide the finite counterparts as follows.

Recall from Example \ref{exm:Laguerre} that the Laguerre polynomial $\HGP{d}{1}{-}$ is the finite free analogue of the Marchenko-Pastur distribution ${\bf MP}_1$. From \cite[Eq. (81)]{mfmp2024hypergeometric} we know that its reversed polynomial is 
\begin{equation}
\label{eq:finitefreestable_1/2}
    f_d(x):=\HGP{d}{-}{-\frac{1}{d}}(-x),
\end{equation}
and letting $d\to\infty$ the empirical root distribution of these polynomials tends to the positive free $\frac{1}{2}$-stable law ${\bf MP}_1^\reversed={\bf fs}_{\frac{1}{2},1}$. Clearly, we have
\[\strans{f_d}{d}\left(-\frac{k}{d}\right)=\frac{k}{d} \qquad \text{for } k=1,\dots, d.\]

By \cite[Eq (82)]{mfmp2024hypergeometric} and Corollary \ref{cor:free_compound}, this means that the Jacobi polynomials
\[\HGP{d}{1}{-\frac{1}{d}}(-x)=\left(\HGP{d}{1}{-}(x)\right)\boxtimes_d \left(\HGP{d}{-}{-\frac{1}{d}}(-x)\right)\]
are the finite analogue of the positive boolean $\frac{1}{2}$-stable law. 
\end{example}

\begin{example}[Symmetric finite $\frac{2}{3}$-stable]
According to \cite[Theorem 12]{arizmendi2009}, there is an interesting relation between positive free stable laws and symmetric free stable laws via the free multiplicative convolution. That is, 
\begin{equation}
\label{eq:free_stable_relation}
{\bf fs}_{\alpha,\frac{1}{2}} = \mu_{\mathrm{sc}} \boxtimes {\bf fs}_{\frac{2\alpha}{\alpha+2}, 1}
\end{equation}
for any $\alpha \in (0,2)$. Here we give the finite analogue of the symmetric $\frac{2}{3}$-stable law.

We define 
\[g_{2d} := H_{2d} \boxtimes_{2d} f_{2d}.\]
By Proposition \ref{prop:stransform_multiplicative_symmetricpols} and Lemma \ref{lem:Strans.hypergeometric}, we have
\begin{equation}
\begin{split}
    \left(\stranstilde{g_{2d}}{2d}\left(-\frac{k}{d}\right)\right)^2&= \left(\stranstilde{H_{2d}}{2d}\left(-\frac{k}{d}\right) \right)^2 \strans{f_{2d}}{2d}\left(-\frac{2k}{2d}\right) \strans{f_{2d}}{2d}\left(-\frac{2k-1}{2d}\right)\\
    &= -\frac{2d}{2k-1} \cdot \frac{2k}{2d} \cdot \frac{2k-1}{2d}\\
    &=-\frac{k}{d},
    \end{split}
\end{equation}
and therefore
\[\stranstilde{g_{2d}}{2d}\left(-\frac{k}{d}\right) = \sqrt{-\frac{k}{d}}.\]
By Proposition \ref{prop:Strans.symmetric.convergence}, if we let $d\to \infty$ with $\frac{k}{d}\to t \in (0,1)$, then
\[\stranstilde{g_{2d}}{2d}\left(-\frac{k}{d}\right) \to \sqrt{-t} = \widetilde{S}_{{\bf fs}_{\frac{2}{3},\frac{1}{2}}} (-t),\]
where the last equality follows from \cite{arizmendi2016classical}. Clearly, we notice this example is the finite analogue of \eqref{eq:free_stable_relation} in the case when $\alpha=\frac{2}{3}$.
\end{example}

%%%%%%%%%%%%%%%%%%%%%%%%%%%%%%%%%%%%%%%%%%%%
\subsection{Finite free multiplicative Poisson's law of small numbers}

In \cite[Lemma 7.2]{BerVoi1992} it was shown that for $\lambda\geq 0$ and $\beta\in\rr\setminus[0,1]$ there exists a measure $\Pi_{\lambda,\beta}\in\MM(\R_{\geq 0})$ with $S$-transform given by 
\[
S_{\Pi_{\lambda,\beta}}(t)=\exp\left( \frac{\lambda}{t+\beta}\right).
\]
This measure can be understood as a free multiplicative Poisson's law. The purpose of this section is to give a finite counterpart.

In this case, we can think of it as a limit of convolution powers of polynomials of the form 
\[(x-\tfrac{\beta-1}{\beta})(x-1)^{d-1}=\HGP{d}{\beta-\frac{1}{d}}{\beta},\]
where the equality follows from a direct computation, see also \cite[Eq (60)]{mfmp2024hypergeometric}.

\begin{proposition}
\label{prop:Poisson_law_limit_theorem_specialcase}
Let $\lambda\geq 0$ and $\beta\in\rr\setminus[0,1]$, and for each $d$ 
consider the polynomial 
\[p_d(x):= (x-\tfrac{\beta-1}{\beta})(x-1)^{d-1}.\]
Then 
\[
\meas{p_d^{\boxtimes_d n}} \weak \Pi_{\lambda,\beta} \qquad \text{as }d\to \infty \quad  \text{ with } \frac{n}{d}\to \lambda.
\]
\end{proposition}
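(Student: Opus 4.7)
The plan is to directly compute the finite $S$-transform of $p_d^{\boxtimes_d n}$ and then invoke Theorem \ref{thm:main}. First I would identify $p_d$ as a hypergeometric polynomial: a quick computation (or the formula in \cite[Eq.~(60)]{mfmp2024hypergeometric}) shows that
\[
\coef{k}{d}(p_d) = \frac{k}{d}\cdot\frac{\beta-1}{\beta} + \frac{d-k}{d} = \frac{\beta - k/d}{\beta} = \frac{\falling{(\beta-1/d)d}{k}}{\falling{\beta d}{k}},
\]
so $p_d = \HGP{d}{\beta-1/d}{\beta}$. Then Lemma \ref{lem:Strans.hypergeometric} immediately gives the closed form
\[
\strans{p_d}{d}\!\left(-\tfrac{k}{d}\right) = \frac{\beta - (k-1)/d}{\beta - k/d}\qquad \text{for } k=1,\dots,d.
\]

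Next, by multiplicativity of the finite $S$-transform (Proposition \ref{lem:S_multiplicative}) applied $n$ times,
\[
\strans{p_d^{\boxtimes_d n}}{d}\!\left(-\tfrac{k}{d}\right) = \left(\frac{\beta - (k-1)/d}{\beta - k/d}\right)^{\!n} = \left(1 + \frac{1/d}{\beta - k/d}\right)^{\!n}.
\]
For any diagonal sequence $(k_d)$ with $k_d/d \to t \in (0,1)$ and with $n = n_d$ satisfying $n_d/d \to \lambda$, the standard $\exp$-limit yields
\[
n_d\log\!\left(1 + \frac{1/d}{\beta - k_d/d}\right) = \frac{n_d/d}{\beta - k_d/d} + O(1/d) \;\longrightarrow\; \frac{\lambda}{\beta - t},
\]
since $\beta \notin [0,1]$ keeps the denominator bounded away from zero uniformly. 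Hence
\[
\strans{p_d^{\boxtimes_d n}}{d}\!\left(-\tfrac{k_d}{d}\right) \;\longrightarrow\; \exp\!\left(\frac{\lambda}{\beta - t}\right) = S_{\Pi_{\lambda,\beta}}(-t)\qquad \text{for every } t \in (0,1).
\]

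Finally, I would conclude by applying Theorem \ref{thm:main}. Since $\beta \notin [0,1]$ implies $(\beta-1)/\beta > 0$, both roots of $p_d$ are strictly positive and therefore $p_d^{\boxtimes_d n} \in \pols_d(\R_{\geq 0})$. The convergence above holds on all of $(0,1)$, so Theorem \ref{thm:main} (combined with the converse Proposition \ref{prop:converse.main.thm} to handle the atom at $0$ automatically) gives $\meas{p_d^{\boxtimes_d n}} \weak \Pi_{\lambda,\beta}$. I do not anticipate any serious obstacle: the whole argument reduces to recognizing $p_d$ as hypergeometric so that its finite $S$-transform telescopes under $\boxtimes_d$, after which the result is a direct consequence of the main theorem.
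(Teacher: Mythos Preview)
Your proposal is correct and follows essentially the same approach as the paper: compute the coefficients of $p_d$ (the paper does this directly, you route it through the hypergeometric identification $p_d=\HGP{d}{\beta-1/d}{\beta}$ and Lemma~\ref{lem:Strans.hypergeometric}, which amounts to the same calculation), raise the finite $S$-transform to the $n$-th power by multiplicativity, take the $\exp$-limit, and conclude via Theorem~\ref{thm:main}. The extra mention of Proposition~\ref{prop:converse.main.thm} is unnecessary since the implication $(2)\Rightarrow(1)$ is already part of Theorem~\ref{thm:main} and $\Pi_{\lambda,\beta}$ has no atom at $0$.
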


\begin{proof}
Consider $\alpha \in (0,1)$.
Recall that the coefficients of $p_d$ are
\[\coef{k}{d}(p_d)=\frac{\falling{d\beta-1}{k}}{\falling{d\beta}{k}}=\frac{d\beta-k}{d\beta},\]
so its finite $S$-transform is given by 
\[
\strans{p_d}{d}\left(-\frac{k}{d}\right)= \frac{d\beta-k+1}{d\beta-k}= 1+\frac{1}{d\beta-k} \qquad \text{for }k=1,\dots, d,
\]
and the finite $S$-transform of $q_d:=p_d^{\boxtimes_{d} n}$ is given by
\[
\strans{q_d}{d}\left(-\frac{k}{d}\right)= \left(1+\frac{1}{d\beta-k}\right)^{n}=\left(1+\frac{1}{d}\frac{1}{\beta-\frac{k}{d}}\right)^{d\frac{n}{d}} \qquad \text{for }k=1,\dots, d.
\]
Then, if we let $d\to\infty$ with $\frac{k}{d}\to t$ and $\frac{n}{d}\to \lambda$ then we obtain that
\[
\lim_{d\to\infty} \strans{q_d}{d}\left(-\frac{k}{d}\right) =  \exp\left(\frac{\lambda}{\beta-t} \right) = S_{\Pi_{\lambda,\beta}}(-t).
\]
The conclusion follows from Theorem \ref{thm:main}.
\end{proof}

%%%%%%%%%%%%%%%%%%%%%%%%%%%%%%%%%%%%%%%%%%%%%%
\subsection{Finite max-convolution powers}

In 2006, Ben Arous and Voiculescu \cite{BV2006} introduced the free analogue of max-convolution. Given two measures $\nu_1, \nu_2 \in \MM(\rr)$, their \emph{free max-convolution}, denoted by $\nu_1\boxlor \nu_2$, is the measures with cumulative distribution function given by
\[
F_{\nu_1\boxlor \nu_2}(x) :=\max\{F_{\nu_1}(x)+F_{\nu_2}(x)-1,0\} \qquad\text{for all } x\in \mathbb{R}.
\]
Similarly, given $\nu \in \MM(\rr)$ and $t\geq 1$, one can define the \emph{free max-convolution of $\nu$ to the power $t$} as the unique measure $\nu^{\boxlor t}$ with cumulative distribution function given by
\begin{align}\label{eq:freemaxpower}
F_{\nu^{\boxlor t}}(x):=\max\{tF_\nu(x)-(t-1),0\}, \qquad t\ge 1.
\end{align}
This notion was introduced by Ueda, who used  Tucci-Haagerup-M\"{o}ller's limit to relate it to free additive convolution powers in \cite[Theorem 1.1]{ueda2021max}: given $t\geq 1$ and $\mu\in \MM(\rr_{\geq 0})$, one has that
\begin{equation}\label{eq:Ueda}
\Phi(\dil{1/t}(\mu^{\boxplus t}) )=\Phi(\mu)^{\boxlor t}.
\end{equation}

The purpose of this section is to prove a finite free analogue of this relation and to show that it approximates \eqref{eq:Ueda} as the degree $d$ tends to $\infty$.  

\begin{definition}
\label{def:finite.free.max}
Given $p\in\pols_d(\rr)$ and $1\leq k\leq d$, we define the {\it finite max-convolution power} $\frac{d}{k}$ of $p$ as the polynomial $p^{\boxlor \frac{d}{k}}\in \pols_k(\rr)$ with roots given by
\[\lambda_j\left(p^{\boxlor \frac{d}{k}}\right)=\lambda_j(p) \qquad \text{for }j=1,\dots, k.\]
\end{definition}

It is straightforward from the definition of free-max convolution in \eqref{eq:freemaxpower} that
\[\meas{p^{\boxlor \frac{d}{k}}}= \left(\meas{p}\right)^{\boxlor \frac{d}{k}}.\]

Then, we have the following.

\begin{proposition}Let $p\in\pols_d(\rr_{\geq 0})$. Then 
\begin{equation}
\label{eq:Phi_max_formula}
\Phi_k(\diff{k}{d}p) = \left(\Phi_d(p)\right)^{\boxlor \frac{d}{k}} \qquad \text{for }k=1,\dots,d.    
\end{equation}
\end{proposition}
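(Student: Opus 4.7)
The plan is to directly compare the roots of both polynomials using two ingredients already in hand: the closed-form expression \eqref{eq:phi.roots} for the roots of $\Phi_n$ applied to any polynomial in $\pols_n(\rr_{\ge 0})$, and Lemma \ref{lem:coefficients_derivatives}, which states that the normalized coefficients $\coef{j}{n}(\cdot)$ are invariant under $\diff{k}{d}$.

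First, I would pin down the multiplicity of the root $0$ on both sides. Let $r$ denote the multiplicity of $0$ in $p$, so that $\coef{j}{d}(p) \neq 0$ if and only if $0 \le j \le d - r$. Since each application of the derivative lowers the multiplicity at $0$ by exactly $1$ (until it reaches $0$), the polynomial $q := \diff{k}{d} p$ has multiplicity $s := \max\{0,\ r - (d-k)\}$ at $0$, equivalently $k - s = \min\{k, d-r\}$.

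Next, I would combine \eqref{eq:phi.roots} with Lemma \ref{lem:coefficients_derivatives}: for $1 \le j \le k - s$,
\[
\lambda_j(\Phi_k(q)) \,=\, \frac{\coef{j}{k}(q)}{\coef{j-1}{k}(q)} \,=\, \frac{\coef{j}{d}(p)}{\coef{j-1}{d}(p)} \,=\, \lambda_j(\Phi_d(p)),
\]
while $\lambda_j(\Phi_k(q)) = 0$ for $k - s < j \le k$. On the right-hand side, Definition \ref{def:finite.free.max} tells us that $(\Phi_d(p))^{\boxlor d/k}$ is the degree-$k$ polynomial whose roots are the top $k$ roots of $\Phi_d(p)$, namely $\lambda_1(\Phi_d(p)) \ge \cdots \ge \lambda_k(\Phi_d(p))$. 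Using \eqref{eq:phi.roots} for $\Phi_d(p)$ and the identity $k - s = \min\{k, d-r\}$, this list consists of $k - s$ positive values $\coef{j}{d}(p)/\coef{j-1}{d}(p)$ followed by $s$ zeros, which matches the roots of $\Phi_k(q)$ term by term. Thus the two polynomials coincide.

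The argument reduces to direct unfolding of definitions, so I do not expect any real obstacle. The only mild care needed is to correctly handle the two sub-cases $k \le d - r$ (where $s = 0$ and no zero roots appear on either side) and $k > d - r$ (where both sides gain exactly $s = r - d + k$ zero roots), since the closed form \eqref{eq:phi.roots} is piecewise.
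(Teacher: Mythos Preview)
Your proposal is correct and follows essentially the same approach as the paper's proof: both compare the roots of the two sides directly by combining the explicit root formula \eqref{eq:phi.roots} for $\Phi_n$ with the invariance of normalized coefficients under $\diff{k}{d}$ from Lemma \ref{lem:coefficients_derivatives}. Your bookkeeping of the multiplicity at $0$ is in fact slightly cleaner than the paper's (which contains a small typo, writing $\max\{r-k,0\}$ instead of $\max\{r-(d-k),0\}$, though the subsequent computation there uses the correct range $1\le j\le\min\{d-r,k\}$).
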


\begin{proof}
The proof uses two main ingredients: Equation \eqref{eq:phi.roots} that expresses the roots of $\Phi_d(p)$ in terms of the coefficients of $p$; and Lemma \ref{lem:coefficients_derivatives} that relates the coefficients of $\diff{k}{d}p$ and $p$.

Let us denote by $r$ the multiplicity of the root 0 in $p$. Then, the multiplicity of $\diff{k}{d} p$ in 0 is $\max\{r-k,0\}$. Using Lemma \ref{lem:coefficients_derivatives} and Equation \eqref{eq:phi.roots} twice, we compute 
\[
\lambda_j\left(\Phi_k(\diff{k}{d}p)\right) 
=
\frac{\coef{j}{k}(\diff{k}{d}p)}{\coef{j-1}{k}(\diff{k}{d}p)}
=
\frac{\coef{j}{d}(p)}{\coef{j-1}{d}(p)}=\lambda_j\left( \Phi_d(p) \right) \qquad \text{for } 1\leq j\leq \min\{d-r,k\},
\]
and in the case where $k\geq d-r$ we can also check that
\begin{align*}
\lambda_j\left(\Phi_k(\diff{k}{d}p)\right) 
= 0 =\lambda_j\left( \Phi_d(p) \right) \qquad \text{for } d-r+1\leq j\leq k,
\end{align*}
as desired.
\end{proof}

From Equation \eqref{eq:Phi_max_formula} and Theorem \ref{thm:main3} it readily follows that in the limit we obtain an approximation of Equation \eqref{eq:Ueda}.

\begin{corollary}
\label{cor:max.convolution}
Fix $t\geq 1$ and a measure $\mu \in \MM(\rr_{\geq 0})$. Let $(d_j)_{j\in \N} \subset \N$ and $(k_j)_{j\in \N}\subset \N$ a diagonal sequence with ratio limit $1/t$. If $p_j\in \pols_{d_j}(\rr_{\geq 0})$ is a sequence of polynomials converging to $\mu$, then
\[
\meas{\Phi_{k_j}(\diff{k_j}{d_j}p_{d_j})}\weak \Phi(\mu)^{\boxlor t} \qquad\text{ as }j\to\infty.
\]
\end{corollary}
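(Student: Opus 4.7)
The plan is to follow the hint preceding the corollary: combine the exact finite identity \eqref{eq:Phi_max_formula} with the finite approximation of the Tucci--Haagerup--M\"{o}ller map (Theorem \ref{thm:main3}), and close with a short continuity observation for the free max-convolution power.

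First, applying \eqref{eq:Phi_max_formula} to each $p_j$ gives
\[
\Phi_{k_j}\!\left(\diff{k_j}{d_j} p_j\right) = \bigl(\Phi_{d_j}(p_j)\bigr)^{\boxlor d_j/k_j}.
\]
Definition \ref{def:finite.free.max} together with \eqref{eq:freemaxpower} shows that $\meas{q^{\boxlor c}} = \meas{q}^{\boxlor c}$ whenever $q\in\pols_d(\R)$ and $c=d/k$ with $1\leq k\leq d$, hence
\[
\meas{\Phi_{k_j}(\diff{k_j}{d_j} p_j)} = \bigl(\meas{\Phi_{d_j}(p_j)}\bigr)^{\boxlor d_j/k_j}.
\]
By Theorem \ref{thm:main3}, the hypothesis $\meas{p_j}\weak \mu$ yields $\meas{\Phi_{d_j}(p_j)}\weak \Phi(\mu)$; moreover $d_j/k_j \to t \geq 1$.

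It remains to verify the elementary continuity fact that if $\nu_j\weak \nu$ in $\MM(\R)$ and $s_j\to s$ with $s_j,s\geq 1$, then $\nu_j^{\boxlor s_j}\weak \nu^{\boxlor s}$. By \eqref{eq:freemaxpower}, at any continuity point $x$ of $F_\nu$ one has $F_{\nu_j}(x)\to F_\nu(x)$, so
\[
F_{\nu_j^{\boxlor s_j}}(x) = \max\{s_j F_{\nu_j}(x) - (s_j-1),\,0\} \longrightarrow \max\{sF_\nu(x) - (s-1),\,0\} = F_{\nu^{\boxlor s}}(x).
\]
Away from the (at most countable) jump set of $F_\nu$ the function $F_{\nu^{\boxlor s}}$ is continuous, so this pointwise convergence on a co-countable set of $x$ is precisely weak convergence. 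Combining with the previous display gives $\meas{\Phi_{k_j}(\diff{k_j}{d_j} p_j)}\weak \Phi(\mu)^{\boxlor t}$, as desired.

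No genuine obstacle arises since the heavy lifting is already packaged in Theorem \ref{thm:main3}. A self-contained alternative that bypasses the continuity lemma is to apply Theorem \ref{thm:general.AGVP} (with the boundary case $1/t=1$ handled by Remark \ref{rem:added}) to conclude $\meas{\diff{k_j}{d_j} p_j}\weak \dil{1/t}(\mu^{\boxplus t})$, then invoke Theorem \ref{thm:main3} on this new polynomial sequence to obtain $\meas{\Phi_{k_j}(\diff{k_j}{d_j} p_j)}\weak \Phi(\dil{1/t}(\mu^{\boxplus t}))$, and finally identify the right-hand side with $\Phi(\mu)^{\boxlor t}$ via Ueda's formula \eqref{eq:Ueda}.
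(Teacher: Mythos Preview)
Your proof is correct and follows precisely the route the paper indicates: apply the exact identity \eqref{eq:Phi_max_formula}, pass to empirical measures, invoke Theorem \ref{thm:main3}, and conclude by the continuity of $(\nu,s)\mapsto \nu^{\boxlor s}$, a detail you make explicit while the paper leaves it implicit in ``readily follows''. The alternative argument you sketch via Theorem \ref{thm:general.AGVP} and Ueda's formula \eqref{eq:Ueda} is also valid and gives a second, equally short path.
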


We now look at an example related to free stable laws and their finite counterparts.

\begin{example}
\label{exm:stable}
From \cite[Example 4.2]{ueda2021max} we know that $\Phi\left({\bf fs}_{\frac{1}{2},1}\right)^{\boxlor t}$ coincides with the free Fr\'{e}chet distribution (or Pareto distribution) with index $t\geq 1$.

In the finite free framework, recall from Example \ref{exm:positive.half.stable} that these polynomials correspond to $\Phi_k\left( \diff{k}{d} f_d \right)$, where $f_d$ was defined in \eqref{eq:finitefreestable_1/2} as the reversed polynomial of the standard Laguerre polynomial. Therefore, if we let $d\to \infty$ and $\frac{d}{k}\to t \ge 1$, Corollary \ref{cor:max.convolution} yields
\begin{equation}
\label{eq:HGP_Dil_Diff}
\meas{\Phi_k\left( \diff{k}{d} f_d \right) }\weak \Phi\left({\bf fs}_{\frac{1}{2},1} \right)^{\boxlor t}.
\end{equation}
\end{example}

To finish this section, we study the finite analogue of the map $\Theta$ on positive measures that was defined in \cite{hasebe2021homomorphisms}. If we denote 
\[\Theta(\mu):= \Phi(\mu \boxtimes {\bf MP}_1^\reversed) =\Phi\left(\mu\boxtimes {\bf fs}_{\frac{1}{2},1} \right) \qquad \text{for } \mu\in \MM(\rr_{\geq 0}), \]
then it holds that 
\begin{equation}
\label{eq:HUformula}
\Theta(\mu^{\boxplus t})= \Theta(\mu)^{\boxlor t} \qquad \text{for all }t\ge 1. 
\end{equation}

We can define the finite free analogue of the map $\Theta$ as 
\[\Theta_d (p):=\Phi_d\left(p \boxtimes_d  f_d\right) \qquad \text{for } p\in \pols_d(\rr_{\geq 0})\]

To obtain the finite analogue of \eqref{eq:HUformula}, we first compute the derivatives of $f_d$.
\begin{lemma}
\label{lem:f_k_formula}
For $1\le k \le d$, we get
\begin{equation*}
    \dil{\frac{k}{d}}\left(\diff{k}{d} f_d \right)= f_k.
\end{equation*}
\end{lemma}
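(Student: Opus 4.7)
The strategy is to check the equality at the level of normalized coefficients, using Lemma \ref{lem:coefficients_derivatives} to express the coefficients of $\diff{k}{d}f_d$ in terms of those of $f_d$, and then tracking the effect of the dilation $\dil{k/d}$.

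First, I would write out $\coef{j}{d}(f_d)$ explicitly. Since $f_d(x)=\HGP{d}{-}{-1/d}(-x)$, and the dilation by $c$ rescales coefficients as $\coef{j}{d}(\dil{c}p)=c^{j}\coef{j}{d}(p)$, one obtains from the hypergeometric definition
\[
\coef{j}{d}(f_d)=(-1)^j \,\coef{j}{d}\!\left(\HGP{d}{-}{-1/d}\right)=(-1)^j\cdot\frac{d^{j}}{\falling{-1}{j}}=\frac{d^{j}}{j!}.
\]

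Next, Lemma \ref{lem:coefficients_derivatives} tells us the coefficients are preserved under differentiation: for $0\le j\le k\le d$,
\[
\coef{j}{k}\!\bigl(\diff{k}{d}f_d\bigr)=\coef{j}{d}(f_d)=\frac{d^{j}}{j!}.
\]
Applying the dilation formula $\coef{j}{k}(\dil{c}p)=c^{j}\coef{j}{k}(p)$ with $c=k/d$, we get
\[
\coef{j}{k}\!\Bigl(\dil{\tfrac{k}{d}}\!\bigl(\diff{k}{d}f_d\bigr)\Bigr)=\left(\frac{k}{d}\right)^{j}\frac{d^{j}}{j!}=\frac{k^{j}}{j!}.
\]
Running the same computation of the coefficients of $f_d$ but with $k$ in place of $d$ gives $\coef{j}{k}(f_k)=k^{j}/j!$, which matches the previous display. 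Since two monic polynomials of the same degree with identical normalized coefficients are equal, the claim follows.

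This is essentially a bookkeeping computation, so I do not anticipate any substantive obstacle; the only care needed is in correctly combining the sign change from $x\mapsto -x$ with the sign of the falling factorial $\falling{-1}{j}=(-1)^{j}j!$, which produces the clean formula $\coef{j}{d}(f_d)=d^{j}/j!$ and makes the compatibility with $\dil{k/d}\circ\diff{k}{d}$ transparent.
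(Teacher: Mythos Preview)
Your proof is correct and follows essentially the same approach as the paper: both compare the normalized coefficients $\coef{j}{k}$ on each side, invoking Lemma~\ref{lem:coefficients_derivatives} for the effect of $\diff{k}{d}$ and the scaling rule for $\dil{k/d}$, arriving at $\coef{j}{k}=k^j/j!$ on both sides. You add the explicit derivation of $\coef{j}{d}(f_d)=d^j/j!$ from the hypergeometric definition and the sign bookkeeping, which the paper simply states.
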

\begin{proof}
    For $1\le j \le k$, we compare the $j$-th coefficients of both polynomials. One can see that
    \[\coef{j}{k}(f_k)= \frac{k^j}{j!}.\]
    On the other hand, we have
    \begin{align*}
        \coef{j}{k}\left(\dil{\frac{k}{d}}\left(\diff{k}{d} f_d \right) \right)  &= \left(\frac{k}{d} \right)^j \coef{j}{k} (\diff{k}{d} f_d)\\
    &=\left(\frac{k}{d} \right)^j \coef{j}{d}(f_d) & \text{(by Lemma \ref{lem:coefficients_derivatives})}\\
    &=\left(\frac{k}{d} \right)^j \cdot \frac{d^j}{j!} = \frac{k^j}{j!}
    \end{align*}
    % \begin{equation*}
    % \begin{split}
    % \coef{j}{k}\left(\dil{\frac{k}{d}}\left(\diff{k}{d} f_d \right) \right) 
    % &= \left(\frac{k}{d} \right)^j \coef{j}{k} (\diff{k}{d} f_d)\\
    % &=\left(\frac{k}{d} \right)^j \coef{j}{d}(f_d) \qquad \text{(by Lemma \ref{lem:coefficients_derivatives})}\\
    % &=\left(\frac{k}{d} \right)^j \cdot \frac{d^j}{j!} = \frac{k^j}{j!},
    % \end{split}
    % \end{equation*}
    as desired.
\end{proof}

Using this, we infer the following formula.
\begin{proposition}
\label{prop:Thetaformula}
Let $p\in \pols_d(\rr_{\geq 0})$. Then
\[\Theta_k \left( \dil{\frac{d}{k}}\diff{k}{d}p \right) = \left(\Theta_d(p) \right)^{\boxlor \frac{d}{k}} \qquad \text{for } k=1,\dots, d.\] 
\end{proposition}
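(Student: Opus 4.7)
The plan is to unfold the definition of $\Theta_k$ on the left-hand side and manipulate the inner convolution until it matches a differentiation of $p\boxtimes_d f_d$, at which point Equation \eqref{eq:Phi_max_formula} delivers the max-convolution power on the right. By definition,
\[
\Theta_k\!\left(\dil{\tfrac{d}{k}}\diff{k}{d}p\right)
= \Phi_k\!\left(\left(\dil{\tfrac{d}{k}}\diff{k}{d}p\right)\boxtimes_k f_k\right),
\]
so I need to identify the polynomial inside $\Phi_k$ with $\diff{k}{d}(p\boxtimes_d f_d)$.

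The first step is to record that dilation commutes with finite free multiplicative convolution. Indeed, for any $u,v\in\pols_k$ and constants $a,b\in\R$, the coefficient formula $\coef{j}{k}(\dil{c}u)=c^j\coef{j}{k}(u)$ combined with Definition \ref{def:finite.free} gives
\[
\coef{j}{k}\!\left(\dil{a}u \boxtimes_k \dil{b}v\right) = a^j b^j \coef{j}{k}(u)\coef{j}{k}(v) = \coef{j}{k}\!\left(\dil{ab}(u\boxtimes_k v)\right),
\]
so $\dil{a}u\boxtimes_k \dil{b}v = \dil{ab}(u\boxtimes_k v)$. Next I apply Lemma \ref{lem:f_k_formula}, which gives $f_k = \dil{\tfrac{k}{d}}(\diff{k}{d}f_d)$; plugging this into the inner convolution and using the dilation identity above yields
\[
\left(\dil{\tfrac{d}{k}}\diff{k}{d}p\right)\boxtimes_k f_k
= \left(\dil{\tfrac{d}{k}}\diff{k}{d}p\right)\boxtimes_k \left(\dil{\tfrac{k}{d}}\diff{k}{d}f_d\right)
= \diff{k}{d}p \,\boxtimes_k\, \diff{k}{d}f_d,
\]
since the two dilations multiply to $\dil{1}=\mathrm{Id}$.

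To finish, I invoke Remark \ref{rem:diff_addtive_multiplicative}, which asserts that differentiation commutes with finite free multiplicative convolution, giving
\[
\diff{k}{d}p \,\boxtimes_k\, \diff{k}{d}f_d = \diff{k}{d}(p\boxtimes_d f_d).
\]
Combining everything and then applying \eqref{eq:Phi_max_formula} to the polynomial $p\boxtimes_d f_d\in\pols_d(\R_{\ge 0})$,
\[
\Theta_k\!\left(\dil{\tfrac{d}{k}}\diff{k}{d}p\right)
= \Phi_k\!\left(\diff{k}{d}(p\boxtimes_d f_d)\right)
= \left(\Phi_d(p\boxtimes_d f_d)\right)^{\boxlor \frac{d}{k}}
= \left(\Theta_d(p)\right)^{\boxlor \frac{d}{k}},
\]
which is exactly the desired identity. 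The argument is essentially an algebraic identity, so I do not anticipate a real obstacle; the only place requiring a bit of care is keeping track of the dilation factors $\tfrac{d}{k}$ and $\tfrac{k}{d}$ so that they cancel correctly, and verifying the commutation of dilation with $\boxtimes_k$, both of which are short computations at the level of coefficients.
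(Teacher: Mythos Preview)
Your proof is correct and follows essentially the same approach as the paper's own proof: both use the definition of $\Theta$, Lemma \ref{lem:f_k_formula}, the commutation of differentiation with $\boxtimes$ from Remark \ref{rem:diff_addtive_multiplicative}, and Equation \eqref{eq:Phi_max_formula}. The only cosmetic difference is that the paper runs the chain of equalities from $\Theta_d(p)^{\boxlor d/k}$ toward $\Theta_k(\dil{d/k}\diff{k}{d}p)$ while you go in the opposite direction, and you spell out the dilation--$\boxtimes_k$ commutation via coefficients where the paper uses it silently.
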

\begin{proof}
For $1\le k \le d$, we have
\begin{align*}
\Theta_d (p)^{\boxlor \frac{d}{k}} &= (\Phi_d(p\boxtimes_d f_d))^{\boxlor \frac{d}{k}} \\
&=\Phi_k (\diff{k}{d}(p\boxtimes_d f_d)) & \text{(by \eqref{eq:Phi_max_formula})}\\
&=\Phi_k ((\diff{k}{d}p )\boxtimes_k (\diff{k}{d} f_d)) & \text{(by Remark \ref{rem:diff_addtive_multiplicative})}\\
&=\Phi_k \left(\left(\dil{\frac{d}{k}} \diff{k}{d}p\right) \boxtimes_k \left( \dil{\frac{k}{d}} \diff{k}{d} f_d \right)\right)\\
&= \Phi_k \left(\left(\dil{\frac{d}{k}} \diff{k}{d}p\right) \boxtimes f_k \right) & \text{(by Lemma \ref{lem:f_k_formula})}\\
&= \Theta_k \left(\dil{\frac{d}{k}} \diff{k}{d}p \right).
\end{align*}
\end{proof}

Thanks to Proposition \ref{prop:Thetaformula}, the last claim can be seen as an approximation of its free counterpart introduced in Equation \eqref{eq:HUformula}.
\begin{corollary}
Fix $t\geq 1$ and a measure $\mu \in \MM(\rr_{\geq 0})$. Let $(d_j)_{j\in \N} \subset \N$ and $(k_j)_{j\in \N}\subset \N$ a diagonal sequence with ratio limit $1/t$. If $p_j\in \pols_{d_j}(\rr_{\geq 0})$ us a sequence of polynomials converging to $\mu$, then
\[
\meas{\Theta_k(\dil{\frac{d}{k}}\diff{k}{d}p_d)}\weak \Theta(\mu)^{\boxlor t} \qquad\text{ as }j\to\infty.
\]
\end{corollary}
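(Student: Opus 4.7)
The plan is to combine Proposition \ref{prop:Thetaformula} with the two main approximation results already developed in the paper, namely Proposition \ref{prop:approx_boxtimesd} ($\boxtimes_d \to \boxtimes$) and Theorem \ref{thm:main3} ($\Phi_d \to \Phi$), and then to verify that the finite free max-convolution power operation $q \mapsto q^{\boxlor d/k}$ is compatible with weak convergence in the diagonal limit. Throughout the argument I write $q_j$ for polynomials and use $d_j, k_j$ freely; the degree $d_j$ tends to infinity while $k_j/d_j \to 1/t$, i.e.\ $d_j/k_j \to t$.

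First, I would apply Proposition \ref{prop:Thetaformula} to rewrite the object of interest as
\[
\Theta_{k_j}\!\left(\dil{\tfrac{d_j}{k_j}}\diff{k_j}{d_j}p_j\right) = \bigl(\Theta_{d_j}(p_j)\bigr)^{\boxlor \frac{d_j}{k_j}}.
\]
Next, recall that the polynomials $f_{d_j}$ from \eqref{eq:finitefreestable_1/2} satisfy $\meas{f_{d_j}} \weak {\bf fs}_{1/2,1}$ (Example \ref{exm:positive.half.stable}). Combining this with Proposition \ref{prop:approx_boxtimesd} gives
\[
\meas{p_j \boxtimes_{d_j} f_{d_j}} \weak \mu \boxtimes {\bf fs}_{1/2,1},
\]
and then Theorem \ref{thm:main3} yields
\[
\meas{\Theta_{d_j}(p_j)} \,=\, \meas{\Phi_{d_j}(p_j \boxtimes_{d_j} f_{d_j})} \weak \Phi\bigl(\mu \boxtimes {\bf fs}_{1/2,1}\bigr) = \Theta(\mu).
\]

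The remaining step, which I expect to be the only real piece of work, is the following auxiliary claim: \emph{if $(q_j)_{j\in\N}$ is a sequence of polynomials with $\deg q_j = d_j$ and $\meas{q_j} \weak \nu$, and if $d_j/k_j \to t \geq 1$, then $\meas{q_j^{\boxlor d_j/k_j}} \weak \nu^{\boxlor t}$}. Directly from Definition \ref{def:finite.free.max}, after ordering the roots of $q_j$ in decreasing order, one computes
\[
F_{\meas{q_j^{\boxlor d_j/k_j}}}(x) = \max\!\left(0,\ \tfrac{d_j}{k_j} F_{\meas{q_j}}(x) - \tfrac{d_j - k_j}{k_j}\right),
\]
which is precisely the finite counterpart of the formula \eqref{eq:freemaxpower}. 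At any continuity point $x$ of $F_{\nu^{\boxlor t}}$, one checks case by case (according to whether $F_\nu(x)$ is above, below or equal to the threshold $(t-1)/t$) that the right-hand side converges to $\max(0, tF_\nu(x) - (t-1)) = F_{\nu^{\boxlor t}}(x)$; the cases $F_\nu(x) < (t-1)/t$ and the threshold case require approximating $x$ by nearby continuity points of $F_\nu$, which is the main technical subtlety but is standard.

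Applying this claim with $q_j = \Theta_{d_j}(p_j)$ and $\nu = \Theta(\mu)$, combined with the first two paragraphs, gives
\[
\meas{\Theta_{k_j}(\dil{d_j/k_j}\diff{k_j}{d_j}p_j)} = \meas{\Theta_{d_j}(p_j)^{\boxlor d_j/k_j}} \weak \Theta(\mu)^{\boxlor t},
\]
which is the desired conclusion.
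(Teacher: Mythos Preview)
Your proposal is correct and follows essentially the same route the paper intends: the paper states this corollary without proof, pointing only to Proposition \ref{prop:Thetaformula}, and the parallel Corollary \ref{cor:max.convolution} is justified by the paper via Equation \eqref{eq:Phi_max_formula} and Theorem \ref{thm:main3}, which is exactly your scheme. One small simplification: the paper already records, right after Definition \ref{def:finite.free.max}, that $\meas{p^{\boxlor d/k}} = (\meas{p})^{\boxlor d/k}$, so your displayed formula for $F_{\meas{q_j^{\boxlor d_j/k_j}}}$ is immediate from \eqref{eq:freemaxpower} and your auxiliary claim reduces to the elementary continuity of $(\nu,s)\mapsto \nu^{\boxlor s}$ in the sense of CDFs, which you correctly identify as routine.
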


\subsection*{Acknowledgements}
The authors thank Takahiro Hasebe and Jorge Garza-Vargas for fruitful discussions in relation to this project.
We thank Andrew Campbell for useful comments that help improving the presentation of the paper.

The authors gratefully acknowledge the  financial support by the grant CONAHCYT A1-S-9764 and JSPS Open Partnership Joint Research Projects grant no. JPJSBP120209921. We greatly appreciate the hospitality of Hokkaido University of Education during June 2023, where this project originated.
K.F. was supported by the Hokkaido University Ambitious Doctoral Fellowship (Information Science and AI) and JSPS Research Fellowship for Young Scientists PD (KAKENHI Grant Number 24KJ1318).
D.P. was partially supported by the AMS-Simons Travel Grant, and by Simons Foundation via Michael Anshelevich's grant.
Y.U. is supported by JSPS Grant-in-Aid for Young Scientists 22K13925 and for Scientific Research (C) 23K03133. 

\printbibliography % biblatex
\end{document}